\numberwithin{equation}{section}
\theoremstyle{plain}
\newtheorem{te}{Theorem}[section]
\newtheorem{coro}[te]{Corollary}
\newtheorem{prop}[te]{Proposition}
\newtheorem{defn}[te]{Definition}
\newtheorem{lem}[te]{Lemma}
\newtheorem*{ack*}{Acknowledgment}
\theoremstyle{remark}
\newtheorem{rmk}{Remark}
\newcommand{\dsum}{\displaystyle\sum}
\newcommand{\dint}{\displaystyle\int}
\newcommand{\dprod}{\displaystyle\prod}\newcommand{\RomanNumeralCaps}[1]
    {\MakeUppercase{\romannumeral #1}}
\newcommand{\nocontentsline}[3]{}
\def\x{{\boldsymbol x}}
\def\y{{\bf y}}
\def\z{{\bf z}}
\def\w{{\bf w}}
\def\u{{\bf u}}
\def\l{{\boldsymbol l}}
\def\n{{\boldsymbol n}}
\def\b{{\boldsymbol b}}
\def\g{{\boldsymbol g}}
\def\0{{\bf 0}}
\def\a{{\boldsymbol a}}
\def\b{{\boldsymbol b}}
\def\d{{\boldsymbol d}}
\def\c{{\boldsymbol c}}
\def\m{{\boldsymbol m}}
\def\h{{\boldsymbol h}}
\def\y{{\boldsymbol y}}
\def\z{{\boldsymbol z}}
\def\f{{\boldsymbol f}}
\def\r{{\boldsymbol r}}
\def\R{{\mathbb R}}
\def\N{{\mathbb N}}
\def\S{{\mathbb S}}
\def\Z{{\mathbb Z}}
\def\P{{\mathbb P}}
\begin{document}

\author{Kiseok Yeon}
\email{kyeon@purdue.edu}
\address{Department of Mathematics, Purdue University, 150 N. University Street, West Lafayette, IN 47907-2067, USA}
\subjclass[2020]{11E76,14G12}
\keywords{Homogenous polynomials, Hasse principle}

\title[Kiseok Yeon]{The Hasse principle for homogeneous polynomials with random coefficients over thin sets}
\maketitle

\begin{abstract}
In this paper, we investigate the solubility of homogeneous polynomial equations. The work of Browning, Le Boudec, Sawin [$\ref{ref3}$] shows that almost all homogeneous equations of degree $d\geq 4$ in $d+1$ or more variables satisfy the Hasse principle, and in particular that a positive portion possess a non-trivial integral solution. Our main result, when combined with our sequel joint work [$\ref{ref2000}$] with H.Lee and S.Lee, shows that such a conclusion remains true even when the coefficients of homogeneous polynomials are constrained by a polynomial condition under a modest condition on the number of variables. 

To state the precise result, let $d$ and $n$ be natural numbers. Let $\nu_{d,n}: \R^n\rightarrow \R^N$ denote the Veronese embedding with $N=\binom{n+d-1}{d}$, defined by listing all the monomials of degree $d$ in $n$ variables using the lexicographical ordering. Let $\langle \a, \nu_{d,n}(\x)\rangle\in \Z[\x]$ be a homogeneous polynomial in $n$ variables of degree $d$ with integer coefficients $\a$, where $\langle\cdot,\cdot\rangle$ denotes the inner product. For a non-singular form $P\in \Z[\x]$ in $N$ variables of degree $k\geq 2,$ consider a set of integer vectors $\a\in \Z^N$, defined by $$\mathfrak{A}(A;P)=\{\a\in \Z^N|\ P(\a)=0,\ \|\a\|_{\infty}\leq A\}.$$
 We confirm that when $d\geq 4$, $n$ is sufficiently large in terms of $d$, and $k\leq d,$ the proportion of integer vectors $\a\in \Z^N$ in $\mathfrak{A}(A;P)$, whose associated equations $\langle \a, \nu_{d,n}(\x)\rangle=0$ satisfy the Hasse principle, converges to $1$ as $A\rightarrow \infty$. We make explicit a lower bound on $n$ guaranteeing this conclusion. In particular, we show that when $d\geq 14$ it suffices to take $n\geq 32d+17$. Our main tool is the Hardy-Littlewood circle method. 
    
 
\end{abstract}

\tableofcontents

\section{Introduction and statement of the results}\label{sec1}

In 1962, thanks to the celebrated work of Birch [$\ref{ref8}$], we can say that a system of $R$ forms $F_1,\ldots,F_R$ with integer coefficients of degree $d$ in $n$ variables, satisfies the \emph{smooth} Hasse principle whenever $n$ is sufficiently large in terms of $d$, $R$ and the dimension of a certain singular locus. In particular, it suffices to take \begin{equation}\label{birch}
    n>R(R+1)(d-1)2^{d-1}+\text{dim}\ W,
\end{equation}
where $$W=\{\x\in \mathbb{A}^n|\ \text{rank}(J(\x))<R\},$$
in which $J(\x)$ is the Jacobian matrix of size $R\times n$ formed from the gradient vectors $\nabla F_1(\x),\ldots,\nabla F_R(\x).$
Later on, when the variety defined by $F_1,\ldots, F_R$ is smooth and $d=2,3$, Rydin Myerson ([$\ref{ref26}$,$\ref{ref27}$])  showed that the factor $R(R+1)(d-1)2^{d-1}+\text{dim} W$ in the bound ($\ref{birch}$) can be replaced by a factor growing linearly in $R$, in particular, $d2^dR+R$. This refined bound also applies to $d\geq 4$ for a generic system of equations $F_1=\cdots=F_R=0$ (see the work of Rydin Myerson [$\ref{ref28}$]). 
Furthermore, Br\"udern and Wooley [$\ref{ref13}$] showed that whenever $n\geq13$, a system of two diagonal cubic forms satisfies the Hasse principle. This result was generalized to that for systems of diagonal cubic forms with a certain non-singularity condition on the coefficients of the system [$\ref{ref12}$]. Beyond a system of equations with the same degrees, Browning and Heath-Brown $[\ref{ref9}]$ verified the Hasse principle for a system of forms with different degrees.

For the case $R=1$, in particular, the Hasse–Minkowski theorem shows that the Hasse principle holds for a quadratic form. Furthermore, for $d=3$ and $R=1,$ Heath-Brown [$\ref{ref25}$] proved that non-singular cubic forms with $10$ variables have a nontrivial integer solution, and later on, a series of Hooley's works $([\ref{ref18}],[\ref{ref22}],[\ref{ref23}],[\ref{ref24}])$ verified the Hasse principle for non-singular cubic forms in $9$ variables and cubic forms in $9$ variables allowed a certain singularity condition. For $d=4$ and $R=1,$ due to Marmon and Vishe [$\ref{ref29}$], we can say that a non-singular quartic form satisfies the Hasse principle whenever $n$ is at least $28.$ In general, for $d\geq 3$ and $R=1,$ Browning and Prendiville [$\ref{ref10}$] showed that a non-singular form satisfies the Hasse principle whenever $n$ is at least $\left(d-\frac{1}{2}\sqrt{d}\right)2^d$.

It is known that the Brauer–Manin obstruction to the Hasse principle is empty for smooth Fano varieties of dimension at least 3 over any number field.
Moreover, for such varieties, it follows from a conjecture of Colliot-Th\'el\`ene that the Brauer–Manin obstruction to the Hasse principle should be the only one. Therefore, smooth Fano varieties are expected to satisfy the Hasse principle. For a smooth variety $V$ defined by a form in $n$ variables of degree $d$ $(R=1$ in our discussion), the number of variables $n$ is greater or equal to $d+1$ if and only if $V$ is Fano, and thus a non-singular form, with $n\geq d+1$ with $d\geq 4$ and $R=1$, is expected to satisfy the Hasse principle. We notice here that the current records described in the previous paragraph are very far from this expectation. 

In $2003$, Poonen and Voloch [$\ref{ref14}$] suggested a probabilistic point of view associated with this expectation. To be specific, let $n$ and $d$ be natural numbers with $d\geq 2. $ Throughout this paper, we let \begin{equation}\label{1.21.2}
    N:=N_{d,n}=\binom{n+d-1}{d}.
\end{equation}
Let $\nu_{d,n}: \R^n\rightarrow \R^N$ denote the Veronese embedding, defined by listing all the monomials of degree $d$ in $n$ variables using the lexicographical ordering. We denote a homogeneous polynomial in $n$ variables of degree $d$ with integer coefficients, by $\langle\a,\nu_{d,n}(\x)\rangle$ with $\a\in \Z^N$. For here and throughout, we write $f_{\a}(\x)=\langle\a,\nu_{d,n}(\x)\rangle$ for simplicity. If we consider a set of integer vectors $\a\in \Z^N$, defined by 
$\mathfrak{A}(A)=\{\a\in \Z^N|\ \|\a\|_{\infty}\leq A\},$
Poonen and Voloch [$\ref{ref14}$] conjectured that the proportion of integer vectors $\a\in \Z^N$ in $\mathfrak{A}(A)$, whose associated equations $f_{\a}(\x)=0$ satisfy the Hasse principle, converges to $1$ as $A\rightarrow \infty$, provided that $n\geq d+1$ and $d\geq3$.  Recently, Browning, Le Boudec, and Sawin [$\ref{ref3}$] confirmed this conjecture of Poonen and Voloch except for the case $d=3$ and $n=4.$ 

To bridge between this probabilistic point of view and the original expectation with a deterministic nature mentioned in the third paragraph, it is natural to investigate the distribution of integer vectors  $\a\in \Z^N$ in $\mathfrak{A}(A)$ whose associated equations $f_{\a}(\x)=0$ satisfy the Hasse principle. However, the conjecture of Poonen and Voloch [$\ref{ref14}$] and the conclusion of the work due to Browning, Le Boudec, and Sawin [$\ref{ref3}$] seem incapable of describing the distribution of those integer vectors $\a\in \Z^N$ in $\mathfrak{A}(A)$ beyond the bounds of their cardinality. Therefore, we turn our attention to the distribution of those integer vectors $\a\in \Z^N$ restricted to `thin sets'. In $2014,$ Br\"udern and Dietmann [$\ref{ref2}$] showed that the proportion of integer vectors $\a\in \Z^n$ in $ [-A,A]^n\cap\Z^n$ associated with diagonal forms satisfying the Hasse principle, converges to $1$ as $A\rightarrow\infty$, provided that $n\geq 3d+2$. 

In order to describe our main interest, for $\a\in \R^N,$ we fix a non-singular form $P(\boldsymbol{a})\in \Z[\x]$ in $N$ variables of degree $k\geq 2$. For the convenience, we define 
 \begin{equation}
     \mathfrak{A}(A;P):=\{\a\in \Z^N|\ \|\a\|_{\infty}\leq A,\ P(\a)=0\}.
 \end{equation}
 Then, we may ask a question about how many equations $f_{\a}(\x)=0$, with $\a\in \mathfrak{A}(A;P)$, satisfy the Hasse principle. Furthermore, even when the coefficients $\boldsymbol{a}\in \Z^N$ are restricted to arbitrary algebraic sets, we may ask how many equations $f_{\a}(\x)=0$, associated with these coefficients $\boldsymbol{a}\in\Z^N$, do satisfy the Hasse principle. Even in the case $R>1$, we may ask analogous questions. With more answers to these questions, we have better information about the distribution of the coefficients $\boldsymbol{a}$ whose associated equations satisfy the Hasse principle. 

 In this paper, we develop a framework via the Hardy-Littlewood circle method, in order to answer the question described in the previous paragraph. We define a set $\mathcal{A}^{\text{loc}}_{d,n}(A;P)$ of integer vectors $\a\in \Z^N$ in $\mathfrak{A}(A;P)$ having the property that the associated equation  $f_{\a}(\x)=0$ is everywhere locally soluble. When $d\geq 14,$ $k\leq d$ and $n\geq 32d+17$, we verify that the proportion of integer vectors $\a\in\mathcal{A}^{\text{loc}}_{d,n}(A;P)$ in $\mathfrak{A}(A;P)$, having the property that the number of integer solutions in $[1,X]^n$ satisfying $f_{\a}(\x)=0$ is less than $A^{-1}X^{n-d}(\log A)^{-\tau}$ for some $\tau>0,$ converges to $0$ as $A\rightarrow \infty$ (see Theorem $\ref{thm1.3}$ together with Corollary $\ref{coro1.5}$ below). Meanwhile, for $P\in \Z[\x]$ satisfying that there exists $\b\in\Z^N$ with $P(\b)=0$ such that $f_{\b}(\x)=0$ has a smooth integer point, our sequel joint work [$\ref{ref2000}$] with H.Lee and S.Lee reveals that the proportion of integer vectors $\a\in \mathcal{A}^{\text{loc}}_{d,n}(A;P)$ in $\mathfrak{A}(A;P)$ converges to a positive number as $A\rightarrow \infty.$ Therefore, at least for these $P\in \Z[\x]$, our main theorem is indeed a non-trivial one. Furthermore, we can say that with $d,k,n$ and those  $P\in \Z[\x]$ as above, for almost all $\a\in \mathcal{A}^{\text{loc}}_{d,n}(A;P)$, the equation $f_{\a}(\x)=0$ has the expected number of integer solutions inferred by the circle method, allowed up to a factor $(\log A)^{-\tau}$. Therefore, this obviously implies that for $d,k,n$ as above, the proportion of integer vectors $\a\in \Z^N$ in $\mathfrak{A}(A;P)$, whose associated equations $f_{\a}(\x)=0$ satisfy the Hasse principle, converges to $1$ as $A\rightarrow \infty.$ 
 
In order to describe our main theorems, we temporarily pause here and provide some definitions. Recall that $f_{\a}(\x)$ is a homogeneous polynomial in $n$ variables of degree $d$.
Furthermore, for $\a\in \Z^N$ and $X>0$, we define
\begin{equation}
    \mathcal{I}_{\a}(X)=\left\{\x\in [1,X]^n\cap \Z^n|\ f_{\a}(\x)=0\right\}.
\end{equation}
We note here that our argument proceeds for fixed $X>0$, and thus for simplicity, we write
\begin{equation}\label{def2.2}
    w=\log X
\end{equation}
and \begin{equation}\label{def2.3}
    W=\dprod_{p\leq w}p^{\lfloor\log w/\log p\rfloor}.
\end{equation}
Observe here that an application of the prime number theorem reveals that $\log W \leq 2w,$ which implies 
\begin{equation}
    W\leq X^{2}.
\end{equation}

\bigskip

For $Q>0$ and $\a\in \Z^N,$ we define
\begin{equation}\label{def6.1}
    \sigma(\a;Q)=Q^{-(n-1)}\#\{\g\in [1,Q]^n|\ f_{\a}(\g)\equiv 0\ \text{mod}\ Q\}.
\end{equation}
We notice that by the Chinese remainder theorem one has
\begin{equation}\label{6.26.26.2}
    \sigma(\a;Q)=\dprod_{p^r\|Q}\sigma(\a;p^r).
\end{equation}
Then, on recalling the definition $(\ref{def2.3})$ of $W$, we write
\begin{equation}
    \mathfrak{S}_{\a}^*=\sigma(\a;W)=\dprod_{p^r\|W}\sigma(\a;p^r).
\end{equation}

Recall the definition $(\ref{def2.2})$ of $w.$ Put $\zeta=w^{-5},$ and we introduce an auxiliary function 
$$\mathfrak{w}_{\zeta}(\beta)=\zeta\cdot\left(\frac{\text{sin}(\pi \zeta\beta)}{\pi \zeta\beta}\right)^2.$$
This function has the Fourier transform
$$\widehat{\mathfrak{w}}_{\zeta}(\xi)=\int_{-\infty}^{\infty}\mathfrak{w}_{\zeta}(\beta)e(-\beta \xi)d\beta=\text{max}\{0,1-|\xi|/\zeta\}.$$
For $\a\in \Z^N$ and $A,X>0,$ we define 
\begin{equation}\label{defnJ*}
    \mathfrak{J}_{\a}^*:= \mathfrak{J}_{\a}^*(A,X)=A^{-1}X^{n-d}\dint_{[0,1]^n}\zeta^{-1} \widehat{\mathfrak{w}}_{\zeta}(A^{-1}f_{\a}(\boldsymbol{\gamma}))d\boldsymbol{\gamma}.
\end{equation}


As we will show in section $\ref{sec5}$ below, the functions $\mathfrak{S}_{\a}^*$ and $\mathfrak{J}_{\a}^*$ behave in a similar manner to the truncated singular series and the truncated singular integral, which traditionally appear in the main term of an asymptotic formula for $\mathcal{I}_{\a}(X)$ predicted by the Hardy-Littlewood circle method, on average over $\a\in \Z^N$ in $\mathfrak{A}(A;P).$

\begin{te}\label{thm2.2}
Let $A$ and $X$ be positive numbers and let $n$ and $d$ be natural numbers with $d\geq 4.$ Let $n_1$ be the greatest integer with $n_1\leq \lfloor(n-1)/2\rfloor/8.$  Suppose that $n_1>2d$ and $2X^d\leq A\leq X^{n_1-d}.$ Suppose that $P\in \Z[\x]$ is a non-singular form in $N_{d,n}$ variables of degree $k\geq 2.$ Then, whenever $N_{d,n}\geq 200k(k-1)2^{k-1},$ there is a positive number $\delta<1$ such that
\begin{equation*}
    \dsum_{\substack{\|\a\|_{\infty}\leq A\\P(\a)=0}}\left|\mathcal{I}_{\a}(X)-\mathfrak{S}_{\a}^*\mathfrak{J}_{\a}^*\right|^2\ll A^{N-k-2}X^{2n-2d}(\log A)^{-\delta}.
\end{equation*}
\end{te}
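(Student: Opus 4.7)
The plan is to apply the Hardy--Littlewood circle method in $\alpha$ to the exponential sum
$$F(\alpha;\a) = \sum_{\x \in [1,X]^n \cap \Z^n} e\bigl(\alpha f_{\a}(\x)\bigr), \qquad \mathcal{I}_{\a}(X) = \int_0^1 F(\alpha;\a)\, d\alpha,$$
and to work throughout at the level of $L^2$-averages over $\a \in \mathfrak{A}(A;P)$, since the number of variables $n$ falls far short of what a pointwise Birch-type treatment would require for each individual $\a$. After a Farey dissection $[0,1] = \mathfrak{M} \cup \mathfrak{m}$ at a level $Q = X^{\theta}$ chosen in terms of $n_1$ and $d$, the task reduces to separately bounding in the second moment the major arc error $\int_{\mathfrak{M}} F\, d\alpha - \mathfrak{S}_{\a}^* \mathfrak{J}_{\a}^*$ and the minor arc contribution $\int_{\mathfrak{m}} F\, d\alpha$.

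On the major arcs the pointwise analysis gives $\int_{\mathfrak{M}} F\, d\alpha = \mathfrak{S}_{\a}(Q)\,\mathfrak{J}_{\a}(X) + E_{\mathrm{M}}(\a)$, where $\mathfrak{S}_{\a}(Q)$ and $\mathfrak{J}_{\a}(X)$ are the usual truncated singular series and singular integral. To compare this with $\mathfrak{S}_{\a}^* \mathfrak{J}_{\a}^*$ in $L^2(\a)$, I would isolate the $W$-smooth factor $\mathfrak{S}_{\a}^*$ in the singular series and estimate the remaining contribution from prime powers whose prime exceeds $w$. The main input is a bound of the form $\sum_{\a \in \mathfrak{A}(A;P)} |\sigma(\a;p^r) - 1|^2 \ll A^{N-k} p^{-2}$ for $p > w$, which via Fourier expansion of the local factor reduces to mean-value estimates for $\sum_{\a \in \mathfrak{A}(A;P)} e_{p^r}\bigl(b \langle \a, \nu_{d,n}(\gamma)\rangle\bigr)$; these are handled by Birch's circle method for $P$ under the hypothesis $N_{d,n} \geq 200k(k-1)2^{k-1}$. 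Summing over $p > w$ yields tail savings of size $(\log w)^{-1}$, which is the origin of the eventual $(\log A)^{-\delta}$ factor. The archimedean smoothing defining $\mathfrak{J}_{\a}^*$ is treated analogously by exploiting the Fejer-kernel properties of $\mathfrak{w}_\zeta$.

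The minor arc second-moment bound is where the heart of the argument lies: the goal is
$$\sum_{\a \in \mathfrak{A}(A;P)} \Bigl|\int_{\mathfrak{m}} F(\alpha;\a)\, d\alpha\Bigr|^2 \ll A^{N-k-2} X^{2(n-d)} (\log A)^{-\delta}.$$
I would open the square and interchange orders of summation, reducing the problem to estimating
$$\int_{\mathfrak{m}}\int_{\mathfrak{m}} \sum_{\x,\y \in [1,X]^n \cap \Z^n} \, \sum_{\substack{\a \in \Z^N \\ \|\a\|_\infty \leq A,\, P(\a)=0}} e\bigl(\a \cdot (\alpha\nu_{d,n}(\x) - \beta\nu_{d,n}(\y))\bigr)\, d\alpha\, d\beta.$$
The inner sum over $\a$ is a Weyl-type sum restricted to the hypersurface $\{P(\a) = 0\}$ and twisted by a linear form in $\a$; Birch's circle method applied to $P$ with this extra twist yields a main term of size $A^{N-k}$ only on the Weyl-diagonal $\alpha\nu_{d,n}(\x) \equiv \beta\nu_{d,n}(\y) \pmod 1$, and power savings off it. The diagonal piece collapses essentially to the $L^2$-mean $\int_{\mathfrak{m}} |F(\alpha;\a)|^2\, d\alpha$, for which a Hooley--Weyl $\Delta$-type argument combined with the $W$-smooth dissection built into the statement delivers the final $(\log A)^{-\delta}$ saving. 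The principal obstacle is precisely this interleaving of two circle methods --- one in $\alpha$ for $f_{\a}(\x) = 0$ and one in $\a$ for $P(\a) = 0$ --- and the hypothesis $N_{d,n} \geq 200k(k-1)2^{k-1}$, stronger than Birch's bare threshold for $P$, is precisely what preserves enough room for the extra linear twist coming from the $\alpha$-dissection to still return power savings.
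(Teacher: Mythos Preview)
Your overall architecture---averaging in $L^2$ over $\a$, a circle method in $\alpha$, and interleaving Birch's method for $P$---matches the paper. But several specifics of your route diverge from the paper's, and at least one step is not right as stated.

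On the minor arcs, the paper does not evaluate $\sum_{\a:P(\a)=0} e(\a\cdot L)$ via Birch-with-twist for each fixed $L=\alpha\nu_{d,n}(\x)-\beta\nu_{d,n}(\y)$. Instead it inserts the orthogonality integral $\int_0^1 e(\beta P(\a))\,d\beta$, performs one step of Weyl differencing in $\a$, and then applies Cauchy--Schwarz over the difference variable $\h$ to \emph{separate} the bound into $G_1(\alpha_1,\alpha_2)^{1/4}\, G_2(\beta)^{1/4}$: here $G_2$ is a differenced Weyl sum for $P$ alone, handled by Birch-type estimates uniformly in everything else, while $G_1$ carries all the $\x,\y,\alpha_i$ dependence. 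A further Cauchy--Schwarz reduces $G_1$ to powers of the one-variable diagonal sum $T(\alpha)=\sum_{|b|\leq A}\bigl|\sum_{1\le x\leq X} e(\alpha b x^d)\bigr|^2$, and the saving comes from the Br\"udern--Dietmann mean value for $\int_0^1 T(\alpha)^t\,d\alpha$ combined with a pointwise Weyl bound on $T$ over $\mathfrak{m}(X^\delta)$; a separate pruning argument treats $\mathfrak{M}(X^\delta)\setminus\mathfrak{M}(w)$. Your assertion that ``the diagonal piece collapses essentially to $\int_{\mathfrak{m}}|F(\alpha;\a)|^2\,d\alpha$'' does not match either route and is where the real work hides: in the paper the controlling object is $T(\alpha)$, not $|F|^2$, and this reduction is precisely where the hypotheses $n_1>2d$ and $2X^d\le A\le X^{n_1-d}$ enter. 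Note also that the paper's major arcs sit at level $w=\log X$, not $X^\theta$.

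On the major arcs, the tail $\mathcal{E}_{\a}=\mathfrak{S}_{\a}^*-\sum_{q\le w}S_{\a}(q)$ is a sum over moduli $q\in(w,W]$ all of whose prime-power factors are at most $w$ (since $W$ is supported only on primes $p\le w$), not over primes $p>w$ as you wrote. The paper bounds $\sum_{\a}|\mathcal{E}_{\a}|^2$ via the same separation procedure applied to $\sum_{q\in\mathcal{C}} S_{\a}(q)$, not via variance bounds on $\sigma(\a;p^r)$; those variance estimates appear only in the proof of Theorem~1.2. The archimedean comparison $\mathfrak{J}_{\a}^*\leftrightarrow\mathfrak{J}_{\a}(w)$ is likewise handled in $L^2$ via separation, with the Fej\'er-type kernel $\mathfrak{w}_\zeta$ entering essentially as you indicate.
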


\begin{rmk}
    A slight modification of the argument at the end of Section 4 allows us to deal with the case $d= 3$. However, the resulting required number of variables may be disappointing compared to the known result on the Hasse principle for cubic forms.
\end{rmk}
\bigskip

Recall the definition $(\ref{1.21.2})$ of $N:=N_{d,n}$. In advance of the statement of the following theorem, we recall the definition of $\mathcal{A}^{\text{loc}}_{d,n}(A;P)$. 
\begin{te}\label{thm2.3}
Let $A$ and $X$ be positive numbers with $X^3\leq A.$ Suppose that $n$ and $d$ are natural numbers with $n>d+1$ and $d\geq 2$. Suppose that $P\in \Z[\x]$ is a non-singular form in $N_{d,n}$ variables of degree $k\geq 2.$ Then, whenever $N_{d,n}\geq 1000n^28^k$, one has
\begin{equation*}
    \#\left\{\a\in \mathcal{A}^{\text{loc}}_{d,n}(A;P)\middle|\ \begin{aligned}
        \mathfrak{S}^*_{\a}\mathfrak{J}_{\a}^*\leq X^{n-d}A^{-1}(\log A)^{-\eta}
    \end{aligned}\right\}\ll A^{N-k}\cdot(\log A)^{-\eta/(40n)},
\end{equation*}
for any $\eta>0.$
\end{te}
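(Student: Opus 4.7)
The plan is to separate the exceptional set according to whether the shortfall is caused by the singular series $\mathfrak{S}_{\a}^*$ or the singular integral factor. Setting $\mathfrak{J}_{\a} := A X^{d-n}\mathfrak{J}_{\a}^{*} = \int_{[0,1]^n}\zeta^{-1}\widehat{\mathfrak{w}}_{\zeta}(A^{-1}f_{\a}(\boldsymbol{\gamma}))\,d\boldsymbol{\gamma}$, the defining inequality becomes $\mathfrak{S}_{\a}^*\mathfrak{J}_{\a}\leq (\log A)^{-\eta}$. The union bound contains the exceptional set inside $B_S\cup B_J$, where $B_S=\{\a\in \mathfrak{A}(A;P):\mathfrak{S}_{\a}^*\leq (\log A)^{-\eta/2}\}$ and $B_J=\{\a\in \mathcal{A}^{\mathrm{loc}}_{d,n}(A;P):\mathfrak{J}_{\a}\leq (\log A)^{-\eta/2}\}$, so it suffices to bound each by $\ll A^{N-k}(\log A)^{-\eta/(40n)}$.

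To control $B_J$, I would exploit the real local solubility packaged into $\mathcal{A}^{\mathrm{loc}}_{d,n}(A;P)$: every such $\a$ admits a non-singular real zero $\x_0\neq\mathbf{0}$ of $f_{\a}$. Using the homogeneity of $f_{\a}$, one rescales $\x_0$ into a fixed compact neighborhood $K\subset(0,1)^n$, and a coarea/implicit-function computation near $\x_0$ produces
\[
\mathfrak{J}_{\a}\gg \int_{\{f_{\a}=0\}\cap K}\frac{A\,d\sigma(\x)}{\|\nabla f_{\a}(\x)\|}\gg 1,
\]
provided $\|\nabla f_{\a}(\x_0)\|\asymp A$. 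Those $\a$ for which this fails lie on a discriminant-type subvariety of $\{P=0\}$; being a proper algebraic subset, its integer points of height $\leq A$ are bounded via Birch's theorem by $\ll A^{N-k-\delta}$ for some $\delta>0$ under the hypothesis $N\geq 1000 n^{2}8^{k}$, which is amply stronger than what is required.

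To control $B_S$, I would estimate the second moment $\sum_{\a\in \mathfrak{A}(A;P)}(\mathfrak{S}_{\a}^*-1)^2$. Expanding $\mathfrak{S}_{\a}^*=\prod_{p^r\|W}\sigma(\a;p^r)$ via its defining counting function and swapping orders, the moment collapses to summing, over pairs of moduli $q_1,q_2\mid W$ and residue classes $\c_1,\c_2$, the number of $\a\in \mathfrak{A}(A;P)$ satisfying $\a\equiv\c_i\pmod{q_i}$. Birch's theorem applied uniformly in these congruence conditions furnishes a main term that cancels against the expected value on averaging, leaving an error summable across primes $p\leq w$. The net conclusion is $\sum_{\a}(\mathfrak{S}_{\a}^*-1)^2\ll A^{N-k}(\log A)^{-\delta'}$ for some $\delta'>0$, and Chebyshev's inequality then yields the required bound on $\#B_S$.

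The main obstacle is the uniform Birch-type counting as a function of the modulus in the singular series moment computation: a standard application produces error terms that grow polynomially in $q$, destroying the log-savings we need. Obtaining uniformity compatible with summation over primes $p\leq w=\log X$, likely through a Birch theorem refined by congruence conditions or a direct $p$-adic analysis of each $\sigma(\a;p^r)$ as an average in $\a$, is where the bulk of the technical work must lie, and where the threshold $N\geq 1000 n^{2}8^{k}$ becomes essential.
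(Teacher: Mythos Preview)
Your high-level split into $B_S$ and $B_J$ is exactly what the paper does: the proof of Theorem~\ref{thm2.3} in section~\ref{sec7} consists of precisely this union bound, fed by Proposition~\ref{prop6.1} (for $B_S$) and Proposition~\ref{prop6.11} (for $B_J$). But both of your sketched arguments for the two pieces have genuine gaps, and in each case the paper takes a different route.

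\medskip

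\textbf{The singular integral side.} Your dichotomy ``either $\|\nabla f_{\a}(\x_0)\|\asymp A$ or $\a$ lies on a discriminant-type subvariety'' is too coarse. If $\a$ is \emph{close} to the discriminant locus but not on it, then $f_{\a}$ has only non-singular real zeros, yet the gradient at every such zero may be $o(A)$; your coarea bound then gives $\mathfrak{J}_{\a}\gg c(\a)$ with $c(\a)\to 0$, not $\gg 1$. These $\a$ are not on a proper subvariety, so no algebraic codimension argument disposes of them. The paper handles this by stratifying $\mathcal{A}^{\mathrm{loc}}_{d,n}(A;P)$ (after passing to $C_P(A)$) into layers $B_N^{(\lambda)}$ according to the gradient size $\lambda\|\a\|$ at a zero, proving $\mathrm{mes}(B_N^{(\lambda)})\ll \lambda A^{-k}$ (Lemma~\ref{lem6.14}) and $\tau(\a;\gamma)\gg \lambda^{n-2}$ on each layer (Lemma~\ref{lem6.1414}), and then applying a Rankin trick with exponent $\kappa=1/(2n)$ to sum over dyadic $\lambda$. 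The genuinely degenerate boundary case is handled separately with a power saving (Lemma~\ref{lem6.13}), which is the piece your discriminant argument would capture.

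\medskip

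\textbf{The singular series side.} Your proposed global second moment $\sum_{\a\in\mathfrak{A}(A;P)}(\mathfrak{S}_{\a}^*-1)^2$ runs directly into the uniformity obstacle you flag, and the paper does not attempt it. Instead, the paper separates the arithmetic from the geometry by Cauchy--Schwarz: writing $\mathcal{S}(A)\leq A^{-N+k}\mathcal{S}_1^{1/2}\mathcal{S}_2^{1/2}$, where $\mathcal{S}_2$ counts pairs $\a_1,\a_2\in\mathfrak{A}(A;P)$ with $\a_1\equiv\a_2\pmod W$ and $\mathcal{S}_1$ counts residues $\b\pmod W$ with $P(\b)\equiv 0$ and $\sigma(\b;W)$ small. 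The pair count $\mathcal{S}_2$ is the auxiliary mean value of Theorem~\ref{thm3.1}, proved by a Weyl-differencing that simultaneously captures $P=0$ and the congruence. The residue count $\mathcal{S}_1$ is purely local: after a Rankin trick with $\kappa=1/(10n)$ and multiplicativity, each prime factor is split into $\sigma(\b;p^r)>1/2$ (controlled by the local variance Lemma~\ref{lem5.3}) and $\sigma(\b;p^r)\leq 1/2$ (stratified by the $p$-adic gradient valuation via the sets $A_v^{(e)}(p^r)$, Lemmas~\ref{lem6.8}--\ref{lem6.9}). This decoupling is what lets the paper avoid a Birch theorem uniform in the modulus; the congruence uniformity is isolated in Theorem~\ref{thm3.1}, where the exponential-sum machinery handles it directly.
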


\bigskip

One infers that the next theorem obviously implies that the proportion of integer vectors $\a\in \Z^N$ in $\mathfrak{A}(A;P)$, whose associated equations $f_{\a}(\x)=0$ satisfy the Hasse principle, converges to $1.$ We prove this theorem by making use of Theorem $\ref{thm2.2}$ and $\ref{thm2.3}$.

\begin{te}\label{thm1.3}
Let $A$ and $X$ be positive numbers.  Suppose that  $A,X,n,d$ and $k$ satisfy the hypotheses in Theorem $\ref{thm2.2}$ and $\ref{thm2.3}$. Then, the proportion of integer vectors $\a\in \mathcal{A}_{d,n}^{\text{loc}}(A;P)$ in $\mathfrak{A}(A;P)$, having the property that $$ \mathcal{I}_{\a}(X)<A^{-1}X^{n-d}(\log A)^{-1/5},$$ converges to $0$ as $A\rightarrow \infty.$
\end{te}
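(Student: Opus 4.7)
The plan is to derive Theorem \ref{thm1.3} as a direct combination of Theorems \ref{thm2.2} and \ref{thm2.3} by means of a Chebyshev-type dyadic argument. Write $\mathcal{B}(A)$ for the set of $\a\in\mathcal{A}^{\text{loc}}_{d,n}(A;P)$ satisfying $\mathcal{I}_{\a}(X)<A^{-1}X^{n-d}(\log A)^{-1/5}$. Under the present hypotheses, Birch's theorem applied to the non-singular form $P$ delivers $\#\mathfrak{A}(A;P)\asymp A^{N-k}$; the lower bound $N_{d,n}\geq 1000n^{2}8^{k}$ demanded in Theorem \ref{thm2.3} comfortably exceeds Birch's requirement $N>(k-1)2^{k-1}+\text{dim}\,W$ for the non-singular $P$. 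Hence it suffices to verify that $\#\mathcal{B}(A)=o(A^{N-k})$.

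Let $\delta\in(0,1)$ be the exponent furnished by Theorem \ref{thm2.2}, and fix any $\eta$ with $0<\eta<\min\{1/5,\,\delta/3\}$. I would partition $\mathcal{B}(A)=\mathcal{B}_{1}(A)\cup\mathcal{B}_{2}(A)$ according to whether the inequality $\mathfrak{S}^{*}_{\a}\mathfrak{J}^{*}_{\a}\leq A^{-1}X^{n-d}(\log A)^{-\eta}$ holds or fails. For the first piece, Theorem \ref{thm2.3} applied with this $\eta$ produces immediately
\begin{equation*}
\#\mathcal{B}_{1}(A)\ll A^{N-k}(\log A)^{-\eta/(40n)}.
\end{equation*}

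For the second piece, note that whenever $\a\in\mathcal{B}_{2}(A)$ and $A$ is sufficiently large, the inequality $\eta<1/5$ forces
\begin{equation*}
\mathfrak{S}^{*}_{\a}\mathfrak{J}^{*}_{\a}-\mathcal{I}_{\a}(X)> A^{-1}X^{n-d}\bigl((\log A)^{-\eta}-(\log A)^{-1/5}\bigr)\geq \tfrac{1}{2}A^{-1}X^{n-d}(\log A)^{-\eta}.
\end{equation*}
Squaring and invoking Theorem \ref{thm2.2} through Chebyshev's inequality then yields
\begin{equation*}
\#\mathcal{B}_{2}(A)\ll\frac{A^{N-k-2}X^{2n-2d}(\log A)^{-\delta}}{A^{-2}X^{2n-2d}(\log A)^{-2\eta}}\ll A^{N-k}(\log A)^{-(\delta-2\eta)}.
\end{equation*}
With $\eta<\delta/3$ the exponent $\delta-2\eta$ is strictly positive, so this contribution is also of size $o(A^{N-k})$.

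Combining the two estimates and dividing by $\#\mathfrak{A}(A;P)\asymp A^{N-k}$ will complete the proof. There is no genuine analytic obstacle here, since the two core inputs—the second moment estimate and the lower bound on $\mathfrak{S}^{*}_{\a}\mathfrak{J}^{*}_{\a}$—have already been established in Theorems \ref{thm2.2} and \ref{thm2.3}. The only care required is the bookkeeping needed to confirm that the compound hypothesis of Theorem \ref{thm1.3} simultaneously enforces the parameter constraints of both theorems, namely $2X^{d}\leq A\leq X^{n_{1}-d}$, $X^{3}\leq A$, $n_{1}>2d$, and the two lower bounds on $N_{d,n}$; these are all routinely verified under the standing assumptions $d\geq 14$ and $n\geq 32d+17$ advertised in the introduction.
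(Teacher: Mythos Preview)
Your proof is correct and follows essentially the same route as the paper: split the exceptional set according to whether $\mathfrak{S}^{*}_{\a}\mathfrak{J}^{*}_{\a}$ is small, handle one piece with Theorem~\ref{thm2.3}, and handle the other with Chebyshev's inequality applied to the second-moment bound of Theorem~\ref{thm2.2}. The only quibble is that your final paragraph about ``bookkeeping'' is unnecessary---the hypothesis of Theorem~\ref{thm1.3} already \emph{assumes} the conditions of Theorems~\ref{thm2.2} and~\ref{thm2.3}, so there is nothing to verify (you may be conflating this with Corollary~\ref{coro1.5}).
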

\begin{proof}
 Let $\a\in \mathcal{A}_{d,n}^{\text{loc}}(A;P)$. 
 Suppose that for $\delta>0$ obtained in Theorem $\ref{thm2.2}$ and for $A,X>0$ with $2X^{d}\leq A\leq X^{n_1-d}$, one has
    \begin{equation}\label{2.11}
        \left|\mathcal{I}_{\a}(X)-\mathfrak{S}^*_{\a}\mathfrak{J}_{\a}^*\right|\leq A^{-1}X^{n-d}\left(\log A\right)^{-\delta/4},
    \end{equation}
    and 
    \begin{equation}\label{2.12}
        \mathfrak{S}^*_{\a}\mathfrak{J}_{\a}^*> A^{-1}X^{n-d}(\log A)^{-\delta/5}.
    \end{equation}
    For sufficiently large $A>0$, it follows from $(\ref{2.11})$ and $(\ref{2.12})$ that 
    \begin{equation}\label{2.13}
    \begin{aligned}
        \mathcal{I}_{\a}(X)&\geq  \mathfrak{S}^*_{\a}\mathfrak{J}_{\a}^*-A^{-1}X^{n-d}(\log A)^{-\delta/4}\geq A^{-1}X^{n-d}(\log A)^{-1/5}.
    \end{aligned}
    \end{equation}
From the argument leading from $(\ref{2.11})$ to $(\ref{2.13})$, one finds that it suffices to estimate  the proportion of integer vectors $\a\in\mathcal{A}_{d,n}^{\text{loc}}(A;P) $ in $\mathfrak{A}(A;P)$ having the property that the inequality ($\ref{2.11}$) or $(\ref{2.12})$ fails. Define temporarily $\mathfrak{E}_1(A;P)$ to be the set of tuples $\a\in \mathcal{A}_{d,n}^{\text{loc}}(A;P)$ for which $(\ref{2.11})$ fails, and define $\mathfrak{E}_2(A;P)$ to be the set of tuples $\a\in \mathcal{A}_{d,n}^{\text{loc}}(A;P)$ for which $(\ref{2.12})$ fails.

First, we find by applying Theorem $\ref{thm2.2}$ that the number of integer vectors $\a\in \Z^N$ in $\mathfrak{A}(A;P)$, such that $(\ref{2.11})$ fails, is bounded above by $A^{N-k}(\log A)^{-\delta/2}.$ Thus, we have
    \begin{equation}\label{2.15}
        \#\mathfrak{E}_1(A;P)\ll A^{N-k}(\log A)^{-\delta/2}.
    \end{equation}

    Next, one finds by applying Theorem $\ref{thm2.3}$ that
    \begin{equation}\label{2.16}
       \#\mathfrak{E}_2(A;P)\ll A^{N-k}(\log A)^{-\delta/(200n)}.
    \end{equation}
    Therefore, the number of integer vectors $\a\in \Z^N$  in $\mathfrak{A}(A;P)$, such that when $\a\in\mathcal{A}_{d,n}^{\text{loc}}(A;P) $, the inequality ($\ref{2.11}$) or $(\ref{2.12})$ fails, is $O(A^{N-k}(\log A)^{-\eta})$ for some $\eta>0.$ 
   Meanwhile, note that since we have $N\geq 1000n^28^k$ and $P$ is a non-singular polynomial in $N$ variables having the property that $P(\a)=0$ has a nontrivial integer solution, it follows from $[\ref{ref8}]$ that for sufficiently large $A>0$ one has
    \begin{equation}\label{2.222}
        \#\{\a\in [-A,A]^N|\ P(\a)=0\} \sim cA^{N-k},
    \end{equation}
    for some $c>0$ depending on $P.$ Hence, we conclude from $(\ref{2.15})$, $(\ref{2.16})$ and $(\ref{2.222})$ that  the proportion of integer vectors $\a\in \mathcal{A}_{d,n}^{\text{loc}}(A;P)$ in $\mathfrak{A}(A;P)$, having the property that  the inequality ($\ref{2.11}$) or $(\ref{2.12})$ fails, is $O((\log A)^{-\eta})$ for some $\eta>0.$ By letting $A\rightarrow \infty,$ this completes the proof of Theorem $\ref{thm1.3}.$
\end{proof}

\begin{coro}\label{coro1.5}
Under the same restriction on $A$ and $X,$
the conclusions of Theorem $\ref{thm2.2}$, $\ref{thm2.3}$ and $\ref{thm1.3}$ hold for $d\geq 14,$ $k\leq d$ and $n\geq 32d+17$ in place of the hypotheses on $n,d$ and $k$.
\end{coro}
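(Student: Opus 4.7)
The proposal is essentially a numerical verification: I would show that the hypotheses $d\geq 14$, $k\leq d$, $n\geq 32d+17$ of the corollary imply every condition on $n$, $d$, $k$ and $N_{d,n}$ required by Theorems \ref{thm2.2} and \ref{thm2.3}, from which the conclusion of Theorem \ref{thm1.3} follows automatically. There is no new analytic content.

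The elementary conditions $d\geq 4$, $d\geq 2$, $n>d+1$ and $k\geq 2$ are immediate. For the constraint $n_1>2d$ appearing in Theorem \ref{thm2.2}, note that $n\geq 32d+17$ gives $\lfloor(n-1)/2\rfloor\geq 16d+8$, and hence $n_1\geq(16d+8)/8=2d+1>2d$.

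The main step, which is the principal point requiring care, is the verification of the two lower bounds on $N_{d,n}=\binom{n+d-1}{d}$. The basic tool is
\[
N_{d,n}=\prod_{i=1}^d\frac{n+d-i}{i}\geq\Bigl(\frac{n}{d}\Bigr)^d,
\]
obtained from the fact that the factor with $i=d$ is smallest. Since $n/d\geq 32$, this yields both $N_{d,n}\geq 32^d$ and $N_{d,n}\geq n^d/d^d$. The former handles Theorem \ref{thm2.2}: with $k\leq d$, the hypothesis $N_{d,n}\geq 200k(k-1)2^{k-1}$ reduces to $16^d\geq 100d(d-1)$, which holds comfortably for $d\geq 14$. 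For Theorem \ref{thm2.3} with $k=d$ (the worst case), the requirement $N_{d,n}\geq 1000n^{2}8^{d}$ reduces, via $N_{d,n}/n^2\geq n^{d-2}/d^d$, to $n^{d-2}\geq 1000(8d)^d$. The left side is increasing in $n$ when $d\geq 3$, so it suffices to check $n=32d+17$; this reduces to $2^{2d-10}\geq 1000\,d^2$, which already holds at $d=14$ (giving $262144\geq 196000$) and persists for all larger $d$ by the exponential growth of the left side against the polynomial right side. This is the tightest of the inequalities and is where one must be most careful about constants.

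Finally, the combined range of admissible $A,X$ required by both theorems is non-empty and consistent: since $n_1-d\geq d+1\geq 15$ and $d\geq 14\geq 3$, for $X$ sufficiently large the interval $2X^d\leq A\leq X^{n_1-d}$ is non-trivial and automatically respects $X^3\leq A$. The corollary is therefore a piece of quantitative book-keeping that instantiates the general hypotheses of Theorems \ref{thm2.2}--\ref{thm1.3} into a clean sufficient condition, and I do not anticipate any obstacle beyond tightening the elementary estimate on $2^{2d-10}$ versus $1000\,d^{2}$ at the threshold $d=14$.
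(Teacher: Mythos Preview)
Your proposal is correct and follows essentially the same route as the paper: both are elementary numerical verifications that the hypotheses $d\geq 14$, $k\leq d$, $n\geq 32d+17$ imply the conditions of Theorems \ref{thm2.2} and \ref{thm2.3}, using the standard lower bound $\binom{m}{d}\geq (m/d)^d$ on the binomial coefficient. The only cosmetic difference is that the paper applies this bound with $m=n+d-1$ (so that $(n+d-1)/d\geq 33$) and reduces the key step to $1000\cdot 8^d\leq 33^{d-2}/d^2$, whereas you use $m=n$ (so $n/d\geq 32$) and arrive at the equivalent threshold inequality $2^{2d-10}\geq 1000\,d^2$; both hold from $d=14$ onward.
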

    \begin{proof}
    It suffices to show that the conditions $d\geq 14,$ $k\leq d$ and $n\geq 32d+17$ imply the hypotheses on $n,d$ and $k$ in Theorem $\ref{thm2.2},$ $\ref{thm2.3}$ and $\ref{thm1.3}.$
   For $d\geq 14,$ a modicum of computation reveals that we have
    $$1000\cdot 8^{d}\leq \frac{33^{d-2}}{d^2}.$$
Then, we see that whenever $d\geq 14$ and $n\geq 32d+17$, we obtain
    $$1000\cdot 8^{d}\leq \frac{1}{d^2}\cdot \left(\frac{n+d-1}{d}\right)^{d-2}.$$
  Hence, it follows that whenever $k\leq d$ one has
    \begin{equation*}
        \begin{aligned}
            1000\cdot n^2\cdot 8^k\leq 1000\cdot 8^{d} \cdot(n+d-1)^2\leq \left(\frac{n+d-1}{d}\right)^d\leq \binom{n+d-1}{d}=N_{d,n}.
        \end{aligned}
    \end{equation*}
    Furthermore, it implies that $N_{d,n}\geq 200k(k-1)2^{k-1}.$
   Additionally, with the same notation in Theorem $\ref{thm2.2}$, we readily see that $n\geq 32d+17$ implies that $n_1>2d.$ 
    \end{proof}

   \begin{rmk}
     By our sequel joint work [$\ref{ref2000}$] with H.Lee and S.Lee, for $P\in \Z[\x]$ satisfying that there exists $\b\in \Z^N$ with $P(\b)=0$ such that $f_{\b}(\x)=0$ has a smooth integer point, the proportion of integer vectors $\a\in \mathcal{A}^{\text{loc}}_{d,n}(A;P)$ in $\mathfrak{A}(A;P)$ converges to a positive number as $A\rightarrow \infty.$ Hence, at least for those  $P\in \Z[\x],$ our main theorems are indeed non-trivial ones.
   \end{rmk}


\bigskip

We notice here that the required number of variables $n\geq 32d+17$ in Corollary $\ref{coro1.5}$ is more restrictive than that required in the work of Browning, Le Boudec, and Sawin $[\ref{ref3}]$. However, we rather note that the success of establishing Theorem $\ref{thm2.2}$, $\ref{thm2.3}$ and $\ref{thm1.3}$ illuminates the flexibility of the circle method in investigating the distribution of the set of $\a\in \Z^N$, whose associated equations $f_{\a}(\x)=0$ do satisfy the Hasse principle. We note that the condition $k\geq 2$ seems not to be removable in our argument. The case $k=1$ rather seems accessible to tools used in $[\ref{ref3}]$, which mainly make use of the geometry of numbers. 

We note that via the same argument described in this paper, one can readily achieve the same conclusion even if one replaces the condition $P(\a)=0$ with $P(\a)=m$ for any $m\in \Z$ in  $\mathbb{V}^P_{d,n}(A)$, under the assumption that $P(\a)=m$ has a nontrivial integer solution. Furthermore, in general, our hope is that techniques described here may be useful in exploring rational points in a variety in $\mathbb{P}^{m_1}\times\mathbb{P}^{m_2}$ with $m_1,m_2\in\Z,$ and confirming the Hasse principle for various classes of systems of homogeneous equations.

\bigskip

\addtocontents{toc}{\protect\setcounter{tocdepth}{0}}
\section*{Stucture of the paper and notation}
 In section $\ref{sec3}$, we record two auxiliary mean value estimates, which are Theorem $\ref{thm3.1}$ and Lemma $\ref{lem3.2}$. The former one is required in the proof of Proposition $\ref{prop6.1}$ that is used in proving Theorem $\ref{thm2.3}$, and the latter one is used in many places in this paper (proofs of Theorem $\ref{thm3.1}$, Proposition $\ref{pro4.3}$, Lemma $\ref{lem5.15.1}$ and Lemma $\ref{lemma5.2}$). Furthermore, in section $\ref{sec3}$, we record several previous results in order to make this paper self-contained. Utilizing minor arcs estimates from section $\ref{sec4}$, we prove Theorem $\ref{thm2.2}$ in section $\ref{sec5}$. By making use of major arcs estimates from section $\ref{sec6}$, we prove Theorem $\ref{thm2.3}$ in section $\ref{sec7}.$

In this paper, we use bold symbols to denote vectors, which we consider as row vectors. For a given vector $\boldsymbol{v}\in \R^N$, we write the $i$-th coordinate of $\boldsymbol{v}$ by $(\boldsymbol{v})_i$ or $v_i.$ 
We use $\langle\cdot,\cdot\rangle$ for the inner product, and use $\|\cdot\|$ for the Euclidean norm. We write $0\leq \x\leq X$ to abbreviate the condition $0\leq x_1,\ldots,x_s\leq X.$ We emphasize here that we preserve summation conditions until different conditions are specified. Additionally, for a prime $p$ and natural numbers $n$ and $h$, we use $p^h\| n$ when $p^h|n$ but $p^{h+1}\nmid n$. Throughout this paper, we use $\gg$ and $\ll$ to denote Vinogradov's well-known notation, and write $e(z)$ for $e^{2\pi iz}$. We use $A\asymp B$ when both $A\ll B$ and $A\gg B$ hold. We adopt the convention that whenever $\epsilon$ appears in a statement, then the statement holds for each $\epsilon>0$, with implicit constants depending on $\epsilon.$

\section*{acknowledgement}
The author acknowledges support from NSF grant DMS-2001549 under the supervision of Trevor Wooley. The author is grateful for support from Purdue University. The author would like to thank J\"org Br\"udern, Rainer Dietmann and Tim Browning for helpful discussions. The author also would like to thank Heejong Lee for helpful comments. Especially, the author would like to thank Trevor Wooley for his guidance on this research, and for his constant encouragement and much useful advice which has improved the exposition.

\bigskip

\addtocontents{toc}{\protect\setcounter{tocdepth}{2}}

\bigskip

\section{Preliminary manoeuvre}\label{sec3}
Throughout this paper, we write
\begin{equation}\label{3.13.13.1}
P(\a)=\dsum_{1\leq j_1,\ldots,j_{k}\leq N}p_{j_1\cdots j_k}a_{j_1}a_{j_2}\cdots a_{j_{k}},
\end{equation}
with $p_{j_1\cdots j_k}\in \Z$ symmetric in the suffixes $j_1,\ldots,j_k.$
In order to describe the argument in sections $\ref{sec3}$ and $\ref{sec4}$, it is convenient to define differencing operators $\Delta_1$ by
\begin{equation*}
    \Delta_1(P(\x);\h)=P(\x+\h)-P(\x),
\end{equation*}
and so we define $\Delta_j$ for $j\geq 2$ recursively by means of the relations
\begin{equation}\label{3.3}
    \Delta_j(P(\x);\h_1,\ldots,\h_{j})=\Delta_1(\Delta_{j-1}(P(\x);\h_1,\ldots,\h_{j-1});\h_j).
\end{equation}
Furthermore, we define
\begin{equation}\label{3.23.23.2}
\begin{aligned}
\psi_j&:=\psi_j(\x^{(1)},\x^{(2)},\ldots,\x^{(k-1)};P)\\
&=k!\dsum_{1\leq j_1,\ldots,j_{k-1}\leq N}p_{j_1\cdots j_{k-1}j}(\x^{(1)})_{j_1}\cdots(\x^{(k-1)})_{j_{k-1}}.
\end{aligned}
\end{equation}

\subsection{Previous results}\label{sec3.1}
In order to make this paper self-contained, we record several previous results in this section.

\bigskip
We denote $\|z\|=\min_{n\in \Z}|z-n|$, for the following lemma.
\begin{lem}\label{lem3.1}$([\ref{ref17}$, $Lemma$ 3.3$]$,\ $[\ref{ref8}$, \textrm{Lemma} 2.3$])$
Let $L_1,\ldots, L_n$ be linear forms, defined by
$$L_i=\gamma_{i1}u_1+\cdots+\gamma_{in}u_n,\ (1\leq i\leq n),$$
satisfying the symmetry condition $\gamma_{ij}=\gamma_{ji}.$ Let $a>1$ be real, and let $N(Z)$ denote the number of sets of integers $u_1,\ldots,u_{n}$ satisfying 
\begin{equation*}
\begin{aligned}
\vert u_i\vert<aZ\ (i=1,\ldots,n),\ \|L_1\|<a^{-1}Z,\ldots,\|L_n\|<a^{-1}Z.
\end{aligned}
\end{equation*}
Then, if $0<Z_1\leq Z_2\leq 1,$ we have
$$\frac{N(Z_2)}{N(Z_1)}\ll \left(\frac{Z_2}{Z_1}\right)^n.$$
\end{lem}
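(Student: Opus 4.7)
The plan is to interpret $N(Z)$ as counting lattice points in a symmetric box and apply the theory of successive minima, with the hypothesis $\gamma_{ij}=\gamma_{ji}$ providing the crucial reduction from the naive exponent $2n$ down to $n$.

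First, I would encode the conditions as lattice-point counting in $\mathbb{R}^{2n}$. Writing $\Gamma=(\gamma_{ij})$, the inequality $\|L_i(\mathbf{u})\|<a^{-1}Z$ means that there exists $v_i\in\mathbb{Z}$ with $|L_i(\mathbf{u})-v_i|<a^{-1}Z$, so $N(Z)$ equals, up to a bounded factor, the number of points of the unimodular lattice
$$\Lambda=\bigl\{(\mathbf{u},\Gamma\mathbf{u}-\mathbf{v}):\mathbf{u},\mathbf{v}\in\mathbb{Z}^n\bigr\}\subset\mathbb{R}^{2n}$$
lying in the symmetric box $B(Z)=[-aZ,aZ]^n\times[-a^{-1}Z,a^{-1}Z]^n$. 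Since $B(Z)=Z\cdot B(1)$, $\mathrm{vol}(B(1))=2^{2n}$, and $\det\Lambda=1$, Minkowski's second theorem supplies successive minima $\lambda_1\leq\cdots\leq\lambda_{2n}$ of $\Lambda$ with respect to $B(1)$ satisfying $\prod_i\lambda_i\asymp 1$, and the standard successive-minima counting estimate gives $N(Z)\asymp\prod_{i=1}^{2n}\max(1,Z/\lambda_i)$. The ratio $N(Z_2)/N(Z_1)$ then decomposes into a product of $2n$ factors, each lying in $[1,Z_2/Z_1]$ and equal to $1$ whenever $\lambda_i\geq Z_2$, so it would suffice to show that at most $n$ of the minima can lie below $Z_2$.

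The crux, where the symmetry condition enters, is to show that there is a constant $c_n>0$ depending only on $n$ such that at most $n$ successive minima satisfy $\lambda_i<c_n$. Let $(\mathbf{u}^{(k)},\mathbf{y}^{(k)})$, with $\mathbf{y}^{(k)}=\Gamma\mathbf{u}^{(k)}-\mathbf{v}^{(k)}$ and $\mathbf{v}^{(k)}\in\mathbb{Z}^n$, be linearly independent lattice vectors realising the successive minima. A direct computation using $\Gamma^{T}=\Gamma$ shows that the symplectic pairing
$$\omega\bigl((\mathbf{u}^{(i)},\mathbf{y}^{(i)}),(\mathbf{u}^{(j)},\mathbf{y}^{(j)})\bigr)=\mathbf{u}^{(i)}\!\cdot\mathbf{y}^{(j)}-\mathbf{u}^{(j)}\!\cdot\mathbf{y}^{(i)}=-\bigl(\mathbf{u}^{(i)}\!\cdot\mathbf{v}^{(j)}-\mathbf{u}^{(j)}\!\cdot\mathbf{v}^{(i)}\bigr)$$
takes integer values, the quadratic terms $(\mathbf{u}^{(i)})^{T}\Gamma\mathbf{u}^{(j)}$ cancelling by symmetry. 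On the other hand, the coordinate-wise bounds $|\mathbf{u}^{(k)}|_{\infty}<a\lambda_k$ and $|\mathbf{y}^{(k)}|_{\infty}<a^{-1}\lambda_k$ yield $|\omega|\leq 2n\lambda_i\lambda_j$; choosing $c_n$ with $2nc_n^{2}<1$ therefore forces $\omega$ to vanish whenever $\lambda_i,\lambda_j\leq c_n$, and the corresponding lattice vectors span a subspace of $\mathbb{R}^{2n}$ that is isotropic for the standard symplectic form, which can have dimension at most $n$.

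Combining, for $Z_2\leq c_n$ the bound $(Z_2/Z_1)^n$ follows at once, and for $c_n<Z_2\leq 1$ one reduces to the previous case by replacing $Z_2$ with $c_n$ and absorbing the bounded quotient $N(Z_2)/N(c_n)$ into the implicit constant, which is permitted to depend on $n$ and $a$. I expect the isotropic-subspace step to be the main obstacle, since it is the only place $\gamma_{ij}=\gamma_{ji}$ is genuinely exploited and requires a delicate quantitative matching between the size of the minima and the integrality of the symplectic pairing to force vanishing.
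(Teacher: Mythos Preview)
The paper does not supply a proof of this lemma; it is recorded in Section~\ref{sec3.1} as a quotation of a classical result, with attribution to Davenport and Birch, and is used as a black box. Your sketch is essentially the original Davenport argument: encode $N(Z)$ as a lattice-point count for a unimodular lattice in $\mathbb{R}^{2n}$, invoke successive minima to write $N(Z)\asymp\prod_{i=1}^{2n}\max(1,Z/\lambda_i)$, and use the symmetry $\gamma_{ij}=\gamma_{ji}$ to force the symplectic pairing on short vectors to vanish, thereby confining the small minima to an isotropic subspace of dimension at most $n$. The reduction from $Z_2>c_n$ to $Z_2\le c_n$ is handled correctly, and the resulting implicit constant depends only on $n$ (your parenthetical allowing dependence on $a$ is unnecessary, since the factors of $a$ and $a^{-1}$ cancel in the bound $|\omega|<2n\lambda_i\lambda_j$). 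So your proposal is sound and matches the standard proof that the paper is citing.
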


\bigskip

\bigskip

\begin{lem}$([\ref{ref2},\ $Lemma 4.3$])$
Let $\alpha\in \R$, $d\in \N$ and let $A,X\geq 1$. Suppose that $q\in \N$ and $a\in \Z$ are coprime with $\vert\alpha-a/q\vert\leq q^{-2}.$ Then, one has
$$\dsum_{-A\leq b\leq A}\biggl\vert\dsum_{1\leq x\leq X}e(\alpha bx^d)\biggr\vert^{2^{d-1}}\ll AX^{2^{d-1}}\bigl(q^{-1}+X^{-1}+q(AX^d)^{-1}\bigr)(AXq)^{\epsilon}.$$
\end{lem}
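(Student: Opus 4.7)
The inequality is a standard Weyl-type mean value in which one performs Weyl differencing on the inner variable $x$ and then exploits the averaging over the linear coefficient $b$. I would begin by writing $S(\gamma)=\sum_{1\le x\le X}e(\gamma x^{d})$ and applying the Weyl differencing lemma $d-1$ times to $|S(\alpha b)|^{2^{d-1}}$. This produces an inequality of the shape
\[
|S(\alpha b)|^{2^{d-1}}\ll X^{2^{d-1}-d}\dsum_{|h_{1}|,\ldots,|h_{d-1}|\le X}\biggl|\dsum_{x\in I(\h)}e\bigl(\alpha b\, d!\, h_{1}\cdots h_{d-1}\,x\bigr)\biggr|,
\]
where $I(\h)\subseteq[1,X]$ is an interval depending on $h_{1},\ldots,h_{d-1}$. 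The point of this step is that, after $d-1$ differencings, the argument of the exponential becomes \emph{linear} in $x$.

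Next I would sum over $-A\le b\le A$ and push that summation inside. Writing $m=d!\,h_{1}\cdots h_{d-1}\,x$, the innermost geometric sum over $x$ (or equivalently, after grouping, over $b$) can be bounded by $\min(A,\|\alpha m\|^{-1})$ via the standard geometric-series estimate. Thus, up to the factor $X^{2^{d-1}-d}$, the task reduces to estimating
\[
\dsum_{|h_{1}|,\ldots,|h_{d-1}|\le X}\dsum_{x\le X}\min\!\bigl(A,\|\alpha\, d!\,h_{1}\cdots h_{d-1}\,x\|^{-1}\bigr).
\]
Parametrising by $m=d!\,h_{1}\cdots h_{d-1}\,x$ with $|m|\le d!\,X^{d}$ and recording that the number of representations of a given $m$ in this form is $O_{d,\epsilon}(|m|^{\epsilon})\ll X^{\epsilon}$, we arrive at
\[
\ll X^{d-1+\epsilon}\dsum_{|m|\le d!X^{d}}\min\!\bigl(A,\|\alpha m\|^{-1}\bigr).
\]

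At this stage I would invoke the classical divisor-counting/Weyl sum lemma: for a real $\alpha$ with $|\alpha-a/q|\le q^{-2}$ and $(a,q)=1$, and any $M\ge 1$,
\[
\dsum_{|m|\le M}\min\!\bigl(A,\|\alpha m\|^{-1}\bigr)\ll (Mq^{-1}+1)(A+q\log q),
\]
proved by splitting the range of $m$ into blocks of length $q$ and bounding the contribution of each block by $A+q\log q$. Applying this with $M=d!X^{d}$ yields the bound $(X^{d}q^{-1}+1)(A+q)(qX)^{\epsilon}$. Multiplying through by $X^{2^{d-1}-d}\cdot X^{d-1+\epsilon}$ and expanding the product $(X^{d}q^{-1}+1)(A+q)=AX^{d}q^{-1}+X^{d}+qAq^{-1}+q$ then absorbing powers of $X$ and $A$ gives
\[
\ll AX^{2^{d-1}}\bigl(q^{-1}+X^{-1}+q(AX^{d})^{-1}\bigr)(AXq)^{\epsilon},
\]
as required.

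The mildly technical step is keeping the book-keeping of the exponents tight: one has to absorb the $X^{-1}$ factor produced by the $(M/q+1)$ term without losing a power of $X$, and the $(AXq)^{\epsilon}$ factor ultimately comes from the divisor-type count for representations of $m$ as $d!\,h_{1}\cdots h_{d-1}\,x$ and from the $\log q$ appearing in the block estimate. The main obstacle is therefore not a deep one, but rather the careful bookkeeping of the three competing terms that come out of the block decomposition; otherwise the proof is a textbook application of Weyl differencing combined with the standard sum over $\min(A,\|\alpha m\|^{-1})$.
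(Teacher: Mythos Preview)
The paper does not prove this lemma; it is quoted from Br\"udern--Dietmann as a known input. Your overall architecture (Weyl-difference $d-1$ times, collapse the product via a divisor bound, then apply the block estimate for $\sum_m\min(\cdot,\|\alpha m\|^{-1})$) is the standard one and is presumably how the original argument runs. The execution, however, contains a real slip.

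After the $d-1$ differencings the absolute value encloses the sum over $x$, while the sum over $b$ sits outside it. The only geometric series you may legitimately evaluate is therefore the one in $x$, which yields $\min(X,\|\alpha b\,d!\,h_1\cdots h_{d-1}\|^{-1})$; you cannot move the absolute value to make the $b$-sum geometric, so your bound $\min(A,\|\alpha m\|^{-1})$ with $m=d!\,h_1\cdots h_{d-1}\,x$ is not available. The correct grouping variable is $m=b\cdot d!\,h_1\cdots h_{d-1}$ with $|m|\ll AX^{d-1}$, and the block lemma is then applied with $\min(X,\cdot)$. This gives $X^{2^{d-1}-d}(AX)^{\epsilon}(AX^{d-1}q^{-1}+1)(X+q\log q)$, which expands to the stated bound once one checks (via AM--GM, for $d\ge2$) that the cross-term $A^{-1}X^{1-d}$ is dominated by $q^{-1}+q(AX^{d})^{-1}$. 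Separately, the factor $X^{d-1+\epsilon}$ you insert before the $m$-sum has no source: the divisor bound for the representations of $m$ costs only $(AX)^{\epsilon}$, so with that spurious factor present your final estimate would be too weak by a full factor of $X^{d-1}$.
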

 Furthermore, this lemma in conjunction with H\"older's inequality yields that
\begin{equation}\label{3.4}
    \dsum_{-A\leq b\leq A}\biggl\vert\dsum_{1\leq x\leq X}e(\alpha bx^d)\biggr|^{2}\ll A^{1+\epsilon}X^{2+\epsilon}\bigl(q^{-1}+X^{-1}+q(AX^d)^{-1}\bigr)^{2^{2-d}}
\end{equation}

\bigskip

\bigskip

Let $U_t(A,B)$ denote the number of solutions of 
$$\dsum_{j=1}^t a_j(x_j^k-y_j^k)=0$$
in integers $a_j,x_j,y_j$ satisfying 
$0<\vert a_j\vert\leq A,\ \vert x_j\vert\leq B,\ \vert y_j\vert\leq B.$
\begin{lem}\label{lem3.23.2}$([\ref{ref2025}$, $\text{Theorem 2.5}])$
Let $t\geq 2.$ Then, for real numbers $A$ and $B$ with $A\geq 2B^k\geq 1$, one has
$$U_t(A,B)\ll (AB)^t+A^{t-1}B^{2t-k+\epsilon}.$$
\end{lem}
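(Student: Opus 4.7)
The plan is to decompose $U_t(A,B)$ according to whether the underlying equation is satisfied trivially in each coordinate. Write $m_j = x_j^k - y_j^k$. The \emph{diagonal} contribution, where $m_j = 0$ for every $j$, forces $x_j = \pm y_j$; this gives $O(B)$ choices per coordinate while leaving each $a_j$ free in $[-A,A]\setminus\{0\}$, producing a contribution of size $O((AB)^t)$ that supplies the first term of the claimed bound.

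For the \emph{off-diagonal} contribution, at least one $m_j$ is non-zero, and since every $a_j$ is non-zero, at least two of the $m_j$ must in fact be non-zero. I would isolate a single coordinate, say $j=1$, and condition on the value $c = -\sum_{j=2}^{t} a_j m_j$. Defining
\[
T(c) = \#\bigl\{(a,x,y) : a(x^k - y^k) = c,\ 0 < |a| \leq A,\ |x|,|y| \leq B\bigr\},
\]
one has $T(0) \asymp AB$, while for $c \neq 0$ the divisor bound combined with the uniform estimate
\[
r_k(m) := \#\bigl\{(x,y) : x^k - y^k = m,\ |x|,|y|\leq B\bigr\} \ll_\epsilon |m|^\epsilon \quad (m\neq 0),
\]
which follows from the factorisation $x^k - y^k = (x-y)(x^{k-1}+\cdots+y^{k-1})$, gives $T(c) \ll (AB)^\epsilon$. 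Writing $V_{t-1}(c)$ for the number of $(a_2,\ldots,a_t,x_2,y_2,\ldots,x_t,y_t)$ that produce this value of $c$, I get the identity
\[
U_t(A,B) = T(0)\, U_{t-1}(A,B) \;+\; \sum_{c \neq 0} T(c)\, V_{t-1}(c),
\]
whose first term fuels an induction on $t$.

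The main obstacle is extracting the power saving of $B^{k-2}$ that separates the sharp off-diagonal bound $A^{t-1}B^{2t-k+\epsilon}$ from the crude $A^{t-1}B^{2t-2+\epsilon}$ that one reaches by applying $T(c) \ll (AB)^\epsilon$ uniformly for $c\neq 0$. I would extract this saving by handling the base case $t=2$ directly: a solution of $a_1 m_1 = -a_2 m_2$ with all four quantities non-zero can be parameterised via $g=\gcd(a_1,a_2)$ and $a_j = g a_j'$ with $\gcd(a_1',a_2')=1$, which forces $m_1 = a_2' n$ and $m_2 = -a_1' n$ for some $n \in \mathbb{Z}\setminus\{0\}$. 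The constraint $|m_j| \leq 2B^k$ bounds $|n|$ by $2B^k/\max(|a_1'|,|a_2'|)$, and summing the $r_k$-weights against this range, crucially using the hypothesis $A \geq 2B^k$ to ensure that the parameters $a_j'$ vary in a productive window, yields the sharp bound for $t=2$. Feeding this back through the inductive loop coming from the $T(0) U_{t-1}$ term, and combining with the diagonal contribution, then delivers $U_t(A,B) \ll (AB)^t + A^{t-1} B^{2t-k+\epsilon}$ for every $t \geq 2$.
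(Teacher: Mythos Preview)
The paper does not prove this lemma; it is quoted verbatim from Br\"udern--Dietmann [\ref{ref2}, Theorem~2.5] and used as a black box. So there is no proof in the paper to compare against, and the question is simply whether your argument stands on its own.

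It does not, and the gap is in the inductive step. Your recursion reads
\[
U_t(A,B) \;=\; T(0)\,U_{t-1}(A,B) \;+\; \sum_{c\neq 0} T(c)\,V_{t-1}(c),
\]
and you control the second sum only through the pointwise bound $T(c)\ll (AB)^\epsilon$ together with the trivial estimate $\sum_c V_{t-1}(c)\ll A^{t-1}B^{2(t-1)}$. This yields an error of size $A^{t-1}B^{2t-2+\epsilon}$ at step $t$, and that error is \emph{not} touched by whatever savings you extract at $t=2$: the term $T(0)U_{t-1}$ propagates the inductive hypothesis, but the off-diagonal error is generated afresh at every stage. Comparing $A^{t-1}B^{2t-2}$ with the target $(AB)^t+A^{t-1}B^{2t-k}$, one needs $B^{t-2}\ll A$; under the hypothesis $A\geq 2B^k$ this holds only for $t\leq k+2$. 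For $t\geq k+3$ (and $k\geq 3$) your bound genuinely exceeds both terms of the claimed estimate, so the induction does not close. In the paper's application (Lemma~\ref{lem4.6}) one takes $t=n_1>2d$ with $k=d$, so precisely the regime $t>2k$ is needed.

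A secondary issue: your $t=2$ analysis via the parametrisation $m_1=a_2'n$, $m_2=-a_1'n$ does not in fact produce the off-diagonal bound $AB^{4-k+\epsilon}$ you claim. Summing $r_k(a_2'n)r_k(-a_1'n)\ll B^\epsilon$ over the stated ranges of $g,a_1',a_2',n$ gives only $AB^{k+\epsilon}$, since $\sum_{|a_1'|,|a_2'|\leq M}1/\max(|a_1'|,|a_2'|)\asymp M$ and the relevant $M$ is $\min(A/g,2B^k)$. This is still absorbed by $(AB)^2$, so $U_2\ll(AB)^2$ survives---but the ``sharp'' second term is not there, and in any case would not rescue the induction. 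The proof in [\ref{ref2}] proceeds instead via the integral representation $U_t=\int_0^1 T(\alpha)^t\,d\alpha$ and a major/minor arc dissection, which is where the genuine $B^{k}$-saving enters uniformly in $t$.
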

We note that even if one replaces the condition $0<\vert a_j\vert\leq A$ by $0\leq \vert a_j\vert\leq A$ in $U_t(A,B)$, one readily obtains
\begin{equation}\label{3.5}
U_t(A,B)\ll (AB)^t+A^{t-1}B^{2t-k+\epsilon},
\end{equation}
under the same conditions as imposed in Lemma $\ref{lem3.23.2}$.

\bigskip

\bigskip

For here and throughout this paper, we define
\begin{equation*}
    \mathcal{R}_m(Q)=\{\b\in (\Z/Q\Z)^m\vert\ \text{gcd}(Q,\b)=1\},
\end{equation*}
and define $v_{p^r}(\boldsymbol{v})$ with $\boldsymbol{v}\in (\Z/p^r\Z)^{n}$ as the largest integer $s\in \{0,\ldots,r\}$ such that we have $\boldsymbol{v}\equiv \0\ \text{mod}\ p^s.$ Furthermore, in order to describe Lemma $\ref{lem3.4}$, for $e\in \{0,\ldots,r\},$ we temporarily define the set $\mathcal{R}^{(e)}_{N}(p^r)$ to be the set of $\a\in \mathcal{R}_N(p^r)$ such that there exists $\x\in \mathcal{R}_{n}(p^r)$ having the property that $f_{\a}(\x)\equiv 0\ \text{mod}\ p^r$ and $v_{p^r}(\nabla f_{\a}(\x))=e.$
\begin{lem}\label{lem3.4}$([\ref{ref3},\text{Lemma 5.7}])$
Let $d\geq 2$ and $n\geq 4.$ Let also $p$ be a prime number and $r\geq 1.$ For $e\in \{0,\ldots,r\}$ and $\a\in \mathcal{R}^{(e)}_{N}(p^r),$ we have
\begin{equation*}
      \frac{1}{p^{rn-r}}\cdot \#\{\g\in \mathcal{R}_n(p^{r})\vert\ f_{\a}(\g)\equiv 0\ \text{mod}\ p^{r}\}\geq \frac{1}{p^{(e+1)(n-1)}}.
\end{equation*}
\end{lem}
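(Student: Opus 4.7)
The plan is to exhibit a large family of solutions $\g\in \mathcal{R}_n(p^r)$ by perturbing the given point $\x$ in a controlled way, with the perturbation scale dictated by the valuation $e$ of the gradient. The argument is a multivariate Hensel lift along the gradient direction of smallest valuation.

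First I would dispose of the trivial case $e=r$: then the bound reduces to $\#\{\g\}\geq p^{-(n-1)}$, which holds automatically because $\x$ itself lies in the set counted. So assume $e<r$. By the definition of $v_{p^r}$, there is an index which, after relabelling, we may take to be $n$, such that $\partial_n f_{\a}(\x)=p^e w_n$ with $w_n$ a unit mod $p$, and $\partial_j f_{\a}(\x)=p^e w_j\in p^e\Z$ for every $j$.

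Next, I would consider only perturbations $\g=\x+p^{e+1}\y$ with $\y\in\{0,1,\ldots,p^{r-e-1}-1\}^n$; these parameterize the fiber of the reduction $(\Z/p^r\Z)^n\to(\Z/p^{e+1}\Z)^n$ through $\x$, and each such $\g$ satisfies $\g\equiv\x\pmod p$, hence $\gcd(\g,p)=1$, so $\g\in\mathcal{R}_n(p^r)$, and distinct $\y$'s give distinct $\g$'s. Writing the Taylor expansion
\[
f_{\a}(\x+p^{e+1}\y)=f_{\a}(\x)+p^{e+1}\langle\nabla f_{\a}(\x),\y\rangle+\sum_{|\boldsymbol{k}|\geq 2}\frac{p^{(e+1)|\boldsymbol{k}|}}{\boldsymbol{k}!}\partial^{\boldsymbol{k}}f_{\a}(\x)\y^{\boldsymbol{k}},
\]
the zeroth-order term is divisible by $p^r$, the first-order term by $p^{2e+1}$, and the remaining terms by $p^{2e+2}$. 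When $r\leq 2e+1$, every one of the $p^{(r-e-1)n}$ perturbations automatically gives $f_{\a}(\g)\equiv 0\pmod{p^r}$, and $(r-e-1)n\geq(n-1)(r-e-1)$, so the claim follows.

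The main step is the case $r\geq 2e+2$. Here I would divide through by $p^{2e+1}$ to define $F(\y):=p^{-(2e+1)}f_{\a}(\x+p^{e+1}\y)\in\Z$, so that
\[
F(\y)\equiv c_0(y_1,\ldots,y_{n-1})+w_n y_n\pmod p,
\]
with $c_0$ independent of $y_n$ and $w_n$ a unit mod $p$. The requirement $f_{\a}(\g)\equiv 0\pmod{p^r}$ becomes $F(\y)\equiv 0\pmod{p^{r-2e-1}}$. For each fixed $(y_1,\ldots,y_{n-1})\in\{0,\ldots,p^{r-e-1}-1\}^{n-1}$, I would apply Hensel's lemma to $F$ viewed as a univariate polynomial in $y_n$: since $\partial F/\partial y_n\equiv w_n\pmod p$ is a unit, there is a unique solution $y_n\in\Z/p^{r-2e-1}\Z$, hence exactly $p^{e}$ valid representatives of $y_n$ in $\{0,\ldots,p^{r-e-1}-1\}$. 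This produces $p^{(n-1)(r-e-1)+e}\geq p^{(n-1)(r-e-1)}$ valid $\g$'s, and dividing by $p^{rn-r}$ gives the stated bound $p^{-(e+1)(n-1)}$.

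The main obstacle I anticipate is bookkeeping the Taylor expansion cleanly so that $F$ is manifestly an integer-valued polynomial with the correct derivative mod $p$; once the distinguished coordinate is pinned down and the scale $p^{e+1}$ is chosen, the Hensel step in the single variable $y_n$ proceeds uniformly in the remaining free variables.
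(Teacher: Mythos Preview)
Your argument is correct. The paper does not supply its own proof of this lemma but simply records it as a citation to Lemma~5.7 of~[\ref{ref3}]; the Hensel-lifting argument you give---perturbing by $p^{e+1}\y$ and solving for $y_n$ via the unit derivative $\partial_{y_n}F\equiv w_n\pmod p$---is the standard route and is essentially the argument in the cited reference.
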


\begin{lem}\label{lem3.5}$([\ref{ref3},\text{Proof of Lemma 5.6}])$
For $\g\in \Z^n$ with $p\nmid \g$ and for $e\in \{0,\ldots, r\}$, we have 
\begin{equation*}
    \#\left\{\a\in \mathcal{R}_N(p^r)\middle\vert\ \begin{aligned}
        (f_{\a}(\g),\nabla f_{\a}(\g))\equiv \boldsymbol{0}\ \text{mod}\ p^e
    \end{aligned}\right\}\leq p^{r(N-n)+(r-e)n}.
\end{equation*}
\end{lem}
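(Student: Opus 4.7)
The plan is to exploit the fact that the map $\a\mapsto(f_{\a}(\g),\nabla f_{\a}(\g))$ is $\Z$-linear in $\a$ and then to exhibit an explicit $n\times n$ submatrix of its coefficient matrix whose determinant is a unit modulo $p$. First, since $p\nmid\g$, after relabeling the coordinates of $\x$ I may assume that $p\nmid g_{1}$. Among the $N$ coordinates of $\a$, I single out the $n$ coordinates corresponding to the monomials $x_{1}^{d},x_{1}^{d-1}x_{2},\ldots,x_{1}^{d-1}x_{n}$, call them $b_{1},\ldots,b_{n}$, and collect the remaining $N-n$ coordinates into a vector $\a'$. Accordingly,
\begin{equation*}
   f_{\a}(\x)=b_{1}x_{1}^{d}+\sum_{j=2}^{n}b_{j}x_{1}^{d-1}x_{j}+F(\a',\x),
\end{equation*}
where $F(\a',\x)$ involves only monomials other than the $n$ distinguished ones.

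Next I consider the $n$ linear-in-$\a$ forms $f_{\a}(\g)$ and $\partial_{k}f_{\a}(\g)$ for $k=2,\ldots,n$, deliberately discarding $\partial_{1}f_{\a}(\g)$ because the coefficient of $b_{1}$ there carries the factor $d$, which could vanish modulo $p$. A direct substitution yields
\begin{equation*}
   f_{\a}(\g)=g_{1}^{d}b_{1}+g_{1}^{d-1}\sum_{j=2}^{n}b_{j}g_{j}+F(\a',\g),
\end{equation*}
\begin{equation*}
   \partial_{k}f_{\a}(\g)=g_{1}^{d-1}b_{k}+\partial_{k}F(\a',\g)\qquad(2\leq k\leq n).
\end{equation*}
Viewed as an $n\times n$ system in $(b_{1},b_{2},\ldots,b_{n})$ with $\a'$ regarded as a parameter, the coefficient matrix is upper triangular with diagonal entries $g_{1}^{d},g_{1}^{d-1},\ldots,g_{1}^{d-1}$, so its determinant equals $g_{1}^{d+(d-1)(n-1)}$, which is a unit modulo $p$ since $p\nmid g_{1}$.

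I then turn this into the counting estimate. Fix $\a'\in(\Z/p^{r}\Z)^{N-n}$. The conditions $f_{\a}(\g)\equiv 0$ and $\partial_{k}f_{\a}(\g)\equiv 0\pmod{p^{e}}$ for $k=2,\ldots,n$ constitute an $n\times n$ linear system in $(b_{1},\ldots,b_{n})$ whose coefficient matrix is invertible modulo $p^{e}$. Hence the system pins down $(b_{1},\ldots,b_{n})$ to a unique residue class modulo $p^{e}$, leaving exactly $p^{(r-e)n}$ admissible lifts in $(\Z/p^{r}\Z)^{n}$. Summing over the $p^{r(N-n)}$ possible choices of $\a'$ produces the upper bound $p^{r(N-n)+(r-e)n}$ on the number of $\a\in(\Z/p^{r}\Z)^{N}$ satisfying these $n$ conditions, which in turn bounds the count in the statement from above since I have dropped both the further condition $\partial_{1}f_{\a}(\g)\equiv 0\pmod{p^{e}}$ and the primitivity restriction $\gcd(p,\a)=1$.

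No serious obstacle appears. The only mild subtlety is the factor $d$ in $\partial_{1}f_{\a}$, which is sidestepped by retaining $f_{\a}(\g)$ itself among the $n$ chosen forms in place of $\partial_{1}f_{\a}(\g)$; this keeps the argument characteristic-free and uniform in $p$, and is the one spot where a naive approach could fail for primes dividing $d$.
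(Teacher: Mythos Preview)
Your proof is correct. The paper does not give its own proof of this lemma; it is quoted verbatim from the proof of Lemma~5.6 in Browning--Le~Boudec--Sawin [3], and your argument is essentially the same linear-algebra computation carried out there: one observes that $\a\mapsto(f_{\a}(\g),\nabla f_{\a}(\g))$ is $\Z$-linear in $\a$, exhibits $n$ coordinates of $\a$ on which the resulting matrix has an $n\times n$ minor that is a $p$-adic unit, and counts fibres. Your choice to keep $f_{\a}(\g)$ rather than $\partial_{1}f_{\a}(\g)$ among the $n$ retained conditions is exactly the device needed to avoid the factor $d$ and make the argument uniform in $p$, and this is the point of the construction in [3] as well.
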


\bigskip

For any $u>0$, we let
$B_m(u)=\{\y\in \R^m\vert\ \|\y\|\leq u\}$. For $A>0$
we set 
$$H_n(A)=B_n\left(1+\frac{1}{A}\right)\setminus B_n\left(1-\frac{1}{A}\right)$$
\begin{lem}$([\ref{ref3}, \text{Proof of Lemma 5.8}])$\label{lem3.6}
For fixed $\y\in \mathcal{H}_n(A)$ with $A>0,$ we have
    $$\#\{\a\in [-A,A]^N\cap \Z^N \vert\ \|\nabla f_{\a}(\y)\|\ll 1\}\ll A^{N-n}.$$
\end{lem}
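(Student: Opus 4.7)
The plan is to regard the condition $\|\nabla f_{\a}(\y)\|\ll 1$ as a system of $n$ linear inequalities in the $N$ unknown coefficients $\a\in\Z^N$, and then count lattice points in the resulting slab by projection onto a coordinate subspace. Concretely, for $i=1,\ldots,n$ write
\[
L_i(\a):=\partial_i f_{\a}(\y)=\sum_{j=1}^{N}\bigl(\partial_i \nu_{d,n}(\y)\bigr)_j\cdot a_j,
\]
so that each $L_i$ is a linear form in $\a$ with coefficients that are homogeneous polynomials of degree $d-1$ in $\y$. Since $\y\in H_n(A)$, we have $\|\y\|\asymp 1$ (for $A$ large), so every coefficient of every $L_i$ is $O_{d,n}(1)$, and our task reduces to counting $\a\in[-A,A]^N\cap\Z^N$ satisfying $|L_i(\a)|\ll 1$ for $i=1,\ldots,n$.

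The first step is to exhibit an $n\times n$ minor of the $n\times N$ coefficient matrix $M(\y):=\bigl((\partial_i\nu_{d,n}(\y))_j\bigr)_{i,j}$ whose absolute value is bounded below uniformly in $\y\in H_n(A)$. Since $\|\y\|\geq 1-1/A$, there exists an index $i_0$ with $|y_{i_0}|\geq\|\y\|/\sqrt{n}\gg_n 1$; by permutation symmetry of the argument, one may assume $i_0=1$. Selecting the $n$ monomials $y_1^d,y_1^{d-1}y_2,\ldots,y_1^{d-1}y_n$ and computing their partial derivatives produces an upper-triangular $n\times n$ submatrix of $M(\y)$ with diagonal entries $dy_1^{d-1},y_1^{d-1},\ldots,y_1^{d-1}$, hence determinant $dy_1^{(d-1)n}\gg_{d,n}1$.

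The second step is the lattice-point count. Split $\a=(\a',\a'')$ with $\a'\in\Z^n$ indexing the $n$ chosen monomials and $\a''\in\Z^{N-n}$ the complementary coordinates. The number of integer choices of $\a''\in[-A,A]^{N-n}$ is $O(A^{N-n})$. For each such $\a''$, the constraints $|L_i(\a)|\ll 1$ $(1\leq i\leq n)$ confine $\a'$ to the preimage of a box of side $O(1)$ under the non-degenerate linear map given by the chosen $n\times n$ submatrix; this preimage is a parallelepiped of volume $O_{d,n}(1)$ and bounded diameter, hence contains $O_{d,n}(1)$ lattice points. Multiplying yields the desired bound $O_{d,n}(A^{N-n})$, with the $n$ choices of $i_0$ contributing only a harmless factor.

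The main obstacle is the uniformity in $\y$ of the determinantal lower bound: one must verify that the specific minor constructed above is bounded away from zero on the entire annulus $H_n(A)$, rather than merely being nonzero pointwise. Once the choice of minor (which depends on $\arg\max_i|y_i|$) is made explicit and the case analysis is absorbed into the implicit constant, the remaining lattice-point count is an elementary projection, of the same flavour as the geometry-of-numbers argument behind Lemma~\ref{lem3.1}.
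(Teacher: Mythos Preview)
The paper does not supply its own proof of this lemma: it is simply quoted from \cite[Proof of Lemma~5.8]{ref3} in the ``Previous results'' subsection, so there is no in-paper argument to compare against. Your proposal is correct and is in fact the standard route (and essentially the one in the cited reference): the gradient $\nabla f_{\a}(\y)$ is linear in $\a$, one exhibits an explicit $n\times n$ minor of the coefficient matrix with determinant $\gg_{d,n}1$ on the annulus $H_n(A)$, and then counts lattice points by fibring over the complementary $N-n$ coordinates. Your choice of the monomials $y_{i_0}^d,\,y_{i_0}^{d-1}y_j$ ($j\neq i_0$) gives precisely the triangular minor you describe, and since $|y_{i_0}|\geq \|\y\|/\sqrt{n}\gg 1$ the determinant lower bound is genuinely uniform over $H_n(A)$; the Cramer-rule observation that the inverse of this minor has $O_{d,n}(1)$ entries then justifies the $O_{d,n}(1)$ lattice-point count in each fibre. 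The only cosmetic remark is that the reference to Lemma~\ref{lem3.1} is a bit loose---that lemma is Davenport's shrinking lemma rather than a projection count---but this does not affect the validity of your argument.
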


\bigskip
\begin{lem}$([\ref{ref3},\text{Proof of Lemma 5.9}])$\label{lem3.7}
    When $\lambda=O(1)$ and $\x\in \{\x\in \R^{n}\vert\ \|\x\|=1\}$, we have
    $$\text{mes}\left(\left\{\a\in B_N(N)\middle\vert\ \begin{aligned}
    \vert f_{\a}(\x)\vert\leq \lambda^2,\ \|\nabla f_{\a}(\x)\|\leq 2\lambda
    \end{aligned}\right\}\right)\ll \lambda^{n+1}.$$
\end{lem}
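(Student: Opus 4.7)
The plan is to use Euler's identity to reduce the constraint $|f_{\a}(\x)|\le \lambda^2$ to a linear condition on $\nabla f_{\a}(\x)$, and then to isolate $n$ coefficients of $\a$ on which the affine map $\a\mapsto \nabla f_{\a}(\x)$ has Jacobian determinant uniformly bounded away from zero over the unit sphere.

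First I would estimate the $n$-dimensional Lebesgue measure of the region
$$R=\{\y\in\R^n:\|\y\|\le 2\lambda,\ |\langle\x,\y\rangle|\le d\lambda^2\}.$$
Since $f_{\a}$ is homogeneous of degree $d$, Euler's identity gives $d\,f_{\a}(\x)=\langle\x,\nabla f_{\a}(\x)\rangle$, and hence the two inequalities defining the set in the lemma are together equivalent to $\nabla f_{\a}(\x)\in R$. As $\|\x\|=1$ we may choose $j$ with $|x_j|\ge n^{-1/2}$. Freezing the $n-1$ coordinates $y_i\in[-2\lambda,2\lambda]$ with $i\ne j$, the slab condition $|\langle\x,\y\rangle|\le d\lambda^2$ confines $y_j$ to an interval of length $\le 2d\lambda^2/|x_j|\ll_{n,d}\lambda^2$, which immediately yields $\text{mes}(R)\ll \lambda^{n+1}$.

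I would then transfer this bound to the coefficient space. Isolating the $n$ scalars $b_1,\dots,b_n$ that appear as the coefficients of the monomials $x_j^{d-1}x_1,\dots,x_j^{d-1}x_n$ in $f_{\a}$, and freezing the remaining $N-n$ coordinates of $\a$ as $\a'$, a direct computation of partial derivatives shows that $\nabla f_{\a}(\x)=M\b+c(\a')$, where the $n\times n$ matrix $M$ has its only non-zero off-diagonal entries in row $j$, with $M_{ii}=x_j^{d-1}$ for $i\ne j$, $M_{jj}=d\,x_j^{d-1}$, and $M_{jk}=(d-1)\,x_j^{d-2}x_k$ for $k\ne j$. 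Since column $j$ of $M$ has the single non-zero entry $M_{jj}$, expansion along that column produces $\det M=d\,x_j^{n(d-1)}$, so $|\det M|^{-1}\ll_{n,d}1$ uniformly on the unit sphere.

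For each fixed $\a'$ the admissible $\b$ form the $M$-preimage of a translate of $R$, a set of $n$-dimensional measure $\ll \lambda^{n+1}/|\det M|\ll \lambda^{n+1}$. Integrating over $\a'$ in the projection of $B_N(N)$, a bounded set of volume $O_N(1)$, gives the claimed bound. The only substantive step is the explicit formula for $\det M$, which certifies uniform non-degeneracy of the change of variables as $\x$ varies over $\|\x\|=1$; the remainder of the argument is routine, and I do not anticipate a genuine obstacle.
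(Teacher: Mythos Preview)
Your argument is correct. The paper does not supply its own proof here; it simply quotes the estimate from the proof of Lemma~5.9 in Browning--Le~Boudec--Sawin, so there is nothing to compare against in the present paper. Your proof is in fact essentially the argument one finds in that reference: the map $\a\mapsto(\nabla f_{\a}(\x),f_{\a}(\x))$ is linear in $\a$, Euler's identity $d\,f_{\a}(\x)=\langle\x,\nabla f_{\a}(\x)\rangle$ shows the image lies in an $n$-dimensional subspace, and the linear map onto that subspace is surjective uniformly in $\x\in\mathbb{S}^{n-1}$. Your contribution is to make the last step completely explicit by exhibiting the $n$ coefficients of $x_j^{d-1}x_1,\dots,x_j^{d-1}x_n$ and computing $\det M=d\,x_j^{n(d-1)}$ directly, rather than appealing to a compactness argument for the uniform lower bound on the Jacobian. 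Both routes give the same bound; yours is slightly more concrete and avoids any soft topological input.
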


\bigskip

\subsection{Auxiliary mean value estimates}

In this section, we provide a mean value estimate which plays a crucial role, in section 5.2, in verifying that $\mathfrak{S}_{\a}$ is rarely small over $\a$ with $\|\a\|_{\infty}\leq A$ and $P(\a)=0$. Since this mean value may be of independent interest, we record it in this separate section.

\begin{te}\label{thm3.1}
Let $P\in \Z[\x]$ be a non-singular form in $N$ variables of degree $k$. Suppose that $P(\x)=0$ has a nontrivial integer solution. Suppose that $W$ is an integer with $W\in [1,A^{2/3}]$ for sufficiently large $A$. Define $$\mathbf{N}(A)=\#\left\{ \x,\y\in [-A,A]^N\cap \Z^N\middle|\ P(\x)=P(\y)=0,\ \x \equiv\y\ \text{mod}\ W\right\}.$$ Then, whenever $N\geq 18k(k-1)4^{k+3}$, we have
\begin{equation}\label{5.1}
\mathbf{N}(A)\asymp A^{2N-2k}W^{1-N}.    
\end{equation}
\end{te}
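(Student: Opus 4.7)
The key reformulation I would pursue is to organise the count by residue classes modulo $W$: setting $M(\r):=\#\{\x\in[-A,A]^N\cap\Z^N:\ P(\x)=0,\ \x\equiv\r\ \text{mod}\ W\}$ for $\r\in(\Z/W\Z)^N$, one has $\mathbf{N}(A)=\sum_\r M(\r)^2$, and $M(\r)=0$ unless $P(\r)\equiv 0\ \text{mod}\ W$. For the lower bound I would then invoke Cauchy--Schwarz to deduce $\mathbf{N}(A)\geq \bigl(\sum_\r M(\r)\bigr)^2 \big/ \#\{\r\in(\Z/W\Z)^N: P(\r)\equiv 0\ \text{mod}\ W\}$. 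The numerator equals $\bigl(\#\{\x\in[-A,A]^N\cap\Z^N: P(\x)=0\}\bigr)^2$, which is $\gg A^{2(N-k)}$ by Birch's theorem, since the hypothesis $N\geq 18k(k-1)4^{k+3}$ comfortably exceeds Birch's threshold and $P=0$ admits a nontrivial integer zero by assumption. The denominator is $O(W^{N-1})$: by the Chinese Remainder Theorem it factorises as a product over $p^s\|W$ of terms of size $O(p^{s(N-1)})$, with constants uniform in $p$ because $P$ is non-singular (so $\{P=0\}$ mod $p$ has only $O(p^{N-2})$ singular points and the smooth points are counted by Hensel). This chain of inequalities produces the lower bound.

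For the matching upper bound, the plan is to pass to the parametrisation $\y=\x+W\z$ and write $\mathbf{N}(A)=\sum_{\x\in[-A,A]^N\cap\Z^N,\ P(\x)=0} M_\x$ where $M_\x:=\#\{\z\in\Z^N: \x+W\z\in[-A,A]^N,\ \tilde P(\x,\z)=0\}$ and $\tilde P(\x,\z):=W^{-1}(P(\x+W\z)-P(\x))=\sum_{j=1}^k W^{j-1}P_j(\x,\z)$, with $P_j(\x,\cdot)$ the degree-$j$ Taylor component of $P$ at $\x$. In particular $P_k(\x,\z)=P(\z)$ and the leading $\z$-form of $\tilde P$ is $W^{k-1}P(\z)$. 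I would apply the Hardy--Littlewood circle method to $M_\x=\int_0^1\sum_\z e(\beta\tilde P(\x,\z))\,d\beta$, with $\z$ ranging over the natural box of radius $\asymp A/W$, and dissect $[0,1]$ into major and minor arcs in $\beta$. On the major arcs a standard local-densities calculation should produce an expected main term of size $\asymp A^{N-k}W^{1-N}$ per $\x$; crucially the factor $W^{1-N}$ (not the naive $W^{-N}$) arises because the congruence $P(\y)\equiv 0\ \text{mod}\ W$ is automatically enforced by $P(\x)=0$ together with $\y\equiv\x\ \text{mod}\ W$. Summing over the $\asymp A^{N-k}$ admissible $\x$ then reproduces the target $A^{2N-2k}W^{1-N}$.

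The hard part will be the minor arc analysis for $\sum_\z e(\beta\tilde P(\x,\z))$, and this is what forces $N\geq 18k(k-1)4^{k+3}$. The essential novelty is that $\tilde P(\x,\z)$ is \emph{not} homogeneous in $\z$: in the natural box $\|\z\|_\infty\leq A/W$ every component $W^{j-1}P_j(\x,\z)$ has magnitude $\asymp A^k/W$, so all $\z$-degrees contribute comparably and one cannot rescale to a pure degree-$k$ form. My plan is to carry out Weyl differencing $k-1$ times in $\z$; after that process only the top $\z$-form $W^{k-1}P(\z)$ survives in the leading linear exponential sum, and the non-singularity of $P$ is then exploited via Lemma $\ref{lem3.1}$ (Birch's symmetric-linear-forms counting lemma) to extract Weyl-type savings. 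The $\x$-dependent lower-degree parts of $\tilde P$ would be absorbed by averaging over $\x$, using auxiliary mean value estimates of the type of Lemma $\ref{lem3.23.2}$ and the derived inequality $(\ref{3.5})$. Propagating these savings through the $k-1$-fold differencing, through the Cauchy--Schwarz steps required to treat the $\x$-averaging, and through the constraint that the saving must outperform the factor $W^{-1}$ already captured in the main term, leads exactly to the explicit requirement $N\geq 18k(k-1)4^{k+3}$.
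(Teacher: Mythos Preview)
Your lower bound is exactly the paper's: Cauchy--Schwarz over residue classes, Birch for the numerator, and the paper's Lemma~\ref{lem3.93.9} (your CRT/Hensel sketch) for $\#\{\r:P(\r)\equiv 0\bmod W\}\ll W^{N-1}$.

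For the upper bound the paper takes a different route, and your sketch has gaps. The paper does not fix $\x$ and run a circle method in $\z$. Instead it treats $\x,\y$ symmetrically: it detects $P(\x)=P(\y)=0$ by $\int_0^1\int_0^1\,d\alpha\,d\beta$, inserts the \emph{implied} congruence $P(\x)\equiv P(\y)\equiv 0\bmod W$ via an extra average $W^{-1}\sum_r$, and after Cauchy--Schwarz reduces to $\bigl(\int_0^1\Xi(\xi)^{1/2}d\xi\bigr)^2$ with $\Xi$ still carrying the mod-$W$ sum. Repeated Weyl differencing and Cauchy--Schwarz then separate $\Xi$ into a discrete factor $G_1(r/W)$ (bounded directly via Birch's Weyl estimate at the rational point $r/W$) and a continuous factor $G_2(\xi)$ (handled by Lemma~\ref{lem3.2}); it is the application of Lemma~\ref{lem3.2} with $\sigma=2^{-3-k}$ that produces the threshold $N\geq 18k(k-1)4^{k+3}$.

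Your plan has two concrete problems. First, the references to Lemma~\ref{lem3.23.2} and $(\ref{3.5})$ are misplaced: those concern diagonal forms $\sum a_j(x_j^d-y_j^d)$ and have no bearing on a general non-singular $P$. After $(k-1)$-fold differencing in $\z$ the $\x$-dependent lower-order pieces of $\tilde P$ become constants in $\z$ and exit under absolute values, so nothing remains to ``absorb by averaging over $\x$''. Second, and more seriously, your polynomial $\tilde P(\x,\z)=\sum_{j=1}^k W^{j-1}P_j(\x,\z)$ has coefficients graded by powers of $W$: modulo any prime $p\mid W$ only the linear part $\nabla P(\x)\cdot\z$ survives, so both the local densities and the major/minor dissection in $\beta$ interact non-trivially with $\gcd(q,W)$. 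Disentangling that arithmetic is precisely what the paper's introduction of the auxiliary variable $r/W$ accomplishes; you have not indicated how you would handle it, and a na\"ive Birch-in-$\z$ argument (which would only need $N>(k-1)2^k$) cannot by itself explain the stated threshold.
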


\bigskip

Since $P(\x)$ is a homogeneous polynomial in $N$ variables of degree $k$, probabilistic arguments deliver the expectation that the number of integer solutions $\x$ and $\y$ satisfying $P(\x)=P(\y)=0$ is $O(A^{2N-2k})$, and that the additional congruence condition on $\x$ and $\y$, which is $\x\equiv \y$ (mod $W$), provides an extra factor $W^{1-N}.$ Therefore, we see that the right-hand side in ($\ref{5.1}$) has the expected magnitude up to a constant. 
Without the congruence condition on $\x$ and $\y$, the expected magnitude immediately follows by the work of Birch [$\ref{ref8}$] whenever $N> (k-1)2^k$ (see also [$\ref{ref10}$]). However, for $\mathbf{N}(A)$ stated in $(\ref{5.1})$, Birch's argument seems not capable of immediately deriving the expected magnitude. 

In this subsection, we introduce an argument that may be helpful in general when one deduces an expected magnitude for the number of integer solutions of a system of equations together with congruence conditions. 
For the upper bound of $\mathbf{N}(A)$, we first represent $\mathbf{N}(A)$ by mean values of exponential sums via orthogonality. Next, on observing that the condition $P(\x)=P(\y)=0$ obviously induces $P(\x)\equiv P(\y)\equiv0$ (mod $W$) and by taking this induced condition into account in mean values of exponential sums, we are capable of drawing both information about $P(\x)=P(\y)=0$ and $\x\equiv\y$ (mod $W$) via suitable applications of the Weyl differencing argument. Eventually, we obtain the expected upper bound for $\mathbf{N}(A),$ by applying classical estimates for major and minor arcs together with such information.
For the lower bound of $\mathbf{N}(A)$, we first obtain an upper bound for the quantity
 \begin{equation}\label{quantity}
\#\left\{\b\in [1,W]^N\cap \Z^N\middle|\ P(\b)\equiv0\ \text{mod}\ W\right\},     
 \end{equation}
 with $W\in \N.$ Next, by applying the Cauchy-Schwarz inequality together with the upper bound for $(\ref{quantity})$, we obtain the expected lower bound for $\mathbf{N}(A)$.

By making use of the same idea of this proof, we are also capable of dealing with the case $W=A^{\mu}$ with $2/3< \mu<1.$ However, the required number of variables $N$ becomes more restrictive, as $\mu\rightarrow 1.$ As we will see in the proof of Proposition $\ref{prop6.1}$, the bound for $\mathbf{N}(A)$ with $1\leq W\leq A^{2/3}$ is enough for our purpose, and thus we record this theorem only with $\mu\leq 2/3.$

\bigskip

 For the proof of Theorem $\ref{thm3.1}$, we require two auxiliary lemmas. The first lemma of these gives the upper bound for 
 $$\#\left\{\b\in [1,W]^N\cap \Z^N\middle|\ P(\b)\equiv0\ \text{mod}\ W\right\},$$
 with $W\in \N.$
 \begin{lem}\label{lem3.93.9}
    Let $P\in \Z[\x]$ be a non-singular form in $N$ variables of degree $k.$ Suppose that $W$ is a natural number, and that $\mathfrak{p}$ is the smallest prime divisor of $W.$ Then, whenever $N> (k-1)2^{k+1},$ one has
$$\#\left\{\b\in [1,W]^N\cap \Z^N\middle|\ P(\b)\equiv0\ \text{mod}\ W\right\}=W^{N-1}+O\left(W^{N-1}\cdot\mathfrak{p}^{-N/(2^k(k-1))}\right).$$
 \end{lem}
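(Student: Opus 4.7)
The plan is to use orthogonality to recast the count as a sum of exponential sums, extract the main term from the zero frequency, and bound the remaining frequencies via Weyl differencing in the spirit of Birch $[\ref{ref8}]$.

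First, by orthogonality,
\begin{equation*}
    \#\{\b\in [1,W]^N\cap \Z^N:\ P(\b)\equiv 0\ \text{mod}\ W\}=\frac{1}{W}\sum_{a=0}^{W-1}T_W(a/W),\qquad T_W(\alpha)=\sum_{\b\in [1,W]^N\cap \Z^N}e(\alpha P(\b)).
\end{equation*}
The $a=0$ term contributes exactly $W^{N-1}$, which is the asserted main term. For each $a\in \{1,\ldots,W-1\}$, write $a/W=b/q$ in lowest terms; then $q\mid W$ and $q>1$, so in particular $q\geq \mathfrak{p}$. Since $e(bP(\b)/q)$ depends only on $\b$ modulo $q$ and $q$ divides $W$, one has the factorisation $T_W(b/q)=(W/q)^N S(b,q)$, where
\begin{equation*}
    S(b,q)=\sum_{\b\in [1,q]^N\cap \Z^N} e(bP(\b)/q)
\end{equation*}
is the complete exponential sum modulo $q$.

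Next, I would bound $|S(b,q)|$ for $\gcd(b,q)=1$ by iterating the differencing operator $\Delta_j$ of $(\ref{3.3})$ exactly $k-1$ times. After the standard manipulations, the $2^{k-1}$-th power of $|S(b,q)|$ is majorised by a weighted count of $(k-1)$-tuples $\h_1,\ldots,\h_{k-1}$ for which each symmetric linear form $\psi_j(\h_1,\ldots,\h_{k-1};P)$ defined in $(\ref{3.23.23.2})$ is close modulo $q$ to the origin. Because $P$ is non-singular, the coefficient vectors of the $\psi_j$ are generically of full rank, so Lemma $\ref{lem3.1}$ applied with an appropriate dilation converts the simultaneous smallness condition into the Birch-type estimate
\begin{equation*}
    |S(b,q)|\ll q^{N-N/(2^{k-1}(k-1))+\epsilon},\qquad \gcd(b,q)=1,
\end{equation*}
valid under the hypothesis $N>(k-1)2^{k+1}$.

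Combining these ingredients and grouping the non-zero frequencies by denominator gives
\begin{equation*}
    \bigl|N_W-W^{N-1}\bigr|\leq \frac{1}{W}\sum_{\substack{q\mid W\\ q>1}}\sum_{\substack{1\leq b\leq q\\ \gcd(b,q)=1}}(W/q)^N|S(b,q)|\ll W^{N-1}\sum_{\substack{q\mid W\\ q>1}}\phi(q)\,q^{-N/(2^{k-1}(k-1))+\epsilon}.
\end{equation*}
Using $\phi(q)\leq q$, absorbing the divisor count into $W^\epsilon$, and noting that $N/(2^{k-1}(k-1))-1\geq N/(2^k(k-1))$ whenever $N\geq 2^k(k-1)$, which is implied by $N>(k-1)2^{k+1}$, the sum is dominated by the term $q=\mathfrak{p}$ and is $O\bigl(W^{N-1}\mathfrak{p}^{-N/(2^k(k-1))}\bigr)$, as required.

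The main obstacle is executing the Weyl differencing cleanly: one must apply Lemma $\ref{lem3.1}$ with the right dilation so that the resulting exponent depends only on the denominator $q$ rather than on $W$, and one must exploit the non-singularity of $P$ to ensure that the system $\{\psi_j\}_{j\leq N}$ has generic full rank. This is the same structural input that fuels Birch's theorem, and it is what dictates the exponent $1/(2^{k-1}(k-1))$ appearing in the Weyl estimate and hence the shape of the final error term.
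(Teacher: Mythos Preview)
Your approach matches the paper's proof: orthogonality, extraction of the zero frequency as the main term, reduction to complete sums $S(b,q)$ over divisors $q\mid W$, Birch's Weyl-type bound $|S(b,q)|\ll q^{N-N/(2^{k-1}(k-1))+\epsilon}$ (which the paper cites as [$\ref{ref8}$, Lemma~5.4]), and then summation over $q>1$.

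One step needs tightening. The phrase ``absorbing the divisor count into $W^\epsilon$'' does not deliver the stated error: bounding by $d(W)\cdot\mathfrak{p}^{1-N/(2^{k-1}(k-1))+\epsilon}$ leaves an unwanted factor $W^\epsilon$, and when $\mathfrak p$ is small (e.g.\ $W=2^m$) this cannot be absorbed into a fixed power of $\mathfrak p$. The correct manoeuvre, which the paper uses and which your phrase ``the sum is dominated by the term $q=\mathfrak p$'' already hints at, is to drop the divisibility constraint entirely: for $\sigma>1$,
\[
\sum_{\substack{q\mid W\\ q>1}} q^{-\sigma}\ \le\ \sum_{q\ge \mathfrak p} q^{-\sigma}\ \ll\ \mathfrak p^{\,1-\sigma}.
\]
Taking $\sigma=N/(2^{k-1}(k-1))-1-\epsilon$ and using $N>(k-1)2^{k+1}$ (so that $\sigma>1$ and $2-N/(2^{k-1}(k-1))+\epsilon\le -N/(2^k(k-1))$ for small $\epsilon$) gives exactly the claimed $O(\mathfrak p^{-N/(2^k(k-1))})$. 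With that correction, your argument is the paper's argument.
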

\begin{proof}
    For simplicity, we write
    \begin{equation}\label{37}
     N_1(W)=\#\left\{\b\in [1,W]^N\cap \Z^N\middle|\ P(\b)\equiv0\ \text{mod}\ W\right\}.   
    \end{equation}
    By applying orthogonality, we have
\begin{equation*}
    N_1(W)=W^{-1}\dsum_{1\leq \b\leq W}\dsum_{1\leq r\leq W} e\left(\frac{P(\b)r}{W}\right).
\end{equation*}
By splitting the sum over $r$ in terms of values of $r_0=W/(W,r)$, we see that
\begin{equation}\label{3.73.73.7}
    N_1(W)=W^{N-1}+W^{-1}\dsum_{\substack{r_0|W\\ r_0\neq 1}}\dsum_{\substack{1\leq r\leq W\\(W,r)=W/r_0}}\dsum_{1\leq \b\leq W}e\left(\frac{P(\b)r}{W}\right).
\end{equation}

For fixed $r_0$ and $r$ with $r_0|W$ and $r_0=W/(W,r)$, we write $ \widetilde{r}=r/(W,r),$ and so $(\widetilde{r},r_0)=1$. Then, since $P$ is a non-singular form, it follows by the Weyl type estimate for exponential sums over minor arcs [$\ref{ref8}$, Lemma 5.4] that
\begin{equation}\label{3.103.10}
    \begin{aligned}
        \dsum_{\substack{r_0|W\\ r_0\neq 1}}\dsum_{\substack{1\leq r\leq W\\(W,r)=W/r_0}}\dsum_{1\leq \b\leq W}e\left(\frac{P(\b)r}{W}\right)&\leq \dsum_{\substack{r_0|W\\ r_0\neq 1}}\dsum_{\substack{1\leq \widetilde{r}\leq r_0\\(\widetilde{r},r_0)=1}}\left|\dsum_{1\leq \b\leq W}e\left(\frac{P(\b)\widetilde{r}}{r_0}\right)\right|\\
        &\leq \dsum_{\substack{r_0|W\\ r_0\neq 1}}\dsum_{\substack{1\leq \widetilde{r}\leq r_0\\(\widetilde{r},r_0)=1}}(W/r_0)^N\left|\dsum_{1\leq \b\leq r_0}e\left(\frac{P(\b)\widetilde{r}}{r_0}\right)\right|\\
        &\ll\dsum_{\substack{r_0|W\\ r_0\neq 1}}\dsum_{\substack{1\leq \widetilde{r}\leq r_0\\(\widetilde{r},r_0)=1}}W^N\cdot r_0^{-N/(2^{k-1}(k-1))+\epsilon}.
    \end{aligned}
\end{equation}
Recall the definition of $\mathfrak{p}$ in the statement of Lemma $\ref{lem3.93.9}$. Then, on noting that for all $\sigma>1$ one has
$$\dsum_{\substack{r_0|W\\r_0\neq 1}}r_0^{-\sigma}\leq \dsum_{\substack{\mathfrak{p}\leq r_0}}r_0^{-\sigma}\ll \mathfrak{p}^{1-\sigma},$$
it follows from ($\ref{3.103.10}$) and the hypothesis $N> 2^{k+1}(k-1)$ that 
\begin{equation*}
\begin{aligned}
     \dsum_{\substack{r_0|W\\ r_0\neq 1}}\dsum_{(W,r)=W/r_0}\dsum_{1\leq \b\leq W}e\left(\frac{P(\b)r}{W}\right)&\ll \dsum_{\substack{r_0|W\\ r_0\neq 1}}W^N\cdot r_0^{1-N/(2^{k-1}(k-1))+\epsilon}\\   
     &\ll W^N\cdot \mathfrak{p}^{2-N/(2^{k-1}(k-1))+\epsilon}\\
     &\leq W^N \cdot \mathfrak{p}^{-N/(2^k(k-1))}.
\end{aligned}
\end{equation*}

On substituting this bound into $(\ref{3.73.73.7})$, we conclude that
\begin{equation*}
    N_1(W)=W^{N-1}+O\left(W^{N-1}\cdot \mathfrak{p}^{-N/(2^{k}(k-1))}\right).
\end{equation*}
\end{proof}

\bigskip
 
 Throughout this paper, we only use certain cases ($l=1,\ l=k-1$) of the following lemma, however, we record this in full generality for convenience of the statement and future works. 

\begin{lem}\label{lem3.2}
Let $\beta$ be a real and $l$ be a natural number with $1\leq l\leq k-1$. Suppose that $A_1,\ldots,A_l$ are sufficiently large positive numbers where $A_1\leq \cdots\leq A_{l+1}$ with $ 1\leq\frac{\log A_{l+1}}{\log A_1}\leq 3$. Suppose that $B$ is a non-negative integer. Define
$$\mathcal{S}(\beta)=\dsum_{y}\dsum_{\x^{(1)}}\cdots\dsum_{\x^{(l)}}\biggl|\dsum_{\x^{(l+1)}\in \mathcal{B}}e(\beta \Delta_l(P(\x^{(l+1)});\x^{(1)},\ldots,\x^{(l)}))\biggr|^2,$$
where the summation in $y$ is over $[-B,B]\cap \Z$ and sums in $\x^{(i)}$ are over
$ [-A_i,A_i]^N\cap \Z^N\ (i=1,\ldots,l)$,
and the sum in $\x^{(l+1)}$ is over a rectangular box $\mathcal{B}\subseteq [-A_{l+1},A_{l+1}]^N\cap \Z^N$$($or the empty set$)$ depending on $y,\x^{(1)},\ldots,\x^{(l)}$. Then, for any $0<\sigma<1$, whenever $$N\geq 3(\sigma+2)\sigma^{-2}k(k-1)2^{k-l},$$ we have 
\begin{equation}\label{3.33.3}
    \dint_0^1\mathcal{S}(\beta)^{\sigma}d\beta\ll (B+1)^{\sigma}(A_1\cdots A_lA_{l+1}^2)^{\sigma N}(A_1\cdots A_lA_{l+1}^{k-l})^{-1},
\end{equation}
where the implicit constant depends on $k.$
\end{lem}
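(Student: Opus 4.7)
The plan is to apply Weyl differencing to the inner exponential sum, then use Hölder's inequality to convert the fractional exponent $\sigma$ into an integer moment, and close with a Davenport--Birch lattice-counting argument based on Lemma $\ref{lem3.1}$.

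Write $T(\beta) = T(\beta; y, \x^{(1)}, \ldots, \x^{(l)}) = \sum_{\x^{(l+1)} \in \mathcal{B}} e(\beta \Delta_l(P(\x^{(l+1)}); \x^{(1)}, \ldots, \x^{(l)}))$, so $\mathcal{S}(\beta) = \sum_y \sum_{\x^{(1)}, \ldots, \x^{(l)}} |T|^2$. Since $T$ is independent of $y$, the $y$-sum contributes merely an overall factor $O(B+1)$. Regarded as a polynomial in $\x^{(l+1)}$, the phase $\Delta_l(P; \x^{(1)}, \ldots, \x^{(l)})$ has degree $k-l$, and by the symmetry of $p_{j_1 \cdots j_k}$ in $(\ref{3.13.13.1})$ its top-degree multilinear coefficients match the $\psi_j$ of $(\ref{3.23.23.2})$ (up to a combinatorial factor). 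Applying the standard Weyl/Cauchy--Schwarz squaring argument $k-l-1$ times yields
$$|T(\beta)|^{2^{k-l-1}} \ll A_{l+1}^{N(2^{k-l-1}-(k-l))} \sum_{\h^{(1)}, \ldots, \h^{(k-l-1)}} \prod_{j=1}^N \min\bigl(A_{l+1},\, \|\beta\,\psi_j(\x^{(1)}, \ldots, \x^{(l)}, \h^{(1)}, \ldots, \h^{(k-l-1)})\|^{-1}\bigr),$$
with each $\h^{(i)}$ in a box of side $O(A_{l+1})$; the boundary case $k-l=1$ needs no squaring.

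Set $M = 2^{k-l-2}$, a positive integer once $k - l \ge 2$ (with $M = 1$ for $k-l=1$). Since $\sigma < 1 \le M$, Hölder's inequality on $[0,1]$ with conjugate exponents $M/\sigma$ and $M/(M-\sigma)$ gives
$$\int_0^1 \mathcal{S}(\beta)^{\sigma}\, d\beta \le \biggl(\int_0^1 \mathcal{S}(\beta)^M\, d\beta\biggr)^{\sigma/M},$$
while Hölder applied to the outer sum yields $\mathcal{S}(\beta)^M \ll (B+1)^M \bigl(\prod_{i=1}^l A_i^N\bigr)^{M-1} \sum_{\x^{(1)}, \ldots, \x^{(l)}} |T|^{2M}$. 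Substituting the Weyl bound with $|T|^{2M} = |T|^{2^{k-l-1}}$ and integrating reduces the task to estimating
$$\sum_{\x^{(1)}, \ldots, \x^{(l)},\, \h^{(1)}, \ldots, \h^{(k-l-1)}} \int_0^1 \prod_{j=1}^N \min\bigl(A_{l+1},\, \|\beta \psi_j\|^{-1}\bigr)\, d\beta.$$

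The $\beta$-integral is bounded by the standard dyadic split around the spikes $\|\beta \psi_j\| < A_{l+1}^{-1}$, which recasts the problem as one of counting outer tuples for which many $\psi_j$ are simultaneously small modulo $1$. Lemma $\ref{lem3.1}$ is then applied to the linear system $\u \mapsto \psi_j(\x^{(1)}, \ldots, \x^{(l)}, \h^{(1)}, \ldots, \h^{(k-l-2)}, \u)$, whose coefficient matrix $(\gamma_{j j'})$ is symmetric in $(j, j')$ by the symmetry of $p_{j_1 \cdots j_k}$. The log-comparability hypothesis $1 \le \log A_{l+1}/\log A_1 \le 3$ permits all $\x^{(i)}$-boxes to be rescaled to a common size at bounded cost. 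Taking the $\sigma/M$-th root of the assembled bound yields $(\ref{3.33.3})$; the hypothesis $N \ge 3(\sigma+2)\sigma^{-2} k(k-1) 2^{k-l}$ is exactly what is needed so that the Lemma $\ref{lem3.1}$ saving, raised to the small exponent $\sigma/M = \sigma \cdot 2^{-(k-l-2)}$, still produces the factor $(A_1 \cdots A_l A_{l+1}^{k-l})^{-1}$. The main obstacle is precisely this final quantitative accounting: ensuring that the geometry-of-numbers saving survives the shrinking power after Hölder forces the specific form of the lower bound on $N$, with the factor $k(k-1) 2^{k-l}$ tracking the Weyl differencing structure and the $\sigma^{-2}(\sigma+2)$ tracking the cost of the Hölder passage together with the rescaling of the differently-sized boxes.
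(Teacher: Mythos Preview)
Your H\"older step is the genuine gap. The inequality $\int_0^1 \mathcal{S}^\sigma\,d\beta \le \bigl(\int_0^1 \mathcal{S}^M\,d\beta\bigr)^{\sigma/M}$ is valid but loses precisely the factor you need. Near $\beta=0$ one has $\mathcal{S}(\beta)\asymp (B+1)(A_1\cdots A_lA_{l+1}^2)^N$ throughout an interval of length $\asymp (A_1\cdots A_lA_{l+1}^{k-l})^{-1}$, so
\[
\int_0^1 \mathcal{S}^M\,d\beta \;\gg\; \bigl[(B+1)(A_1\cdots A_lA_{l+1}^2)^N\bigr]^M(A_1\cdots A_lA_{l+1}^{k-l})^{-1},
\]
and the $\sigma/M$-th root delivers only $(A_1\cdots A_lA_{l+1}^{k-l})^{-\sigma/M}$ in place of the required $(A_1\cdots A_lA_{l+1}^{k-l})^{-1}$. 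Since $\sigma/M<1$ this falls short by a genuine power of $A_{l+1}$. No minor-arc saving from Lemma~$\ref{lem3.1}$ can repair this, because the deficit arises from the major-arc peak, whose size is insensitive to $N$. Your final sentence conflates the minor-arc saving (which large $N$ does amplify) with the major-arc contribution (which it does not touch).

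The paper never passes to an integer moment. It splits $[0,1)$ into major arcs $\widetilde{\mathfrak{M}}(A_1^\eta)$ and minor arcs $\widetilde{\mathfrak{m}}(A_1^\eta)$ and handles $\mathcal{S}(\beta)^\sigma$ directly on each. On the minor arcs the Weyl--Birch dichotomy (via $(\ref{3.7})$, $(\ref{3.8})$) gives a pointwise bound strong enough that $\sup_{\widetilde{\mathfrak{m}}}\mathcal{S}^\sigma$ already suffices. On the major arcs it proves the approximation $\mathcal{S}(\beta)\ll \mathcal{L}(q,a)\,\mathcal{I}(A_1\cdots A_lA_{l+1}^{k-l}\delta)+|E(\beta)|$, applies the subadditivity $(a+b)^\sigma\le a^\sigma+b^\sigma$ (valid because $0<\sigma<1$), and shows that $\sum_{q,a}\mathcal{L}(q,a)^\sigma$ and $\int\mathcal{I}(\gamma)^\sigma\,d\gamma$ converge separately; the factor $(A_1\cdots A_lA_{l+1}^{k-l})^{-1}$ then appears with exponent exactly $1$ from the substitution $\gamma=A_1\cdots A_lA_{l+1}^{k-l}\delta$ in the singular integral.

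A minor slip: $T$ is not independent of $y$, since by hypothesis the box $\mathcal{B}$ depends on $y$. This is harmless once $y$ is carried as an outer variable, as the paper does, but your sentence ``the $y$-sum contributes merely an overall factor'' is not literally correct.
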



\bigskip

 We notice that the variable $y$ in the outer sum of $\mathcal{S}(\beta)$ is not associated with the argument of the exponential sum $\mathcal{S}(\beta)$, that is $\Delta_l(P(\x^{(l+1)});\x^{(1)},\ldots,\x^{(l)}),$ and is only involved in the range of $\x^{(l+1)}$ of the innermost sum. We note that Lemma $\ref{lem3.2}$ will be required not only for the proof of Theorem $\ref{thm3.1}$ but also in other places in this paper (proofs of Proposition $\ref{pro4.3}$, Lemma $\ref{lem5.15.1}$ and Lemma $\ref{lemma5.2}$), and that corresponding exponential sums $\mathcal{S}(\beta)$ in these places are obtained by the Weyl differencing argument. 
 
 The proof of Lemma $\ref{lem3.2}$ makes use of the standard treatment of major arcs and minor arcs used in [$\ref{ref8}$]. We emphasize here that the improvement of the required number of variables $N$, described in Theorem $\ref{thm3.1}$ and Lemma $\ref{lem3.2}$, may slightly improve the range of the degree $k$ of $P(\a)$ in Corollary ${\ref{coro1.5}}$. However, as we implicitly reflected in the proof of Corollary $\ref{coro1.5}$, since we are rather interested in the required number of variables $n$ of $f_{\a}$ in terms of the degree $d$ of $f_{\a}$ so that the hypotheses on $n,d$, and $k$ in Theorem $\ref{thm2.2}$, $\ref{thm2.3}$ and $\ref{thm1.3}$ hold for large $d$, we do not put our effort into optimizing the required number of variables $N$ throughout this paper.
\begin{rmk}
One of the ways to improve the bound for the required number of variables $N$ is to make use of pruning arguments to deal with minor arcs estimates, used in [$\ref{ref8}$]. It may provide sharper bounds for the required number of variables $N$ than that stated in Theorem $\ref{thm3.1}$ and Lemma $\ref{lem3.2}$.
\end{rmk}

\bigskip

For the proof of Lemma $\ref{lem3.2}$, we recall the definition ($\ref{3.23.23.2}$) of $\psi_j$ and it is convenient to define
\begin{equation}\label{7.1}
\begin{aligned}
&\Gamma_P(\beta;A_1,\ldots,A_{k-1},A_k)\\
&= \dsum_{\substack{\x^{(1)},\ldots,\x^{(k-1)}\\\|\x^{(1)}\|_{\infty}\leq A_1,\ldots,\|\x^{(k-1)}\|_{\infty}\leq A_{k-1}}}\dprod_{j=1}^N\min(A_k,\|\beta \psi_j(\x^{(1)},\x^{(2)},\ldots,\x^{(k-1)};P)\|^{-1}).  
\end{aligned}
\end{equation}

\begin{proof}[Proof of Lemma $\ref{lem3.2}$]
We denote $\|z\|=\min_{n\in \Z}|z-n|$, for this proof.
By applying the Weyl differencing argument to $\mathcal{S}(\beta)$, whenever $l\leq k-2$ we observe that
\begin{equation}\label{3.6}
\begin{aligned}
\mathcal{S}(\beta)\ll (B+1)(A_1\cdots A_{l+1}^2)^N(A_1\cdots A_{l+1}^{k-l})^{-2^{2-k+l}N}\Gamma_P(\beta;A_1,A_1,\ldots,A_{k})^{2^{2-k+l}},  
\end{aligned}
\end{equation}
with $A_{l+1}=\cdots=A_k.$ When $l=k-1,$ by the trivial estimate 
$$\biggl|\dsum_{\x^{(k)}\in \mathcal{B}}e(\beta \Delta_l(P(\x^{(k)};\x^{(1)},\ldots,\x^{(k-1)})))\biggr|\leq A_k^N,$$
one has
\begin{equation}\label{3.83.8}
    \mathcal{S}(\beta)\ll (B+1)A_k^N\Gamma_P(\beta;A_1,A_1,\ldots,A_{k}).
\end{equation}
Recall the definition ($\ref{3.23.23.2}$) of
$$\psi_j:=\psi_j(\x^{(1)},\x^{(2)},\ldots,\x^{(k-1)};P).$$
Then, by using the argument in the proof of [$\ref{ref17}$, Lemma 3.2], we deduce that
\begin{equation}\label{3.14}
\Gamma_P(\beta;A_1,\ldots,A_{k-1},A_k)\ll A_k^{N}(\log A_k)^NN(A_1,\ldots,A_{k-1},A_k^{-1};\beta),
\end{equation}
where 
\begin{equation}\label{3.9}
\begin{aligned}
&N(A_1,\ldots,A_{k-1},A_k^{-1};\beta)\\
&=\#\{\|\x^{(i)}\|_{\infty}\leq A_i\ (i=1,\ldots,k-1)|\ \|\beta \psi_j\|<A_k^{-1}\ (j=1,\ldots,N)\}.
\end{aligned}  
\end{equation}

 Meanwhile, we see that $\psi_j$ is a linear form in $\x^{(i)}$ for fixed $\x^{(m)}\ (1\leq m\leq k-1,m\neq i)$. Hence, for any positive number $X$ with $X<A_1,$ we set $n=N,$ $L_i=\psi_i$ and
\begin{equation*}
\begin{aligned}
&Z_1=X^{\frac{1}{2}(i+1)}(A_1\cdots A_iA_k)^{-\frac{1}{2}},\\ &Z_2=(A_iX^{i-1})^{\frac{1}{2}}(A_1\cdots A_{i-1}A_k)^{-\frac{1}{2}},\\
&a=(A_1\cdots A_iA_k)^{\frac{1}{2}}X^{-\frac{1}{2}(i-1)}  
\end{aligned}  
\end{equation*} 
We note here that the condition $X< A_1$ and the hypothesis $A_1\leq \cdots\leq A_{l+1}$ in the statement of Lemma $\ref{lem3.2}$ ensure that $a>1$ and $0<Z_1<Z_2\leq 1.$ It is possible to replace the hypothesis  $A_1\leq\cdots\leq  A_{l}$ by a weaker condition $\text{max}_{1\leq i\leq l}A_i\leq A_{l+1}$, although we have to instead choose $X$ less than $\text{min}_{1\leq i\leq l}A_i$. However, we adapt the hypothesis $A_1\leq \cdots\leq A_l\leq A_{l+1}$, purely for notational convenience throughout this proof. With these quantities, it follows by applying Lemma $\ref{lem3.1}$
that for $i$ with $1\leq i\leq k-2$ we have
\begin{equation*}
\begin{aligned}
&N(X,\ldots,X,A_i,\ldots,A_{k-1},X^{i-1}(A_1\cdots A_{i-1}A_k)^{-1};\beta)\\
&\ll (A_i/X)^N N(X,\ldots,X,A_{i+1},\ldots,A_{k-1},X^i(A_1\cdots A_iA_k)^{-1};\beta),
\end{aligned}  
\end{equation*}
and for $i=k-1$ we have
\begin{equation*}
\begin{aligned}
&N(X,\ldots,X,A_{k-1},X^{k-2}(A_1\cdots A_{k-2}A_k)^{-1};\beta)\\
&\ll (A_{k-1}/X)^N N(X,\ldots,X,X^{k-1}(A_1\cdots A_{k-1}A_k)^{-1};\beta).
\end{aligned}  
\end{equation*}
Hence, we deduce by applying this with $i=1,\ldots,k-1$ recursively that \begin{equation}\label{3.63.63.6}
\begin{aligned}
&N(A_1,\ldots,A_{k-1},A_k^{-1};\beta)\\
&\ll (A_1/X)^N\cdots (A_{k-1}/X)^N N(X,\ldots,X,X^{k-1}(A_1\cdots A_{k})^{-1};\beta).
\end{aligned}
\end{equation}

Furthermore, on noting that $P$ is a non-singular form and that $A_{l+1}=\cdots=A_k$, application of the argument used in [$\ref{ref20}$, Lemma 3.3, Lemma 3.4] together with ($\ref{3.14}$) and ($\ref{3.63.63.6}$) readily delivers that either
\begin{equation}\label{3.7}
(i)\ \Gamma_P(\beta;A_1,\ldots,A_{k})\ll (A_1\cdots A_l A_{l+1}^{k-l})^N(\log A_{l+1})^{N+1}X^{-N}    
\end{equation}
or there exist $a\in \Z$ and $q\in \N$ such that
\begin{equation}\label{3.8}
(ii)\ (q,a)=1,\ q\leq X^{k-1}\ \text{and}\ |q\beta-a|\leq X^{k-1}(A_1\cdots A_{l}A_{l+1}^{k-l})^{-1}.    
\end{equation}

In order to analyze the mean value  $\dint_0^1\mathcal{S}(\beta)^{\sigma}d\beta$ in $(\ref{3.33.3})$, we must define 
$$\widetilde{\mathfrak{M}}(Q)=\displaystyle\bigcup_{\substack{0\leq a\leq q\leq Q\\(q,a)=1}}\widetilde{\mathfrak{M}}_{q,a}(Q),$$
where $$\widetilde{\mathfrak{M}}_{q,a}(Q)=\{\beta\in [0,1)|\ |\beta-a/q|\leq q^{-1}Q(A_1\cdots A_{l}A_{l+1}^{k-l})^{-1}\},$$
and define $\widetilde{\mathfrak{m}}(Q)=[0,1)\setminus \widetilde{\mathfrak{M}}(Q).$ Then, we observe that
\begin{equation}\label{3.10}
    \dint_0^1\mathcal{S}(\beta)^{\sigma}d\beta\ll \mathcal{J}_1+\mathcal{J}_2,
\end{equation}
where 
\begin{equation}\label{3.73.7}
    \begin{aligned}
    \mathcal{J}_1=\dint_{\widetilde{\mathfrak{M}}(A_{1}^{\eta})}\mathcal{S}(\beta)^{\sigma}d\beta\ \ \text{and}\ \ \mathcal{J}_2=\dint_{\widetilde{\mathfrak{m}}(A_{1}^{\eta})}\mathcal{S}(\beta)^{\sigma}d\beta
    \end{aligned}
\end{equation}
in which $\eta=\frac{\sigma}{2(\sigma+2)}.$

We first analyze $\mathcal{J}_1$ by making use of the standard treatment of major arcs used in [$\ref{ref8}$, section 5]. Let $\beta\in \widetilde{\mathfrak{M}}_{q,a}(A_{1}^{\eta}),$ and write $\delta=\beta-a/q$ with $|\delta|\leq q^{-1}A^{\eta}_{1}(A_1\cdots A_{l}A_{l+1}^{k-l})^{-1}.$ On recalling the definition of $\mathcal{S}(\beta)$, one finds that 
$$\mathcal{S}(\beta)=\dsum_{y}\dsum_{\x^{(1)},\ldots,\x^{(l)}}\dsum_{\x^{(l+1)},\x^{(l+2)}\in \mathcal{B}}e(\beta \phi_1(\x^{(l+1)},\x^{(l+2)})),$$
where
\begin{equation*}
\begin{aligned}
 \phi_1(\x^{(l+1)},\x^{(l+2)})&:=\phi_1(\x^{(l+1)},\x^{(l+2)};\x^{(1)},\ldots,\x^{(l)})\\
 &=\Delta_l(P(\x^{(l+1)});\x^{(1)},\ldots,\x^{(l)})-\Delta_l(P(\x^{(l+2)});\x^{(1)},\ldots,\x^{(l)}).
\end{aligned}   
\end{equation*}

Let us write $\x^{(i)}=q\y^{(i)}+\z^{(i)}\ (i=1,\ldots,l+2)$, where $1\leq \z^{(i)}\leq q$ and $\y^{(i)}$ runs over boxes so that $\|\x^{(i)}\|_{\infty}\leq A_{i}$ with $i=1,\ldots,l$, and $1\leq \z^{(l+1)},\z^{(l+2)}\leq q$ and $\y^{(l+1)},\y^{(l+2)}$ run over boxes so that $\x^{(l+1)},\x^{(l+2)}\in \mathcal{B}$. 
\begin{equation}\label{3.93.9}
\begin{aligned}
    &\mathcal{S}(\beta)= \dsum_{y}\dsum_{\z^{(1)},\ldots,\z^{(l+2)}}e\left(\frac{a}{q} \phi_1(\z^{(l+1)},\z^{(l+2)};\z^{(1)},\ldots,\z^{(l)})\right) \mathcal{T}(\delta,\z^{(1)},\ldots,\z^{(l+2)}),
\end{aligned}
\end{equation}
where 
\begin{equation*}
    \mathcal{T}(\delta,\z^{(1)},\ldots,\z^{(l+2)})=\dsum_{\y^{(1)},\ldots,\y^{(l+2)}}e\left(\delta\phi_2(\y^{(l+1)},\y^{(l+2)};\y^{(1)},\ldots,\y^{(l)})\right)
\end{equation*}
in which 
\begin{equation*}
\begin{aligned}
 \phi_2(\y^{(l+1)},\y^{(l+2)})&:=\phi_2(\y^{(l+1)},\y^{(l+2)};\y^{(1)},\ldots,\y^{(l)})\\
 &=\phi_1(q\y^{(l+1)}+\z^{(l+1)},q\y^{(l+2)}+\z^{(l+2)};q\y^{(1)}+\z^{(1)},\ldots,q\y^{(l)}+\z^{(l)}).   
\end{aligned}
\end{equation*}
By the definition of $\mathcal{B},$ we see that the ranges of $\x^{(l+1)}=q\y^{(l+1)}+\z^{(l+1)}$, $\x^{(l+2)}=q\y^{(l+2)}+\z^{(l+2)}$ are equal and depend at most on $y,\z^{(1)},\ldots,\z^{(l)}$ and $\y^{(1)},\ldots,\y^{(l)},$ 


Then, by the same treatment used in [$\ref{ref8}$, Lemma 5.1], we find that there exist boxes $\mathfrak{D}_i\ (i=1,\ldots,l+2)$ with side length at most $2$ such that
\begin{equation}\label{3.113.11}
\begin{aligned}
\mathcal{T}(\delta,\z^{(1)},\ldots,\z^{(l+2)})-\mathcal{U}(\delta)\ll E_1+E_2,
\end{aligned}
\end{equation}
where \begin{equation*}
 \begin{aligned}
&E_1=q(A_1/q)^N(A_2/q)^N\cdots(A_{l+1}/q)^{2N}A_2\cdots A_l A_{l+1}^{k-l}|\delta|\\
&E_2\leq(A_1/q)^{N-1}(A_2/q)^N\cdots(A_{l+1}/q)^{2N}
\end{aligned}
\end{equation*}
and
\begin{equation*}
\begin{aligned}
\mathcal{U}(\delta)=\dint_{q\boldsymbol{\gamma}^{(1)}+\z^{(1)}\in A_{1}\mathfrak{D}_1}\cdots\dint_{q\boldsymbol{\gamma}^{(l+2)}+\z^{(l+2)}\in A_{l+2}\mathfrak{D}_{l+2}}e(\delta \phi_2(\boldsymbol{\gamma}^{(l+1)},\boldsymbol{\gamma}^{(l+2)}))d\boldsymbol{\gamma}^{(l+2)}\cdots d\boldsymbol{\gamma}^{(1)}
\end{aligned}
\end{equation*}
 in which  $A_{l+1}=A_{l+2}$ and $\mathfrak{D}_{l+1}=\mathfrak{D}_{l+2}$ depend at most on $y,\z^{(1)},\ldots,\z^{(l)},\boldsymbol{\gamma}^{(1)},\ldots,\boldsymbol{\gamma}^{(l)},$ and are uniformly bounded.
Notice that $E_2$ accounts for the initial and final interval with length at most $O(1)$ for each coordinate of $\boldsymbol{\gamma}^{(j)}(j=1,\ldots,l+2)$ in $\mathcal{U}(\delta).$ 

Furthermore, by change of variables $q\boldsymbol{\gamma}^{(i)}+\boldsymbol{z}^{(i)}=A_{i}\boldsymbol{\eta}^{(i)}$ with $1\leq i\leq l+2$ and $A_{l+1}=A_{l+2}$, we find that
\begin{equation}\label{3.123.12}
\begin{aligned}
   \mathcal{U}(\delta)=q^{-(l+2)N}(A_1\cdots A_lA_{l+1}^2)^NI(A_1\cdots A_lA_{l+1}^{k-l}\delta),
\end{aligned}
\end{equation}
where 
$$I(\delta)=\dint_{\mathfrak{D}_1}\cdots\dint_{\mathfrak{D}_{l+2}}e(\delta \phi_1(\boldsymbol{\eta}^{(l+1)},\boldsymbol{\eta}^{(l+2)};\boldsymbol{\eta}^{(1)}\ldots,\boldsymbol{\eta}^{(l)}))d\boldsymbol{\eta}^{(l+2)}\cdots d\boldsymbol{\eta}^{(1)}.$$
Notice here that $I(\delta)$ depends at most on  $y,\z^{(1)},\ldots,\z^{(l)}.$

Meanwhile, on recalling the definition of $\phi_1$, observe that
\begin{equation*}
\begin{aligned}
&\dsum_{\z^{(1)},\ldots,\z^{(l+2)}}e\left(\frac{a}{q}\phi_1(\z^{(l+1)},\z^{(l+2)};\z^{(1)},\ldots,\z^{(l)})\right),     \\
&=\dsum_{\z^{(1)},\ldots,\z^{(l)}}\biggl|\dsum_{1\leq \z\leq q}e\left(\frac{a}{q}\Delta_l(P(\z);\z^{(1)},\ldots,\z^{(l)})\right)\biggr|^2.
\end{aligned}
\end{equation*}
For simplicity, we write the inner sum over $\z$ by
$$S_{q,a}:=S_{q,a}(\z^{(1)},\ldots,\z^{(l)})=\dsum_{1\leq \z\leq q}e\left(\frac{a}{q}\Delta_l(P(\z);\z^{(1)},\ldots,\z^{(l)})\right).$$
Hence, by substituting $(\ref{3.123.12})$ into ($\ref{3.113.11}$) and that into ($\ref{3.93.9}$), we obtain
\begin{equation}\label{3.13}
    \mathcal{S}(\beta)=\dsum_{y}\dsum_{\z^{(1)},\ldots,\z^{(l)}}|S_{q,a}|^2\cdot q^{-(l+2)N}(A_1\cdots A_lA_{l+1}^2)^NI(A_1\cdots A_lA_{l+1}^{k-l}\delta) +O(E(\beta)),
\end{equation}
where
$E(\beta)=(B+1)q^{(l+2)N}(E_1+E_2).$
This is possible because $I(\delta)$ depends at most on $y,\z^{(1)},\ldots,\z^{(l)}.$
For simplicity, we write 
\begin{equation*}
    \mathcal{I}(A_1\cdots A_lA_{l+1}^{k-l}\delta)=\sup_{\z^{(1)},\ldots,\z^{(l)}}\biggl|\dsum_{-B\leq y\leq B}I(A_1\cdots A_lA_{l+1}^{k-l}\delta)\biggr|
\end{equation*}
and 
\begin{equation*}
    \mathcal{L}(q,a)=\dsum_{\z^{(1)},\ldots,\z^{(l)}}|S_{q,a}|^2\cdot q^{-(l+2)N}(A_1\cdots A_lA_{l+1}^2)^N.
\end{equation*}
Then, it follows from ($\ref{3.13}$) that 
\begin{equation}\label{8.14}
    \mathcal{S}(\beta)\ll \mathcal{L}(q,a)\cdot  \mathcal{I}(A_1\cdots A_lA_{l+1}^{k-l}\delta)+|E(\beta)|.
\end{equation}

Meanwhile, since $\beta\in \widetilde{\mathfrak{M}}_{q,a}(A_{1}^{\eta})$, we find from the definition of $E_1$ and $E_2$ that
\begin{equation}\label{3.143.14}
\begin{aligned}
E(\beta)\ll(B+1)(A_1\cdots A_lA_{l+1}^2)^NA_{1}^{-1+\eta}.
\end{aligned}
\end{equation}
Note that $$\text{mes}(\widetilde{\mathfrak{M}}(A_{1}^{\eta}))\leq (A_1\cdots A_lA_{l+1}^{k-l})^{-1}A_{1}^{2\eta}.$$ 
Thus, on substituting $(\ref{3.143.14})$ into $(\ref{8.14})$ and that into $\mathcal{J}_1$ in ($\ref{3.73.7}$), we deduce from the elementary inequality $(a+b)^{\sigma}\leq a^{\sigma}+b^{\sigma}$ with $0<\sigma<1$ and $a,b>0$ that
\begin{equation}\label{3.18}
\begin{aligned}
    \mathcal{J}_1\ll \dsum_{1\leq q\leq A_{1}^{\eta}}\dsum_{\substack{1\leq a\leq q\\ (q,a)=1}}(\mathcal{L}(q,a))^{\sigma}\mathfrak{J}(A_{1}^{\eta})+ E,
\end{aligned}
\end{equation}
where
$$\mathfrak{J}(A_{1}^{\eta})=\dint_{|\delta|\leq q^{-1}A^{\eta}_{1}\left(A_1\cdots A_{l}A_{l+1}^{k-l}\right)^{-1}}\left(\mathcal{I}(A_1\cdots A_lA_{l+1}^{k-l}\delta))\right)^{\sigma}d\delta$$
and 
\begin{equation*}
\begin{aligned}
E&=O\left(\text{mes}(\widetilde{\mathfrak{M}}(A_{1}^{\eta}))(B+1)^{\sigma}(A_1\cdots A_lA_{l+1}^2)^{\sigma N}A_{1}^{-\sigma+\sigma\eta}\right)\\
&=O\left((B+1)^{\sigma}(A_1\cdots A_lA_{l+1}^2)^{\sigma N}(A_1\cdots A_lA_{l+1}^{k-l})^{-1}A_{1}^{-\sigma+(\sigma+2)\eta}\right).    
\end{aligned}
\end{equation*}
By the same treatment used in [$\ref{ref8}$, Lemma 5.2, Lemma 5.4] together with ($\ref{3.113.11}$) using $q=1,a=0$ and with the Weyl type estimate over minor arcs for the exponential sum $S(\beta)$ derived by ($\ref{3.6}$), ($\ref{3.7}$) and ($\ref{3.8}$), one infers that
$$\mathcal{I}(\gamma)\ll (B+1)\text{min}(1, |\gamma|^{-2^{2-k+l}N/(k-1)+\epsilon})$$
and 
$$\mathcal{L}(q,a)\ll q^{-2^{2-k+l}N/(k-1)+\epsilon}\cdot(A_1\cdots A_lA_{l+1}^2)^N.$$
Hence, whenever $N\geq 3(k-1)2^{k-l+2}\sigma^{-1}$ and $1\leq l\leq k-2$, one infers that
\begin{equation}\label{3.2020}
    \begin{aligned}
    &(i)\ \mathfrak{J}(A^{\eta}_{1})\ll (B+1)^{\sigma}(A_1\cdots A_l A_{l+1}^{k-l})^{-1}\ \text{uniformly in} \ q\geq 1.\\
   &(ii)\ \dsum_{1\leq q\leq A_{1}^{\eta}}\dsum_{\substack{1\leq a\leq q\\ (q,a)=1}}(\mathcal{L}(q,a))^{\sigma}\ll (A_1\cdots A_lA_{l+1}^2)^{N\sigma}.
    \end{aligned}
\end{equation}
By using ($\ref{3.83.8}$) in place of $(\ref{3.6})$, when $l=k-1$ and $N\geq 3(k-1)\sigma^{-1}$, we obtain the same estimates with ($\ref{3.2020}$).
Hence, since $\eta=\frac{\sigma}{2(\sigma+2)}$, on substituting $(\ref{3.2020})$ into ($\ref{3.18}$) we conclude that 
\begin{equation}\label{3.19}
\begin{aligned}
\mathcal{J}_1\ll(B+1)^{\sigma}(A_1\cdots A_{l+1}^2)^{N\sigma}(A_1\cdots A_l A_{l+1}^{k-l})^{-1}.
\end{aligned}
\end{equation}

Next, we turn to estimate $\mathcal{J}_2$ in $(\ref{3.73.7})$. By ($\ref{3.6}$), ($\ref{3.7}$) and $(\ref{3.8})$, whenever $\beta \in \widetilde{\mathfrak{m}}(A_{1}^{\eta})$, one easily infers that when $1\leq l\leq k-2$
$$\mathcal{S}(\beta)\ll (B+1)(A_1\cdots A_lA_{l+1}^{2})^N(\log A_{l+1})^{N+1}A_{1}^{-2^{2-k+l}(k-1)^{-1}N\eta}.$$
By using ($\ref{3.83.8}$) in place of $(\ref{3.6}),$ when $l=k-1$, we obtain the bound
$$\mathcal{S}(\beta)\ll (B+1)(A_1\cdots A_lA_{l+1}^{2})^N(\log A_{l+1})^{N+1}A_{1}^{-(k-1)^{-1}N\eta}.$$
Therefore, on recalling  the hypothesis $1\leq\frac{\log A_{l+1}}{\log A_1}\leq 3$ in the statement of Lemma $\ref{lem3.2}$, in all cases, it follows that  whenever $N\geq 6k(k-1)2^{k-l-2}\eta^{-1}\sigma^{-1}$ one has
\begin{equation}\label{3.20}
\begin{aligned}
    \mathcal{J}_2\ll \sup_{\beta\in \widetilde{\mathfrak{m}}(A_{1}^{\eta})}\mathcal{S}(\beta)^{\sigma}
    &= o\left((B+1)^{\sigma}(A_1\cdots A_lA_{l+1}^{2})^{\sigma N}(A_1\cdots A_l A_{l+1}^{k-l})^{-1}\right).
\end{aligned}
\end{equation}
The inequality ($\ref{3.10}$) with ($\ref{3.19}$) and ($\ref{3.20}$) delivers that 
$$ \dint_0^1\mathcal{S}(\beta)^{\sigma}d\beta\ll(B+1)^{\sigma}(A_1\cdots A_lA_{l+1}^{2})^{\sigma N}(A_1\cdots A_l A_{l+1}^{k-l})^{-1}, $$
and thus this completes the proof of Lemma $\ref{lem3.2}.$
\end{proof}

\bigskip

\begin{proof}[Proof of Theorem $\ref{thm3.1}$]
We shall first derive the lower bound for $\mathbf{N}(A).$ By making the trivial observation that whenever $P(\x)=0$, one has $P(\x)\equiv 0\ \text{mod}\ W,$ it follows by applying the Cauchy-Schwarz inequality that 
\begin{equation}\label{3.313.31}
    \begin{aligned}
        &\#\left\{\x\in [-A,A]^N\cap \Z^N\middle|\ P(\x)=0\right\}\\
        &=\dsum_{\substack{1\leq \b\leq W\\P(\b)\equiv 0\ \text{mod}\ W}} \#\left\{\x\in [-A,A]^N\cap \Z^N\middle|\ P(\x)=0,\ \x\equiv \b\ \text{mod}\ W\right\}\\
        &\leq \biggl(\dsum_{\substack{1\leq\b\leq W\\P(\b)\equiv 0\ \text{mod}\ W}}1\biggr)^{1/2}\cdot \mathcal{S}^{1/2},
    \end{aligned}
\end{equation}
where
$$\mathcal{S}=\dsum_{\substack{1\leq \b\leq W\\P(\b)\equiv 0\ \text{mod}\ W}}\#\left\{\x\in [-A,A]^N\cap \Z^N\middle|\ P(\x)=0,\ \x\equiv \b\ \text{mod}\ W\right\}^2.$$
On noting that 
$\mathcal{S}\leq \mathbf{N}(A)$ and by applying Lemma $\ref{lem3.93.9}$, it follows from $(\ref{3.313.31})$ that
\begin{equation}\label{3.323.32}
  \#\left\{\x\in [-A,A]^N\cap \Z^N\middle|\ P(\x)=0\right\}\ll (W^{N-1})^{1/2}\cdot \mathbf{N}(A)^{1/2}.
\end{equation}
Meanwhile, since $P(\x)=0$ has a nontrivial integer solution and $P$ is a non-singular form, we find by [$\ref{ref8}$] that 
$$\#\left\{\x\in [-A,A]^N\cap \Z^N\middle|\ P(\x)=0\right\}\asymp A^{N-k},$$
and thus it follows by $(\ref{3.323.32})$ that
\begin{equation}\label{3.34}
 A^{2N-2k}W^{1-N}\ll \mathbf{N}(A).   
\end{equation}

Next, we shall derive the upper bound for $\mathbf{N}(A).$
By making the trivial observation that whenever $P(\x)=P(\y)=0$, one has $P(\x)\equiv P(\y)\equiv 0\ \text{mod}\ W$, it follows by orthogonality and by applying the triangle inequality that
\begin{equation}\label{5.25.2}
\begin{aligned}
  \mathbf{N}(A)&= W^{-N}\dint_{[0,1)^2}\dsum_{\substack{1\leq \l\leq W\\\l\in \Z^{N}}}\dsum_{\substack{-A\leq\x,\y\leq A\\ P(\x)\equiv P(\y)\equiv 0\ \text{mod}\ W \\ \x,\y\in \Z^N}}e(\alpha P(\x)+\beta P(\y))e\left(\frac{\langle\x-\y,\l\rangle}{W}\right)d\alpha \ d\beta    \\
  &\leq  W^{-N}\dint_{[0,1)^2}\dsum_{\substack{1\leq \l\leq W\\\l\in \Z^{N}}}\biggl|\dsum_{\substack{-A\leq\x,\y\leq A\\ P(\x)\equiv P(\y)\equiv 0\ \text{mod}\ W\\ \x,\y\in \Z^N}}e(\alpha P(\x)+\beta P(\y))e\left(\frac{\langle\x-\y,\l\rangle}{W}\right)\biggr|d\alpha \ d\beta.
\end{aligned}
\end{equation}
We find by applying the Cauchy-Schwarz inequality that the last expression in  $(\ref{5.25.2})$ is bounded above by 
\begin{equation}\label{3.22}
W^{-N}\dint_0^1\Xi(\alpha)^{1/2}d\alpha\dint_0^1\Xi(\beta)^{1/2}d\beta=W^{-N}\biggl(\dint_0^1\Xi(\alpha)^{1/2}d\alpha\biggr)^2,
\end{equation}
where
$$\Xi(\xi)=\dsum_{1\leq\l\leq W}\biggl|\dsum_{\substack{-A\leq \x\leq A\\P(\x)\equiv0\ \text{mod}\ W}}e(\xi P(\x))e\left(\frac{\langle \x,\l\rangle}{W}\right)\biggr|^2.$$
By orthogonality again, the sum $\Xi(\xi)$ is seen to be
\begin{equation*}
    \Xi(\xi)=\dsum_{1\leq\l\leq W}\biggl|W^{-1}\dsum_{1\leq r\leq W}\dsum_{-A\leq \x\leq A}e(\xi P(\x))e\left(\frac{\langle \x,\l\rangle}{W}\right)e\biggl(\frac{P(\x)r}{W}\biggr)\biggr|^2.
\end{equation*}

We now analyze the sum $\Xi(\xi).$ Define 
$$S(r/W,\xi)=\dsum_{-2A/W\leq \h\leq 2A/W}\dsum_{\x\in I_{\h}}e(\xi( P(\x+W\h)-P(\x)))e\biggl(\frac{P(\x)r}{W}\biggr),$$
where $I_{\h}=\{-A\leq \x\leq A|\ -A\leq\x+W\h\leq A\}$. Then,
by squaring out and inverting the order of summation, we see by applying orthogonality with respect to variables $\l$ and applying the change of variables with respect to $\x$ that
\begin{equation}\label{3.23}
    \Xi(\xi)=W^{N-1}\dsum_{1\leq r\leq W}S(r/W,\xi).
\end{equation}

 In order to apply the Weyl differencing argument to the sum $S(r/W,\xi),$ we recall the definition $(\ref{3.3})$ of the differencing operator $\Delta_j$, and define
\begin{equation*}
\begin{aligned}
U_1(\x^{(1)})&:=U_1(\x^{(1)};\h,\x^{(k-1)},\ldots,\x^{(2)})\\
&=\Delta_{k-1}(P(\x^{(1)});W\h,\x^{(k-1)},\ldots,\x^{(2)})    \\
&=W\Delta_{k-1}(P(\x^{(1)});\h,\x^{(k-1)},\ldots,\x^{(2)}) 
\end{aligned}
\end{equation*}
and
\begin{equation*}
\begin{aligned}
U_2(\x^{(1)})&:=U_2(\x^{(1)};\x^{(k-1)},\ldots,\x^{(2)})=\Delta_{k-2}(P(\x^{(1)});\x^{(k-1)},\ldots,\x^{(2)}).
\end{aligned}
\end{equation*}
Notice here that $U_1(\x^{(1)})$ is a polynomial in $\x^{(1)}$ of degree $1$, and $U_2(\x^{(1)})$ is a polynomial in $\x^{(1)}$ of degree $2.$ Furthermore, define
\begin{equation*}
    \mathcal{B}^{(0)}:=\mathcal{B}^{(0)}(\h)=\left\{\x\in [-A,A]^N\middle|\ \x+W\h\in [-A,A]^N\right\}
\end{equation*}
and define $\mathcal{B}^{(j)},$ when $j\geq 1$, by recursively setting
\begin{equation*}
\begin{aligned}
    \mathcal{B}^{(1)}&:=\mathcal{B}^{(1)}(\h,\x^{(k-1)})\\
    &=\mathcal{B}^{(0)}\cap \left\{\x\in [-A,A]^N\middle|\ \x+\x^{(k-1)}\in \mathcal{B}^{(0)}\right\}    
\end{aligned}
\end{equation*}
and 
\begin{equation*}
\begin{aligned}
    \mathcal{B}^{(j)}&:=\mathcal{B}^{(j)}(\h,\x^{(k-1)},\ldots,\x^{(k-j)})\\
    &=\mathcal{B}^{(j-1)}\cap \left\{\x\in [-A,A]^N\middle|\ \x+\x^{(k-j)}\in \mathcal{B}^{(j-1)}\right\}.
\end{aligned}
\end{equation*}
Additionally, we define
\begin{equation*}
    T(r/W,\xi)=\dsum_{\h}\dsum_{\x^{(2)},\ldots,\x^{(k-1)}}\biggl|\dsum_{\x^{(1)}}e(\xi U_1(\x^{(1)}))e((r/W)U_2(\x^{(1)}))\biggr|,
    \end{equation*}
where $-2A/W\leq \h\leq 2A/W$, $-2A\leq \x^{(2)},\ldots,\x^{(k-1)}\leq 2A$, and $\x^{(1)}$ runs over the rectangular box $\mathcal{B}^{(k-2)}.$

Then, on observing that $P(\x+W\h)-P(\x)=\Delta_1(P(\x);W\h)$ and by applying the standard Weyl differencing argument to the sum $S(r/W,\xi)$, we deduce that
\begin{equation}\label{5.4}
    S(r/W,\xi)\ll (A/W)^{N-2^{2-k}N}A^{N-(k-1)2^{2-k}N}T(r/W,\xi)^{2^{2-k}}.
\end{equation}


 In order to apply the Weyl differencing argument once again to $T(r/W,\xi)$, we define 
\begin{equation*}
\begin{aligned}
    U_3(\h,\x^{(1)})&=\Delta_{k-1}(P(\x^{(1)});\h,\x^{(k-1)},\ldots,\x^{(2)})\\
    U_4(\x^{(0)},\x^{(1)})&=\Delta_{k-1}(P(\x^{(0)});\x^{(k-1)},\ldots,\x^{(1)}),
\end{aligned}
\end{equation*}
and
\begin{equation*}
    V(r/W,\xi)=\dsum_{\h}\dsum_{\x^{(1)},\x^{(2)},\ldots,\x^{(k-1)}}\dsum_{\x^{(0)}}e(W\xi U_3(\h,\x^{(1)}))e((r/W)U_4(\x^{(0)},\x^{(1)})),
\end{equation*}
where $-2A\leq \x^{(1)}\leq 2A$ and $\x^{(0)}$ runs over the rectangular box $\mathcal{B}^{(k-1)}.$ 
Notice from the definition of $\mathcal{B}^{(k-1)}$ that the range of $\x^{(0)}$ depends on $\h,\x^{(1)},\ldots,\x^{(k-1)}$, and that $U_3(\h,\x^{(1)})=U_1(\x^{(1)})/W.$
Then, by applying the Cauchy-Schwarz inequality and Weyl differencing, one deduces that
\begin{equation}\label{3.24}
\begin{aligned}
    T(r/W,\xi)\leq W^{-N/2}A^{(k-1)N/2}V(r/W,\xi)^{1/2}.
\end{aligned}
\end{equation}

By applying the triangle inequality and the Cauchy-Schwarz inequality again, we find that
\begin{equation}\label{5.5}
\begin{aligned}
    V(r/W,\xi)&\leq \dsum_{\x^{(1)},\ldots,\x^{(k-1)}}|Y(r/W,\xi)|   \leq A^{(k-1)N/2}\biggl(\dsum_{\x^{(1)},\ldots,\x^{(k-1)}}|Y(r/W,\xi)|^2\biggr)^{1/2},
\end{aligned}
\end{equation}
where 
$$Y(r/W,\xi)=\dsum_{\h,\x^{(0)}}e(W\xi U_3(\h,\x^{(1)}))e((r/W)U_4(\x^{(0)},\x^{(1)})).$$
Meanwhile, 
we see from the triangle inequality that
\begin{equation}\label{5.6}
\begin{aligned}
   Y(r/W,\xi)\leq \dsum_{-2A/W\leq\h\leq 2A/W}|Z_1(r/W)|,
\end{aligned}
\end{equation}
where
\begin{equation*}
    Z_1(r/W)=\dsum_{\x^{(0)}\in \mathcal{B}^{(k-1)}}e((r/W)U_4(\x^{(0)},\x^{(1)})).
\end{equation*}
 Note that the range of $\h$ is contained in $[-2A/W,2A/W]$ and the range of $\x^{(0)}$ is contained in $[-A,A]^N.$ Hence, if we change the order of sums over $\h,\x^{(0)}$ in $Y(r/W,\xi)$ and if we further define the range of $\h$ to be the empty set when $\x^{(0)}\notin [-A,A]^N,$ one sees from the triangle inequality again that
\begin{equation}\label{5.7}
\begin{aligned}
    Y(r/W,\xi)\leq \dsum_{-2A\leq \x^{(0)}\leq 2A}|Z_2(\xi)|,
\end{aligned}
\end{equation}
where
\begin{equation*}
    Z_2(\xi)=\dsum_{\h\in B(\x^{(0)})}e(W\xi U_3(\h,\x^{(1)})),
\end{equation*}
in which $B(\x^{(0)}):=B(\x^{(0)};\x^{(1)},\ldots,\x^{(k-1)})$ is an $N$-dimensional set depending on $\x^{(0)}$, $\x^{(1)}$,$\ldots$, $\x^{(k-1)}.$ We note from the definition of $\mathcal{B}^{(k-1)}$ that $B(\x^{(0)})$ is a rectangular box (or the empty set) contained in $[-2A/W,2A/W]^N$.

Hence, on substituting $(\ref{5.6})$ and $(\ref{5.7})$ into $(\ref{5.5})$, we discern that
\begin{equation*}
\begin{aligned}
    &V(r/W,\xi)\\
    &\leq A^{(k-1)N/2}\biggl(\dsum_{\x^{(1)},\ldots,\x^{(k-1)}}\biggl(\dsum_{-2A/W\leq\h\leq 2A/W}|Z_1(r/W)|\biggr)\biggl(\dsum_{-2A\leq \x^{(0)}\leq 2A}|Z_2(\xi)|\biggr)\biggr)^{1/2}.   
\end{aligned}
\end{equation*}
By applying the Cauchy-Schwarz inequality and Weyl differencing argument again, we deduce that
\begin{equation}\label{3.29}
\begin{aligned}
    V(r/W,\xi)&\leq A^{(k-1)N/2}G_1(r/W)^{1/4}G_2(\xi)^{1/4},
\end{aligned}
\end{equation}
where 
$$G_1(r/W)=\dsum_{\x^{(1)},\ldots,\x^{(k-1)}}\biggl(\dsum_{-2A/W\leq\h\leq 2A/W}|Z_1(r/W)|\biggr)^2$$
and
$$G_2(\xi)=\dsum_{\x^{(1)},\ldots,\x^{(k-1)}}\biggl(\dsum_{-2A\leq \x^{(0)}\leq 2A}|Z_2(\xi)|\biggr)^2.$$

On substituting $(\ref{3.29})$ into $(\ref{3.24})$ and that into $(\ref{5.4})$, one has
\begin{equation*}
    S(r/W,\xi)\ll (A/W)^{N}A^NA^{-(k+3)2^{-k}N}W^{2^{1-k}N}G_1(r/W)^{2^{-1-k}}G_2(\xi)^{2^{-1-k}}.
\end{equation*}
Then, on substituting this bound for $S(r/W,\xi)$ into $(\ref{3.23})$, we deduce that
\begin{equation*}
    \Xi(\xi)\ll W^{-1}A^{2N}A^{-(k+3)2^{-k}N}W^{2^{1-k}N}G_2(\xi)^{2^{-1-k}}\dsum_{1\leq r\leq W}G_1(r/W)^{2^{-1-k}}.
\end{equation*}
Thus, it follows from ($\ref{5.25.2}$) and ($\ref{3.22}$) that 
\begin{equation}\label{3.35}
    \mathbf{N}(A)\ll C(A,W)\biggl(\dint_0^1G_2(\alpha)^{2^{-2-k}}d\alpha\biggr)^{2},
\end{equation}
where
$$C(A,W)=W^{-N-1+2^{1-k}N}A^{2N-(k+3)2^{-k}N}\dsum_{1\leq r\leq W}G_1(r/W)^{2^{-1-k}}.$$

As the endgame of the proof of Theorem $\ref{thm3.1}$, we first analyse the mean value
\begin{equation}\label{3.36}
\dint_0^1G_2(\alpha)^{2^{-2-k}}d\alpha,    
\end{equation}
in $(\ref{3.35})$. In order to apply Lemma $\ref{lem3.2}$, we temporarily pause to bound $G_2(\alpha)$ by a different type of exponential sum. On recalling the definition ($\ref{3.13.13.1}$) of $P(\a)$, we define 
$$\Psi(\x^{(1)},\ldots, \x^{(k)})=k!\dsum_{1\leq j_1,\ldots,j_k\leq N}p_{j_1\cdots j_k}(\x^{(1)})_{j_1}\cdots (\x^{(k)})_{j_k}.$$
Then, on recalling the definition of $G_2(\alpha)$ and noting that $P$ is a homogeneous polynomial of degree $k,$ we see that
$$G_2(\alpha)=\dsum_{\x^{(1)},\ldots,\x^{(k-1)}}\biggl(\dsum_{-2A\leq \x^{(0)}\leq 2A}\biggl|\dsum_{\h\in B(\x^{(0)})}e(W\alpha \Psi^{(1)}(\h,\x^{(k-1)}))\biggr|\biggr)^2,$$
where $\Psi^{(1)}(\h,\x^{(k-1)})=\Psi(\h,\x^{(1)},\ldots,\x^{(k-1)}).$
Then, we see from the Cauchy-Schwarz inequality that 
\begin{equation*}
     G_2(\alpha)\ll A^N\dsum_{\x^{(1)},\ldots,\x^{(k-1)}}\dsum_{\x^{(0)}}\dsum_{\h^{(1)},\h^{(2)}\in B(\x^{(0)})}e(W\alpha \Psi^{(1)}(\h^{(1)}-\h^{(2)},\x^{(k-1)})).
\end{equation*}
 By changing the order of summations, we find that
\begin{equation*}
     G_2(\alpha)\ll A^N\dsum_{-2A/W\leq \h^{(1)},\h^{(2)}\leq 2A/W}\dsum_{\x^{(0)},\ldots,\x^{(k-2)}}\dsum_{\x^{(k-1)}\in B}e(W\alpha \Psi^{(1)}(\h^{(1)}-\h^{(2)},\x^{(k-1)})),
\end{equation*}
where $B$ is an $N$-dimensional set depending on $\h^{(1)},\h^{(2)},\x^{(0)}\ldots,\x^{(k-2)}$. We note from the definition of $B(\x^{(0)})$ that $B$ is a rectangular box of dimension $N$.
By change of variables $\y=\h^{(1)}-\h^{(2)}$ and by applying the triangle inequality, we deduce that
\begin{equation*}
\begin{aligned}
    G_2(\alpha)\ll A^N\dsum_{-4A/W\leq \y\leq 4A/W}\dsum_{\h^{(2)}}\dsum_{\x^{(0)},\ldots,\x^{(k-2)}}\biggl|\dsum_{\x^{(k-1)}\in B_1}e(W\alpha \Psi^{(1)}(\y,\x^{(k-1)}))\biggr|,
\end{aligned}
\end{equation*}
where $B_1:=B_1(\x^{(0)},\h^{(2)},\y,\x^{(1)},\ldots,\x^{(k-2)})\subseteq [-2A,2A]^N$ is a rectangular box (or the empty set). 

 For fixed $\y,\h^{(2)},\x^{(0)},\ldots,\x^{(k-2)},$ we observe that 
 \begin{equation*}
     \biggl|\dsum_{\x^{(k-1)}\in B_1}e(W\alpha \Psi^{(1)}(\y,\x^{(k-1)}))\biggr|=\biggl|\dsum_{\x^{(k-1)}\in B_1}e(W\alpha \Delta_{k-1}(P(\x^{(k-1)})))\biggr|,
 \end{equation*}
where $\Delta_{k-1}(P(\x^{(k-1)})):=\Delta_{k-1}(P(\x^{(k-1)});\y,\x^{(1)},\ldots,\x^{(k-2)}).$
Hence, by applying the Cauchy-Schwarz inequality, we find that
\begin{equation}\label{3.37}
    G_2(\alpha)\ll A^N((A/W)^{2N}A^{(k-1)N})^{1/2}G_3(\alpha)^{1/2},
\end{equation}
where
$$G_3(\alpha)=\dsum_{\x^{(0)},\h^{(2)}}\dsum_{\y}\dsum_{\x^{(1)},\ldots,\x^{(k-2)}}\biggl|\dsum_{\x^{(k-1)}\in B_1}e(W\alpha \Delta_{k-1}(P(\x^{(k-1)})))\biggr|^2.$$
Meanwhile, we note that variables $\x^{(0)}$ and $\h^{(2)}$ in $G_3(\alpha)$ are not associated with the argument of the exponential sum in the innermost sum, and are only involved in $B_1$. Consider an arbitrary injective function $\mathcal{F}$ mapping from $$(\x^{(0)},\h^{(2)})\in \left([-2A,2A]^N\times [-2A/W,2A/W]^N\right)\cap \Z^{2N}$$ to $$b\in [-(20A)^{2N}W^{-N},(20A)^{2N}W^{-N}]\cap \Z.$$ Furthermore, we define a set $ D_1:=D_1(b,\y,\x^{(1)},\ldots,\x^{(k-2)})\subseteq \Z^N$ in the following way. If $b$ is not in the image of $\mathcal{F}$, we define $D_1$ to be the empty set, and if $b$ is in the image of $\mathcal{F}$, we define
\begin{align*}
    D_1&:=D_1(b,\y,\x^{(1)},\ldots,\x^{(k-2)})\\
    &=D_1(\mathcal{F}(\x^{(0)},\h^{(2)}),\y,\x^{(1)},\ldots,\x^{(k-2)})\\
    &=B_1(\x^{(0)},\h^{(2)},\y,\x^{(1)},\ldots,\x^{(k-2)})\cap \Z^N.
\end{align*}
Then, one may write 
\begin{equation}\label{3.202020}
    G_3(\alpha)=\dsum_{ |b|\leq (20A)^{2N}W^{-N}}\dsum_{\y}\dsum_{\x^{(1)},\ldots,\x^{(k-2)}}\biggl|\dsum_{\x^{(k-1)}\in D_1}e(W\alpha \Delta_{k-1}(P(\x^{(k-1)})))\biggr|^2.
\end{equation}

Hence, on substituting $(\ref{3.202020})$ into $(\ref{3.37})$ and that into $(\ref{3.36})$, it follows by applying periodicity that
\begin{equation}\label{3.38}
\begin{aligned}
    \dint_0^1G_2(\alpha)^{2^{-2-k}}d\alpha&\ll A^{(k+3)2^{-3-k}N}W^{-2^{-2-k}N}\dint_0^1 G_3(\alpha)^{2^{-3-k}}d\alpha\\
    &=A^{(k+3)2^{-3-k}N}W^{-2^{-2-k}N}W^{-1}\dint_0^WG_3(W^{-1}\alpha)^{2^{-3-k}}d\alpha\\
    &=A^{(k+3)2^{-3-k}N}W^{-2^{-2-k}N}\dint_0^1G_3(W^{-1}\alpha)^{2^{-3-k}}d\alpha.
\end{aligned}
\end{equation}
Since $D_1$ is a rectangular box depending on $b,\y,\x^{(1)},\ldots,\x^{(k-2)}$, by applying Lemma $\ref{lem3.2}$ to $$\int_0^1G_3(W^{-1}\alpha)^{2^{-3-k}}d\alpha,$$ with $$l=k-1, B=(20A)^{2N}W^{-N}, A_1=4A/W, A_2=A_3=\cdots=A_k=2A, \sigma=2^{-3-k},$$ it follows that whenever $N\geq 18k(k-1)4^{k+3}$ one has
\begin{equation}\label{3.39}
    \dint_0^1G_3(W^{-1}\alpha)^{2^{-3-k}}d\alpha\ll (A^{k+3}/W^2)^{2^{-3-k}N}WA^{-k}.
\end{equation}
By substituting $(\ref{3.39})$ into $(\ref{3.38})$, one has
\begin{equation}\label{3.40}
    \dint_0^1G_2(\alpha)^{2^{-2-k}}d\alpha\ll A^{(k+3)2^{-2-k}N}W^{-2^{-1-k}N}WA^{-k}.
\end{equation}

Next, we turn to estimate $C(A,W)$ in $(\ref{3.35}).$ Recall the definition of $C(A,W),$ that is
\begin{equation}\label{3.41}
C(A,W)=W^{-N-1+2^{1-k}N}A^{2N-(k+3)2^{-k}N}\dsum_{1\leq r\leq W}G_1(r/W)^{2^{-1-k}}.    
\end{equation}
By splitting the summation over $r$ into the values of $d=(r,W),$ we see that
\begin{equation}\label{3.42}
\dsum_{1\leq r\leq W}G_1(r/W)^{2^{-1-k}}=\dsum_{d|W}\dsum_{(r,W)=W/d}G_1(r/W)^{2^{-1-k}}.
\end{equation}
Meanwhile, on recalling the definition of $G_1(r/W)$, it follows by applying the Cauchy-Schwarz inequality that
$$G_1(r/W)\ll (A/W)^N\dsum_{\h}\dsum_{\x^{(1)},\ldots,\x^{(k-1)}}\dsum_{\z^{(1)},\z^{(2)}\in \mathcal{B}^{(k-1)}}e((r/W)U_4(\z^{(1)}-\z^{(2)},\x^{(1)})).$$
Furthermore, if we use the definition ($\ref{7.1}$) of $\Gamma_P$ , we find by change of variables $\z=\z^{(1)}-\z^{(2)}$ that
\begin{equation*}
    G_1(r/W)\ll (A/W)^{2N}A^{N}\Gamma_P(r/W,A,\ldots,A).
\end{equation*}
Additionally, let us use the definition ($\ref{3.9}$) of $N(A_1,\ldots,A_{k-1},A_k^{-1};\beta).$ Then, whenever $(r,W)=W/d$, one infers from the argument in [$\ref{ref17}$, Lemma 3.2] that 
\begin{equation*}
    \Gamma_P(r/W,A,\ldots,A)\ll A^N(\log d)^N N(A,\ldots,A,A^{-1}; r/W).
\end{equation*}
Notice here that naive application of [$\ref{ref17}$, Lemma 3.2] gives $\log A$ in place of $\log d$ on the right-hand side, however, one readily sees from the fact that $r/W$ is a rational number with $(r,W)=W/d$ that the factor $\log A$ could be replaced by $\log d$.
Hence, since $P$ is a non-singular form, the arguments in [$\ref{ref20}$, Lemma 3.3, Lemma 3.4] using ($\ref{3.14}$) and $(\ref{3.63.63.6})$ with $A_1=\cdots=A_k=A$ and $\beta=r/W$ delivers that 
\begin{equation}\label{3.43}
G_1(r/W)\ll (A/W)^{2N}A^{(k+1)N}(\log d)^{N+1}d^{-N/(k-1)}.    
\end{equation}
On substituting $(\ref{3.43})$ into $(\ref{3.42})$, whenever $N\geq k2^{k+2},$ we deduce that
\begin{equation}\label{3.45}
\begin{aligned}
    \dsum_{1\leq r\leq W}G_1(r/W)^{2^{-1-k}}&\ll \dsum_{d|W}\dsum_{(r,W)=W/d}(A/W)^{2^{-k}N}A^{(k+1)2^{-1-k}N}(\log d)^{2^{-1-k}(N+1)}d^{-2^{-1-k}N/(k-1)}\\
   & \ll (A/W)^{2^{-k}N}A^{(k+1)2^{-1-k}N}\dsum_{1\leq d}d^{-2}\ll  (A/W)^{2^{-k}N}A^{(k+1)2^{-1-k}N}
\end{aligned}
\end{equation}
Then, substituting ($\ref{3.45}$) into ($\ref{3.41}$), one has
\begin{equation}\label{3.46}
    C(A,W)\ll W^{-N-1+2^{-k}N}A^{2N-(k+3)2^{-1-k}N}.
\end{equation}
Therefore, by substituting ($\ref{3.40}$) and ($\ref{3.46}$) into ($\ref{3.35}$), we conclude that
\begin{equation}\label{3.313.313.31}
\mathbf{N}(A)\ll A^{2N-2k}W^{1-N}.    
\end{equation}

Combining $(\ref{3.313.313.31})$ and $(\ref{3.34})$, we complete the proof of Theorem $\ref{thm3.1}.$
\end{proof}

\bigskip

\section{Minor arcs}\label{sec4}
Our goal in this section is to prove Proposition $\ref{prop4.1}$ below. In order to describe this proposition, we provide some definitions.
For any measurable set $\mathfrak{B}\subseteq[0,1)$, $\a\in \Z^N$ and $X>1$, define
\begin{equation}\label{4.1}
    \mathcal{I}_{\a}(X,\mathfrak{B})=\dint_{\mathfrak{B}}\dsum_{1\leq \x\leq X}e(\alpha f_{\a}(\x))d\alpha.
\end{equation}
For $B>0,$ define the major arcs 
\begin{equation}\label{4.24.2}
    \mathfrak{M}(B)=\bigcup_{\substack{0\leq a\leq q\leq B\\(q,a)=1}}\mathfrak{M}(q,a),
\end{equation}
where $$\mathfrak{M}(q,a)=\{\alpha\in [0,1)|\ |\alpha-a/q|\leq BA^{-1}X^{-d}\},$$ 
and define the minor arcs $\mathfrak{m}(B)=[0,1)\setminus \mathfrak{M}(B).$ Recall the definition ($\ref{def2.2}$) of $w$. Here and throughout, we abbreviate $\mathfrak{M}(w)$ and $\mathfrak{m}(w)$ simply to $\mathfrak{M}$ and $\mathfrak{m}.$

\begin{prop}\label{prop4.1}
Let $n$ and $d$ be natural numbers with $d\geq 2.$ Let $n_1$ be the greatest integer with $n_1\leq \lfloor(n-1)/2\rfloor/8.$  Suppose that $n_1>2d$ and $2X^d\leq A\leq X^{n_1-d}.$ Suppose that $P\in \Z[\x]$ is a non-singular form in $N_{d,n}$ variables of degree $k\geq 2.$ Then, whenever $N\geq 200k(k-1)2^{k-1},$ one has
\begin{equation}\label{in4.1}
\dsum_{\substack{\|\a\|_{\infty}\leq A\\ P(\a)=0}}\left|\mathcal{I}_{\a}(X,\mathfrak{m})\right|^2\ll A^{N-k-2}X^{2n-2d}(\log A)^{-1}.
\end{equation}
\end{prop}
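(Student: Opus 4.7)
My plan is to first apply Cauchy--Schwarz to the integral defining $\mathcal{I}_{\a}(X,\mathfrak{m})$, yielding $|\mathcal{I}_{\a}(X,\mathfrak{m})|^2\leq \text{mes}(\mathfrak{m})\int_{\mathfrak{m}}|T_{\a}(\alpha)|^2\,d\alpha$ with $T_{\a}(\alpha)=\sum_{1\leq \x\leq X}e(\alpha f_{\a}(\x))$, and then to sum over $\a$ and swap with the $\alpha$-integration, reducing matters to bounding
\[
\int_{\mathfrak{m}}\sum_{\substack{P(\a)=0\\ \|\a\|_{\infty}\leq A}}|T_{\a}(\alpha)|^2\,d\alpha.
\]
Expanding $|T_{\a}(\alpha)|^2=\sum_{\x,\y}e(\alpha\langle\a,\nu_{d,n}(\x)-\nu_{d,n}(\y)\rangle)$ and detecting $P(\a)=0$ through orthogonality introduces an integral over $\gamma\in[0,1]$ of an exponential sum over $\a\in[-A,A]^N\cap\Z^N$ with phase $\alpha\langle\a,\nu_{d,n}(\x)-\nu_{d,n}(\y)\rangle+\gamma P(\a)$, a polynomial of degree $k$ in $\a$.

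Next, I would apply Weyl differencing $(k-1)$ times in the $\a$-variable. Because the linear-in-$\a$ piece $\alpha\langle\a,\nu_{d,n}(\x)-\nu_{d,n}(\y)\rangle$ is killed by $\Delta_{k-1}$ whenever $k\geq 2$, the differenced phase is exactly $\gamma$ times the multilinear form $\sum_j \psi_j(\h_1,\ldots,\h_{k-1};P)\,a_j$, so the inner $\a$-sum collapses into a product $\prod_{j=1}^{N}\min(A,\|\gamma\psi_j(\h)\|^{-1})$ from which one extracts a bound of the shape $A^{N-N/2^{k-1}}\Gamma_P(\gamma;A,\ldots,A)^{1/2^{k-1}}$, uniform in $\alpha,\x,\y$. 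To exploit the minor arcs condition on $\alpha$ and capture the $X^{2n-2d}(\log A)^{-1}$ factor in $(\ref{in4.1})$, a parallel round of Weyl differencing $(d-1)$ times in the $\x$-variable is needed, producing linear exponential sums $\prod_j\min(X,\|\alpha\Phi_j(\a,\h)\|^{-1})$ whose amplitudes $\Phi_j$ are $d$-linear in $\a,\h_1,\ldots,\h_{d-1}$; the classical Weyl estimate over $\mathfrak{m}=[0,1)\setminus\mathfrak{M}(w)$ with $w=\log X$ then supplies the logarithmic saving.

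After these two differencings, the remaining $\gamma$-integral is bounded using Lemma $\ref{lem3.2}$ with $l=k-1$, $\sigma=2^{1-k}$, and box sizes $A_1,\ldots,A_k\asymp A$, with the parameter $B$ absorbing the auxiliary $\x,\y,\h$-data produced by the $\x$-differencing. The hypothesis $N\geq 200k(k-1)2^{k-1}$ is precisely what Lemma $\ref{lem3.2}$ demands at this choice of $\sigma$ and $l$. The main obstacle I anticipate is the careful coordination of the two Weyl differencings---in $\x$ to exploit the $\alpha$-minor-arcs condition and in $\a$ to invoke Lemma $\ref{lem3.2}$---so that the resulting estimate simultaneously retains the $X^{-2d}$ factor (from the $\x,\y$-sum concentrating on $f_{\a}=0$ in mean) and the $A^{-2}$ factor (from the $P(\a)=0$ detection). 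The hypotheses $2X^d\leq A\leq X^{n_1-d}$ and $n_1>2d$, with $n_1=\lfloor(n-1)/2\rfloor/8$, serve to keep all parameters in the balanced regime where both Weyl differencing arguments remain effective and their savings combine multiplicatively into the claimed $A^{N-k-2}X^{2n-2d}(\log A)^{-1}$.
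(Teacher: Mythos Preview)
Your proposal has a genuine gap at the ``coordination'' step, which you yourself flag as the main obstacle but do not resolve. After $(k-1)$ rounds of Weyl differencing in $\a$, the linear phase $\alpha\langle\a,\nu_{d,n}(\x)-\nu_{d,n}(\y)\rangle$ is annihilated and the resulting bound is uniform in $\alpha,\x,\y$; at that point there is no mechanism left to feed in the minor-arcs condition on $\alpha$, so a ``parallel'' differencing in $\x$ cannot recover any savings. Doing the $\x$-differencing first instead leaves the degree-$k$ term $\gamma P(\a)$ untouched and entangled with the new $\x$-difference variables, so you are back where you started. In short, neither order of the two full differencings works, and you give no interpolation device that would let them act simultaneously. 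A second, smaller issue: invoking Lemma~\ref{lem3.2} with $l=k-1$ and $\sigma=2^{1-k}$ requires $N\gg k(k-1)4^{k}$, which for $k\ge 6$ is strictly stronger than the hypothesis $N\ge 200k(k-1)2^{k-1}$, so your claim that this is ``precisely'' the stated condition is not correct.

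The paper's route is different and resolves the coordination problem cleanly via what it calls the \emph{separation procedure}. One performs a \emph{single} round of Weyl differencing in $\a$; the phase then splits as a product over $\h$ of a factor $F_1(\alpha_1,\alpha_2,\h)$ carrying all the $(\alpha,\x,\y)$-information and a factor $F_2(\beta,\h)$ carrying the $P$-information. Cauchy--Schwarz in $\h$ separates these into $G_1(\alpha_1,\alpha_2)^{1/4}G_2(\beta)^{1/4}$. The $\beta$-integral of $G_2^{1/4}$ is then handled by Lemma~\ref{lem3.2} with $l=1$ and $\sigma=1/4$ (this is where $N\ge 200k(k-1)2^{k-1}$ enters). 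For $G_1$ the paper does \emph{not} Weyl-difference in $\x$; instead it isolates the diagonal coefficients $b_i$ of $x_i^d$, so that everything reduces to powers of the one-variable object $T(\alpha)=\sum_{|b|\le A}\bigl|\sum_{x\le X}e(\alpha bx^d)\bigr|^2$. The minor arcs $\mathfrak{m}(w)$ are split as $\mathfrak{m}(X^\delta)\cup(\mathfrak{M}(X^\delta)\setminus\mathfrak{M}(w))$: on $\mathfrak{m}(X^\delta)$ one combines the mean value $\int T^{n_i}$ (via Lemma~\ref{lem3.23.2}) with a pointwise Weyl bound (\ref{3.4}); on the intermediate range one uses major-arc approximations for $\sum_x e(\alpha bx^d)$ and a dyadic pruning in $q$. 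Your sketch omits this split, and without it the Weyl saving over $\mathfrak{m}(w)$ alone is only logarithmic in $X$, which by itself does not yield the full $(\log A)^{-1}$.
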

We shall prove Proposition $\ref{prop4.1}$ at the end of this section. We temporarily pause here and provide the key idea in dealing with the constraint $P(\a)=0$ in the summation over $\a$ in $(\ref{in4.1})$. By orthogonality, we can represent 
 $$\dsum_{\substack{\|\a\|_{\infty}\leq A\\ P(\a)=0}}\left|\mathcal{I}_{\a}(X,\mathfrak{m})\right|^2$$
 by mean values of an associated exponential sum which is the shape of 
 \begin{equation}\label{4.44.4}
     \dsum_{\x}\dsum_{\|\a\|_{\infty}\leq A}e(\beta P(\a))e(F(\a,\x)),
 \end{equation}
 where $F(\a,\x)$ is a polynomial in $\x,$ and a linear function in $\a$. On noting that $F(\a,\x)$ is linear in $\a$, by applying the Cauchy-Schwarz inequality and Weyl differencing argument, we obtain an upper bound for $(\ref{4.44.4})$ in terms of
 \begin{equation}\label{4.54.5}
     \dsum_{\x}\dsum_{\a,\h}e(\beta(P(\a+\h)-P(\a)))e(F(\h,\x)).
 \end{equation}
 By changing the order of summations and applying the triangle inequality, the expression $(\ref{4.54.5})$ is bounded above by 
 \begin{equation}\label{4.64.6}
     \dsum_{\h}\biggl| \dsum_{\a}e(\beta(P(\a+\h)-P(\a)))\biggr|\biggl|\dsum_{\x}e(F(\h,\x))\biggr|.
 \end{equation}
Then, by applying the Cauchy-Schwarz inequality again, the expression $(\ref{4.64.6})$ is bounded above by
\begin{equation}\label{4.74.7}
     \left(\dsum_{\h}\biggl| \dsum_{\a}e(\beta(P(\a+\h)-P(\a)))\biggr|^2\right)^{1/2}\left(\dsum_{\h}\biggl|\dsum_{\x}e(F(\h,\x))\biggr|^2\right)^{1/2}.
 \end{equation}
Now, one sees that the first part in the expression $(\ref{4.74.7})$ includes all the information about $P(\a)=0$, and this allows us to deal with the constraint $P(\a)=0$ separately. We emphasize here that even if we replace the constraint $P(\a)=0$ by a more general constraint $P_1(\a)=P_2(\a)=\cdots=P_m(\a)=0$ with $P_i(\a)\in \Z[\a]$ for $1\leq i\leq m,$ we could follow the same procedure leading from $(\ref{4.44.4})$ to $(\ref{4.74.7})$ and deal with the constraint separately. Thus, we call this procedure `the separation procedure'.

\bigskip

At this point, recall the Veronese embedding and that we write $f_{\a}(\x)=\langle\a, \nu_{d,n}(\x)\rangle$ for a homogeneous polynomial in $n$ variables of degree $d$ with coefficients $\a$. We introduce some definitions useful in sections $\ref{sec4}$ and $\ref{sec5}$.

\begin{defn}\label{defn4.1}
Define $\boldsymbol{b}:=\boldsymbol{b}(f_{\boldsymbol{a}})=(b_1,\ldots,b_n)\in \Z^n$ and $\boldsymbol{c}:=\boldsymbol{c}(f_{\boldsymbol{a}})=(c_1,\ldots, c_{N-n})\in \Z^{N-n}$ to be vectors associated with the coefficients $\a$ of $f_{\a}(\x)$ such that
$b_k$ is the coefficient of $x_k^d$ with $k=1,\ldots,n$, and the coefficients $c_k$ are the coefficients of the remaining monomials in lexicographical order with $k=1,\ldots, N-n.$
\end{defn}

\begin{defn}\label{defn4.2}
Let $n$ and $d$ be natural numbers with $d\geq 2.$ Consider the monomials of degree $d$ in $n$ variables $x_1,\ldots, x_n$. In particular, the number of these monomials is $N=\binom{n+d-1}{d}.$ Then, define $v_d(\x)\in \R^n$ and $w_d(\x)\in \R^{N-n}$ to be vectors associated with those monomials such that $(v_d(\x))_i$ is $x_i^d$ with $i=1,\ldots,n$ and the polynomials $(w_d(\x))_j$ are the remaining monomials in lexicographical order with $j=1,\ldots, N-n$, respectively.
\end{defn}

For example, for $f_{\a}(\x)=a_1x_1^3+a_2x_1^2x_2+a_3x_1x_2^2+a_4x_2^3,$ we have $\b=(a_1,a_4)$ and $\c=(a_2,a_3).$ 
Furthermore, we find that 
$$v_3(x_1,x_2)=(x_1^3,x_2^3)\quad\ \text{and}\quad\ w_3(x_1,x_2)=(x_1^2x_2,x_1x_2^2).$$
Then, we notice that 
$$f_{\a}(\x)=\langle\b,v_d(\x)\rangle+\langle\c,w_d(\x)\rangle.$$

\bigskip

\subsection{The separation procedure}\label{subsec4.1}
Our goal in this subsection is to derive Proposition $\ref{pro4.3}$ below. This is the culmination of our work in this subsection. In order to describe Proposition $\ref{pro4.3}$, we recall the definition $(\ref{4.1})$ of $\mathcal{I}_{\a}(X,\mathfrak{B}),$ and define
\begin{equation}\label{4.4}
T(\alpha):=\dsum_{-A\leq b\leq A}\bigl|\dsum_{1\leq x\leq X}e(\alpha bx^d)\bigr|^2,    
\end{equation}
with $\alpha\in\R.$ 
\begin{prop}\label{pro4.3}
Let $n$ and $d$ be natural numbers. Suppose that $P\in \Z[\x]$ is a non-singular form in $N_{d,n}$ variables of degree $k\geq 2$. For any measurable set $\mathfrak{B}\subseteq[0,1)$, whenever $N\geq 200k(k-1)2^{k-1},$ we have
\begin{equation}
    \dsum_{\substack{\|\a\|_{\infty}\leq A\\ P(\a)=0}}\bigl|\mathcal{I}_{\a}(X,\mathfrak{B})\bigr|^2\ll A^{N-n/8-k}X^{7n/4}\dint_{\mathfrak{B}}T(\alpha_1)^{\lfloor n/2\rfloor/8}d\alpha_1\dint_{\mathfrak{B}}T(\alpha_2)^{\lceil n/2\rceil/8}d\alpha_2.
\end{equation}
\end{prop}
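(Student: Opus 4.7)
The plan is to follow the separation procedure outlined in the paper between the statements of Proposition~4.1 and Definition~4.1. I begin by using orthogonality to detect $P(\a)=0$:
\[
\Sigma := \sum_{\substack{\|\a\|_\infty\le A\\P(\a)=0}}|\mathcal{I}_\a(X,\mathfrak{B})|^2 = \int_0^1\!\!\int_{\mathfrak{B}^2}\sum_{\x,\y\in[1,X]^n}\sum_{\|\a\|_\infty\le A}e\bigl(\beta P(\a)+\alpha_1 f_\a(\x)-\alpha_2 f_\a(\y)\bigr)\,d\alpha_1 d\alpha_2\, d\beta.
\]
The linear dependence on $\a$ of $\alpha_1 f_\a(\x)-\alpha_2 f_\a(\y)=\langle\a,\mu\rangle$, with $\mu=\alpha_1\nu_{d,n}(\x)-\alpha_2\nu_{d,n}(\y)$, is the hinge: one Weyl differencing in $\a$, preceded by a Cauchy--Schwarz over $(\x,\y)$, yields inside the integrand
\[
\biggl|\sum_{\x,\y}\sum_\a e\bigl(\beta P(\a)+\langle\a,\mu\rangle\bigr)\biggr|^2\le X^{2n}\sum_{\h}T_\beta(\h)\,F_\h(\alpha_1)\overline{F_\h(\alpha_2)},
\]
with $T_\beta(\h)=\sum_\a e(\beta\Delta_1 P(\a;\h))$ a Weyl sum for the degree-$(k-1)$ polynomial $\Delta_1 P$, and $F_\h(\alpha)=\sum_{\x\in[1,X]^n}e(\alpha f_\h(\x))$. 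This cleanly splits the $P$-dependence (carried by $T_\beta$) from the $f_\h$-dependence.

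Next I would apply a carefully chosen H\"older inequality to the $\h$-sum. The factor $\sum_\h|T_\beta(\h)|^2$, when integrated against $d\beta$, is precisely of the shape $\mathcal{S}(\beta)$ in Lemma~\ref{lem3.2} with $l=1$, and the hypothesis $N\ge 200k(k-1)2^{k-1}$ allows application of Lemma~\ref{lem3.2} with $\sigma=1/4$ (the required inequality $N\ge 3(\sigma+2)\sigma^{-2}k(k-1)2^{k-1}=108k(k-1)2^{k-1}$ being comfortably met), producing the $A^{-k}$-type saving that reflects the constraint $P(\a)=0$. The remaining $\h$-sums involving $F_\h(\alpha_i)$ are then reorganised by invoking Definitions~\ref{defn4.1} and~\ref{defn4.2} to write $\h=(\b,\c)\in\Z^n\times\Z^{N-n}$ and $f_\h(\x)=\sum_{i=1}^n b_i x_i^d+\langle\c,w_d(\x)\rangle$, splitting the $\x$- and $\y$-ranges as $[1,X]^{\lfloor n/2\rfloor}\times[1,X]^{\lceil n/2\rceil}$, and routing the diagonal $\b$-coordinates into the two $\alpha$-integrals so that each coordinate contributes a factor of the shape $\sum_b|\sum_x e(\alpha b x^d)|^2=T(\alpha)$; after a final H\"older step, the first $\lfloor n/2\rfloor$ coordinates produce $T(\alpha_1)^{\lfloor n/2\rfloor/8}$ in the $\alpha_1$-integral and the last $\lceil n/2\rceil$ coordinates produce $T(\alpha_2)^{\lceil n/2\rceil/8}$ in the $\alpha_2$-integral.

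The main obstacle lies in carefully balancing the Cauchy--Schwarz and H\"older steps so that the exponents come out exactly as $\lfloor n/2\rfloor/8$ and $\lceil n/2\rceil/8$ in the two separate $\alpha$-integrals, and the prefactor is precisely $A^{N-n/8-k}X^{7n/4}$: naive choices of H\"older exponents on the triple $(T_\beta, F_\h(\alpha_1), F_\h(\alpha_2))$ overshoot the target by powers of $A$. To avoid this, the splittings of $\h$ into $(\b,\c)$ and of $(\x,\y)$ into halves must be carried out in advance of the H\"older step, so that the coupling contribution from the mixed monomials $w_d(\x)$ can be bounded trivially (yielding the $X^{7n/4}$ exponent after combining with the $X^{2n}$ loss from the initial Cauchy--Schwarz) while the $\b$-sums factorise cleanly into the desired product of two $T$-integrals. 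The synchronisation of this book-keeping with the mean-value input of Lemma~\ref{lem3.2} at $\sigma=1/4$ is the crux of the argument, and the restriction $N\ge 200k(k-1)2^{k-1}$ is essentially dictated by this compatibility.
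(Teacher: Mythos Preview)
Your proposal is correct and follows essentially the same route as the paper: orthogonality followed by Cauchy--Schwarz and one Weyl differencing in $\a$ separates the problem into $G_1(\alpha_1,\alpha_2)=\sum_\h|F_\h(\alpha_1)\overline{F_\h(\alpha_2)}|^2$ and $G_2(\beta)=\sum_\h|T_\beta(\h)|^2$ (the paper's Lemma~4.4), after which Lemma~3.2 with $\sigma=1/4$ handles $\int_0^1 G_2(\beta)^{1/4}d\beta\ll A^{3N/4-k}$ and the $(\b,\c)$-decomposition of $\h$ reduces $G_1$ to powers of $T(\alpha_i)$ (the paper's Lemma~4.5). The only cosmetic difference is that the paper obtains the $\lfloor n/2\rfloor,\lceil n/2\rceil$ split not by partitioning the variable ranges but by writing $H(\alpha_1,\alpha_2)=\bigl(\sum_b|\sum_x e(\alpha_1bx^d)|^2|\sum_y e(\alpha_2by^d)|^2\bigr)^n$ and trivially bounding one of the two squared inner sums by $X^2$ in each of the $n$ factors.
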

We shall prove Proposition $\ref{pro4.3}$ at the end of this subsection, by combining Lemma $\ref{lem4.4}$ and Lemma $\ref{lem4.5}$. To be specific, in Lemma $\ref{lem4.4},$ we bound
\begin{equation*}
    \dsum_{\substack{\|\a\|_{\infty}\leq A\\ P(\a)=0}}\left|\mathcal{I}_{\a}(X,\mathfrak{B})\right|^2
\end{equation*}
by mean values of two exponential sums, via applications of the Cauchy-Schwarz inequality in a way we mentioned in the explanation following Proposition $\ref{prop4.1}.$ 
From Lemma $\ref{lem4.5}$, we have an upper bound for one of these exponential sums, in terms of mean values of the exponential sum $T(\alpha)$. Furthermore, by Lemma $\ref{lem3.2},$ we have an upper bound for the other exponential sum. We will combine these estimates to prove Proposition $\ref{pro4.3}.$

\bigskip

In order to describe Lemma $\ref{lem4.4}$, it is convenient to introduce some definitions. For $\boldsymbol{h}\in \Z^N$, we define
$$I_{\boldsymbol{h}}=\{\a\in \Z^N|\ \|\a\|_{\infty}\leq A, \|\a+\boldsymbol{h}\|_{\infty}\leq A\}.$$
Furthermore, we define
\begin{equation*}
F_1(\alpha_1,\alpha_2,\boldsymbol{h})=\dsum_{\substack{1\leq \x,\y\leq X\\\x,\y\in \Z^n}}e(\alpha_1\langle\boldsymbol{h},\nu_{d,n}(\x)\rangle-\alpha_2\langle\boldsymbol{h},\nu_{d,n}(\y)\rangle)    
\end{equation*} and $$F_2(\beta,\boldsymbol{h})=\dsum_{\substack{\a\in I_{\boldsymbol{h}}}}e(\beta(P(\a+\boldsymbol{h})-P(\a))).$$
Additionally, we define
$$G_1(\alpha_1,\alpha_2)=\dsum_{\substack{\|\boldsymbol{h}\|_{\infty}\leq 2A\\ \boldsymbol{h}\in \Z^N}}|F_1(\alpha_1,\alpha_2,\boldsymbol{h})|^2\quad\ \text{and}\quad\ G_2(\beta)=\dsum_{\substack{\|\boldsymbol{h}\|_{\infty}\leq 2A\\ \boldsymbol{h}\in \Z^N}}|F_2(\beta,\boldsymbol{h})|^2.$$

\begin{lem}\label{lem4.4}
We have
\begin{equation}\label{ineq4.2}
     \dsum_{\substack{\|\a\|_{\infty}\leq A\\ P(\a)=0}}\left|\mathcal{I}_{\a}(X,\mathfrak{B})\right|^2\ll X^n\dint_{\mathfrak{B}^2}G_1(\alpha_1,\alpha_2)^{1/4}d\alpha_1d\alpha_2\dint_0^1G_2(\beta)^{1/4}d\beta.
\end{equation}

\end{lem}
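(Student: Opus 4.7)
The plan is to implement the ``separation procedure'' sketched immediately after Proposition $\ref{prop4.1}$, specialized to the setting of the two dual variables $\x,\y$ coming from $|\mathcal{I}_\a|^2$. First I would use orthogonality twice: once to encode $P(\a)=0$ as $\int_0^1 e(\beta P(\a))\,d\beta$, and once to expand $|\mathcal{I}_\a(X,\mathfrak{B})|^2$ as a double integral over $\mathfrak{B}^2$. Interchanging the orders of summation and integration yields
$$\sum_{\substack{\|\a\|_\infty\leq A\\P(\a)=0}}\bigl|\mathcal{I}_\a(X,\mathfrak{B})\bigr|^2 = \int_0^1\int_{\mathfrak{B}^2}T(\alpha_1,\alpha_2,\beta)\,d\alpha_1\,d\alpha_2\,d\beta,$$
where
$$T(\alpha_1,\alpha_2,\beta):=\sum_{1\leq \x,\y\leq X}\ \sum_{\|\a\|_\infty\leq A} e\bigl(\beta P(\a)+\alpha_1 f_\a(\x)-\alpha_2 f_\a(\y)\bigr).$$
The task is then to produce a pointwise bound on $|T|$ that factorizes as $(\text{function of }\alpha_1,\alpha_2)\cdot(\text{function of }\beta)$.

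Next, I apply Cauchy--Schwarz in the $(\x,\y)$-summation to obtain $|T|^2\leq X^{2n}\sum_{\x,\y}|T_{\x,\y}|^2$, where $T_{\x,\y}$ denotes the inner $\a$-sum. Expanding $|T_{\x,\y}|^2$ as a double sum over $(\a,\a')$ and introducing the difference variable $\h=\a-\a'$ (with $\a'\in I_\h$, to respect the box) replaces the degree-$k$ phase $\beta P(\a)$ by $\beta\bigl(P(\a'+\h)-P(\a')\bigr)$, while the phases linear in $\a$---and here it is crucial that $f_\a$ is linear in $\a$---reduce cleanly to $\alpha_1\langle\h,\nu_{d,n}(\x)\rangle-\alpha_2\langle\h,\nu_{d,n}(\y)\rangle$, entirely independent of $\a'$. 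Consequently the $(\x,\y)$- and $\a'$-summations decouple, and after carrying out the $(\x,\y)$-sum explicitly I arrive at
$$\sum_{\x,\y}|T_{\x,\y}|^2 = \sum_{\|\h\|_\infty\leq 2A}F_1(\alpha_1,\alpha_2,\h)\,F_2(\beta,\h).$$

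Finally, one more Cauchy--Schwarz, now in the $\h$-variable, bounds this last display by $G_1(\alpha_1,\alpha_2)^{1/2}G_2(\beta)^{1/2}$, and extracting the fourth root produces the pointwise estimate $|T(\alpha_1,\alpha_2,\beta)|\leq X^n G_1(\alpha_1,\alpha_2)^{1/4}G_2(\beta)^{1/4}$. Substituting this into the triple integral and applying Fubini---valid precisely because $G_1^{1/4}$ depends only on $(\alpha_1,\alpha_2)$ and $G_2^{1/4}$ only on $\beta$---separates the triple integral into the product of the two advertised integrals, yielding $(\ref{ineq4.2})$. The argument is essentially formal, so I do not anticipate any genuine obstacle; the only point requiring care is the bookkeeping around the admissible range $I_\h$ for $\a'$ after the change of variables $\h=\a-\a'$, which is exactly the reason why $I_\h$ is singled out in the preamble to the lemma.
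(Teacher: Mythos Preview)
Your proposal is correct and follows essentially the same approach as the paper's proof: orthogonality to unfold the constraint $P(\a)=0$, Cauchy--Schwarz over $(\x,\y)$, the Weyl-differencing substitution $\h=\a-\a'$ which decouples the linear-in-$\a$ phase from the $P$-phase, and a second Cauchy--Schwarz over $\h$ to reach $G_1^{1/4}G_2^{1/4}$. The only cosmetic difference is that the paper inserts a triangle inequality over $(\x,\y)$ before the first Cauchy--Schwarz (working with $F=\sum_{\x,\y}|\sum_\a\cdots|$ rather than your $T$), but this leads to the same bound.
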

\begin{proof}
Recall the definition of $F_1(\alpha_1,\alpha_2,\a)$. Then, by orthogonality, we have
\begin{equation}\label{4.2}
\begin{aligned}
     \dsum_{\substack{\|\a\|_{\infty}\leq A\\ P(\a)=0}}\left|\mathcal{I}_{\a}(X,\mathfrak{B})\right|^2= \dsum_{\substack{\|\a\|_{\infty}\leq A}}\dint_0^1\dint_{\mathfrak{B}^2}F_1(\alpha_1,\alpha_2,\a)e(\beta P(\a))d\alpha_1d\alpha_2d\beta
\end{aligned}
\end{equation}
By changing the order of summations together with the triangle inequality, one finds that
\begin{equation}\label{4.77}
   \dsum_{\substack{\|\a\|_{\infty}\leq A\\ P(\a)=0}}\left|\mathcal{I}_{\a}(X,\mathfrak{B})\right|^2 \leq \dint_0^1\dint_{\mathfrak{B}^2}F(\alpha_1,\alpha_2,\beta)d\alpha_1d\alpha_2d\beta,  
\end{equation}
where 
$$F(\alpha_1,\alpha_2,\beta)=\dsum_{\substack{1\leq \x,\y\leq X\\\x,\y\in \Z^n}}\biggl|\dsum_{\substack{\|\a\|_{\infty}\leq A}}e(\beta P(\a)+\alpha_1\langle\a,\nu_{d,n}(\x)\rangle-\alpha_2\langle\a,\nu_{d,n}(\y)\rangle)\biggr|.$$
We emphasize again that we preserve summation conditions until different conditions are specified. 

By applying the Cauchy-Schwarz inequality and a conventional Weyl differencing argument, one sees that
\begin{equation*}
    F(\alpha_1,\alpha_2,\beta)\leq (X^{2n})^{1/2}(\Xi(\alpha_1,\alpha_2,\beta))^{1/2},
\end{equation*}
where 
\begin{equation*}
\begin{aligned}
&\Xi(\alpha_1,\alpha_2,\beta)\\
&=\dsum_{\x,\y}\dsum_{\substack{\|\boldsymbol{h}\|_{\infty}\leq 2A\\\boldsymbol{h}\in \Z^N}}\dsum_{\a\in I_{\boldsymbol{h}}}e(\beta(P(\a+\boldsymbol{h})-P(\a))+\alpha_1\langle\boldsymbol{h},\nu_{d,n}(\x)\rangle-\alpha_2\langle\boldsymbol{h},\nu_{d,n}(\y)\rangle\rangle).       
\end{aligned}
\end{equation*}
By applying the triangle inequality, we have
$$F(\alpha_1,\alpha_2,\beta)\leq X^{n}\bigl(\dsum_{\boldsymbol{h}}|F_1(\alpha_1,\alpha_2,\boldsymbol{h})||F_2(\beta,\boldsymbol{h})|\bigr)^{1/2},$$
where 
$$F_1(\alpha_1,\alpha_2,\boldsymbol{h})=\dsum_{\x,\y}e(\alpha_1\langle\boldsymbol{h},\nu_{d,n}(\x)\rangle-\alpha_2\langle\boldsymbol{h},\nu_{d,n}(\y)\rangle)$$
and $$F_2(\beta,\boldsymbol{h})=\dsum_{\a\in I_{\boldsymbol{h}}}e(\beta(P(\a+\boldsymbol{h})-P(\a)))$$

By applying the Cauchy-Schwarz inequality again and recalling the definition of $G_1(\alpha_1,\alpha_2)$ and $G_2(\beta)$, we see that 
\begin{equation}\label{4.3}
    F(\alpha_1,\alpha_2,\beta)\leq X^{n}G_1(\alpha_1,\alpha_2)^{1/4}G_2(\beta)^{1/4}.
\end{equation}
Therefore, on substituting $(\ref{4.3})$ into $(\ref{4.77}),$ we complete the proof of Lemma $\ref{lem4.4}$.
\end{proof}

\bigskip

Next, Lemma $\ref{lem4.5}$ provides the upper bound for $G_1(\alpha_1,\alpha_2)$.
\begin{lem}\label{lem4.5}
We have
\begin{equation*}
    G_1(\alpha_1,\alpha_2)\ll A^{N-n/2}X^{3n}T(\alpha_1)^{\lfloor n/2\rfloor/2}T(\alpha_2)^{\lceil n/2\rceil/2}.
\end{equation*}
\end{lem}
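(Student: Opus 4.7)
The plan is to exploit two features of the definition of $G_1$: the phase of $|F_1|^2$ is linear in $\h$, which makes the $\h$-sum factorise coordinatewise into Dirichlet kernels, and the Veronese embedding $\nu_{d,n}(\x)$ contains $n$ distinguished coordinates $x_i^d$ whose sums over $\x$ will decouple across indices. First, I would expand $|F_1(\alpha_1,\alpha_2,\h)|^2$ by opening up the square and swap the sum over $\x,\x',\y,\y'$ with the sum over $\h\in[-2A,2A]^N\cap\Z^N$. Since the exponent is $\langle\h,\boldsymbol{v}\rangle$ with $\boldsymbol{v}=\alpha_1(\nu_{d,n}(\x)-\nu_{d,n}(\x'))-\alpha_2(\nu_{d,n}(\y)-\nu_{d,n}(\y'))$, the $\h$-sum factorises as a product of Dirichlet kernels and yields
\[G_1(\alpha_1,\alpha_2)=\sum_{\x,\x',\y,\y'}\prod_{j=1}^N D(v_j),\qquad D(t)=\sum_{|h|\leq 2A}e(ht).\]
For the $N-n$ coordinates corresponding to the mixed monomials in $w_d(\x)$, apply the trivial bound $|D|\leq 4A+1\ll A$, which pulls out a factor $A^{N-n}$. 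For the remaining $n$ pure-power coordinates, $v_j=\alpha_1(x_j^d-x_j'^d)-\alpha_2(y_j^d-y_j'^d)$, and the sum over $\x,\x',\y,\y'\in[1,X]^n\cap\Z^n$ decouples across $j$, producing
\[G_1\ll A^{N-n}\left(\sum_{x,x',y,y'\in[1,X]\cap\Z}|D(\alpha_1(x^d-x'^d)-\alpha_2(y^d-y'^d))|\right)^n.\]

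Next, I would bound one copy of the inner sum in two different ways. By Cauchy--Schwarz it is at most $X^2(\sum_{x,x',y,y'}|D(v)|^2)^{1/2}$. Expanding $|D(v)|^2$ as the Fej\'er kernel $\sum_{|k|\leq 4A}(4A+1-|k|)e(kv)$ and summing over $x,x',y,y'$, the four inner variables separate completely, giving
\[\sum_{x,x',y,y'}|D(v)|^2=\sum_{|k|\leq 4A}(4A+1-|k|)\Bigl|\sum_{1\leq x\leq X}e(k\alpha_1 x^d)\Bigr|^2\Bigl|\sum_{1\leq y\leq X}e(k\alpha_2 y^d)\Bigr|^2.\]
Discarding the $\alpha_1$-factor via $|\sum_x e(k\alpha_1 x^d)|^2\leq X^2$ and recognising the remaining $k$-sum as $T(\alpha_2)$ (up to a harmless adjustment in the range of $k$ that is absorbed into the implicit constant) yields $\sum|D(v)|^2\ll AX^2 T(\alpha_2)$; the symmetric manoeuvre of discarding the $\alpha_2$-factor instead gives $\sum|D(v)|^2\ll AX^2 T(\alpha_1)$. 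Hence every copy of the inner sum satisfies $\sum_{x,x',y,y'}|D(v)|\ll X^3 A^{1/2}T(\alpha_i)^{1/2}$ for both $i=1$ and $i=2$.

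Finally, the asymmetric exponents in the statement come from using the $i=1$ bound on $n_1=\lfloor n/2\rfloor$ of the $n$ identical factors in the product and the $i=2$ bound on the remaining $n_2=\lceil n/2\rceil$ factors. This is legitimate precisely because both estimates hold for every individual factor, and it gives
\[\left(\sum_{x,x',y,y'}|D(v)|\right)^n\ll X^{3n}A^{n/2}T(\alpha_1)^{\lfloor n/2\rfloor/2}T(\alpha_2)^{\lceil n/2\rceil/2}.\]
Combining with the $A^{N-n}$ factor extracted from the mixed coordinates produces the claimed bound $A^{N-n/2}X^{3n}T(\alpha_1)^{\lfloor n/2\rfloor/2}T(\alpha_2)^{\lceil n/2\rceil/2}$. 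I expect the delicate step to be the middle one: the single Dirichlet kernel $D(v_j)$ entangles $\alpha_1$ and $\alpha_2$ inside one exponential, and it is only the Fej\'er expansion of $|D|^2$ that separates them into a product of two independent diagonal character sums to which the definition of $T$ applies; the asymmetric exponents then emerge not from any clever inequality, but simply from choosing which of the two equally valid per-factor bounds to apply to each copy.
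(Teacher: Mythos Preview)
Your proof is correct and follows essentially the same route as the paper's: both open $|F_1|^2$, discard the $N-n$ mixed-monomial coordinates of $\h$ trivially to extract $A^{N-n}$, exploit the coordinatewise factorisation of the pure-power phase, apply Cauchy--Schwarz, and then separate $\alpha_1$ and $\alpha_2$ by re-expanding the resulting square and bounding one factor trivially by $X^2$ on $\lfloor n/2\rfloor$ versus $\lceil n/2\rceil$ of the $n$ copies. The only cosmetic difference is that you carry out the $\h$-sum explicitly and phrase the argument via Dirichlet and Fej\'er kernels, whereas the paper keeps the inner sum over $\b$ and applies Cauchy--Schwarz once on all $4n$ variables before factorising; the two orderings are equivalent since the phase factorises. (Your caveat about the range $|k|\le 4A$ versus $|b|\le A$ in $T(\alpha)$ is shared by the paper and is indeed harmless for all downstream estimates.)
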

\begin{proof}
Recall the definition $\b:=\b(f_{\a})$ of Definition $\ref{defn4.1}$. By squaring out and applying the triangle inequality, we find that 
\begin{equation*}
\begin{aligned}
    &G_1(\alpha_1,\alpha_2)\ll A^{N-n}\dsum_{\substack{1\leq \x^{(1)},\x^{(2)}\leq X\\1\leq\y^{(1)},\y^{(2)}\leq X}}\biggl|\dsum_{\substack{\|\b\|_{\infty}\leq 2A\\\b\in \Z^{n}}}e(\Psi(\alpha_1,\alpha_2,\b,\x^{(1)},\x^{(2)},\y^{(1)},\y^{(2)}))\biggr|,    
\end{aligned}
\end{equation*}
where 
$$\Psi(\alpha_1,\alpha_2,\b,\x^{(1)},\x^{(2)},\y^{(1)},\y^{(2)})=\alpha_1\langle\b,v_d(\x^{(1)})-v_d(\x^{(2)})\rangle-\alpha_2\langle\b,v_d(\y^{(1)})-v_d(\y^{(2)})\rangle.$$
By applying the Cauchy-Schwarz inequality and the triangle inequality, one sees that 
\begin{equation}\label{ineq4.6}
    G_1(\alpha_1,\alpha_2)\ll A^{N-n}(X^{4n})^{1/2}\bigl(A^nH(\alpha_1,\alpha_2)\bigr)^{1/2},
\end{equation}
where $$H(\alpha_1,\alpha_2)=\dsum_{\b}\biggl|\dsum_{\substack{\x^{(1)},\x^{(2)}\\\y^{(1)},\y^{(2)}}}e(\Psi(\alpha_1,\alpha_2,\b,\x^{(1)},\x^{(2)},\y^{(1)},\y^{(2)}))\biggr|.$$

We note that 
\begin{equation*}
\begin{aligned}
H(\alpha_1,\alpha_2)=\biggl(\dsum_{-A\leq b\leq A}\biggl|\dsum_{1\leq x\leq X}e(\alpha_1bx^d)\biggr|^2\biggl|\dsum_{1\leq y\leq X}e(\alpha_2 by^d)\biggr|^2\biggr)^{n}.
\end{aligned}
\end{equation*}
Then, on recalling the definition $(\ref{4.4})$ of $T(\alpha)$ and applying the trivial bounds that
$$\dsum_{1\leq x\leq X}e(\alpha_1bx^d)\leq X\ \textrm{and}\ \dsum_{1\leq y\leq X}e(\alpha_2 by^d)\leq X,$$ we see that
\begin{equation}\label{4.8}
    H(\alpha_1,\alpha_2)\leq X^{2n}T(\alpha_1)^{\lfloor n/2\rfloor}T(\alpha_2)^{\lceil n/2\rceil}.
\end{equation}
On substituting ($\ref{4.8}$) into $(\ref{ineq4.6})$, we complete the proof of Lemma $\ref{lem4.5}.$
\end{proof}

\bigskip

\bigskip

\begin{proof}[Proof of Proposition $\ref{pro4.3}$]
Recall the definition of $G_2(\beta)$ and $I_{\h}$ leading to Lemma $\ref{lem4.4}$. Since $I_{\h}$ is a rectangular box depending on $\h$, it follows by Lemma $\ref{lem3.2}$ with $l=1$, $B=0,$ $A_1=A_2=2A$ and $\sigma=1/4$ that whenever $N\geq 200k(k-1)2^{k-1},$ one has $$\int_0^1G_2(\beta)^{1/4}d\beta\ll A^{3N/4-k}.$$
Then, on substituting this estimate and the bound for $G_1(\alpha_1,\alpha_2)$ obtained in Lemma $\ref{lem4.5}$ into ($\ref{ineq4.2}$), we readily conclude that 
$$   \dsum_{\substack{\|\a\|_{\infty}\leq A\\ P(\a)=0}}\bigl|\mathcal{I}_{\a}(X,\mathfrak{B})\bigr|^2\ll A^{N-n/8-k}X^{7n/4}\dint_{\mathfrak{B}}T(\alpha_1)^{\lfloor n/2\rfloor/8}d\alpha_1\dint_{\mathfrak{B}}T(\alpha_2)^{\lceil n/2\rceil/8}d\alpha_2.$$ Thus, we complete the proof of Proposition $\ref{pro4.3}$. 
\end{proof}

\bigskip

\subsection{Bounds for large moduli}\label{subsec4.2}
In this subsection, for given $\delta>0$, we derive the bound for
\begin{equation*}
     \dsum_{\substack{\|\a\|_{\infty}\leq A\\ P(\a)=0}}\left|\mathcal{I}_{\a}(X,\mathfrak{m}(X^{\delta}))\right|^2.
\end{equation*}

To derive a bound for this quantity, we make use of Proposition $\ref{pro4.3}$ and Lemma $\ref{lem3.23.2}.$

\bigskip

\begin{lem}\label{lem4.6}
Let $\delta$ be a positive number with $\delta<1/6$. Let $n$ and $d$ be natural numbers with $d\geq 2.$ Let $n_1$ be the greatest integer with $n_1\leq\lfloor (n-1)/2\rfloor/8.$ Suppose that $n_1>2d$ and $2X^d\leq A\leq X^{n_1-d}.$ Suppose that $P\in \Z[\x]$ is a non-singular form in $N_{d,n}$ variables of degree $k\geq 2.$ Then, whenever $N\geq 200k(k-1)2^{k-1},$ there exists $\delta'>0$ such that
\begin{equation*}
     \dsum_{\substack{\|\a\|_{\infty}\leq A\\ P(\a)=0}}\left|\mathcal{I}_{\a}(X,\mathfrak{m}(X^{\delta}))\right|^2\ll A^{N-k-2}X^{2n-2d}A^{-\delta'}.
\end{equation*}
In particular, the exponent $\delta'$ depends on $\delta,n$ and $d.$
\end{lem}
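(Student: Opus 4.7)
The plan is to apply Proposition~\ref{pro4.3} with $\mathfrak{B}=\mathfrak{m}(X^{\delta})$, and then estimate the resulting one-variable integrals on the minor arcs by splitting them via a pointwise Weyl bound on $T(\alpha)$ and the mean value bound of Lemma~\ref{lem3.23.2}. Proposition~\ref{pro4.3} reduces matters to showing
$$J_1\cdot J_2\ll A^{n/8-2-\delta'}X^{n/4-2d}$$
for some $\delta'>0$, where $J_j=\int_{\mathfrak{m}(X^{\delta})}T(\alpha)^{m_j}\,d\alpha$ with $m_1=\lfloor n/2\rfloor/8$ and $m_2=\lceil n/2\rceil/8$, so that $m_1+m_2=n/8$.

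First I would derive a pointwise estimate for $T(\alpha)$ on $\mathfrak{m}(X^{\delta})$. Dirichlet's approximation theorem supplies coprime $q,a$ with $q\leq AX^d/X^{\delta}$ and $|\alpha-a/q|\leq X^{\delta}/(qAX^d)$, while the defining property of $\mathfrak{m}(X^{\delta})$ forces $q>X^{\delta}$. Consequently $q^{-1}+X^{-1}+q(AX^d)^{-1}\ll X^{-\delta}$ (using $\delta<1/6<1$), and (\ref{3.4}) yields
$$T(\alpha)\ll B_0:=A^{1+\epsilon}X^{2+\epsilon-\eta},\qquad \eta:=\delta\cdot 2^{2-d},\qquad\alpha\in\mathfrak{m}(X^{\delta}).$$
For any integer $t_j$ with $2\leq t_j\leq m_j$, I would then write
$$J_j\leq B_0^{m_j-t_j}\int_0^1 T(\alpha)^{t_j}\,d\alpha=B_0^{m_j-t_j}U_{t_j}(A,X),$$
and apply Lemma~\ref{lem3.23.2} to get $U_{t_j}(A,X)\ll(AX)^{t_j}+A^{t_j-1}X^{2t_j-d+\epsilon}$. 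The product $J_1J_2$ then decomposes into four cross terms.

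The heart of the argument will be to pick $t_1,t_2$ so that all four cross terms lie below the target bound. Writing $A=X^a$ with $d\leq a\leq n_1-d$ and taking $t_1=t_2=n_1$ (so $\tau:=t_1+t_2=2n_1$), a direct comparison of the exponents of $X$ in each of the four cross terms against the target exponent $a(n/8-2-\delta')+n/4-2d$ reduces, up to $O(\delta')$, to the single inequality
$$\frac{2n_1-\eta\cdot n/8}{1-\eta}<2n_1,$$
which simplifies to the strict gap $n_1<n/16$. This is automatic from $n_1\leq\lfloor(n-1)/2\rfloor/8\leq(n-1)/16$, and in fact yields $n/8-2n_1\geq 1/8$, so that the factor $B_0^{n/8-2n_1}\geq B_0^{1/8}$ contributes a genuine saving of size $X^{\eta/8}$. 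This translates into a positive exponent $\delta'$ of order $\eta/(n_1-d)$, depending on $\delta,n,d$ only. The hypothesis $n_1>2d$ ensures $t_1=t_2=n_1\geq 2$, so that Lemma~\ref{lem3.23.2} legitimately applies; it also guarantees $m_j\geq n_1$, so that $B_0^{m_j-t_j}$ is meaningful.

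The main obstacle will be this simultaneous control of all four cross terms by a single pair $(t_1,t_2)$. Without the strict gap $2n_1<n/8$ the feasible window for $\tau$ collapses, and the Weyl saving available on $\mathfrak{m}(X^{\delta})$ would be insufficient to overcome the loss incurred from the separation procedure encoded in Proposition~\ref{pro4.3}; this is precisely where the hypothesis on $n_1$ is indispensable.
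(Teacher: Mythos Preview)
Your proposal is correct and follows essentially the same route as the paper: apply Proposition~\ref{pro4.3}, bound $T(\alpha)$ pointwise on $\mathfrak{m}(X^\delta)$ via Dirichlet and~(\ref{3.4}), and handle the remaining integer moments by the mean-value estimate~(\ref{3.5}). The only cosmetic differences are that the paper picks the integer exponents $n_1,n_2$ separately (as the greatest integers below $\lfloor(n-1)/2\rfloor/8$ and $\lceil(n-1)/2\rceil/8$) and immediately discards the $(AX)^{t}$ term in $U_t(A,X)$ using $A\le X^{n_1-d}$, whereas you take $t_1=t_2=n_1$ and track all four cross terms; at the extremal value $a=n_1-d$ these cross terms coincide, so both bookkeepings lead to the same requirement $n_1<n/16$ and the same power saving.
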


\begin{proof}
By Proposition $\ref{pro4.3}$, we obtain the bound
\begin{equation}\label{4.9}
\begin{aligned}
     & \dsum_{\substack{\|\a\|_{\infty}\leq A\\ P(\a)=0}}\bigl|\mathcal{I}_{\a}(X,\mathfrak{m}(X^{\delta}))\bigr|^2\\
     &\ll A^{N-n/8-k}X^{7n/4}\dint_{\mathfrak{m}(X^{\delta})}T(\alpha_1)^{\lfloor n/2\rfloor/8}d\alpha_1\dint_{\mathfrak{m}(X^{\delta})}T(\alpha_2)^{\lceil n/2\rceil/8}d\alpha_2
\end{aligned}
\end{equation}

For simplicity, we write 
\begin{equation*}
\begin{aligned}
    m_1=\lfloor (n-1)/2\rfloor/8\ \text{and}\    m_2=\lceil (n-1)/2\rceil/8,
\end{aligned}
\end{equation*}
and write $n_i$ for the greatest integer less than $m_i$ for $i=1$ and $2$, respectively. Then, on noting that $
 n/8=m_1+m_2+1/8$,
one finds from ($\ref{4.9}$) that
\begin{equation}\label{4.12}
     \dsum_{\substack{\|\a\|_{\infty}\leq A\\ P(\a)=0}}\bigl|\mathcal{I}_{\a}(X,\mathfrak{m}(X^{\delta}))\bigr|^2\ll  A^{N-n/8-k}X^{7n/8}\cdot\mathcal{L}_1\cdot\mathcal{L}_2\cdot\sup_{\alpha\in \mathfrak{m}(X^{\delta})}{T}(\alpha)^{\sigma},
\end{equation}
where 
\begin{equation*}
\begin{aligned}
  &\sigma=m_1+m_2-n_1-n_2+1/8  \\
  &\mathcal{L}_1=\dint_0^1{T}(\alpha_1)^{n_1}d\alpha_1\\
  &\mathcal{L}_2=\dint_0^1{T}(\alpha_2)^{n_2}d\alpha_2
\end{aligned}
\end{equation*}

Recall the hypothesis $2X^d\leq A\leq X^{n_1-d}$ in the statement of Lemma $\ref{lem4.6}$. Then, we find from orthogonality and $(\ref{3.5})$ that
\begin{equation}\label{4.13}
   \mathcal{L}_1\ll A^{n_1-1}X^{2n_1-d+\epsilon}.
\end{equation}
Similarly, one finds that
\begin{equation}\label{4.14}
     \mathcal{L}_2\ll A^{n_2-1}X^{2n_2-d+\epsilon}.
\end{equation}
Furthermore, it follows by ($\ref{3.4}$) that 
\begin{equation}\label{4.15}
    \sup_{\alpha\in \mathfrak{m}(X^{\delta})}{T}(\alpha)^{\sigma}\ll (AX^2)^{\sigma+\epsilon}X^{-2^{2-k}\sigma\delta }.
\end{equation}
On substituting $(\ref{4.13})$, $(\ref{4.14})$ and $(\ref{4.15})$ into $(\ref{4.12})$, we conclude that
\begin{equation*}
      \dsum_{\substack{\|\a\|_{\infty}\leq A\\ P(\a)=0}}\bigl|\mathcal{I}_{\a}(X,\mathfrak{m}(X^{\delta}))\bigr|^2\ll A^{N-k-2}X^{2n-2d}X^{-\delta''},
\end{equation*}
where $\delta''$ is a positive number with $\delta''<2^{2-k}\sigma\delta.$ Hence, on recalling that $A\leq X^{n_1-d}$, we complete the proof of Lemma $\ref{lem4.6}.$
\end{proof}

\bigskip

\subsection{Bounds for small moduli}\label{subsec4.3}
In this section, we derive the bound for
\begin{equation*}
   \dsum_{\substack{\|\a\|_{\infty}\leq A\\ P(\a)=0}}\left|\mathcal{I}_{\a}(X,\mathfrak{M}(X^{\delta})\setminus \mathfrak{M}(\log X))\right|^2.
\end{equation*}

To derive a bound for this quantity, we use the standard treatments of exponential sums in major arcs and pruning arguments.

\bigskip

\begin{lem}\label{lem4.7}
 With the same notation in Lemma $\ref{lem4.6}$, suppose that $n_1>2d$, $2X^d\leq A\leq X^{n_1-d}$ and $0<\delta<1/6$. Suppose that $n_1>2d$ and $2X^d\leq A\leq X^{n_1-d}.$ Suppose that $P\in \Z[\x]$ is a non-singular form in $N_{d,n}$ variables of degree $k\geq 2.$ Then, whenever $N\geq 200k(k-1)2^{k-1},$ one has
\begin{equation*}
     \dsum_{\substack{\|\a\|_{\infty}\leq A\\ P(\a)=0}}\left|\mathcal{I}_{\a}(X,\mathfrak{M}(X^{\delta})\setminus \mathfrak{M}(\log X))\right|^2\ll A^{N-k-2}X^{2n-2d}(\log A)^{-1}.
\end{equation*}
\end{lem}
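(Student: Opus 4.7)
The plan is to apply Proposition 4.3 with $\mathfrak{B} = \mathfrak{M}(X^\delta) \setminus \mathfrak{M}(\log X)$. This reduces the problem to establishing a pruning estimate of the shape
$$\int_{\mathfrak{B}} T(\alpha)^{m}\, d\alpha \ll A^{m-1}X^{2m-d}(\log X)^{-\tau}$$
for some $\tau > 0$ and $m \in \{m_1, m_2\}$, with $m_1 = \lfloor n/2\rfloor/8$ and $m_2 = \lceil n/2\rceil/8$, while the second integral appearing in Proposition 4.3 is bounded trivially via orthogonality and (3.5) by $A^{m_j - 1} X^{2m_j - d + \epsilon}$. Substituting both estimates into Proposition 4.3 and simplifying using $n/8 = m_1 + m_2 + 1/8$ together with $A \leq X^{n_1 - d}$ yields the claimed bound $A^{N-k-2}X^{2n-2d}(\log A)^{-1}$, once we note that $\log A \asymp \log X$.

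For the pruning estimate, I would carry out a standard major arcs analysis of $T(\alpha)$. For $\alpha = a/q + \beta \in \mathfrak{M}(q,a)$, the Hardy--Littlewood approximation yields
$$\sum_{1 \leq x \leq X} e(\alpha b x^d) = q^{-1} S(q, ab)\, v(\beta b) + O(q^{1-1/d+\epsilon}),$$
where $S(q,c) = \sum_{y \bmod q} e(cy^d/q)$ and $v(\gamma) = \int_0^X e(\gamma u^d)\, du$. Using the classical Gauss sum bound $|S(q, ab)| \ll q^{1-1/d+\epsilon}(q,b)^{1/d}$, which is valid since $(a,q) = 1$, together with the divisor estimate $\sum_{|b| \leq A} (q,b)^{2/d} \ll A q^{\epsilon}$, one obtains
$$T(\alpha) \ll AX^2 q^{-2/d + \epsilon}$$
uniformly for $\alpha \in \mathfrak{M}(q,a)$ with $q > \log X$, up to admissible error terms. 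Raising to the $m$-th power, integrating over $\mathfrak{M}(q,a)$ using the bound $|\mathfrak{M}(q,a)| \ll X^\delta (qAX^d)^{-1}$, and summing over $a$ coprime to $q$ gives a per-modulus contribution of size $A^{m-1}X^{2m-d+\delta} q^{-2m/d + \epsilon}$. Since $m_i > 2d$ under the hypothesis $n_1 > 2d$, the exponent $2m/d$ exceeds $4$, so the tail $\sum_{q > \log X} q^{-2m/d + \epsilon}$ converges rapidly and delivers a $(\log X)^{-\tau}$ saving strong enough to absorb the $X^\delta$ factor once $\delta$ is chosen sufficiently small.

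The main obstacle is executing the major arcs analysis of $T(\alpha)$ cleanly. The gcd factor $(q,b)$ in the Gauss sum bound must be tracked precisely through the summation over $|b| \leq A$ so as not to erode the $q^{-2/d}$ saving; this requires splitting $b$ according to $g = (q,b)$ and invoking a divisor-style bound. The error term $O(q^{1-1/d+\epsilon})$ in the Hardy--Littlewood approximation must be controlled after squaring and summing in $b$ so that it remains negligible relative to the main term. Finally, the degenerate contribution from $b$ divisible by $q$ (in particular $b = 0$), where the Gauss sum estimate degenerates, needs to be isolated and handled directly; it contributes at worst an amount of size $(A/q)X^2$ to $T(\alpha)$, which is comfortably absorbed by the main-term bound.
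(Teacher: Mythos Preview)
Your overall strategy---apply Proposition~4.3, bound one integral by orthogonality via (3.5), and prune the other---is reasonable, and the arithmetic $n/8 = m_1 + m_2 + 1/8$ does collapse to the target bound once each integral is of size $A^{m_j-1}X^{2m_j-d}$ times a log-saving. The gap is in the pruning step itself: a log-power saving from the $q$-sum cannot absorb a power of $X$.

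Concretely, with the paper's convention the arc $\mathfrak{M}(q,a)\subset\mathfrak{M}(X^\delta)$ has measure $\asymp X^\delta(AX^d)^{-1}$, independent of $q$. Using your pointwise bound $T(\alpha)\ll AX^2q^{-2/d+\epsilon}$, summing over $a$ and then over $\log X<q\leq X^\delta$ gives
\[
\int_{\mathfrak B}T(\alpha)^m\,d\alpha \;\ll\; X^\delta\,A^{m-1}X^{2m-d}\sum_{q>\log X}q^{\,1-2m/d+\epsilon}\;\ll\; X^\delta\,A^{m-1}X^{2m-d}(\log X)^{2-2m/d+\epsilon}.
\]
No matter how small $\delta>0$ is, $X^\delta$ dominates every fixed power of $\log X$, so this never reaches $A^{m-1}X^{2m-d}(\log X)^{-\tau}$. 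Your sentence ``strong enough to absorb the $X^\delta$ factor once $\delta$ is chosen sufficiently small'' is exactly where the argument breaks. A related issue: $\mathfrak B=\mathfrak M(X^\delta)\setminus\mathfrak M(\log X)$ also contains points with $q\leq\log X$ but $|\alpha-a/q|>(\log X)(AX^d)^{-1}$, which your $q$-only analysis ignores.

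The paper fixes both problems by dyadically slicing $\mathfrak B=\bigcup_j\bigl(\mathfrak M(2^{j+1}\log X)\setminus\mathfrak M(2^j\log X)\bigr)$ and applying Cauchy--Schwarz over $O(\log X)$ shells. On a shell $\mathfrak C=\mathfrak M(2Q)\setminus\mathfrak M(Q)$ the arc width is $\asymp Q(AX^d)^{-1}$, so $\mathrm{mes}(\mathfrak C)\ll Q^3(AX^d)^{-1}$; the same pointwise bound $T(\alpha)\ll AX^2Q^{-2/d+\epsilon}$ now applied to \emph{both} integrals yields a net factor $Q^{6-n/(4d)+\epsilon}$, which is $\ll Q^{-2}$ under $n_1>2d$. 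Since $Q\geq\log X$, each shell contributes $\ll A^{N-k-2}X^{2n-2d}(\log X)^{-2}$, and summing over $O(\log X)$ shells (with the Cauchy--Schwarz loss) gives $(\log X)^{-1}$. The key point is that the dyadic decomposition ties the arc width to the modulus, so the saving and the loss live on the same scale.
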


\begin{proof}
On observing that 
$$\mathfrak{M}(X^{\delta})\setminus \mathfrak{M}(\log X)=\bigcup_{j=0}^{J}\mathfrak{M}(2^{j+1}\log X)\setminus\mathfrak{M}(2^j\log X),$$
with $J=O(\log X),$ we deduce by the Cauchy-Schwarz inequality that
\begin{equation}\label{4.16}
    \begin{aligned}
     & \dsum_{\substack{\|\a\|_{\infty}\leq A\\ P(\a)=0}}\left|\mathcal{I}_{\a}(X,\mathfrak{M}(X^{\delta})\setminus \mathfrak{M}(\log X))\right|^2\\
     &\ll J\dsum_{j=0}^J \dsum_{\substack{\|\a\|_{\infty}\leq A\\ P(\a)=0}}\left|\mathcal{I}_{\a}(X,\mathfrak{M}(2^{j+1}\log X)\setminus\mathfrak{M}(2^j\log X))\right|^2\\
    \end{aligned}
\end{equation}

Now, we analyze the mean value
$$ \dsum_{\substack{\|\a\|_{\infty}\leq A\\ P(\a)=0}}\left|\mathcal{I}_{\a}(X,\mathfrak{M}(2Q)\setminus\mathfrak{M}(Q))\right|^2,$$
with $\log X\leq Q\leq X^{\delta}.$ For simplicity, we temporarily write
\begin{equation}
    \mathfrak{C}=\mathfrak{M}(2Q)\setminus\mathfrak{M}(Q).
\end{equation}
Then, by Proposition $\ref{pro4.3}$, we deduce that
\begin{equation}\label{4.18}
     \dsum_{\substack{\|\a\|_{\infty}\leq A\\ P(\a)=0}}\left|\mathcal{I}_{\a}(X,\mathfrak{C})\right|^2\ll A^{N-n/8-k}X^{7n/4}\dint_{\mathfrak{C}}T(\alpha_1)^{\lfloor n/2\rfloor/8}d\alpha_1\dint_{\mathfrak{C}}T(\alpha_2)^{\lceil n/2\rceil/8}d\alpha_2.
\end{equation}

Meanwhile, whenever $\alpha\in\mathfrak{C},$ there exist $q\in \N$ and $a\in \Z$ with $(q,a)=1$ such that $Q\leq q\leq 2Q$ and
$$|\alpha-a/q|\leq 2Q(AX^d)^{-1}.$$
Also, for $b\in \Z\setminus \{0\}$ with $|b|\leq A$, if we write $l=(q,b)$, $\widetilde{q}=q/l$ and $\widetilde{b}=b/l$, we have
\begin{equation*}
    \left|b\alpha-\frac{a\widetilde{b}}{\widetilde{q}}\right|\leq \frac{2Qb}{AX^d}\leq \frac{2Q}{X^d}.
\end{equation*}
Hence, since $\widetilde{q}\leq 2Q$, it follows by [$\ref{ref15}$, Lemma 2.7] that
\begin{equation}\label{4.19}
    \dsum_{1\leq x\leq X}e(b\alpha x^d)-(\widetilde{q})^{-1}S(\widetilde{q},a\widetilde{b})v(\beta)=O((2Q)^2),
\end{equation}
where $S(q,a)=\dsum_{n=1}^qe(an^d/q)$ and $v(\beta)=\dint_0^Xe(\beta \gamma^d)d\gamma$ with $\beta=b\alpha-\frac{a\widetilde{b}}{\widetilde{q}}.$ Thus, when $\alpha\in \mathfrak{C}$, we deduce from ($\ref{4.19}$) that 
\begin{equation}\label{4.20}
\begin{aligned}
    T(\alpha)&\ll X^2+\dsum_{l|q}\dsum_{\substack{|b|\leq A\\ (q,b)=l}}\biggl|\dsum_{1\leq x\leq X}e(b\alpha x^d)\biggr|^2\\
    &\ll X^2 +\dsum_{l|q}\dsum_{\substack{|b|\leq A\\ (q,b)=l}}
    \left(|((\widetilde{q})^{-1}S(\widetilde{q},a\widetilde{b})v(\beta)|^2+(2Q)^4\right).
\end{aligned}
\end{equation}

By [$\ref{ref15}$, Theorem 4.2], we have a bound $S(q/l,a(b/l))\ll (q/l)^{1-1/d}$
and a trivial bound $v(\beta)\leq X$. Hence, on substituting these estimates into $(\ref{4.20})$, we find that
\begin{equation}\label{4.21}
    T(\alpha)\ll X^2+\dsum_{l|q}AX^2q^{-2/d}l^{2/d-1}+AQ^4.
\end{equation}
The second term $\sum_{l|q}AX^2q^{-2/d}l^{2/d-1}$ is bounded above by $AX^2q^{-2/d}\sum_{l|q}1,$ and by the standard divisor estimate, this bound is $O(AX^2q^{-2/d+\epsilon})$. Recall that $\alpha\in \mathfrak{C}$, and thus we have the bound $Q\leq q\leq 2Q$ with $\log X\leq Q\leq X^{\delta}.$ Furthermore, we recall the hypothesis $X^d\leq A$ in the statement in Lemma $\ref{lem4.7}.$ Hence, it follows from $(\ref{4.21})$ that 
\begin{equation}\label{4.22}
    T(\alpha)\ll AX^2Q^{-2/d+\epsilon}.
\end{equation}
Therefore, on substituting ($\ref{4.22}$) into ($\ref{4.18}$), we obtain the bound
\begin{equation*}
     \dsum_{\substack{\|\a\|_{\infty}\leq A\\ P(\a)=0}}\left|\mathcal{I}_{\a}(X,\mathfrak{C})\right|^2\ll A^{N-n/8-k}X^{7n/4}\textrm{mes}(\mathfrak{C})^2(AX^2Q^{-2/d+\epsilon})^{n/8}.
\end{equation*}
On noting that $\textrm{mes}(\mathfrak{C})\ll Q^3(AX^d)^{-1}$, we conclude that
\begin{equation}\label{4.23}
      \dsum_{\substack{\|\a\|_{\infty}\leq A\\ P(\a)=0}}\left|\mathcal{I}_{\a}(X,\mathfrak{C})\right|^2\ll A^{N-k-2}X^{2n-2d}Q^{6-n/(4d)+\epsilon}.
\end{equation}
Recall that $n_1$ is the greatest integer with $n_1\leq\lfloor (n-1)/2\rfloor/8.$ We find from ($\ref{4.23}$) together with the hypothesis $n_1>2d$ in the statement of Lemma $\ref{lem4.7}$ that
$$  \dsum_{\substack{\|\a\|_{\infty}\leq A\\ P(\a)=0}}\left|\mathcal{I}_{\a}(X,\mathfrak{C})\right|^2\ll A^{N-k-2}X^{2n-2d}Q^{-2}.$$
Hence, on recalling that $J=O(\log X)$, it follows from ($\ref{4.16}$) that
\begin{equation*}
    \dsum_{\substack{\|\a\|_{\infty}\leq A\\ P(\a)=0}}\left|\mathcal{I}_{\a}(X,\mathfrak{M}(X^{\delta})\setminus \mathfrak{M}(\log X))\right|^2\ll A^{N-k-2}X^{2n-2d}(\log X)^{-1}.
\end{equation*}
On noting that $A\leq X^{n_1-d}$ in the statement of Lemma $\ref{lem4.7}$, we complete the proof of Lemma $\ref{lem4.7}.$

\end{proof}

\bigskip

\begin{proof}[Proof of Proposition $\ref{prop4.1}$]
Note that 
\begin{equation*}
    \mathfrak{m}(\log X)=(\mathfrak{M}(X^{\delta})\setminus\mathfrak{M}(\log X))\cup \mathfrak{m}(X^{\delta}).
\end{equation*}
Then, by the elementary inequality $(a+b)^2\leq 2a^2+2b^2,$ we find that
\begin{equation*}
\begin{aligned}
    & \dsum_{\substack{\|\a\|_{\infty}\leq A\\ P(\a)=0}}\left|\mathcal{I}_{\a}(X,\mathfrak{m}(\log X))\right|^2\\
    &\ll    \dsum_{\substack{\|\a\|_{\infty}\leq A\\ P(\a)=0}}\left|\mathcal{I}_{\a}(X,\mathfrak{M}(X^{\delta})\setminus \mathfrak{M}(\log X))\right|^2+   \dsum_{\substack{\|\a\|_{\infty}\leq A\\ P(\a)=0}}\left|\mathcal{I}_{\a}(X,\mathfrak{m}(X^{\delta}))\right|^2.
\end{aligned}
\end{equation*}
Thus, by Lemma $\ref{lem4.6}$ and Lemma $\ref{lem4.7}$, we conclude that
\begin{equation*}
    \dsum_{\substack{\|\a\|_{\infty}\leq A\\ P(\a)=0}}\left|\mathcal{I}_{\a}(X,\mathfrak{m}(\log X))\right|^2\ll  A^{N-k-2}X^{2n-2d}(\log A)^{-1}.
\end{equation*}
Therefore, we complete the proof of Proposition $\ref{prop4.1}.$
\end{proof}

\bigskip

\section{Proof of Theorem 1.1}\label{sec5}
In this section, we provide the proof of Theorem $\ref{thm2.2}$. To do this, we require two auxiliary lemmas. These lemmas reveal that $\mathfrak{S}_{\a}^*$ and $\mathfrak{J}_{\a}^*$, defined in the introduction, behave in a similar manner to the truncated singular series and the truncated singular integral on average over $\a$ with $\|\a\|_{\infty}\leq A$ and $P(\a)=0$. It is worth noting that in the proofs of these lemmas, we make use of the same idea as that in Section $\ref{sec4}$ (the separation procedure). In advance of the statement of the first lemma of these, it is convenient to define the exponential sum $S_{\a}(q)$ with $\a\in \Z^N$, $q\in \N$ by
\begin{equation*}
    S_{\a}(q):=S_{\a}(q;n)=q^{-n}\dsum_{\substack{1\leq b\leq q\\ (q,b)=1}}\dsum_{\substack{1\leq \r\leq q\\\r\in \Z^n}}e\left(\frac{b}{q}f_{\a}(\r)\right).
\end{equation*}
\begin{lem}\label{lem5.15.1}
Let $n$ and $d$ be natural numbers. Suppose that $A,B,C$ are sufficiently large positive numbers with $B<C.$ Suppose that $P\in \Z[\x]$ is a non-singular form in $N_{d,n}$ variables of degree $k\geq 2.$ Then, for any set $\mathcal{C}\subseteq [B,C]\cap \Z$, whenever $N\geq 200k(k-1)2^{k-1}$  we have 
\begin{equation*}
    \dsum_{\substack{\|\a\|_{\infty}\leq A\\ P(\a)=0}}\biggl|\dsum_{q\in \mathcal{C}}S_{\a}(q)\biggr|^2\ll A^{N-k}\biggl(\dsum_{q\in \mathcal{C}}q^{1+\epsilon}(q^{-1}+q^{-4/d})^{n/16}\biggr)^2.
\end{equation*}
\end{lem}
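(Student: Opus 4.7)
The plan is to adapt the ``separation procedure'' developed in Subsection~\ref{subsec4.1}, replacing the continuous frequency variables $\alpha_1, \alpha_2$ used in Proposition~\ref{pro4.3} by the discrete rationals $b_j/q_j$ (with $q_j \in \mathcal{C}$ and $(b_j, q_j) = 1$), weighted by $q_j^{-n}$. Using orthogonality $\mathbf{1}_{P(\a) = 0} = \int_0^1 e(\beta P(\a))\, d\beta$ and expanding $|\sum_{q \in \mathcal{C}} S_\a(q)|^2$ as a double sum over $(q_1, q_2)$, I will bound the left-hand side by $\int_0^1 F^S(\beta)\, d\beta$, where
\begin{equation*}
    F^S(\beta) = \dsum_{q_1, q_2 \in \mathcal{C}} (q_1 q_2)^{-n} \dsum_{\substack{(b_j, q_j) = 1 \\ \r_j \bmod q_j}} \Bigl| \dsum_{\|\a\|_\infty \leq A} e(\beta P(\a) + L(\a)) \Bigr|,
\end{equation*}
with $L(\a) = (b_1/q_1) f_\a(\r_1) - (b_2/q_2) f_\a(\r_2)$ linear in $\a$.

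Following the template of Lemma~\ref{lem4.4}, I will apply Cauchy--Schwarz on the $(\r_1, \r_2)$-sum, then Weyl differencing on the inner $\a$-sum to introduce a variable $\h$, and finally Cauchy--Schwarz on the $\h$-sum, arriving at
\begin{equation*}
    F^S(\beta) \leq G_2(\beta)^{1/4} \dsum_{q_1, q_2, b_1, b_2} (q_1 q_2)^{-n/2} \tilde{G}_1(q_1, q_2, b_1, b_2)^{1/4},
\end{equation*}
where $G_2(\beta)$ is identical to the $P$-differenced mean value of Subsection~\ref{subsec4.1} and
\begin{equation*}
    \tilde{G}_1(q_1, q_2, b_1, b_2) = \dsum_{\|\h\|_\infty \leq 2A} \Bigl| \dsum_{\substack{\r_1 \bmod q_1 \\ \r_2 \bmod q_2}} e\bigl((b_1/q_1) f_\h(\r_1) - (b_2/q_2) f_\h(\r_2)\bigr) \Bigr|^2.
\end{equation*}
For the $G_2$ factor, Lemma~\ref{lem3.2} gives $\int_0^1 G_2(\beta)^{1/4}\, d\beta \ll A^{3N/4 - k}$ whenever $N \geq 200 k(k-1) 2^{k-1}$, exactly as in the proof of Proposition~\ref{pro4.3}.

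For the $\tilde{G}_1$ factor, I will mimic the diagonal decomposition used in Lemma~\ref{lem4.5}: splitting $\h = (\b, \c) \in \Z^n \times \Z^{N-n}$ along the diagonal and off-diagonal coefficients of $f_\h$ (Definitions~\ref{defn4.1} and~\ref{defn4.2}), I will bound the $\c$-sum trivially by $O(A^{N-n})$, thereby reducing $\tilde{G}_1$ to a sum over only $\b$. After Cauchy--Schwarz on the $\r$-variables and Weyl differencing on $\b$, this reduced sum factorizes across the $n$ coordinates of $\b$, yielding a product of one-dimensional exponential sums of the shape $\sum_{r \bmod q_j} e((b_j/q_j) h r^d)$ in $h$. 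Applying the Weyl-type estimate~$(\ref{3.4})$ to each such factor (with $X = q_j$ and $\alpha = b_j/q_j$) and setting $T^*(q, b)$ equal to $T(\alpha)$ from~$(\ref{4.4})$ evaluated at $\alpha = b/q$ with $X = q$, one obtains $T^*(q, b) \ll A^{1+\epsilon} q^{2+\epsilon}(q^{-1}+q^{-4/d})^{c(d)}$ for a suitable constant $c(d) > 0$, and hence an estimate of the form $\tilde{G}_1 \ll A^{N - n/2}(q_1 q_2)^{3n/2} T^*(q_1, b_1)^{\lfloor n/2 \rfloor} T^*(q_2, b_2)^{\lceil n/2 \rceil}$.

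Combining these estimates and summing over $(q_j, b_j)$ with $(b_j, q_j) = 1$ (using $\phi(q) \leq q$) produces the desired bound $A^{N-k}\bigl(\sum_{q \in \mathcal{C}} q^{1+\epsilon}(q^{-1}+q^{-4/d})^{n/16}\bigr)^2$. The principal obstacle will be the delicate bookkeeping in the $\tilde{G}_1$ estimate: after the several Cauchy--Schwarz and Weyl differencing steps, the powers of $A$, $q_1$, $q_2$, and of $T^*(q_j, b_j)$ must be tracked carefully so that, once the weights $(q_1 q_2)^{-n/2}$ are applied and fourth roots are taken, the residual powers of $q_j$ combine with the Weyl-type bound on $T^*$ to yield exactly the exponent $n/16$ on the factor $(q^{-1}+q^{-4/d})$ appearing in the final sum.
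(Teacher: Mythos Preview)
Your overall architecture is exactly the paper's: orthogonality for $P(\a)=0$, expand the square over $q_1,q_2,b_1,b_2$, then the separation procedure (Cauchy--Schwarz plus Weyl differencing in $\a$) to split off $G_2(\beta)$ from a ``$G_1$-type'' factor in which only the linear phase $L(\h)$ survives. The treatment of $\int_0^1 G_2^{1/4}\,d\beta$ via Lemma~\ref{lem3.2} with $l=1$, $\sigma=1/4$ is identical to the paper's, and this is where the hypothesis $N\ge 200k(k-1)2^{k-1}$ enters.

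The gap is in your estimation of $\tilde G_1$. After the diagonal reduction you correctly arrive at products of complete sums $H(b_jh/q_j,q_j)=\sum_{1\le r\le q_j}e((b_jh/q_j)r^d)$ summed over $|h|\le cA$. But applying the Weyl bound~(\ref{3.4}) with $X=q_j$ and $\alpha=b_j/q_j$ is the wrong tool: (\ref{3.4}) is designed for incomplete sums and saves only a factor of size $q^{-2^{2-d}}$, which is exponentially small in $d$. With $X=q$ the bracket in (\ref{3.4}) collapses to $O(q^{-1})$, so no $q^{-4/d}$ term ever emerges, and you cannot reach the specific exponent $n/16$ on $(q^{-1}+q^{-4/d})$ stated in the lemma.

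What the paper does instead is apply one more Cauchy--Schwarz (to decouple $q_1$ from $q_2$) and pass to the \emph{fourth} moment
\[
I_j=\dsum_{|f|\le 4A}\bigl|H(b_jf/q_j,q_j)\bigr|^4,
\]
then split according to $e=q_j/(f,q_j)$ and invoke the complete-sum bound $S(e,a)\ll e^{1-1/d}$ from [$\ref{ref15}$, Theorem~4.2], giving $|H|\ll q_j e^{-1/d}$ and hence
\[
I_j\ll Aq_j^{3}\dsum_{e\mid q_j}e^{1-4/d}\ll Aq_j^{4+\epsilon}\bigl(q_j^{-1}+q_j^{-4/d}\bigr).
\]
It is precisely this fourth power combined with the $e^{-1/d}$ saving from the Gauss-sum estimate that manufactures the $q^{-4/d}$ and, after taking fourth roots of $G_1$, the exponent $n/16$. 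Replace your appeal to (\ref{3.4}) by this divisor splitting and the complete-sum bound; the remaining bookkeeping (powers of $q_j$ and of $A$) then matches and yields the stated estimate.
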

\begin{proof}
We deduce by applying orthogonality and changing the order of summations that
\begin{equation}\label{6.16.1}
\begin{aligned}
      \dsum_{\substack{\|\a\|_{\infty}\leq A\\ P(\a)=0}}\biggl|\dsum_{q\in \mathcal{C}}S_{\a}(q)\biggr|^2
      &=\dsum_{\substack{\|\a\|_{\infty}\leq A}}\dint_0^1\dsum_{q_1,q_2\in \mathcal{C}}S_{\a}(q_1)\overline{S_{\a}(q_2)}e(\beta P(\a))d\beta\\
      &=q_1^{-n}q_2^{-n}\dsum_{q_1,q_2\in \mathcal{C}}\dsum_{\substack{1\leq b_1\leq q_1\\ 1\leq b_2\leq q_2\\ (q_1,b_1)=(q_2,b_2)=1}}\dint_0^1\dsum_{\substack{1\leq \r_1\leq q_1\\1\leq \r_2\leq q_2\\\r_1,\r_2\in \Z^n}} T_{\a}\left(\beta,\frac{b_1}{q_1},\frac{b_2}{q_2},\r_1,\r_2\right)  d\beta,
\end{aligned}
\end{equation}
where 
$$T_{\a}\left(\beta,\frac{b_1}{q_1},\frac{b_2}{q_2},\r_1,\r_2\right)=\dsum_{\substack{\|\a\|_{\infty}\leq A}}e(\beta P(\a))e\left(\frac{b_1}{q_1}f_{\a}(\r_1)-\frac{b_2}{q_2}f_{\a}(\r_2)\right).$$

We now investigate the integrand in ($\ref{6.16.1}$), that is
$$\dsum_{\substack{1\leq \r_1\leq q_1\\1\leq \r_2\leq q_2}} T_{\a}\left(\beta,\frac{b_1}{q_1},\frac{b_2}{q_2},\r_1,\r_2\right). $$
In order to obtain the upper bound for this quantity, we
define 
$$F_1\left(\frac{b_1}{q_1},\frac{b_2}{q_2},\h\right)=\dsum_{\substack{1\leq \r_1\leq q_1\\1\leq\r_2\leq q_2}}e\left(\frac{b_1}{q_1}f_{\h}(\r_1)-\frac{b_2}{q_2}f_{\h}(\r_2)\right)$$
and
$$F_2\bigl(\beta,\h\bigr)=\dsum_{\a\in I_{\h}}e(\beta(P(\a+\h)-P(\a)))$$
in which 
$$I_{\h}=\{\a\in \Z^N|\ \|\a\|_{\infty}\leq A,\ \|\a+\h\|_{\infty}\leq A\}.$$ Then,
by applying the Cauchy-Schwarz inequality and a conventional Weyl differencing together with the triangle inequality, one deduces that
\begin{equation*}
\begin{aligned}
    \dsum_{\substack{ \r_1,\r_2}} T_{\a}\left(\beta,\frac{b_1}{q_1},\frac{b_2}{q_2},\r_1,\r_2\right)\ll q_1^{n/2}q_2^{n/2}\biggl(\dsum_{\substack{\|\h\|_{\infty}\leq 2A\\\h\in\Z^N}}\biggl|F_1\biggl(\frac{b_1}{q_1},\frac{b_2}{q_2},\h\biggr)\biggr|\biggl|F_2\bigl(\beta,\h\bigr)\biggr|\biggr)^{1/2}.
\end{aligned}
\end{equation*}
If we further define
$$G_1\biggl(\frac{b_1}{q_1},\frac{b_2}{q_2}\biggr)=\dsum_{\|\h\|_{\infty}\leq 2A}\biggl|F_1\biggl(\frac{b_1}{q_1},\frac{b_2}{q_2},\h\biggr)\biggr|^2\ \text{and}\ G_2(\beta)=\dsum_{\|\h\|_{\infty}\leq 2A}\biggl|F_2\bigl(\beta,\h\bigr)\biggr|^2,$$
 we find by applying the Cauchy-Schwarz inequality again that
\begin{equation}\label{6.26.2}
     \dsum_{\substack{\r_1,\r_2}} T_{\a}\left(\beta,\frac{b_1}{q_1},\frac{b_2}{q_2},\r_1,\r_2\right)\ll q_1^{n/2}q_2^{n/2}\left(G_1\biggl(\frac{b_1}{q_1},\frac{b_2}{q_2}\biggr)\right)^{1/4}\left(G_2(\beta)\right)^{1/4}.
\end{equation}

We first analyse $G_1\bigl(\frac{b_1}{q_1},\frac{b_2}{q_2}\bigr).$ Recall Definition $\ref{defn4.2}$. Furthermore, for $\d,\r_1^{(1)},\r_1^{(2)},\r_2^{(1)},\r_2^{(2)}\in \Z^n$, we define 
\begin{equation}
\begin{aligned}
   & \Psi(\d,\r_1^{(1)},\r_1^{(2)},\r_2^{(1)},\r_2^{(2)})\\
    &:=\Psi\left(\d,\r_1^{(1)},\r_1^{(2)},\r_2^{(1)},\r_2^{(2)};\frac{b_1}{q_1},\frac{b_2}{q_2},\right)\\
    &= \frac{b_1}{q_1}(\langle\d,v_{d}(\r_1^{(1)})\rangle-\langle\d,v_{d}(\r_1^{(2)})\rangle)-\frac{b_2}{q_2}(\langle\d,v_{d}(\r_2^{(1)})\rangle-\langle\d,v_{d}(\r_2^{(2)})\rangle).
\end{aligned}
\end{equation} 
Then, by squaring out and applying the triangle inequality, we deduce that
\begin{equation}\label{6.36.3}
\begin{aligned}
    G_1\biggl(\frac{b_1}{q_1},\frac{b_2}{q_2}\biggr)\ll A^{N-n}\dsum_{\substack{1\leq \r_1^{(1)},\r_1^{(2)}\leq q_1\\1\leq\r_2^{(1)},\r_2^{(2)}\leq q_2}}\biggl|\dsum_{\substack{\|\d\|_{\infty}\leq 2A\\ \d\in \Z^n}}e\left(\Psi(\d,\r_1^{(1)},\r_1^{(2)},\r_2^{(1)},\r_2^{(2)})\right)\biggr|.    
\end{aligned}
\end{equation}
By applying the Cauchy-Schwarz inequality together with the triangle inequality and by changing the order of summations, one sees from ($\ref{6.36.3}$) that
\begin{equation*}
\begin{aligned}
    G_1\biggl(\frac{b_1}{q_1},\frac{b_2}{q_2}\biggr)\ll A^{N-n}\cdot  q_1^{n}q_2^n  \cdot \biggl(\dsum_{\substack{\|\d_1\|_{\infty}\leq 2A\\\|\d_2\|_{\infty}\leq 2A}}\biggl|\dsum_{ \substack{1\leq\r_1^{(1)},\r_1^{(2)}\leq q_1\\1\leq \r_2^{(1)},\r_2^{(2)}\leq q_2}}e\left(\Psi(\d_1-\d_2,\r_1^{(1)},\r_1^{(2)},\r_2^{(1)},\r_2^{(2)})\right)\biggr|\biggr)^{1/2}.
\end{aligned}
\end{equation*}
By change of variables $\d=\d_1-\d_2$, we have
\begin{equation*}
\begin{aligned}
    G_1\biggl(\frac{b_1}{q_1},\frac{b_2}{q_2}\biggr)\ll A^{N-n}\cdot  q_1^{n}q_2^n  \cdot \biggl(\dsum_{\substack{\|\d\|_{\infty}\leq 4A}}A^n\biggl|\dsum_{ \r_1^{(1)},\r_1^{(2)},\r_2^{(1)},\r_2^{(2)}}e\left(\Psi(\d,\r_1^{(1)},\r_1^{(2)},\r_2^{(1)},\r_2^{(2)})\right)\biggr|\biggr)^{1/2}.
\end{aligned}
\end{equation*}
On writing
$$H\left(\alpha,q\right)=\dsum_{1\leq r\leq q}e\left(\alpha r^d\right),$$
we find that
\begin{equation*}
\begin{aligned}
   G_1\biggl(\frac{b_1}{q_1},\frac{b_2}{q_2}\biggr)\ll A^{N-n/2}\cdot q_1^{n}q_2^n \cdot \biggl(\dsum_{|f|\leq 4A}\biggl|H\left(\frac{b_1f}{q_1},q_1\right)\biggr|^2\biggl|H\left(\frac{b_2f}{q_2},q_2\right)\biggr|^2\biggr)^{n/2}.
\end{aligned}
\end{equation*}
By applying the Cauchy-Schwarz inequality again, one obtains 
\begin{equation}\label{6.5}
     G_1\biggl(\frac{b_1}{q_1},\frac{b_2}{q_2}\biggr)\ll A^{N-n/2}\cdot q_1^{n}q_2^{n}\cdot I_1^{n/4}\cdot I_2^{n/4},
\end{equation}
where 
\begin{equation*}
    I_1=\dsum_{|f|\leq 4A}\left|H\left(\frac{b_1f}{q_1},q_1\right)\right|^4\ \text{and}\  I_2=\dsum_{|f|\leq 4A}\left|H\left(\frac{b_2f}{q_2},q_2\right)\right|^4.
\end{equation*}
By splitting sum over $f$ in terms of values of $(f,q_1),$ we see that
\begin{equation}\label{6.6}
I_1=\dsum_{e|q_1}\dsum_{\substack{(f,q_1)=q_1/e\\|f|\leq 4A}}\left|H\left(\frac{b_1f}{q_1},q_1\right)\right|^4.    
\end{equation}
Meanwhile, when $(f,q_1)=q_1/e$, on writing that $f=(q_1/e)\widetilde{f}$ with $(\widetilde{f},e)=1,$ it follows from [$\ref{ref15}$, Theorem 4.2] that
\begin{equation}\label{6.7}
\begin{aligned}
H\left(\frac{b_1f}{q_1},q_1\right)=\dsum_{1\leq r\leq q_1}e\left(\frac{b_1f}{q_1} r^d\right)=\frac{q_1}{e}\cdot \dsum_{1\leq r\leq e}e\left(\frac{b_1\widetilde{f}}{e}r^d\right)\ll q_1\cdot e^{-1/d}.        
\end{aligned}
\end{equation}
Hence, on substituting $(\ref{6.7})$ into $(\ref{6.6})$, one has
\begin{equation}\label{6.8}
    I_1\ll \dsum_{e|q_1}(Ae/q_1)(q_1e^{-1/d})^4\leq Aq_1^{3}\dsum_{e|q_1}e^{1-4/d}  \leq  Aq_1^{4+\epsilon}(q_1^{-1}+q_1^{-4/d}).
\end{equation}
Similarly, we have
\begin{equation}\label{6.9}
    I_2\ll Aq_2^{4+\epsilon}(q_2^{-1}+q_2^{-4/d}).
\end{equation}
Then, on substituting $(\ref{6.8})$ and $(\ref{6.9})$ into $(\ref{6.5})$, we obtain 
\begin{equation}\label{6.10}
     G_1\biggl(\frac{b_1}{q_1},\frac{b_2}{q_2}\biggr)\ll A^N\cdot q_1^{2n+\epsilon}q_2^{2n+\epsilon}\cdot (q_1^{-1}+q_1^{-4/d})^{n/4}(q_2^{-1}+q_2^{-4/d})^{n/4}.
\end{equation}

Next, recall the definition of $G_2(\beta).$ Since the range of summation $I_{\h}$ in the definition of $G_2(\beta)$ is a rectangular box depending on $\h,$ it follows by Lemma $\ref{lem3.2}$ with $l=1,\ B=0,\ A_1=A_2=2A$ and $\sigma=1/4$ that
\begin{equation}\label{6.11}
    \dint_0^1G_2(\beta)^{1/4}d\beta\ll A^{3N/4-k}.
\end{equation}
On substituting $(\ref{6.10})$ into ($\ref{6.26.2}$) and that into $(\ref{6.16.1})$, we have
\begin{equation}\label{6.12}
\begin{aligned}
      &\dsum_{\substack{\|\a\|_{\infty}\leq A\\ P(\a)=0}}\biggl|\dsum_{q\in \mathcal{C}}S_{\a}(q)\biggr|^2\\
      &\ll \dsum_{q_1,q_2\in \mathcal{C}}\dsum_{\substack{1\leq b_1\leq q_1\\ 1\leq b_2\leq q_2\\ (q_1,b_1)=(q_2,b_2)=1}} A^{N/4}\cdot q_1^{\epsilon}q_2^{\epsilon}\cdot(q_1^{-1}+q_1^{-4/d})^{n/16}(q_2^{-1}+q_2^{-4/d})^{n/16}\cdot \dint_0^1G_2(\beta)^{1/4}d\beta.
\end{aligned}
\end{equation}
Therefore, on substituting $(\ref{6.11})$ into $(\ref{6.12})$, we conclude that
\begin{equation*}
    \dsum_{\substack{\|\a\|_{\infty}\leq A\\ P(\a)=0}}\biggl|\dsum_{q\in \mathcal{C}}S_{\a}(q)\biggr|^2\ll A^{N-k}\biggl(\dsum_{q\in \mathcal{C}}q^{1+\epsilon}(q^{-1}+q^{-4/d})^{n/16}\biggr)^2.
\end{equation*}
This completes the proof of Lemma $\ref{lem5.15.1}.$
\end{proof}

\bigskip

In advance of the statement of the second auxiliary lemma, we define 
\begin{equation}\label{defnJ}
   \mathfrak{J}_{\a}(w)=X^{n-d}A^{-1} \dint_{|\beta|\leq w}\dint_{[0,1]^n}e(\beta A^{-1}f_{\a}(\boldsymbol{\gamma}))d\boldsymbol{\gamma} d\beta,
\end{equation}
with $\a\in \Z^N.$
Furthermore, we recall the definition ($\ref{defnJ*}$) of $\mathfrak{J}_{\a}^*.$ Note again that in the proof of Lemma $\ref{lemma5.2}$ below, we shall make use of the same idea as that in Section $\ref{sec4}$ (the separation procedure).

\begin{lem}\label{lemma5.2}
Let $n$ and $d$ be natural numbers with $n\geq 8(d+1).$ Suppose that $P\in \Z[\x]$ is a non-singular form in $N_{d,n}$ variables of degree $k\geq 2.$ Then, whenever $N\geq 200k(k-1)2^{k-1}$, we have
    \begin{equation*}
        \dsum_{\substack{\|\a\|_{\infty}\leq A\\ P(\a)=0}}\left|\mathfrak{J}_{\a}^*-\mathfrak{J}_{\a}(w)\right|^2\ll A^{N-k-2}X^{2n-2d}(w^{2-n/(2(d+1))}+\zeta).
    \end{equation*}
\end{lem}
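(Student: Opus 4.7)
The plan is to rewrite both $\mathfrak{J}_{\a}^*$ and $\mathfrak{J}_{\a}(w)$ as integrals in $\beta$ against a common oscillatory integral, and then to control the difference on two complementary regions. Since $\mathfrak{w}_{\zeta}$ is an even function, Fourier inversion yields
\begin{equation*}
\zeta^{-1}\widehat{\mathfrak{w}}_{\zeta}(A^{-1}f_{\a}(\boldsymbol{\gamma}))=\dint_{\R}\rho_{\zeta}(\beta)e(\beta A^{-1}f_{\a}(\boldsymbol{\gamma}))d\beta,\quad \rho_{\zeta}(\beta):=\zeta^{-1}\mathfrak{w}_{\zeta}(\beta)=\left(\frac{\sin(\pi\zeta\beta)}{\pi\zeta\beta}\right)^2.
\end{equation*}
Setting $I_{\a}(\beta):=\dint_{[0,1]^n}e(\beta A^{-1}f_{\a}(\boldsymbol{\gamma}))d\boldsymbol{\gamma}$ and exchanging orders of integration, I obtain
\begin{equation*}
\mathfrak{J}_{\a}^*-\mathfrak{J}_{\a}(w)=X^{n-d}A^{-1}\dint_{\R}\Phi(\beta)I_{\a}(\beta)d\beta,\qquad \Phi(\beta):=\rho_{\zeta}(\beta)-\mathbf{1}_{[-w,w]}(\beta).
\end{equation*}

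I would then split this integral at $|\beta|=w$. On the central region $|\beta|\leq w$, Taylor expansion of $\sin$ gives $|1-\rho_{\zeta}(\beta)|\ll(\zeta\beta)^2\leq\zeta^2w^2=w^{-8}$. The trivial bound $|I_{\a}(\beta)|\leq 1$ combined with Birch's estimate $\#\mathfrak{A}(A;P)\ll A^{N-k}$ then contributes $O(A^{N-k-2}X^{2n-2d}w^{-14})$ to the target sum, which is absorbed into the $\zeta=w^{-5}$ term of the claimed bound. For the tail $|\beta|>w$, where $\Phi=\rho_{\zeta}\geq 0$, I would apply the separation procedure introduced in Section $\ref{subsec4.1}$ and reused in Lemma $\ref{lem5.15.1}$ to
\begin{equation*}
\mathcal{T}:=\dsum_{\substack{\|\a\|_{\infty}\leq A\\P(\a)=0}}\biggl|\dint_{|\beta|>w}\rho_{\zeta}(\beta)I_{\a}(\beta)d\beta\biggr|^2.
\end{equation*}
Expanding the square in $(\beta_1,\beta_2)$, inserting orthogonality to encode $P(\a)=0$, and carrying out Cauchy--Schwarz together with a single Weyl differencing on the $\a$-sum in exact parallel with Lemma $\ref{lem4.4}$, one factors $\mathcal{T}$ into a Weyl-differenced mean value in $\a$ bounded by Lemma $\ref{lem3.2}$ (with $l=1$, $\sigma=1/4$, giving $A^{3N/4-k}$ when $N\geq 200k(k-1)2^{k-1}$), times a $(\beta_1,\beta_2)$-weighted mean over $b$ of the continuous analogue
$T^{\sharp}(\beta/A):=\dsum_{-A\leq b\leq A}\bigl|\dint_0^1 e((\beta/A)b\gamma^d)d\gamma\bigr|^2$
of the exponential sum $T(\alpha)$ in $(\ref{4.4})$.

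The principal difficulty lies in extracting the precise saving $w^{2-n/(2(d+1))}$ from this weighted tail integral. To this end I would combine the van der Corput estimate $\bigl|\dint_0^1 e(\lambda\gamma^d)d\gamma\bigr|\ll\min(1,|\lambda|^{-1/d})$ with a divisor-style split on $|b|\leq A$ analogous to the argument leading to $(\ref{4.22})$ in the proof of Lemma $\ref{lem4.7}$, producing the pointwise bound $T^{\sharp}(\beta/A)\ll A^{1+\epsilon}(|\beta|/A)^{-2/(d+1)}$ for $|\beta|>w$. Raising this to the power $n/16$ in each of the variables $\beta_1,\beta_2$ and integrating against $\rho_{\zeta}(\beta_1)\rho_{\zeta}(\beta_2)\mathbf{1}_{|\beta_1|,|\beta_2|>w}$, and using the complementary tail estimates $\int_{|\beta|>w}\rho_{\zeta}(\beta)d\beta\ll\zeta^{-1}=w^5$ and $\rho_{\zeta}(\beta)\ll(\zeta\beta)^{-2}$, a short calculation yields the factor $w^{2-n/(2(d+1))}$. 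The hypothesis $n\geq 8(d+1)$ forces this exponent to be $\leq -2$, making all resulting $\beta$-integrals absolutely convergent and delivering the claimed bound.
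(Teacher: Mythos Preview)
Your overall strategy---Fourier inversion to write $\mathfrak{J}_{\a}^*-\mathfrak{J}_{\a}(w)$ as a weighted $\beta$-integral of $I_{\a}(\beta)$, a split at $|\beta|=w$, and the separation procedure of \S\ref{subsec4.1} on the tail---matches the paper's, and your central-region bound is fine. The gap is in the tail. Your claimed estimate $T^{\sharp}(\beta/A)\ll A^{1+\epsilon}(|\beta|/A)^{-2/(d+1)}$ is vacuous in the relevant range: the $\beta$-integral is effectively supported on $|\beta|\ll\zeta^{-1}\ll A$, where the right-hand side already exceeds the trivial bound $T^{\sharp}\leq 2A+1$. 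The analogy with $(\ref{4.22})$ is misplaced, since that estimate records a saving in the arithmetic denominator $q$ on major arcs, and no such denominator is present for a purely archimedean frequency $\beta/A$. The genuine pointwise bound coming from $|I(\lambda)|\ll\min(1,|\lambda|^{-1/d})$ is $T^{\sharp}(\beta/A)\ll A|\beta|^{-2/d}$ (for $d\geq 3$); fed through your $n/16$-th power route this yields only $w^{2-n/(4d)}$, which is strictly weaker than the stated $w^{2-n/(2(d+1))}$ since $4d>2(d+1)$. Your ``short calculation'' therefore cannot close.

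What the paper does instead is bypass $T^{\sharp}$ and apply H\"older's inequality with exponent $d+1$ directly to the coupled sum $\sum_{|f|\leq 4A}|I(\beta_1 f/A)|^2|I(\beta_2 f/A)|^2$ that sits inside $U_1(\beta_1,\beta_2)$ after the separation step. Because $|I(\lambda)|^{d+1}\ll|\lambda|^{-(d+1)/d}$ is just barely summable, one obtains $\sum_{|f|\leq 4A}|I(\beta f/A)|^{d+1}\ll A|\beta|^{-1}$, whence $U_1\ll A^{N}|\beta_1|^{-n/(d+1)}|\beta_2|^{-n/(d+1)}$ and $U_1^{1/4}\ll A^{N/4}|\beta_1|^{-n/(4(d+1))}|\beta_2|^{-n/(4(d+1))}$. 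Integrating each $|\beta_i|$ over $(w,\infty)$ then gives $w^{1-n/(4(d+1))}$ per variable and hence the claimed $w^{2-n/(2(d+1))}$; the hypothesis $n\geq 8(d+1)$ is precisely $n/(4(d+1))\geq 2$, which guarantees the convergence you need. The $\zeta$-term in the lemma arises from a separate $|1-\rho_{\zeta}(\beta)|$-weighted piece (the paper uses a three-way split via an intermediate truncation at $|\beta|=\zeta^{-3}$ rather than your two-way split), handled by the same $U_1$ bound together with $|1-\rho_{\zeta}(\beta)|\ll\min(1,(\zeta\beta)^2)$.
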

\begin{proof}
Put $K(\beta)=\zeta^{-1}\mathfrak{w}_{\zeta}(\beta),$ and we introduce functions $A_1(\a)$ and $A_2(\a)$ defined by 
$$A_1(\a)=X^{n-d}A^{-1}\dint_{[0,1]^n}\dint_{|\beta|\leq\zeta^{-3}}K(\beta)e(\beta A^{-1}f_{\a}(\boldsymbol{\gamma}))d\beta d\boldsymbol{\gamma}$$
and 
$$A_2(\a)=X^{n-d}A^{-1}\int_{[0,1]^n}\int_{|\beta|\leq\zeta^{-3}}e(\beta A^{-1}f_{\a}(\boldsymbol{\gamma}))d\beta d\boldsymbol{\gamma}.$$
Then, by applying the elementary inequality $(a+b+c)^2\ll (a^2+b^2+c^2)$, we deduce that 
\begin{equation}\label{5.145.145.14}
\begin{aligned}
    &\dsum_{\substack{\|\a\|_{\infty}\leq A\\P(\a)=0}}\left|\mathfrak{J}_{\a}^*-\mathfrak{J}_{\a}(w)\right|^2   \\
    &\ll \dsum_{\substack{\|\a\|_{\infty}\leq A\\P(\a)=0}}\left|A_2(\a)-\mathfrak{J}_{\a}(w)\right|^2+\dsum_{\substack{\|\a\|_{\infty}\leq A\\P(\a)=0}}\left|A_2(\a)-A_1(\a)\right|^2+\dsum_{\substack{\|\a\|_{\infty}\leq A\\P(\a)=0}}\left|A_1(\a)-\mathfrak{J}_{\a}^*\right|^2.
\end{aligned}
\end{equation}
We first analyze the first term. By orthogonality, we have
    \begin{equation}\label{5.135.13}
         \dsum_{\substack{\|\a\|_{\infty}\leq A\\ P(\a)=0}}\left|A_2(\a)-\mathfrak{J}_{\a}(w)\right|^2=\dsum_{\|\a\|_{\infty}\leq A}\dint_0^1\left|A_2(\a)-\mathfrak{J}_{\a}(w)\right|^2e(\alpha P(\a))d\alpha.
    \end{equation}
    On noting that 
    \begin{equation}
    \begin{aligned}
        &\left|A_2(\a)-\mathfrak{J}_{\a}(w)\right|^2\\ 
        &=X^{2(n-d)}A^{-2}\dint_{\substack{w<|\beta_1|\leq \zeta^{-3}\\w<|\beta_2|\leq \zeta^{-3}}}\dint_{[0,1]^{2n}}e(\beta_1 A^{-1}f_{\a}(\boldsymbol{\gamma}_1)-\beta_2 A^{-1}f_{\a}(\boldsymbol{\gamma}_2))d\boldsymbol{\gamma}_1 d\boldsymbol{\gamma}_2 d\beta_1 d\beta_2,
    \end{aligned}
    \end{equation}
    we find that
\begin{equation}\label{5.155.15}
    \begin{aligned}
           &\dsum_{\substack{\|\a\|_{\infty}\leq A\\ P(\a)=0}}\left|A_2(\a)-\mathfrak{J}_{\a}(w)\right|^2\\
           &\ll (X^{n-d}A^{-1})^2\dint_0^1\dint_{\substack{w<|\beta_1|\leq \zeta^{-3}\\w<|\beta_2|\leq \zeta^{-3}}}\dint_{[0,1]^{2n}}T(\alpha,\beta_1,\beta_2,\boldsymbol{\gamma}_1,\boldsymbol{\gamma}_2)  d\boldsymbol{\gamma}_1d\boldsymbol{\gamma}_2d\beta_1 d\beta_2 d\alpha,
    \end{aligned}
\end{equation}
where
$$T(\alpha,\beta_1,\beta_2,\boldsymbol{\gamma}_1,\boldsymbol{\gamma}_2)=\dsum_{-A\leq \a\leq A}e(\alpha P(\a))e(\beta_1 A^{-1}f_{\a}(\boldsymbol{\gamma}_1)-\beta_2A^{-1}f_{\a}(\boldsymbol{\gamma}_2)).$$

In order to apply the Cauchy-Schwarz inequality, we define
\begin{equation*}
    T_1(\beta_1,\beta_2,\h)=\dint_{[0,1]^{2n}}e(\beta_1 A^{-1}f_{\h}(\boldsymbol{\gamma}_1)-\beta_2 A^{-1}f_{\h}(\boldsymbol{\gamma}_2))d\boldsymbol{\gamma}_1 d\boldsymbol{\gamma}_2
\end{equation*}
and 
\begin{equation*}
    T_2(\alpha,\h)=\dsum_{\a\in I_{\h}}e(\alpha(P(\a+\h))-P(\a)))
\end{equation*}
in which 
$$I_{\h}=\{\a\in \Z^N|\ \|\a+\h\|_{\infty}\leq A,\ \|\a\|_{\infty}\leq A\}.$$
Then, by applying the Cauchy-Schwarz inequality and Weyl differencing argument together with the triangle inequality, we deduce that
\begin{equation*}
\dint_{[0,1]^{2n}}T(\alpha,\beta_1,\beta_2,\boldsymbol{\gamma}_1,\boldsymbol{\gamma}_2)d\boldsymbol{\gamma}_1d\boldsymbol{\gamma}_2\leq \biggl(\dsum_{\|\h\|_{\infty}\leq 2A}\left|T_1(\beta_1,\beta_2,\h)\right|\left|T_2(\alpha,\h)\right|\biggr)^{1/2}.
\end{equation*}
If we further define 
$$U_1(\beta_1,\beta_2)=\dsum_{\|\h\|_{\infty}\leq 2A}\left|T_1(\beta_1,\beta_2,\h)\right|^2\ \text{and}\ U_2(\alpha)=\dsum_{\|\h\|_{\infty}\leq 2A}\left|T_2(\alpha,\h)\right|^2,$$
we find by applying the Cauchy-Schwarz inequality again that
\begin{equation}\label{5.165.16}
\dint_{[0,1]^{2n}}T(\alpha,\beta_1,\beta_2,\boldsymbol{\gamma}_1,\boldsymbol{\gamma}_2)d\boldsymbol{\gamma}_1d\boldsymbol{\gamma}_2\leq U_1(\beta_1,\beta_2)^{1/4}\cdot U_2(\alpha)^{1/4}.
\end{equation}

We investigate $U_1(\beta_1,\beta_2)$. Recall Definition $\ref{defn4.2}.$ Furthermore, for $\d\in \Z^n$,  $\boldsymbol{\gamma}_1^{(1)}$, $\boldsymbol{\gamma}_1^{(2)}$, $\boldsymbol{\gamma}_2^{(1)}$, $\boldsymbol{\gamma}_2^{(2)}\in \R^n$, we define
\begin{equation*}
\begin{aligned}
    \Psi(\d,\boldsymbol{\gamma}_1^{(1)}, \boldsymbol{\gamma}_1^{(2)},\boldsymbol{\gamma}_2^{(1)},\boldsymbol{\gamma}_2^{(2)}) &:=\Psi(\d,\boldsymbol{\gamma}_1^{(1)}, \boldsymbol{\gamma}_1^{(2)},\boldsymbol{\gamma}_2^{(1)},\boldsymbol{\gamma}_2^{(2)};\beta_1,\beta_2)\\
    &=A^{-1}\beta_1 (\langle\d, v_{d}(\boldsymbol{\gamma}_1^{(1)})- v_{d}(\boldsymbol{\gamma}_1^{(2)})\rangle)-A^{-1}\beta_2( \langle \d,v_d(\boldsymbol{\gamma}_2^{(1)})- v_{d}(\boldsymbol{\gamma}_2^{(2)})\rangle).
\end{aligned}
\end{equation*}
Then, by squaring out and applying the triangle inequality, we deduce that
\begin{equation}\label{5.175.17}
    U_1(\beta_1,\beta_2)\ll A^{N-n}\dint_{[0,1]^{4n}}\biggl|\dsum_{\substack{\|\d\|_{\infty}\leq 2A\\\d\in \Z^n}}e(\Psi(\d,\boldsymbol{\gamma}_1^{(1)}, \boldsymbol{\gamma}_1^{(2)},\boldsymbol{\gamma}_2^{(1)},\boldsymbol{\gamma}_2^{(2)}))\biggr|d\boldsymbol{\gamma},
\end{equation}
where we wrote $d\boldsymbol{\gamma}=d\boldsymbol{\gamma}_1^{(1)}d\boldsymbol{\gamma}_1^{(2)}d\boldsymbol{\gamma}_2^{(1)}d\boldsymbol{\gamma}_2^{(2)},$ for simplicity.
By applying the Cauchy-Schwarz inequality and the triangle inequality, we see from $(\ref{5.175.17})$ that
\begin{equation*}
    U_1(\beta_1,\beta_2)\ll A^{N-n}\biggl(\dsum_{\substack{\|\d\|_{\infty}\leq 4A\\ \d\in \Z^n}}A^n\biggl|\dint_{[0,1]^{4n}}e(\Psi(\d,\boldsymbol{\gamma}_1^{(1)}, \boldsymbol{\gamma}_1^{(2)},\boldsymbol{\gamma}_2^{(1)},\boldsymbol{\gamma}_2^{(2)}))d\boldsymbol{\gamma}\biggr|\biggr)^{1/2}.
\end{equation*}

On writing $$I(\xi)=\dint_0^1e(\xi \gamma^d)d\gamma,$$
we find by applying the H\"older's inequality that
\begin{equation}\label{5.20202020}
\begin{aligned}
 U_1(\beta_1,\beta_2)&\ll A^{N-n/2}\biggl(\dsum_{|f|\leq 4A}|I(\beta _1A^{-1}f)|^2|I(A^{-1}\beta_2 f)|^2\biggr)^{n/2}\\
 &\ll A^{N-2n/(d+1)}\cdot \biggl(\dsum_{|f|\leq 4A}|I(\beta _1A^{-1}f)|^{d+1}\biggr)^{\frac{n}{d+1}}\cdot\biggl(\dsum_{|f|\leq 4A}|I(\beta _2A^{-1}f)|^{d+1}\biggr)^{\frac{n}{d+1}}.
\end{aligned}
\end{equation}

Meanwhile, since by [$\ref{ref15}$, Lemma 2.8] we have
$$I(\xi)\ll \text{min}\left(1,\frac{1}{|\xi|^{1/d}}\right),$$
it follows that whenever $w<|\beta|\leq\zeta^{-3}$, one has
\begin{equation}\label{5.212121}
    \begin{aligned}
        \dsum_{|f|\leq 4A}|I(\beta A^{-1}f)|^{d+1}&\ll \dsum_{|f|\leq A|\beta|^{-1}}1+\dsum_{A|\beta|^{-1}\leq |f|\leq 4A}\frac{1}{|A^{-1}\beta f|^{(d+1)/d}}\ll A|\beta|^{-1}.
    \end{aligned}
\end{equation}
Hence, whenever $w<|\beta_1|\leq \zeta^{-3}$ and $w<|\beta_2|\leq\zeta^{-3}$, we have
\begin{equation}\label{5.185.18}
    U_1(\beta_1,\beta_2)\ll A^N\cdot |\beta_1|^{-\frac{n}{d+1}}\cdot |\beta_2|^{-\frac{n}{d+1}}.
\end{equation}

Next, recall the definition of $U_2(\alpha).$ Since $I_{\h}$ in $U_2(\alpha)$ is a rectangular box depending on $\h,$ it follows by Lemma $\ref{lem3.2}$ with $l=1,B=0,A_1=A_2=2A$ and $\sigma=1/4$ that 
\begin{equation}\label{5.195.19}
\dint_0^1 U_2(\alpha)^{1/4}d\alpha \ll A^{3N/4-k}.
\end{equation}
On substituting $(\ref{5.185.18})$ into $(\ref{5.165.16})$ and that into $(\ref{5.155.15})$, we find that
\begin{equation}\label{5.205.20}
    \begin{aligned}
         &\dsum_{\substack{\|\a\|_{\infty}\leq A\\ P(\a)=0}}\left|A_2(\a)-\mathfrak{J}_{\a}(w)\right|^2\\
           &\ll (X^{n-d}A^{-1})^2\cdot A^{N/4}\dint_{\substack{w<|\beta_1|\leq \zeta^{-3}\\w<|\beta_2|\leq \zeta^{-3}}}|\beta_1|^{-n/(4(d+1))}|\beta_2|^{-n/(4(d+1))}d\beta_1 d\beta_2
 \dint_0^1 U_2(\alpha)^{1/4}d\alpha\\
 &\ll A^{N-k-2}\cdot X^{2n-2d}\cdot w^{2-n/(2(d+1))},
    \end{aligned}
\end{equation}
where we have used $(\ref{5.195.19})$.

Next, we turn to estimate the second term in $(\ref{5.145.145.14}).$ By the orthogonality, we see that 
\begin{equation*}
\begin{aligned}
    \dsum_{\substack{\|\a\|_{\infty}\leq A\\ P(\a)=0}}|A_1(\boldsymbol{a})-A_2(\boldsymbol{a})|^2=\dsum_{\|\a\|_{\infty}\leq A}\dint_0^1\left|A_1(\a)-A_2(\a)\right|^2e(\alpha P(\a))d\alpha.
\end{aligned}
\end{equation*}
On recalling the definition of $A_1(\a)$ and $A_2(\a)$, it follows by squaring out and swapping the order of summation and integration that 
\begin{equation}\label{revise}
\begin{aligned}
    &\dsum_{\substack{\|\a\|_{\infty}\leq A\\ P(\a)=0}}\left|A_1(\a)-A_2(\a)\right|^2  \\
    &\ll X^{2(n-d)}\cdot A^{-2}\int_0^1\dint_{\substack{|\beta_1|\leq\zeta^{-3}\\|\beta_2|\leq\zeta^{-3}}}(1-K(\beta_1))(1-K(\beta_2))\mathcal{T}(\alpha,\beta_1,\beta_2) d\beta_1 d\beta_2 d\alpha,
\end{aligned}
\end{equation}
where $$\mathcal{T}(\alpha,\beta_1,\beta_2)=\dint_{[0,1]^{2n}}T(\alpha,\beta_1,\beta_2,\boldsymbol{\gamma}_1,\boldsymbol{\gamma}_2)d\boldsymbol{\gamma}_1 d\boldsymbol{\gamma}_2.$$
Furthermore, by $(\ref{5.165.16})$ and $(\ref{5.20202020})$, one has 
\begin{equation}\label{5.2424}
    \mathcal{T}(\alpha,\beta_1,\beta_2)\ll U_1(\beta_1,\beta_2)^{1/4}\cdot U_2(\alpha)^{1/4}
\end{equation}
and
\begin{equation}\label{5.2525}
\begin{aligned}
   & U_1(\beta_1,\beta_2)\\
    &\ll A^{N-2n/(d+1)}\biggl(\dsum_{|f|\leq 4A}|I(\beta_1 A^{-1}f)|^{d+1}\biggr)^{n/(d+1)}\cdot\biggl(\dsum_{|f|\leq 4A}|I(\beta_2 A^{-1}f)|^{d+1}\biggr)^{n/(d+1)}.  
\end{aligned}
\end{equation}
Then, by using the inequality
$$I(\xi)\ll \text{min}\left(1,\frac{1}{|\xi|^{1/d}}\right)$$
again together with the first inequality in $(\ref{5.212121})$, we deduce that
$$\dsum_{|f|\leq 4A}|I(\beta A^{-1}f)|^{d+1}\ll A\cdot \text{min}(1,|\beta|^{-1}).$$
Hence, one has
\begin{equation}\label{5.2626}
    U_1(\beta_1,\beta_2)\ll A^N\cdot \text{min}\left(1,|\beta_1|^{-n/(d+1)}\right)\cdot \text{min}\left(1,|\beta_2|^{-n/(d+1)}\right).
\end{equation}
On substituting $(\ref{5.2626})$ into $(\ref{5.2424})$ and that into $(\ref{revise})$, we find that
\begin{equation}\label{5.2727}
\begin{aligned}
    &\dsum_{\substack{\|\a\|_{\infty}\leq A\\ P(\a)=0}}\left|A_1(\a)-A_2(\a)\right|^2  \\
    &\ll X^{2(n-d)}\cdot A^{N/4-2}\biggl(\int_{\R}|1-K(\beta)|\cdot \text{min}\left(1,|\beta|^{-n/(4(d+1))}\right)d\beta\biggr)^2\cdot \int_0^1U_2(\alpha)^{1/4}d\alpha\\
    &\ll X^{2n-2d}\cdot A^{N-k-2}\biggl(\int_{\R}|1-K(\beta)|\cdot \text{min}\left(1,|\beta|^{-n/(4(d+1))}\right)d\beta\biggr)^2,
\end{aligned}
\end{equation}
where we have used $(\ref{5.195.19}).$

Meanwhile, put $\mathfrak{C}=[-\zeta^{-1/2},\zeta^{-1/2}]$ and $\mathfrak{D}=\R\setminus \mathfrak{C}.$ Then from the Taylor expansion of $\mathfrak{w}_{\zeta}(\beta)$, we have
$$|1-K(\beta)|\ll \text{min}(1,|\beta\zeta|^2).$$
Whenever $n>4(d+1),$ we obtain
\begin{equation*}
\begin{aligned}
  &\dint_{\mathfrak{C}}|1-K(\beta)|\cdot \text{min}(1,|\beta|^{-n/(4(d+1))})d\beta\\
  &\ll \text{sup}_{\beta\in \mathfrak{C}}|1-K(\beta)|\cdot \int_{\mathfrak{C}}\text{min}(1,|\beta|^{-n/(4(d+1))})d\beta\ll \zeta.  
\end{aligned}
\end{equation*}
Whenever $n\geq 8(d+1)$, the contribution arising from $\mathfrak{D}$ is 
$$\dint_{\mathfrak{D}}|1-K(\beta)|\cdot \text{min}(1,|\beta|^{-n/(4(d+1))})d\beta\ll \dint_{\mathfrak{D}}|\beta|^{-2}d\beta\ll \zeta^{1/2}.$$
Therefore, on substituting these estimates into $(\ref{5.2727})$, we conclude that 
\begin{equation}\label{5.285.28}
    \dsum_{\substack{\|\a\|_{\infty}\leq A\\ P(\a)=0}}\left|A_1(\a)-A_2(\a)\right|^2\ll A^{N-k-2}\cdot X^{2n-2d}\cdot \zeta.
\end{equation}

Lastly, we investigate the third term in $(\ref{5.145.145.14})$.
By the definition of $A_1(\a)$, $\mathfrak{w}_{\zeta}(\beta)$ and  $\mathfrak{J}^*_{\a}$, we deduce that 
\begin{equation}\label{5.295.29}
\begin{aligned}
    &\dsum_{\substack{\|\a\|_{\infty}\leq A\\P(\a)=0}}\left|A_1(\a)-\mathfrak{J}_{\a}^*\right|^2\\
     &\leq X^{2n-2d}A^{-2}\dsum_{\substack{\|\a\|_{\infty}\leq A\\ P(\a)=0}}\biggl|\dint_{[0,1]^n}\dint_{|\beta|>\zeta^{-3}}\left(\frac{\text{sin}(\pi \beta\zeta)}{\pi \beta\zeta}\right)^2e(\beta A^{-1}f_{\a}(\boldsymbol{\gamma}))d\beta d\boldsymbol{\gamma}\biggr|^2\\
     &\leq X^{2n-2d}A^{-2}\dsum_{\substack{\|\a\|_{\infty}\leq A\\P(\a)=0}}\biggl(\frac{1}{\zeta}\int_{|\beta'|> \zeta^{-2}}\left|\frac{\text{sin}(\pi \beta')}{\pi \beta'}\right|^2d\beta'\biggr)^2\\
     &\ll X^{2n-2d}\cdot A^{-2} \cdot \zeta^2\cdot \dsum_{\substack{\|\a\|_{\infty}\leq A\\P(\a)=0}}1\ll  A^{N-k-2}\cdot X^{2n-2d}\cdot  \zeta^2,
\end{aligned}
\end{equation}
where we have used the triangle inequality, change of variable $\beta'=\beta\zeta$ and the bound $$\dsum_{\substack{\|\a\|_{\infty}\leq A\\P(\a)=0}}1\ll A^{N-k}.$$ Therefore, on substituting $(\ref{5.205.20})$, $(\ref{5.285.28})$ and $(\ref{5.295.29})$ into ($\ref{5.145.145.14}$), we complete the proof of Lemma $\ref{lemma5.2}.$
\end{proof}

\bigskip

\begin{proof}[Proof of Theorem $\ref{thm2.2}$]

Recall the definition $(\ref{4.1})$ and $(\ref{4.24.2})$ of $\mathcal{I}_{\a}(X,\mathfrak{B})$ and $\mathfrak{M}$. Then, we find that
\begin{equation*}
\begin{aligned}
\mathcal{I}_{\a}(X,\mathfrak{M})&=\dint_{\mathfrak{M}}\dsum_{\substack{1\leq \x\leq X\\ \x\in \Z^{n}}}e(\alpha f_{\a}(\x)) d\alpha  \\
&=\dsum_{1\leq q\leq w}\dsum_{\substack{1\leq b\leq q\\(q,b)=1}}\dint_{|\alpha-b/q|\leq \frac{w}{AX^d}}\dsum_{\substack{1\leq \x\leq X}}e(\alpha f_{\a}(\x)) d\alpha.
\end{aligned} 
\end{equation*}
Recall the definition ($\ref{defnJ}$) of $\mathfrak{J}_{\a}(w).$ By applying classical treatments in major arcs [$\ref{ref8}$, Lemma 5.1] and writing $\beta=\alpha-b/q,$ we readily find that 
\begin{equation}\label{6.1}
\begin{aligned}
        &\mathcal{I}_{\a}(X,\mathfrak{M})=\dsum_{1\leq q\leq w}S_{\a}(q)\mathfrak{J}_a(w)+O(A^{-1}X^{n-d-1}w^5),
        \end{aligned}
\end{equation}
where 
$$S_{\a}(q)=\dsum_{\substack{1\leq b\leq q\\ (q,b)=1}}q^{-n}\dsum_{1\leq \r\leq q}e\left(\frac{b}{q} f_{\a}(\r)\right).$$
Meanwhile, recall the definition $(\ref{def2.3})$ and ($\ref{6.26.26.2}$) of $W$ and $\mathfrak{S}_{\a}^*$, and note from the classical treatment that 
$$\mathfrak{S}_{\a}^*=\displaystyle\prod_{p\leq w}\biggl(\dsum_{\substack{0\leq h\leq \log_pw}}S_{\a}(p^h)\biggr).$$
If we define a set 
\begin{equation*}
    \mathcal{Q}=\{q\in (w,W]|\ \text{for all primes}\ p,\ p^h\|q\Rightarrow p^h\leq w\}
\end{equation*}
and define $$\mathcal{E}_{\a}=\dsum_{q\in \mathcal{Q}}S_{\a}(q),$$ we find from the multiplicativity of $S_{\a}(q)$ that
\begin{equation}\label{6.2}
    \dsum_{1\leq q\leq w}S_{\a}(q)=\mathfrak{S}_{\a}^*-\mathcal{E}_{\a}.
\end{equation}

Meanwhile, note that $$\mathcal{I}_{\a}(X)=\mathcal{I}_{\a}(X,\mathfrak{M})+\mathcal{I}_{\a}(X,\mathfrak{m}).$$ Then, on recalling the definition of $\mathfrak{J}_{\a}^*$, we deduce from $(\ref{6.1})$ and $(\ref{6.2})$ together with applications of the elementary inequality $(a+b)^2\leq 2a^2+2b^2$ that
\begin{equation}\label{6.3}
\begin{aligned}
      &\dsum_{\substack{\|\a\|_{\infty}\leq A\\P(\a)=0}}\left|\mathcal{I}_{\a}(X)-\mathfrak{S}_{\a}^*\mathfrak{J}_{\a}^*\right|^2\\
      &\ll \dsum_{\substack{\|\a\|_{\infty}\leq A\\P(\a)=0}}\left|\mathcal{I}_{\a}(X,\mathfrak{M})-\mathfrak{S}_{\a}^*\mathfrak{J}_{\a}^*\right|^2+\dsum_{\substack{\|\a\|_{\infty}\leq A\\P(\a)=0}}\left|\mathcal{I}_{\a}(X,\mathfrak{m})\right|^2\\
      &\ll \Sigma_1+\Sigma_2+\dsum_{\substack{\|\a\|_{\infty}\leq A\\P(\a)=0}}\left|\mathcal{I}_{\a}(X,\mathfrak{m})\right|^2+O\biggl(A^{-2}X^{2n-2d-2}w^{10}\dsum_{\substack{\|\a\|_{\infty}\leq A\\P(\a)=0}}1\biggr),
\end{aligned}
\end{equation}
where 
$$\Sigma_1=\dsum_{\substack{\|\a\|_{\infty}\leq A\\P(\a)=0}}\left|\mathcal{E}_{\a}\mathfrak{J}_{\a}^*\right|^2\ \text{and}\ \Sigma_2=\dsum_{\substack{\|\a\|_{\infty}\leq A\\P(\a)=0}}\left|(\mathfrak{S}_{\a}^*-\mathcal{E}_{\a})(\mathfrak{J}_{\a}^*-\mathfrak{J}_{\a}(w))\right|^2.$$

First, we estimate the third and fourth terms of the last expression in ($\ref{6.3}$). Since we have $$\#\{\a\in \Z^N|\ \|\a\|_{\infty}\leq A,\ P(\a)=0\}\ll A^{N-k},$$ it follows by Proposition $\ref{prop4.1}$ that 
\begin{equation}\label{6.161616}
\dsum_{\substack{\|\a\|_{\infty}\leq A\\P(\a)=0}}\left|\mathcal{I}_{\a}(X,\mathfrak{m})\right|^2+O\biggl(A^{-2}X^{2n-2d-2}w^{10}\dsum_{\substack{\|\a\|_{\infty}\leq A\\P(\a)=0}}1\biggr)\ll A^{N-k-2}X^{2n-2d}(\log A)^{-\delta},    
\end{equation}
with some $\delta>0.$ 

Next, we turn to estimate the first term of the last expression in ($\ref{6.3}$). From the trivial bound, we have 
\begin{equation}\label{6.1616}
\left|\mathfrak{J}_{\a}^*\right|^2\ll X^{2(n-d)}\zeta^{-2}A^{-2}=X^{2(n-d)}w^{10}A^{-2}.
\end{equation}
By Lemma $\ref{lem5.15.1}$ with $B=w,C=W$ and $\mathcal{C}=\mathcal{Q}$, we have
\begin{equation}\label{6.17}
    \begin{aligned}
        \dsum_{\substack{\|\a\|_{\infty}\leq A\\P(\a)=0}}\left|\mathcal{E}_{\a}\right|^2&\ll A^{N-k}\biggl(\dsum_{q\geq w}q^{1+\epsilon}(q^{-1}+q^{-4/d})^{n/16}\biggr)^2\\
        &\ll A^{N-k}\biggl(\dsum_{q\geq w}\left(q^{1-n/16+\epsilon}+q^{1-n/(4d)+\epsilon}\right)\biggr)^2.
    \end{aligned}
\end{equation}
Meanwhile, from the hypotheses $n_1\leq \lfloor(n-1)/2\rfloor/8$ and $2X^d\leq A\leq X^{n_1-d}$ in the statement of Theorem $\ref{thm2.2},$ we see that $n> 32d.$ Hence, it follows from $(\ref{6.17})$ together with the hypothesis $d\geq 4$ that
\begin{equation}\label{6.18}
\begin{aligned}
    \dsum_{\substack{\|\a\|_{\infty}\leq A\\P(\a)=0}}\left|\mathcal{E}_{\a}\right|^2&\ll  A^{N-k}\left(w^{2-2d}+w^{-6}\right)^2\ll A^{N-k}\cdot w^{-12}.
\end{aligned}
\end{equation}
Therefore, combining $(\ref{6.1616})$ and $(\ref{6.18})$, we conclude that
\begin{equation}\label{5.202020}
\Sigma_1=    \dsum_{\substack{\|\a\|_{\infty}\leq A\\P(\a)=0}}\left|\mathcal{E}_{\a}\mathfrak{J}_{\a}^*\right|^2\ll A^{N-k-2}X^{2n-2d}w^{-2}.
\end{equation}

Lastly, it remains to estimate the second term of the last expression in ($\ref{6.3}$). From the trivial bound, we have
$$\left|\mathfrak{S}_{\a}^*-\mathcal{E}_{\a}\right|^2= \biggl|\dsum_{1\leq q\leq w}S_{\a}(q)\biggr|^2\leq w^4.$$
Hence, we deduce by applying Lemma $\ref{lemma5.2}$ with $n> 32d$ and $d\geq 4$ that
\begin{equation}\label{5.3030}
    \Sigma_2=\dsum_{\substack{\|\a\|_{\infty}\leq A\\P(\a)=0}}\left|(\mathfrak{S}_{\a}^*-\mathcal{E}_{\a})(\mathfrak{J}_{\a}^*-\mathfrak{J}_{\a}(w))\right|^2\ll A^{N-k-2}\cdot X^{2n-2d}\cdot w^{-1}.
\end{equation}
 
 Then, on recalling the definition of $w$ and substituting $(\ref{6.161616})$, $(\ref{5.202020})$ and $(\ref{5.3030})$ into the last expression in ($\ref{6.3}$), one concludes that 
\begin{equation*}
    \dsum_{\substack{\|\a\|_{\infty}\leq A\\P(\a)=0}}\left|\mathcal{I}_{\a}(X)-\mathfrak{S}_{\a}^*\mathfrak{J}_{\a}^*\right|^2\ll A^{N-k-2}X^{2n-2d}(\log A)^{-\delta},
\end{equation*}
for some $\delta$ with $0<\delta<1.$ This completes the proof of Theorem $\ref{thm2.2}.$
\end{proof}

\bigskip

\section{Major arcs}\label{sec6}
In this section, we show that $\mathfrak{S}_{\a}^*$ and $\mathfrak{J}^*_{\a}$ are rarely small. In order to verify this, we develop a method combining the classical treatments of major arcs with those used in [$\ref{ref3},$ section 5].

\bigskip

\subsection{Singular series treatment I}\label{sec6.1}

Our purpose in sections $\ref{sec6.1}$ and $\ref{sec6.26.2}$ is to prove Proposition $\ref{prop6.1}$ below. 
In advance of the statement of this proposition, we recall the definition $(\ref{def2.3})$ of $W$, and recall the definition $\mathcal{A}^{\text{loc}}_{d,n}(A;P)$ in section 1 and that
$\mathfrak{S}_{\a}^*=\sigma(\a;W).$
\begin{prop}\label{prop6.1}
Let $A$ and $X$ be positive numbers with $X^3\leq A.$ Suppose that $n$ and $d$ are natural numbers with $n>d+1$ and $d\geq 2$, and that $X^3\leq A$. Suppose that $P\in \Z[\x]$ is a non-singular form in $N_{d,n}$ variables of degree $k\geq 2.$ Then, whenever $N\geq 1000n^28^k$, one has
\begin{equation*}
    A^{-N+k}\cdot\#\left\{\a\in \mathcal{A}^{\text{loc}}_{d,n}(A;P)\middle|\ \begin{aligned}
        \mathfrak{S}_{\a}^*\leq(\log A)^{-\eta}
    \end{aligned}\right\}\ll (\log A)^{-\eta/(20n)},
\end{equation*}
for any $\eta>0.$
\end{prop}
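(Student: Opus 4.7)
My plan is to deduce the statement from a negative-moment estimate via Markov's inequality. Concretely, if I can establish, for some $\delta = \delta(n) > 0$, that
\[
\sum_{\a \in \mathcal{A}^{\mathrm{loc}}_{d,n}(A;P)} (\mathfrak{S}^*_\a)^{-\delta} \ll A^{N-k},
\]
then Markov's inequality immediately yields
\[
\#\{\a \in \mathcal{A}^{\mathrm{loc}}_{d,n}(A;P) : \mathfrak{S}^*_\a \leq (\log A)^{-\eta}\} \leq (\log A)^{-\eta\delta} \sum_{\a} (\mathfrak{S}^*_\a)^{-\delta},
\]
and the target exponent $\eta/(20n)$ corresponds to the choice $\delta = 1/(20n)$.

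To prove the moment bound I would exploit the multiplicative factorisation $\mathfrak{S}^*_\a = \prod_{p \leq w} \sigma(\a;p^{r_p})$, with $r_p = \lfloor \log w / \log p \rfloor$, together with the Chinese remainder theorem. The plan has two layers. First, I would partition $\a$ by its residue class $\b$ modulo $W$ and invoke Theorem $\ref{thm3.1}$ to count the integer points $\a \in [-A,A]^N$ with $P(\a)=0$ in each class; the hypothesis $X^3 \leq A$ secures the required range $W \leq X^2 \leq A^{2/3}$. Second, I would estimate, prime by prime, the local average of $\sigma(\a;p^{r_p})^{-\delta}$ over residue classes $\a \pmod{p^{r_p}}$ that are $p$-adically soluble. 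For each such average Lemma $\ref{lem3.4}$ provides the lower bound $\sigma(\a;p^{r_p}) \geq p^{-(e_p(\a)+1)(n-1)}$, where $e_p(\a)$ is the smallest defect level at which a coprime witness $\g$ satisfies $f_\a(\g) \equiv 0 \pmod{p^{r_p}}$ and $v_{p^{r_p}}(\nabla f_\a(\g)) = e_p(\a)$, while Lemma $\ref{lem3.5}$ bounds the density of residue classes with $e_p(\a) \geq e$. Summing over $e$ with the weight $p^{e(n-1)\delta}$ should yield $1 + O(p^{-1-\varepsilon})$ for each prime, and hence a bounded Euler product over $p \leq w$.

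The main obstacle will be making the residue-class density in the previous step sharp enough. A naive union bound over witnesses $\g \in \mathcal{R}_n(p^{r_p})$ applied to Lemma $\ref{lem3.5}$ gives a count of order $p^{n(r_p - e)}$, which is trivial when $e \leq r_p$; the genuine saving must come from the fact that a residue class with $e_p(\a) = e$ typically carries many witnesses $\g$, so that dividing out by this multiplicity restores a real decay in $e$. I would control this multiplicity via a second-moment computation counting pairs $(\g_1,\g_2)$ of witnesses, in the same spirit as the separation procedure of Section $\ref{sec4}$, and then transport the resulting per-class bound to the thin set $\{P(\a) = 0\}$ via Theorem $\ref{thm3.1}$. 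The constant $20n$ in the final exponent is expected to arise from balancing this per-prime saving against the range of defect levels $e$ and the prime range $p \leq w$.
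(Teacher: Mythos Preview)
Your overall strategy—Rankin/Markov on a negative moment, multiplicative factorisation over $p\le w$, and Theorem~\ref{thm3.1} for passage to the thin set $P(\a)=0$—is exactly the paper's. But two ingredients are missing.

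First, Theorem~\ref{thm3.1} bounds the \emph{second moment} $\sum_{\b}\#\{\a\equiv\b:P(\a)=0\}^2$, not individual per-class counts, so you cannot simply weight each class by $\sigma(\b;W)^{-\delta}$ as in your first layer. The paper instead applies Cauchy--Schwarz to separate $\mathcal S_1^{1/2}\mathcal S_2^{1/2}$ with $\mathcal S_2\le\mathbf N(A)$; this halving is why it takes $\kappa=1/(10n)$ to land at $\eta/(20n)$. Second, and more seriously, your per-prime Euler factor cannot be $1+O(p^{-1-\varepsilon})$ from the lower bound in Lemma~\ref{lem3.4} alone: that lemma gives $\sigma(\a;p^r)\ge p^{-(e+1)(n-1)}$ (note the $e+1$), so the dominant stratum $e=0$, which has density essentially $1$, already contributes weight $p^{(n-1)\kappa}>1$, and the product over $p\le w$ diverges regardless of what happens for $e\ge1$. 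The paper's remedy is the dichotomy $\sigma>1/2$ versus $\sigma\le1/2$. For $\sigma>1/2$ it Taylor-expands $\sigma^{-\kappa}=1-\kappa(\sigma-1)+O((\sigma-1)^2)$ and controls the linear and quadratic terms via the first/second moments of $\sigma$ restricted to $P(\b)\equiv0\pmod{p^r}$ (Lemmas~\ref{lem5.1}, \ref{lem5.2}, and the variance bound Lemma~\ref{lem5.3}), obtaining per-prime factor $1+O(p^{-2})$. For $\sigma\le1/2$ it stratifies by $p^v\|\a$ and the defect level $e$, combining Lemma~\ref{lem6.9} with the density bound Lemma~\ref{lem6.8}; for $e\in\{0,1\}$ it also uses that $\sigma\le1/2$ forces $(\sigma-1)^2\ge1/4$, feeding back into Lemma~\ref{lem5.3}. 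Your second-moment idea over pairs $(\g_1,\g_2)$ is exactly the engine behind Lemma~\ref{lem5.3}, but it must be deployed to show that $\sigma$ is typically \emph{close to $1$}, not merely to bound the density of large-$e$ classes.
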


\bigskip

We provide here the structure of sections $\ref{sec6.1}$ and $\ref{sec6.26.2}$. For a prime number $p$ and a natural number $r,$ in section $\ref{6.1.2}$, we give asymptotic formulae for 
 $$\#\left\{\a\in [1,p^r]^N\middle|\ P(\a)\equiv0\ (\textrm{mod}\ p^r)\right\}\ \text{and}\ \dsum_{\substack{\a\in [1,p^r]^N\\P(\a)=0\ \textrm{mod}\ p^r}}\sigma(\a;p^r),$$
 respectively. In section $\ref{subsec5.1.3}$, we provide a bound for a variance of $\sigma(\a;p^r),$ that is
 $$ \dsum_{\substack{\a\in [1,p^r]^N\\P(\a)\equiv 0\ \text{mod}\ p^r}}(\sigma(\a;p^r)-1)^2.$$

Recall the definition of $v_{p^r}(\boldsymbol{v})$ and $\mathcal{R}_m(Q)$ leading to Lemma $\ref{lem3.4}$ and Lemma $\ref{lem3.5}.$ For $r,v,e\in \Z$ with $0\leq e\leq r-v$, we introduce a condition $\mathcal{C}_v^{(e)}(p^r)$ as follows. We say that $\a\in \Z^N$ satisfies the condition $\mathcal{C}_v^{(e)}(p^r)$ when one has
\begin{equation*}
    \begin{aligned}
       &(i)\ p^v\|\a\ \text{and}\ P(\a)\equiv 0\ \text{mod}\ p^r\\
       &(ii)\ \exists\ \x\in \mathcal{R}_n(p^{r-v})\ \text{such that}\ v_{p^{r-v}}(\nabla f_{p^{-v}\a}(\x))=e\ \textrm{and}\ 
f_{p^{-v}\a}(\x)\equiv 0\ \text{mod}\ p^{r-v}.
    \end{aligned}
\end{equation*}
For $r,v,e\in \Z$ with $0\leq e\leq r-v$, define a set
 \begin{equation}\label{6.59590}
 \begin{aligned}
A_v^{(e)}(p^r)=\left\{\a\in [1,p^r]^N\middle|\ \a \ \text{satisfies the condition}\ \mathcal{C}_v^{(e)}(p^r)\right\}.     
 \end{aligned}
\end{equation}
In section $\ref{6.1.4}$, we provide an upper bound for $\#A_v^{(e)}(p^r).$ In section $\ref{6.1.5}$, we give a lower bound for $\sigma (\a;p^r)$ with $\a\in A_v^{(e)}(p^r).$ Combining all these estimates together with the strategy used in  [$\ref{ref3},$ section 5], we shall prove Proposition $\ref{prop6.1}$ at the end of section $\ref{sec6.26.2}$.

\bigskip

\subsubsection{Proofs of  Lemmas $\ref{lem5.1}$ and $\ref{lem5.2}$}\label{6.1.2}
In order to describe the following lemma, recall that $P\in \Z[\x]$ is a non-singular form in $N$ variables of degree $k$, and write
$$N_1(p^r)=\#\left\{\a\in [1,p^r]^N\middle|\ P(\a)\equiv0\ (\textrm{mod}\ p^r)\right\}.$$ 

\begin{lem}\label{lem5.1}
Suppose that $p$ is a prime number, and $r$ is a natural number. Then, whenever $N>(k-1)2^{k+1}$, we have
\begin{equation*}
  N_1(p^r) =p^{r(N-1)}+O(p^{r(N-1)-N/(2^k(k-1))}).
\end{equation*}
\begin{proof}

This immediately follows from Lemma $\ref{lem3.93.9}$ with $W=p^r.$

\end{proof}
\end{lem}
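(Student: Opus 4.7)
The plan is simply to invoke Lemma~\ref{lem3.93.9} with the choice $W = p^r$. Since $p$ is prime and $r \geq 1$, the integer $W = p^r$ is a prime power, and its smallest prime divisor $\mathfrak{p}$ equals $p$. The hypothesis $N > (k-1)2^{k+1}$ in the present statement is exactly the hypothesis required by Lemma~\ref{lem3.93.9}. Substituting $W = p^r$ and $\mathfrak{p} = p$ into the conclusion of that lemma yields
\begin{equation*}
N_1(p^r) = (p^r)^{N-1} + O\!\left((p^r)^{N-1}\, p^{-N/(2^k(k-1))}\right) = p^{rN-r} + O\!\left(p^{rN - r - N/(2^k(k-1))}\right),
\end{equation*}
which is precisely the claimed asymptotic.

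There is no genuine obstacle to overcome, because all the substantive analytic work has already been carried out in Lemma~\ref{lem3.93.9}. For orientation, recall that the proof of Lemma~\ref{lem3.93.9} writes $N_1(W)$ via orthogonality as an exponential sum in the variable $r \in \{1,\dots,W\}$, isolates the main term $W^{N-1}$ arising from the trivial contribution $r_0 := W/(W,r) = 1$, and bounds the complementary contribution by splitting according to $r_0 \mid W$ with $r_0 > 1$ and applying Birch's Weyl-type minor-arc estimate for the non-singular form $P$ in $N > (k-1)2^{k+1}$ variables. The resulting bound involves a geometric sum $\sum_{r_0 \geq \mathfrak{p}} r_0^{-N/(2^{k-1}(k-1)) + 1 + \epsilon}$, which converges (given the hypothesis on $N$) and produces the saving $\mathfrak{p}^{-N/(2^k(k-1))}$. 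Specializing to a prime power $W = p^r$ gives $\mathfrak{p} = p$, and no further argument is required.
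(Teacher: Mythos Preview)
Your proof is correct and takes essentially the same approach as the paper: both simply invoke Lemma~\ref{lem3.93.9} with $W=p^r$ (so that $\mathfrak{p}=p$) and read off the result. Your additional paragraph recapping the argument behind Lemma~\ref{lem3.93.9} is accurate but not needed for the proof itself.
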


\bigskip

Recall the definition $(\ref{def6.1})$ of $\sigma(\a;p^r)$. To describe the following lemma, recall again that $P\in \Z[\x]$ is a non-singular form in $N$ variables of degree $k.$ It is convenient to define
$$N_2(p^r)=\dsum_{\substack{\a\in [1,p^r]^N\\P(\a)\equiv0\ \textrm{mod}\ p^r}}\sigma(\a;p^r).$$

\begin{lem}\label{lem5.2}
Suppose that $p$ is a prime number, and $r$ is a natural number. Then, whenever $n>d$ and $N>(k-1)2^{k-1}(2+n/d+n-d)$, we have
\begin{equation*}
N_2(p^r)=p^{r(N-1)}+O(p^{rN-\lceil r/d\rceil n}+p^{rN-r-n+d}).    
\end{equation*}
\end{lem}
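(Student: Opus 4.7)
My approach is to apply orthogonality to each of the two defining congruences, writing
\begin{equation*}
N_2(p^r)=p^{-r(n-1)-2r}\sum_{s,t=1}^{p^r}\sum_{\g\in[1,p^r]^n}\sum_{\a\in[1,p^r]^N}e\!\left(\frac{sP(\a)+tf_{\a}(\g)}{p^r}\right),
\end{equation*}
and then decomposing the expression according to whether $s$ and $t$ lie in $p^r\Z$. The main term $p^{r(N-1)}$ is produced by the diagonal contribution $s\equiv t\equiv 0\pmod{p^r}$ through trivial evaluation of the inner sums, which yields $p^{rN+rn-r(n-1)-2r}=p^{r(N-1)}$.

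For the regime $s\equiv 0$ and $t\not\equiv 0\pmod{p^r}$, the innermost sum over $\a$ is a character sum linear in $\a$ and hence vanishes unless $t\,\nu_{d,n}(\g)\equiv\0\pmod{p^r}$ in every coordinate. Writing $p^v\|t$ with $0\le v<r$, this condition is equivalent to $\g\equiv\0\pmod{p^{\lceil(r-v)/d\rceil}}$, since $\nu_{d,n}(\g)$ contains the pure $d$-th powers $g_i^d$. Counting admissible $t$ and $\g$ and summing over $v$ reduces matters to estimating $\sum_{u=1}^{r}p^{u-\lceil u/d\rceil n}$ after the substitution $u=r-v$. Under the hypothesis $n>d$, this sum is bounded by $p^{d-n}+p^{r-\lceil r/d\rceil n}$, producing, after multiplication by the overall factor $p^{rN-r}$, exactly the two error terms $p^{rN-r-n+d}$ and $p^{rN-\lceil r/d\rceil n}$ recorded in the statement.

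For the regime $s\not\equiv 0\pmod{p^r}$, set $p^u\|s$ with $u\in\{0,\dots,r-1\}$. After reducing $\a$ modulo $p^{r-u}$, the inner sum becomes a complete exponential sum for a polynomial whose degree-$k$ homogeneous part is $s'P(\a)$, with $\gcd(s',p)=1$, perturbed by lower-order terms coming from $tf_{\a}(\g)$ that do not affect the singular locus modulo $p$. The Weyl-type estimate from [$\ref{ref8}$, Lemma 5.4] (as already employed in the proof of Lemma $\ref{lem3.93.9}$) therefore delivers the uniform bound $p^{rN-(r-u)N/(2^{k-1}(k-1))+\epsilon}$ on the inner sum, independent of $t$ and $\g$. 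Executing the resulting geometric sums over $s,\ t,\ \g$ and their $p$-adic valuations, the hypothesis $N>(k-1)2^{k-1}(2+n/d+n-d)$ is precisely what is needed to absorb the total contribution into $O(p^{rN-r-n+d}+p^{rN-\lceil r/d\rceil n})$.

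\textbf{Main obstacle.} The delicate point is uniformity of the Weyl bound in the regime $s\not\equiv 0$: although the perturbation $tf_{\a}(\g)$ introduces an $\a$-linear term whose coefficient depends on $\g$ and may share $p$-adic valuation with $t$, the Weyl exponent is driven solely by the singular locus of the leading form $s'P(\a)$ over $\F_p$, which is untouched. Once this uniformity is established, the remaining work is the optimization of $v+\lceil(r-v)/d\rceil n$ over $v\in\{0,\dots,r-1\}$, which is exactly what pins down the two error terms appearing in the statement; the built-in slack of $n/d$ in the hypothesis on $N$ is what ensures that a single threshold controls both error regimes simultaneously.
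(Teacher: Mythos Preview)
Your overall strategy coincides with the paper's: orthogonality in both congruences, followed by a split according to whether $l_1$ (your $s$) is $\equiv 0\pmod{p^r}$. Your treatment of the regime $s\equiv 0$ is correct and is a clean reorganization of the paper's $T(\g)$-analysis (you split by $v_p(t)$ where the paper splits by $v_p(\a)$; both routes yield the same two error terms).

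The gap is in the regime $s\not\equiv 0$. The uniform Weyl bound $p^{rN-(r-u)N/(2^{k-1}(k-1))+\epsilon}$ that you assert for the $\a$-sum is valid, but summing it \emph{trivially} over $t$ and $\g$ gives a total contribution
\[
p^{-r(n+1)}\sum_{u=0}^{r-1}p^{r-u}\cdot p^r\cdot p^{rn}\cdot p^{rN-(r-u)N/(2^{k-1}(k-1))+\epsilon}
\ \ll\ p^{rN+1-N/(2^{k-1}(k-1))+\epsilon},
\]
which is \emph{not} $O(p^{rN-r-n+d}+p^{rN-\lceil r/d\rceil n})$ once $r$ exceeds roughly $n/d$. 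No hypothesis on $N$ alone can repair this, since the shortfall grows linearly in $r$. What the paper does instead is factor the $\a$-sum as $S_1(\tilde l_1,\g)\cdot S_2(\g)$, where $S_1$ carries the Weyl savings and $S_2$ (the $\n$-sum summed over $l_2$) is evaluated exactly as $p^{r+(r-r_1)N-\max\{0,\,r-r_1-dr_2\}}$ with $p^{r_2}\|\g$. The factor $p^{-\max\{0,\,r-r_1-dr_2\}}$ is the extra, $\g$-dependent saving that your ``uniform in $t$ and $\g$'' formulation discards; it is exactly this saving that produces the paper's $X_1$, $X_2$ bounds and is where the $n/d$ term in the hypothesis on $N$ is actually spent. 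You need to retain the dependence on $v_p(\g)$ (and the vanishing of the $\n$-sum) to close the argument.
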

\begin{proof}
Inverting the order of summation we obtain

\begin{equation}\label{eqeq5.1}
    N_2(p^r)=p^{-r(n-1)}\dsum_{\g\in [1,p^r]^n}\#\left\{\a\in [1,p^r]^N\middle|\ \begin{aligned}
 \langle\a,\nu_{d,n}(\g)\rangle&\equiv 0\ \text{mod}\ p^r\\
  P(\a)&\equiv0\ \textrm{mod}\ p^r
    \end{aligned}\right\}.
\end{equation}
By orthogonality, one has
\begin{equation}\label{eqeq5.2}
\begin{aligned}
   &\#\left\{\a\in [1,p^r]^N\middle|\ \begin{aligned}
    \langle\a,\nu_{d,n}(\g)\rangle&\equiv 0\ \text{mod}\ p^r\\
 P(\a)&\equiv0\ \textrm{mod}\ p^r
    \end{aligned}\right\} =p^{-2r}\dsum_{1\leq l_1,l_2\leq p^r}\Xi(\g,l_1,l_2),
\end{aligned}
\end{equation}
where 
$$\Xi(\g,l_1,l_2)=\dsum_{1\leq\a\leq p^r}e\biggl(\frac{P(\a)l_1}{p^r}\biggr)e\biggl(\frac{\langle\a,\nu_{d,n}(\g)\rangle l_2}{p^r}\biggr).$$

 Write $(l_1,p^r)=p^{r-r_1}$ with $0\leq r_1\leq r.$ 
Then, on substituting ($\ref{eqeq5.2}$) into $(\ref{eqeq5.1})$ and by splitting summation over $\g$ in terms of values of $(\g,p^r)$, we see that
\begin{equation}\label{eqeqeq5.3}
    N_2(p^r)=p^{-r(n-1)}\dsum_{\substack{0\leq r_2\leq r}}\dsum_{\substack{1\leq \g\leq p^r\\(\g,p^r)=p^{r_2}}}(S(\g)+T(\g)),
\end{equation}
where $$S(\g)=p^{-2r}\dsum_{1\leq r_1\leq r}\dsum_{\substack{(l_1,p^r)=p^{r-r_1}\\1\leq l_2\leq p^r}}\Xi(\g,l_1,l_2)$$
and 
$$T(\g)=p^{-2r}\dsum_{1\leq l_2\leq p^r}\dsum_{\a\in [1,p^r]^N}e\biggl(\frac{\langle\a,\nu_{d,n}(\g)\rangle l_2}{p^r}\biggr).$$
Recall the definition of $\Xi(\g,l_1,l_2)$ and that $p^{r-r_1}=(l_1,p^r)$. For fixed $r_1,$ let us write $l_1=p^{r-r_1}\widetilde{l}_1$ with $(\widetilde{l}_1,p)=1.$ Then, on writing $\a=p^{r_1}\n+\m$ with $0\leq \n\leq p^{r-r_1}-1$ and $1\leq \m\leq p^{r_1}$, we see that
\begin{equation*}
    \Xi(\g,l_1,l_2)=\dsum_{0\leq \n\leq p^{r-r_1}-1}\dsum_{1\leq \m\leq p^{r_1}}e\biggl(\frac{P(\m)\widetilde{l}_1}{p^{r_1}}\biggr)e\biggl(\frac{\langle p^{r_1}\n+\m,\nu_{d,n}(\g)\rangle l_2}{p^r}\biggr).
\end{equation*}
On substituting this expression into $S(\g)$ and that into $(\ref{eqeqeq5.3})$, it follows by the triangle inequality that 
\begin{equation}\label{5.45.4}
    \begin{aligned}
    &p^{-r(n-1)}\dsum_{\substack{0\leq r_2\leq r}}\dsum_{\substack{1\leq \g\leq p^r\\(\g,p^r)=p^{r_2}}}S(\g)\\
    &\leq p^{-r(n+1)}\dsum_{\substack{1\leq r_1\leq r\\0\leq r_2\leq r}}\dsum_{\substack{1\leq \g\leq p^r\\(\g,p^r)=p^{r_2}}}\dsum_{\substack{1\leq \widetilde{l}_1\leq p^{r_1}\\(\widetilde{l}_1,p)=1}}\bigl|S_1(\widetilde{l}_1,\g)\bigr|\cdot\bigl|S_2(\g)\bigr|,
    \end{aligned}
\end{equation}
where $$S_1(\widetilde{l}_1,\g)=\sup_{1\leq l_2\leq p^r}\biggl|\dsum_{1\leq \m\leq p^{r_1}}e\biggl(\frac{P(\m)\widetilde{l}_1}{p^{r_1}}\biggr)e\biggl(\frac{\langle\m,\nu_{d,n}(\g)\rangle l_2}{p^r}\biggr)\biggr|$$
and 
$$S_2(\g)=\dsum_{1\leq l_2\leq p^r}\biggl|\dsum_{0\leq \n\leq p^{r-r_1}-1}e\biggl(\frac{\langle\n,\nu_{d,n}(\g)\rangle l_2}{p^{r-r_1}}\biggr)\biggr|.$$

We first analyze the sum $S_1(\widetilde{l}_1,\g).$ Since $P(\m)$ is a non-singular form and $(p^{r_1},\widetilde{l}_1)=1$, it follows by the Weyl type estimate for exponential sums over minor arcs  [$\ref{ref8}$, Lemma 5.4] that 
\begin{equation}\label{5.55.5}
    S_1(\widetilde{l}_1,\g)\ll p^{Nr_1-Nr_1/(2^{k-1}(k-1))+\epsilon}.
\end{equation}

Next, we analyze the sum $S_2(\g)$. We infer from orthogonality that the inner sum in $S_2(\g)$ is a non-negative integer. 
Thus, by changing the order of summations, we have
$$S_2(\g)=\dsum_{0\leq \n\leq p^{r-r_1}-1}\dsum_{1\leq l_2\leq p^r}e\biggl(\frac{\langle\n,\nu_{d,n}(\g)\rangle l_2}{p^{r-r_1}}\biggr).$$ 
Then, by orthogonality, one has
\begin{equation}\label{5.65.6}
\begin{aligned}
    S_2(\g)&=p^{r}\cdot \{0\leq \n\leq p^{r-r_1}-1|\ \langle\n,\nu_{d,n}(\g)\rangle\equiv 0\ \text{mod}\ p^{r-r_1}\}
\end{aligned}
\end{equation}
For fixed $\g\in [1,p^r]^n$ with $(\g,p^r)=p^{r_2},$ we observe that there exists $i$ such that $$((\nu_{d,n}(p^{-r_2}\g))_{i},p)=1.$$ With this observation in mind, we infer from ($\ref{5.65.6}$) that
\begin{equation}\label{6.86.8}
\begin{aligned}
     S_2(\g)&=p^{r}\cdot \{0\leq \n\leq p^{r-r_1}-1|\ \langle\n,\nu_{d,n}(p^{-r_2}\g)\rangle\equiv 0\ \text{mod}\ p^{\max\{0,r-r_1-dr_2\}}\}\\
     &= p^r\cdot p^{(r-r_1)N}\cdot p^{-\max\{0,r-r_1-dr_2\}}.
\end{aligned}
\end{equation}

Therefore, on substituting ($\ref{5.55.5}$) and $(\ref{6.86.8})$ into $(\ref{5.45.4})$, we find that
\begin{equation*}
\begin{aligned}
   & p^{-r(n-1)}\dsum_{\substack{0\leq r_2\leq r}}\dsum_{\substack{1\leq \g\leq p^r\\(\g,p^r)=p^{r_2}}}S(\g)\\
    &\ll p^{-r(n+1)}\dsum_{\substack{1\leq r_1\leq r\\0\leq r_2\leq r}}\dsum_{\substack{1\leq \g\leq p^r\\(\g,p^r)=p^{r_2}}}p^{r_1}\cdot p^{r+rN-r_1N/(2^{k-1}(k-1))+\epsilon}\cdot p^{-\max\{0,r-r_1-dr_2\}}\\
    &\ll p^{-r(n+1)}\dsum_{\substack{1\leq r_1\leq r\\0\leq r_2\leq r}}p^{(r-r_2)n}\cdot p^{r_1}\cdot p^{r+rN-r_1N/(2^{k-1}(k-1))+\epsilon}\cdot p^{-\max\{0,r-r_1-dr_2\}}.
\end{aligned}
\end{equation*}
By splitting the sum over $r_2$, we find that
\begin{equation}\label{6.96.96.9}
     p^{-r(n-1)}\dsum_{\substack{0\leq r_2\leq r}}\dsum_{\substack{1\leq \g\leq p^r\\(\g,p^r)=p^{r_2}}}S(\g)\ll X_1+X_2,
\end{equation}
where
$$X_1=p^{-r(n+1)}\dsum_{\substack{1\leq r_1\leq r\\0\leq r_2\leq (r-r_1)/d}}p^{(r-r_2)n}\cdot p^{r_1}\cdot p^{r+rN-r_1N/(2^{k-1}(k-1))+\epsilon}\cdot p^{-(r-r_1-dr_2)}$$
and
$$X_2=p^{-r(n+1)}\dsum_{\substack{1\leq r_1\leq r\\ (r-r_1)/d< r_2\leq r}}p^{(r-r_2)n}\cdot p^{r_1}\cdot p^{r+rN-r_1N/(2^{k-1}(k-1))+\epsilon}.$$
With our choice of $N$ in the statement of Lemma $\ref{lem5.2}$, a modicum of computation delivers that
$$X_1\ll p^{rN-r+2-N/(2^{k-1}(k-1)))+\epsilon}\ \text{and}\ X_2\ll p^{rN-(n/d)r+n/d-N/(2^{k-1}(k-1))+\epsilon}.$$

It remains to estimate $p^{-r(n-1)}\dsum_{\substack{0\leq r_2\leq r}}\dsum_{\substack{1\leq \g\leq p^r\\(\g,p^r)=p^{r_2}}}T(\g)$ in $(\ref{eqeqeq5.3}).$ By splitting the sum over $\a$ in $T(\g)$ in terms of values of $(\a,p^r)$, we see that 
\begin{equation}\label{6.1111}
\begin{aligned}
    &p^{-r(n-1)}\dsum_{\substack{0\leq r_2\leq r}}\dsum_{\substack{1\leq \g\leq p^r\\(\g,p^r)=p^{r_2}}}T(\g)   \\
    &=p^{-r(n+1)}\dsum_{0\leq r_2\leq r}\dsum_{(\g,p^r)=p^{r_2}}\dsum_{0\leq s\leq r}\dsum_{(p^r,\a)=p^{r-s}}\dsum_{1\leq l_2\leq p^r}e\biggl(\frac{\langle\a, \nu_{d,n}(\g)\rangle l_2}{p^r}\biggr).
\end{aligned}
\end{equation}
On noting that $(\nu_{d,n}(\g)p^{-dr_2},p)=1$ and $(\a p^{-r+s},p)=1,$ we find by applying orthogonality that the innermost sum is turned into
\begin{equation}\label{6.126.12}
\begin{aligned}
    &\dsum_{1\leq l_2\leq p^r}e\biggl(\frac{\langle\a p^{-r+s},\nu_{d,n}(\g)p^{-dr_2}\rangle}{p^r}\cdot p^{dr_2+r-s}\cdot l_2\biggr)\\
    &= \left\{ \begin{aligned}
       & p^r\ \ \text{when}\ \langle \a p^{-r+s},\nu_{d,n}(\g)p^{-dr_2}\rangle\equiv0\ \textrm{mod}\ p^{\max\{0,s-dr_2\}}\\
       & 0\ \ \ \textrm{otherwise} .
    \end{aligned} \right.
\end{aligned}
\end{equation}
By splitting sums over $s$ and $r_2$ according to whether $\max\{0,s-dr_2\}=0$ or $\max\{0,s-dr_2\}\neq 0$, we see from $(\ref{6.1111})$ and $(\ref{6.126.12})$ that
\begin{equation}\label{5.11}
    p^{-r(n-1)}\dsum_{\substack{0\leq r_2\leq r}}\dsum_{\substack{1\leq \g\leq p^r\\(\g,p^r)=p^{r_2}}}T(\g)=p^{-r(n+1)}(U_1+U_2),
\end{equation}
where
$$U_1=\dsum_{\substack{0\leq r_2,s\leq r\\s\leq dr_2}}\dsum_{(\g,p^r)=p^{r_2}}\dsum_{(p^r,\a)=p^{r-s}}p^r\ \text{and}\ U_2=\dsum_{\substack{0\leq r_2,s\leq r\\s>dr_2}}p^r\dsum_{(\g,p^r)=p^{r_2}}V,$$
in which 
$$V=\#\left\{\a\in [1,p^r]^N\middle|\ \begin{aligned}
\langle\a p^{-r+s},\nu_{d,n}(\g)p^{-dr_2}\rangle&\equiv0\ \textrm{mod}\ p^{s-dr_2}\\
(p^{r},\a)&= p^{r-s}\end{aligned}\right\}.$$
We first see that 
\begin{equation}\label{5.12}
    \begin{aligned}
    p^{-r(n+1)}U_1&\leq \dsum_{\substack{0\leq r_2,s\leq r\\ s\leq dr_2}}p^rp^{Ns}p^{(r-r_2)n}p^{-r(n+1)}\leq \dsum_{\substack{0\leq s\leq r\\s/d\leq r_2\leq r}}p^{Ns-r_2n}\ll p^{rN-\lceil r/d\rceil n}.
    \end{aligned}
\end{equation}
Next, we find that
\begin{equation}\label{5.13}
    \begin{aligned}
    p^{-r(n+1)}U_2=p^{-rn}\dsum_{\substack{0\leq r_2,s\leq r\\s>dr_2}}\dsum_{(\g,p^r)=p^{r_2}}V.
    \end{aligned}
\end{equation}
Meanwhile, for $\g$ with $(\g,p^r)=p^{r_2}$ and $\a$ with $(p^r,\a)=p^{r-s}$, one has $(\nu_{d,n}(\g)p^{-dr_2},p)=1$ and $(\a p^{-r+s},p)=1.$ Hence, we infer that
$$V=p^{sN}\cdot p^{-s+dr_2}-p^{(s-1)N}\cdot p^{-s+dr_2+1}.$$
Then, on noting that the number of $\g$ with $(\g,p^r)=p^{r_2}$ is $p^{(r-r_2)n}(1+O(p^{-n})),$ we deduce that
\begin{equation}\label{5.14}
\begin{aligned}
    &\dsum_{(\g,p^r)=p^{r_2}}V=p^{(r-r_2)n}(1+O(p^{-n}))(p^{s(N-1)}\cdot p^{dr_2}-p^{(s-1)(N-1)}\cdot p^{dr_2})\\
    &=(p^{s(N-1)}\cdot p^{rn-r_2n+dr_2}-p^{(s-1)(N-1)}\cdot p^{rn-r_2n+dr_2})(1+O(p^{-n}))\\
    &=p^{s(N-1)}\cdot p^{rn-r_2n+dr_2}(1+O(p^{-n})).
\end{aligned}
\end{equation}
Therefore, on substituting $(\ref{5.14})$ into $(\ref{5.13}),$ we conclude that
\begin{equation}\label{5.15}
\begin{aligned}
     p^{-r(n+1)}U_2&=p^{-rn}\dsum_{\substack{0\leq r_2,s\leq r\\s>dr_2}}p^{s(N-1)}\cdot p^{rn-r_2n+dr_2}(1+O(p^{-n})) \\
     &=p^{r(N-1)}+O(p^{r(N-1)-(n-d)}).
\end{aligned}
\end{equation}
Furthermore, by substituting $(\ref{5.12})$ and $(\ref{5.15})$ into $(\ref{5.11})$, we find that
\begin{equation}\label{5.16}
      p^{-r(n-1)}\dsum_{\substack{0\leq r_2\leq r}}\dsum_{\substack{1\leq \g\leq p^r\\(\g,p^r)=p^{r_2}}}T(\g)= p^{rN-r}+O(p^{rN-\lceil r/d\rceil n}+p^{rN-r-n+d}),
\end{equation}
and substituting $(\ref{6.96.96.9})$ and $(\ref{5.16})$ into $(\ref{eqeqeq5.3})$, it follows that whenever $$N> (k-1)2^{k-1}(2+n/d+n-d),$$ we have
\begin{equation*}
     N_2(p^r)=p^{rN-r}+O(p^{rN-\lceil r/d\rceil n}+p^{rN-r-n+d}).
\end{equation*}
\end{proof}

\bigskip

\subsubsection{Proof of Lemma $\ref{lem5.3}$}\label{subsec5.1.3}
Recall the definition $(\ref{def6.1})$ of $\sigma(\a;p^r)$ and that $P\in \Z[\x]$ is a non-singular form in $N$ variables of degree $k$. The following lemma provides the bound for a variance of $\sigma(\a;p^r)$.

\begin{lem}\label{lem5.3}
Suppose that $p$ is a prime number, and $r$ is a natural number. Then, whenever $n>d$ and $N>(k-1)2^{k-1}(n-d+3)$, one has
\begin{equation*}
    \dsum_{\substack{\a\in [1,p^r]^N\\P(\a)\equiv 0\ \text{mod}\ p^r}}(\sigma(\a;p^r)-1)^2\ll p^{rN-n\lceil r/d\rceil}+p^{rN-r+d-n}.
\end{equation*}
\end{lem}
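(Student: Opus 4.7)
The plan is to expand the variance as a second moment minus cross-terms. Writing
\begin{equation*}
\dsum_{\substack{\a\in[1,p^r]^N\\P(\a)\equiv 0\ \text{mod}\ p^r}}(\sigma(\a;p^r)-1)^2 = \dsum_{P(\a)\equiv 0}\sigma(\a;p^r)^2 - 2\dsum_{P(\a)\equiv 0}\sigma(\a;p^r) + \dsum_{P(\a)\equiv 0}1,
\end{equation*}
Lemmas \ref{lem5.1} and \ref{lem5.2} respectively give the last two sums as $p^{r(N-1)}$ plus admissible error. Since these two main terms, together with the (as yet unknown) main term of the second moment, enter the combination with signs $+1,-2,+1$, they will cancel precisely provided the second moment also evaluates to $p^{r(N-1)}$. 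Hence it suffices to establish
\begin{equation*}
    \dsum_{\substack{\a\in [1,p^r]^N\\P(\a)\equiv 0\ \text{mod}\ p^r}}\sigma(\a;p^r)^2 = p^{r(N-1)} + O(p^{rN-n\lceil r/d\rceil}+p^{rN-r-n+d}).
\end{equation*}

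To prove this, I would expand $\sigma(\a;p^r)^2 = p^{-2r(n-1)}\#\{(\g_1,\g_2)\in[1,p^r]^{2n}: f_{\a}(\g_i)\equiv 0\ \text{mod}\ p^r,\, i=1,2\}$ and apply orthogonality three times to detect the two linear congruences together with the polynomial congruence $P(\a)\equiv 0\ \text{mod}\ p^r$, obtaining
\begin{equation*}
    \dsum_{P(\a)\equiv 0}\sigma(\a;p^r)^2 = p^{-2rn-r}\dsum_{\g_1,\g_2}\dsum_{l_1,l_2,l_3=1}^{p^r}\dsum_{\a\in[1,p^r]^N}e\!\left(\frac{l_1 f_{\a}(\g_1)+l_2 f_{\a}(\g_2)+l_3 P(\a)}{p^r}\right).
\end{equation*}
Writing $(l_3,p^r)=p^{r-r_1}$ with $0\le r_1\le r$, the contribution from $r_1=0$ collapses to $p^{-r}\sum_{\a\in[1,p^r]^N}\sigma(\a;p^r)^2$, the unrestricted second moment; this is the main term. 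By a further application of orthogonality on the two linear conditions followed by a case-split on the $p$-adic valuations of $\g_1,\g_2$ parallel to the $T(\g)$-computation $(\ref{5.11})$--$(\ref{5.15})$ in the proof of Lemma \ref{lem5.2}, one shows $\sum_\a\sigma(\a;p^r)^2 = p^{rN} + O(p^{rN+r-n\lceil r/d\rceil}+p^{rN+d-n})$, so that $p^{-r}\sum_\a\sigma(\a;p^r)^2 = p^{r(N-1)}$ with acceptable error. For the error contribution $r_1\ge 1$, the standard substitution $\a=p^{r_1}\n+\m$ used in the $S(\g)$-step of Lemma \ref{lem5.2} turns the sum over $\n\in[0,p^{r-r_1}-1]^N$ into a complete sum enforcing the divisibility $p^{r-r_1}\mid l_1\nu_{d,n}(\g_1)+l_2\nu_{d,n}(\g_2)$, while the remaining sum in $\m$ is an exponential sum with non-singular leading form $\widetilde l_3 P(\m)/p^{r_1}$, $(\widetilde l_3,p)=1$, controlled by the Weyl-type minor-arcs estimate [\ref{ref8}, Lemma 5.4] with saving $p^{-Nr_1/(2^{k-1}(k-1))+\epsilon}$. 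The hypothesis $N\ge(k-1)2^{k-1}(n-d+3)$ is calibrated so that this saving absorbs the outer summations over $r_1\ge 1$ and $(l_1,l_2,\g_1,\g_2)$ and yields the required error.

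The main obstacle lies in the $r_1=0$ main-term evaluation: compared with the single-linear-condition analysis in Lemma \ref{lem5.2}, one must now handle two Veronese vectors $\nu_{d,n}(\g_1)$ and $\nu_{d,n}(\g_2)$ simultaneously, so that the case-analysis must track the joint $p$-adic valuations $v_p(\g_1),\,v_p(\g_2)$ as well as possible $p$-adic linear dependencies between $\nu_{d,n}(\g_1)$ and $\nu_{d,n}(\g_2)$ modulo small powers of $p$. A careful bookkeeping through these degenerate strata is necessary to verify that each contributes no more than the claimed error terms $p^{rN-n\lceil r/d\rceil}$ and $p^{rN-r+d-n}$, and it is precisely this step that produces the same error shape as in Lemma \ref{lem5.2}.
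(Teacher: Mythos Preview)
Your proposal is correct and matches the paper's approach essentially step for step: the variance expansion, the threefold orthogonality, the split by $r_1$ into a main term $T(\g_1,\g_2)$ and an error $S(\g_1,\g_2)$, the Weyl saving $p^{-Nr_1/(2^{k-1}(k-1))}$ on the $\m$-sum, and the identification of the joint Veronese counting problem for $(\g_1,\g_2)$ as the crux. The paper resolves that crux via a Smith-normal-form computation (Lemma~\ref{lem5.4}) feeding into an explicit asymptotic for $\sum_{\g_1,\g_2}\mathcal{W}_z(\g_1,\g_2)$ (Lemma~\ref{lem6.6}); note that this same counting lemma is needed not only in the $r_1=0$ main term but also to control the $\n$-sum in the $r_1\ge 1$ error (see~(\ref{6.47})), so the ``careful bookkeeping through degenerate strata'' you flag for the main term must in fact be done once and reused in both places.
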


We provide the proof of Lemma $\ref{lem5.3}$ at the end of this subsection $\ref{subsec5.1.3}$. We note here that this proof is technical and long, and thus we temporarily pause and provide a sketch of the proof of Lemma $\ref{lem5.3}.$ The proof is based on a method combining the classical treatment of major arcs with the strategy used in [$\ref{ref3},$ section 5]. 

First of all, by squaring out, one obtains
\begin{equation}\label{5.1717}
     \dsum_{\substack{\a\in [1,p^r]^N\\P(\a)\equiv 0\ \text{mod}\ p^r}}(\sigma(\a;p^r)-1)^2= \dsum_{\substack{\a\in [1,p^r]^N\\P(\a)\equiv 0\ \text{mod}\ p^r}}(\sigma(\a;p^r)^2-2\sigma(\a;p^r)+1).
\end{equation}
To bound the sums associated with the second and third summands in $(\ref{5.1717}),$ we make use of Lemma $\ref{lem5.1}$ and Lemma $\ref{lem5.2}.$ It remains to estimate the second moment of $\sigma(\a;p^r)$ as $\a$ runs over $1\leq \a\leq p^r$ with $P(\a)\equiv0\ \text{mod}\ p^r$. Define 
\begin{equation*}
    N_3(p^r):= \dsum_{\substack{\a\in [1,p^r]^N\\P(\a)=0\ \textrm{mod}\ p^r}}\sigma(\a;p^r)^2.
\end{equation*}
Then, inverting the order of summations we have 

\begin{equation}\label{eq5.1}
    N_3(p^r)=p^{-2r(n-1)}\dsum_{\g_1,\g_2\in [1,p^r]^n}\# \left\{\a\in [1,p^r]^N \middle|  \ \begin{aligned}
   \langle\a,\nu_{d,n}(\g_i)\rangle&\equiv0\ \text{mod}\ p^r\ (i=1,2)\\
   P(\a)&\equiv0\ \text{mod}\ p^r
    \end{aligned}\right\}.
\end{equation}
By orthogonality, the summand on the right hand side in ($\ref{eq5.1}$) is seen to be
\begin{equation}\label{eq5.2}
    p^{-3r}\dsum_{1\leq l_1,l_2,l_3\leq p^r}\Xi(\g_1,\g_2,l_1,l_2,l_3),
\end{equation}
where
\begin{equation}\label{6.2121}
    \Xi(\g_1,\g_2,l_1,l_2,l_3)=\dsum_{1\leq \a\leq p^r}e\biggl(\frac{P(\a)l_1}{p^r}\biggr)e\biggl(\frac{\langle\a,\nu_{d,n}(\g_1)\rangle l_2}{p^r}\biggr)e\biggl(\frac{\langle\a,\nu_{d,n}(\g_2)\rangle l_3}{p^r}\biggr).
\end{equation}

Then, on substituting $(\ref{eq5.2})$ into $(\ref{eq5.1})$ and by splitting summation over $\g_1$ and $\g_2$ in terms of values of $(\g_1,p^r)$ and $(\g_2,p^r)$, we see that
\begin{equation}\label{5.19}
    N_3(p^r)=p^{-2r(n-1)}\dsum_{0\leq r_2,r_3\leq r}\dsum_{\substack{1\leq \g_1\leq p^r\\(\g_1,p^r)=p^{r_2}}}\dsum_{\substack{1\leq \g_2\leq p^r\\(\g_2,p^r)=p^{r_3}}}(S(\g_1,\g_2)+T(\g_1,\g_2)),
\end{equation}
where 
\begin{equation}\label{6.2323}
S(\g_1,\g_2)=p^{-3r}\dsum_{1\leq r_1\leq r}\dsum_{\substack{(l_1,p^r)=p^{r-r_1}\\1\leq l_2,l_3\leq p^r}}\Xi(\g_1,\g_2,l_1,l_2,l_3)    
\end{equation}
and 
$$T(\g_1,\g_2)=p^{-3r}\dsum_{1\leq l_2,l_3\leq p^r}\Xi(\g_1,\g_2,p^r,l_2,l_3).$$
It is worth noting that the sum of the second summand $T(\g_1,\g_2)$ in $(\ref{5.19})$ gives the main term of $N_3(p^r),$ which is provided in Lemma $\ref{lem5.6}.$ Furthermore, the sum of the first summand $S(\g_1,\g_2)$ in $(\ref{5.19})$ yields the error term of $N_3(p^r),$ which is provided in Lemma $\ref{lem5.5}.$ Therefore, we combine all to prove Lemma $\ref{lem5.3}.$

\bigskip

For the proofs of Lemmas $\ref{lem5.6}$ and $\ref{lem5.5},$ we require two auxiliary lemmas.
In order to describe the first auxiliary lemma, we recall the definition of $N(d,m)=\binom{m+d-1}{d}$ with $m,d\in \N.$ Furthermore, given vectors $\z_1,\z_2\in \Z^{M},$ we use the notation  $\mathcal{G}(\z_1,\z_2)$ for the greatest common divisor of the $2\times 2$ minors of the $M\times 2$ matrix whose columns are the vectors $\z_1,\z_2.$

\begin{lem}\label{lem5.4}
Let $v$ be a non-negative integer. Let $c_1,c_2\in \Z$ with $(c_1,p)=(c_2,p)=1$, and let $t_1,t_2\in \N\cup\{0\}$ with $t_1\leq t_2.$ Suppose that $\h_1,\h_2\in \Z^{m}$ with
$$\h_1=c_1p^{t_1}\f_1,\ \h_2=c_2p^{t_2}\f_2,$$
where vectors $\f_1$ and $\f_2$ are primitive in $\Z^m$. Consider a quantity $\mathcal{N}(\h_1,\h_2)$ defined by
\begin{equation*}
\mathcal{N}(\h_1,\h_2):=\#\left\{\n\in [1,p^{v}]^{N(d,m)}\middle|
\begin{aligned}
\langle\n,\nu_{d,m}(\h_i)\rangle\equiv0\ \ \text{mod}\ p^{v}\ (i=1,2)
\end{aligned}\right\}.    
\end{equation*}
Then, we have 
\begin{equation*}
    \mathcal{N}(\h_1,\h_2)=\begin{aligned}
        &p^{v(N(d,m)-1)}\cdot p^{-\max\{0,v-dt_1\}}\cdot \mathcal{V},
    \end{aligned} 
\end{equation*}
where 
\begin{equation*}
   \mathcal{V}= \left\{ \begin{aligned}
      & p^{dt_2}{\rm{gcd}}(\mathcal{G}(\f_1,\f_2),p^{v-dt_2})\ \ \text{when}\ v-dt_2\geq0\\
      &  p^{v}\ \ \ \ \ \ \ \ \ \ \ \ \ \ \ \ \ \ \ \ \ \ \ \ \ \ \ \ \ \ \text{when}\ v-dt_2<0.
   \end{aligned}
\right.
\end{equation*}
\end{lem}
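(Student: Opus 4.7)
The plan is to reduce $\mathcal{N}(\h_1,\h_2)$ to counting the zeros modulo $p^v$ of a pair of linear forms in $N(d,m)$ variables, and then to apply Smith normal form over $\Z_p$. Using the homogeneity $\nu_{d,m}(c\f)=c^d\nu_{d,m}(\f)$ together with $(c_i,p)=1$, each congruence $\langle\n,\nu_{d,m}(\h_i)\rangle\equiv 0\pmod{p^v}$ is equivalent to $\langle\n,p^{dt_i}\nu_{d,m}(\f_i)\rangle\equiv 0\pmod{p^v}$. Writing $\b_i=\nu_{d,m}(\f_i)$ and forming the $2\times N(d,m)$ matrix $M$ with rows $p^{dt_1}\b_1$ and $p^{dt_2}\b_2$, the Smith normal form of $M$ over $\Z_p$ gives
\begin{equation*}
\mathcal{N}(\h_1,\h_2)=\gcd(d_1,p^v)\cdot\gcd(d_2,p^v)\cdot p^{v(N(d,m)-2)},
\end{equation*}
where $d_1\mid d_2$ are the invariant factors of $M$.

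I then identify $d_1$ and $d_2$. Since $\f_i$ is primitive in $\Z^m$, so is $\b_i=\nu_{d,m}(\f_i)$ (because $\gcd(f_{i,1}^d,\dots,f_{i,m}^d)=\gcd(\f_i)^d=1$), and so the gcd of the entries of $M$ is $p^{dt_1}$, giving $v_p(d_1)=dt_1$. Likewise, the gcd of the $2\times 2$ minors of $M$ equals $p^{d(t_1+t_2)}\cdot\mathcal{G}(\b_1,\b_2)$, so $v_p(d_2)=dt_2+v_p(\mathcal{G}(\b_1,\b_2))$. The crux of the proof is therefore the identity
\begin{equation*}
v_p\bigl(\mathcal{G}(\nu_{d,m}(\f_1),\nu_{d,m}(\f_2))\bigr)=v_p\bigl(\mathcal{G}(\f_1,\f_2)\bigr),\ \text{valid when }\f_1\neq\pm\f_2.
\end{equation*}
I would prove this by choosing $V\in\mathrm{GL}_m(\Z_p)$ with $V\f_1=\e_1$, followed by a block change of basis on coordinates $2,\dots,m$ to bring $V\f_2$ into the form $a\e_1+p^e\e_2$, where $e=v_p(\mathcal{G}(\f_1,\f_2))$ and primitivity of $\f_2$ forces $v_p(a)=0$ whenever $e\geq 1$. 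Since the induced map $\mathrm{Sym}^d V\in\mathrm{GL}_{N(d,m)}(\Z_p)$ preserves $p$-adic valuations of gcds of $2\times 2$ minors, it suffices to check the identity in the reduced coordinates: a direct inspection shows that the only nonzero $2\times 2$ minors of $[\nu_{d,m}(\e_1)\mid\nu_{d,m}(a\e_1+p^e\e_2)]$ are $a^{d_1}p^{e(d-d_1)}$ for $0\leq d_1\leq d-1$, whose $p$-adic valuations attain the minimum value $e$ under the primitivity constraint.

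Finally, the degenerate case $\f_1=\pm\f_2$ is immediate: then $\nu_{d,m}(\f_2)=(\pm 1)^d\nu_{d,m}(\f_1)$, so the two congruences collapse (using $t_1\leq t_2$) to the single condition $\langle\n,\b_1\rangle\equiv 0\pmod{p^{\max\{0,v-dt_1\}}}$, which has $p^{vN(d,m)-\max\{0,v-dt_1\}}$ solutions. Combining the Smith normal form count with the identification of $v_p(d_1)$ and $v_p(d_2)$, and translating $\gcd(d_2,p^v)=p^{dt_2}\gcd(\mathcal{G}(\f_1,\f_2),p^{v-dt_2})$ when $v-dt_2\geq 0$ and $\gcd(d_2,p^v)=p^v$ otherwise, recovers both clauses of the lemma. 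The principal obstacle is precisely the identity $v_p(\mathcal{G}(\nu_{d,m}(\f_1),\nu_{d,m}(\f_2)))=v_p(\mathcal{G}(\f_1,\f_2))$, since column operations on the pair $(\f_1,\f_2)$ do not intertwine with the Veronese embedding, and the $\mathrm{GL}_m(\Z_p)$-normalisation of $(\f_1,\f_2)$ is the only reduction that is simultaneously compatible with the Veronese structure and with invariant-factor calculations.
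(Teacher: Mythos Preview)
Your proof is correct and follows essentially the same route as the paper: reduce to counting solutions of two linear congruences, apply Smith normal form, and invoke the identity relating $\mathcal{G}(\nu_{d,m}(\f_1),\nu_{d,m}(\f_2))$ to $\mathcal{G}(\f_1,\f_2)$. The only difference is that the paper imports this identity as a black box from \cite{ref3} (Lemma~3.11 there), whereas you supply a self-contained proof of its $p$-part via the $\mathrm{GL}_m(\Z_p)$-normalisation and the induced $\mathrm{Sym}^d$ action.
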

\begin{proof}
Notice that 
\begin{equation}\label{5.2020}
\mathcal{N}(\h_1,\h_2):=\#\left\{\n\in [1,p^{v}]^{N(d,m)}\middle|
\begin{aligned}
&\langle\n,\nu_{d,m}(c_1\f_1)\rangle\equiv0\ \ \ \ \ \ \ \ \ \text{mod}\ p^{\max\{0,v-dt_1\}}\\
&\langle\n,\nu_{d,m}(p^{t_2-t_1}c_2\f_2)\rangle\equiv0\ \ \text{mod}\ p^{\max\{0,v-dt_1\}}
\end{aligned}\right\}.    
\end{equation}

We first investigate the case $\f_1\neq \pm\f_2.$ Observe that $\f_1\neq \pm\f_2$ implies that $\nu_{d,m}(\f_1)$ and $\nu_{d,m}(\f_2)$ are linearly independent. We find by the Smith normal form theorem that there exists $f\in \Z$ and $T\in GL_{N(d,m)}(\Z)$ with $|\text{det}(T)|=1$ such that
\begin{equation*}
    \biggl(\begin{aligned}
        \nu_{d,m}(&c_1\f_1)\\
        \nu_{d,m}(c_2&p^{t_2-t_1}\f_2)
    \end{aligned}\biggr)T=\left(\begin{aligned}
        1\ \ &0\ \ 0\ \cdots\ 0\\
        f\ \ &S\ \ 0\ \cdots\ 0
    \end{aligned}\right),
\end{equation*}
where $S:=S(\f_1,\f_2)=\mathcal{G}\left(\nu_{d,m}(c_1\f_1),\nu_{d,m}(c_2p^{t_2-t_1}\f_2)\right).$ Thus, one has
\begin{equation*}
    N(\f_1,\f_2)=\#\left\{\n\in [1,p^{v}]^{N(d,m)}\middle|
\left(\begin{aligned}
       1\ \ &0\ \ 0\ \cdots\ 0\\
        f\ \ &S\ \ 0\ \cdots\ 0
    \end{aligned}\right)\n^{\top}\equiv\0\ \  \text{mod}\ p^{\max\{0,v-dt_1\}}\right\}.
\end{equation*}
By [$\ref{ref3}$, Lemma 3.11], we have
\begin{equation*}
    \begin{aligned}
        \mathcal{G}(\nu_{d,m}(c_1\f_1),\nu_{d,m}(c_2p^{(t_2-t_1)}\f_2))&=c_1^d\cdot c_2^d\cdot p^{(t_2-t_1)d}\cdot\mathcal{G}(\nu_{d,m}(\f_1),\nu_{d,m}(\f_2))\\
        &=c_1^d\cdot c_2^d\cdot p^{(t_2-t_1)d}\cdot\mathcal{G}(\f_1,\f_2).
    \end{aligned}
\end{equation*}
Hence, on recalling that $(c_1,p)=(c_2,p)=1$, we find that 
\begin{equation}\label{5.2121}
    \mathcal{N}(\h_1,\h_2)=(p^{v})^{N(d,m)-2}\cdot \mathcal{Z}_1\cdot \mathcal{Z}_2,
\end{equation}
where
$$\mathcal{Z}_1=\#\{1\leq n_1\leq p^{v}|\ n_1\equiv0\ \text{mod}\ p^{\max\{0,v-dt_1\}}\}$$
and 
$$\mathcal{Z}_2=\#\{0\leq n_2\leq p^{v}-1|\  p^{(t_2-t_1)d}\cdot\mathcal{G}(\f_1,\f_2)\cdot n_2\equiv0\  \text{mod}\ p^{\max\{0,v-dt_1\}}\}.$$

First, consider the case $v-dt_2\geq 0$. Since $v-dt_2\geq 0.$ Since $t_1\leq t_2,$ one has $v-dt_1\geq 0.$ Then, the condition $p^{(t_2-t_1)d}\cdot\mathcal{G}(\f_1,\f_2)\cdot n_2\equiv0\  \text{mod}\ p^{\max\{0,v-dt_1\}}$
implies that $$\mathcal{G}(\f_1,\f_2)\cdot n_2\equiv0\ \text{mod}\ p^{v-dt_2}.$$ Hence, we deduce that
\begin{equation*}
    \begin{aligned}
    \mathcal{Z}_2&= p^{v}(p^{v-dt_2}(\text{gcd}(\mathcal{G}(\f_1,\f_2),p^{v-dt_2}))^{-1})^{-1}\\
    &= p^{dt_2}\text{gcd}(\mathcal{G}(\f_1,\f_2),p^{v-dt_2}).
    \end{aligned}
\end{equation*}

Next, consider the case $v-dt_2<0$. Obviously, we have
$\mathcal{Z}_2= p^{v}.$
Therefore, on noting that $\mathcal{Z}_1=p^{v}\cdot p^{-\max\{0,v-dt_1\}}
$, it follows from $(\ref{5.2121})$ that 
\begin{equation}\label{5.2222}
    \mathcal{N}(\h_1,\h_2)= (p^{v})^{N(d,m)-1}\cdot p^{-\max\{0,v-dt_1\}}\cdot \mathcal{V},
\end{equation}
where 
\begin{equation*}
   \mathcal{V}= \left\{ \begin{aligned}
      & p^{dt_2}\textrm{gcd}(\mathcal{G}(\f_1,\f_2),p^{v-dt_2})\ \text{when}\ v-dt_2\geq0\\
      &  p^{v}\ \ \ \ \ \ \ \ \ \ \ \ \ \ \ \ \ \ \ \ \ \ \ \ \ \ \ \ \ \ \ \text{when}\ v-dt_2<0
   \end{aligned}
\right.
\end{equation*}

Next, we analyse $N(\f_1,\f_2)$ with $\f_1=\pm \f_2.$ On recalling that $\f_1$ is primitive and noting that $p\nmid\f_1$ implies $p\nmid \nu_{d,m}(\f_1),$ we see from ($\ref{5.2020}$) that \begin{equation}\label{5.2323}
    \begin{aligned}
        \mathcal{N}(\h_1,\h_2)&=\#\{\n\in [1,p^v]^{N(d,m)}|\ \langle\n,\nu_{d,m}(\f_1)\rangle\equiv0\ \text{mod}\ p^{\max\{0,v-dt_1\}}\}\\
        &= p^{vN(d,m)}\cdot p^{-\max\{0,v-dt_1\}}.
    \end{aligned}
\end{equation}
Then, on noting that $\mathcal{V}=p^v$ when $\f_1=\pm \f_2$, we complete the proof of Lemma $\ref{lem5.4}$, by $(\ref{5.2222})$ and $(\ref{5.2323})$.
\end{proof}

\bigskip

We make use of Lemma $\ref{lem5.4}$ in order to prove the second auxiliary lemma, which  will be used in the proofs of Lemmas $\ref{lem5.6}$ and $\ref{lem5.5}.$ In advance of the statement of Lemma $\ref{lem6.6}$, we recall the definition $(\ref{1.21.2})$ of $N.$ Furthermore, for $z\in \N\cup \{0\}$ and $\g_1,\g_2\in \Z^n$, we define 
 $$\mathcal{W}_z(\g_1,\g_2)=\#\left\{ \a\in [0,p^z-1]^N\middle|\ \langle\a,\nu_{d,n}(\g_i)\rangle\equiv 0\ \textrm{mod}\ p^z\ (i=1,2)\right\}.$$
\begin{lem}\label{lem6.6}
    Suppose that $r$ and $z$ are non-negative integers with $z\leq r$. Then, whenever $n>d$, one has
    \begin{equation}\label{6.2888}
        \dsum_{0\leq r_2,r_3\leq r}\dsum_{\substack{1\leq \g_1\leq p^r\\ (\g_1,p^r)=p^{r_2}\\\g_1\in \Z^n}}\dsum_{\substack{1\leq \g_2\leq p^r\\(\g_2,p^r)=p^{r_3}\\\g_2\in \Z^n}}\mathcal{W}_z(\g_1,\g_2)= p^{z(N-2)+2rn}\left(1+O\left(p^{d-n}+p^{z-n\lceil z/d\rceil}\right)\right).
    \end{equation}
\end{lem}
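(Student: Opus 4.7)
The plan is to apply orthogonality to $\mathcal{W}_z(\g_1,\g_2)$, extract the main term from the trivial Fourier frequency, and bound the remaining contribution via a case analysis. By orthogonality on $(\Z/p^z\Z)^N$, we have
\[
\mathcal{W}_z(\g_1,\g_2)=p^{zN-2z}\cdot\#\bigl\{(l_1,l_2)\in(\Z/p^z\Z)^2 : l_1\nu_{d,n}(\g_1)+l_2\nu_{d,n}(\g_2)\equiv \boldsymbol{0}\bmod p^z\bigr\}.
\]
Since $\nu_{d,n}(\g)\bmod p^z$ depends only on $\g\bmod p^z$, I can interchange the orders of summation and reduce the $\g$-ranges from $[1,p^r]^n$ to $[1,p^z]^n$ at the cost of a factor $p^{2(r-z)n}$. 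The left-hand side of \eqref{6.2888} thus becomes $p^{zN-2z+2(r-z)n}\sum_{l_1,l_2}\Lambda(l_1,l_2)$, where $\Lambda(l_1,l_2)=\#\{(\g_1,\g_2)\in[1,p^z]^{2n}:l_1\nu_{d,n}(\g_1)+l_2\nu_{d,n}(\g_2)\equiv\boldsymbol{0}\bmod p^z\}$. The trivial frequency contributes $\Lambda(\boldsymbol{0},\boldsymbol{0})=p^{2zn}$, producing precisely the main term $p^{z(N-2)+2rn}$.

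For nontrivial frequencies I parametrize each nonzero $l_i=p^{z-z_i}\widetilde{l}_i$ with $(\widetilde{l}_i,p)=1$, $1\leq z_i\leq z$. In Case A (exactly one of $l_1,l_2$ vanishes, say $l_2=0$), the condition reduces to $\nu_{d,n}(\g_1)\equiv\boldsymbol{0}\bmod p^{z_1}$; inspecting the pure monomials $x_j^d$ shows this is equivalent to $\g_1\equiv\boldsymbol{0}\bmod p^{\lceil z_1/d\rceil}$, and summing yields
\[
\ll p^{2zn}\sum_{z_1=1}^{z}p^{z_1-n\lceil z_1/d\rceil}\ll p^{2zn}\bigl(p^{d-n}+p^{z-n\lceil z/d\rceil}\bigr),
\]
where the two summands correspond respectively to $z_1$ small and $z_1=z$, and the geometric-type series converges thanks to $n>d$. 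In Case B (both $l_i\neq 0$, WLOG $z_1\geq z_2\geq 1$), the condition becomes $\nu_{d,n}(\g_1)\equiv c\,\nu_{d,n}(\g_2)\bmod p^{z_1}$ with $v_p(c)=z_1-z_2$. Splitting by $t=v_p(\g_2)$: if $t\geq\lceil z_2/d\rceil$, the right-hand side vanishes mod $p^{z_1}$ and Case~A's argument applies to $\g_1$, giving the same bound; otherwise $t<\lceil z_2/d\rceil$, and the combined pure- and cross-monomial relations force the coordinate-wise ratios $g_{1,i}/g_{2,i}$ to equal a common scalar $\lambda$ satisfying $\lambda^d\equiv c\bmod p^{z_1-dt}$. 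A Hensel-type count of $d$-th roots produces only few solutions, and (after incorporating the $p^{(z-z_1)n}$ lifts for $\g_1$) the total contribution collapses to $\ll p^{2zn}\sum_{z_1=1}^{z}p^{z_1(2-n)}\ll p^{2zn}p^{d-n}$, using $n>2$ (which follows from $n>d\geq 2$).

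The main technical obstacle is the last sub-case of Case B, where both $l_i$ are nonzero and $\g_2$ has small $p$-adic valuation: here one must rigorously establish that the system $\nu_{d,n}(\g_1)\equiv c\,\nu_{d,n}(\g_2)\bmod p^{z_1}$ really forces $\g_1$ to be a common scalar multiple of $\g_2$ (up to an appropriate $p$-power), and then bound the number of $d$-th roots of $c$ modulo $p^{z_1-dt}$. Both steps are routine consequences of Hensel's lemma when $p\nmid d$ and require only minor adjustments when $p\mid d$; in every case the count is bounded in terms of $d$ alone, which suffices to close the estimate. Combining the contributions from Cases A and B produces the error bound $O\!\bigl(p^{z(N-2)+2rn}(p^{d-n}+p^{z-n\lceil z/d\rceil})\bigr)$ and completes the proof of the asymptotic formula.
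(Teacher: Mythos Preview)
Your orthogonality-and-swap approach is genuinely different from the paper's. The paper never sums over the dual variables $(l_1,l_2)$; instead it evaluates $\mathcal{W}_z(\g_1,\g_2)$ in closed form via the Smith normal form (Lemma~\ref{lem5.4}), expressing the answer through the gcd $\mathcal{G}(\f_1,\f_2)$ of the $2\times 2$ minors of the primitive parts $\f_1,\f_2$ of $\g_1,\g_2$, and then sums over $(\g_1,\g_2)$ stratified by $(r_2,r_3)$ and by whether $\f_1=\pm\f_2$. Your route trades that linear-algebra step for a Diophantine one---counting preimages of the Veronese map---which is more elementary in spirit but requires more case-work.

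Your Case~B2, however, contains a genuine error. Write $\g_2=p^t\h_2$ with $p\nmid\h_2$ and set $c'=cp^{dt}$, $s=v_p(c')=z_1-z_2+dt$. When $s>0$, the relation $\nu_{d,n}(\g_1)\equiv c'\,\nu_{d,n}(\h_2)\bmod p^{z_1}$ does \emph{not} determine $\g_1$ modulo $p^{z_1}$. One needs $d\mid s$ (else no solutions); setting $m=s/d$ and picking $i$ with $p\nmid h_{2,i}$, the monomial $x_i^{d-1}x_j$ gives $g_{1,i}^{d-1}g_{1,j}\equiv c'\,h_{2,i}^{d-1}h_{2,j}\bmod p^{z_1}$, and since $v_p(g_{1,i}^{d-1})=(d-1)m$ this only fixes $g_{1,j}$ modulo $p^{z_1-(d-1)m}$. (One checks that, conversely, $\g_1\equiv\lambda_0\h_2\bmod p^{z_1-(d-1)m}$ with $\lambda_0^d\equiv c'$ already forces every monomial relation, so no further constraints arise.) Thus the correct lift count for $\g_1$ is $\ll p^{\,n(z-z_1+(d-1)m)}$, not $p^{\,n(z-z_1)}$; for $d>2$ this makes the sub-cases with $t\geq1$ or $z_1>z_2$ \emph{larger} than the $t=0$, $z_1=z_2$ term you isolated, so the displayed bound $\sum_{z_1}p^{z_1(2-n)}$ undercounts. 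The repair is routine: with the corrected lifts, the exponent relative to $p^{2zn}$ becomes $z_1(2-n)+m\bigl((d-1)n-d\bigr)+t(d-n)$ subject to $0\leq t\leq m\leq(z_1-1)/d$; summing first over $t$ (coefficient $d-n<0$, geometric), then over $m$ (coefficient $(d-1)n-d>0$, dominated by the top value $m\approx z_1/d$, yielding combined exponent $\approx z_1(1-n/d)$), and finally over $z_1$, one finds the total is still $\ll p^{2-n}\leq p^{d-n}$. So your strategy does succeed, but the phrase ``$p^{(z-z_1)n}$ lifts'' and the ensuing summation must be replaced by this more careful accounting.
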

\begin{proof}
For $\g_1\in \Z^n$ and $\g_2\in \Z^n$ with $(\g_1,p^r)=p^{r_2}$ and $(\g_2,p^r)=p^{r_3}$, we see that there exist $c_1,c_2\in \Z$ with $(c_1,p)=(c_2,p)=1$ and primitive vectors $\f_1$ and $\f_2$ in $\Z^n$ such that
\begin{equation}\label{6.29292929}
    \g_1=c_1p^{r_2}\f_1,\ \g_2=c_2p^{r_3}\f_2.
\end{equation}
Then, in the case that $r_2\leq r_3$, by applying Lemma $\ref{lem5.4}$ with $t_1=r_2,t_2=r_3,m=n$ and $v=z$, one has
\begin{equation}\label{6.292929}
    \mathcal{W}_z(\g_1,\g_2)=\begin{aligned}
        &p^{z(N-1)}\cdot p^{-\max\{0,z-dr_2\}}\cdot \mathcal{V}_1,
    \end{aligned}
\end{equation}
where 
\begin{equation*}
   \mathcal{V}_1= \left\{ \begin{aligned}
      & p^{dr_3}\text{gcd}(\mathcal{G}(\f_1,\f_2),p^{z-dr_3})\ \text{when}\ z-dr_3\geq0\\
      &  p^{z}\ \ \ \ \ \ \ \ \ \ \ \ \ \ \ \ \ \ \ \ \ \ \ \ \ \ \ \ \ \ \ \ \text{when}\ z-dr_3<0.
   \end{aligned}
\right.
\end{equation*}

Define 
$$\Sigma_1=\dsum_{\substack{(\g_1,p^r)=1\\(\g_2,p^r)=1}}\mathcal{W}_z(\g_1,\g_2),$$
\begin{equation}\label{6.313131}
\Sigma_2=\dsum_{\substack{0\leq r_2,r_3\leq r\\r_2\leq r_3\\(r_2,r_3)\neq (0,0))}}\dsum_{\substack{(\g_1,p^r)=p^{r_2}\\(\g_2,p^r)=p^{r_3}}}\mathcal{W}_z(\g_1,\g_2)\end{equation}
and 
$$\Sigma_3=\dsum_{\substack{0\leq r_2,r_3\leq r\\ r_2>r_3\\ (r_2,r_3)\neq (0,0)}}\dsum_{\substack{(\g_1,p^r)=p^{r_2}\\(\g_2,p^r)=p^{r_3}}}\mathcal{W}_z(\g_1,\g_2).$$
On writing the left-hand side in $(\ref{6.2888})$ by $\mathfrak{T}$, we notice here that 
\begin{equation}\label{6.32323232}
    \mathfrak{T}=\Sigma_1+\Sigma_2+\Sigma_3.
\end{equation}

We first analyze $\Sigma_1.$ When $r_2=r_3=0$, it follows from $(\ref{6.292929})$ that 
$$\mathcal{W}_z(\g_1,\g_2)=p^{zN-2z}\text{gcd}(\mathcal{G}(\f_1,\f_2),p^{z}).$$
 Hence, we find that 
$$\Sigma_1=\dsum_{\substack{(\g_1,p)=1\\(\g_2,p)=1}}p^{zN-2z}\text{gcd}(\mathcal{G}(\f_1,\f_2),p^{z}).$$
By introducing 
$$\mathfrak{F}^{(e)}(p^z)=\left\{(\g_1,\g_2)\in [1,p^r]^n\times[1,p^r]^n\middle|\ \begin{aligned}
    &\text{gcd}(\mathcal{G}(\f_1,\f_2),p^{z})=p^e\\
    &(\g_1,p)=(\g_2,p)=1
\end{aligned}\right\},$$
one finds that 
$$\Sigma_1=p^{zN-2z}\biggl(\dsum_{\substack{(\g_1,p)=1\\ (\g_2,p)=1}}1+O\biggl(\dsum_{1\leq e\leq z}p^e\cdot \#\mathfrak{F}^{(e)}(p^z)\biggr)\biggr).$$
Note that $\sum_{\substack{(\g_1,p)=1\\ (\g_2,p)=1}}1=p^{2rn}(1+O(p^{-n})).$ Furthermore, for given $\g_2\in [1,p^r]^n$ with $(\g_2,p)=1$, one infers that the number of $\g_1\in [1,p^r]^n$ such that $(\g_1,\g_2)\in \mathfrak{F}^{(e)}(p^z)$ is at most $p^{rn-e(n-1)}.$ Thus, this gives $\#\mathfrak{F}^{(e)}(p^z)\ll p^{2rn-e(n-1)}.$ Therefore, we deduce that 
\begin{equation}\label{6.333}
    \Sigma_1=p^{z(N-2)+2rn}(1+O(p^{-n+2})).
\end{equation}

Next, we turn to estimate $\Sigma_2.$ When $dr_2\leq z<dr_3$, we find from $(\ref{6.292929})$ that
\begin{equation}\label{6.323232}
    \begin{aligned}
         \mathcal{W}_z(\g_1,\g_2)&=p^{z(N-1)}\cdot p^{dr_2}.
    \end{aligned}
\end{equation}
Furthermore, when $z<dr_2\leq dr_3$, one finds from $(\ref{6.292929})$ again that 
\begin{equation}\label{6.3333}
    \begin{aligned}
         \mathcal{W}_z(\g_1,\g_2)&= p^{zN}.
    \end{aligned}
\end{equation}
On noting that 
$$\#\{(\g_1,\g_2)\in [1,p^r]^{2n}|\ (\g_1,p^r)=p^{r_2},\ (\g_2,p^r)=p^{r_3}\}\ll (p^{r-r_2})^n\cdot (p^{r-r_3})^n,$$
we find from $(\ref{6.323232})$ and $(\ref{6.3333})$ that
\begin{equation*}
    \begin{aligned}
        &\dsum_{\substack{0\leq r_2,r_3\leq r\\r_2\leq r_3\\z<dr_3}}\dsum_{\substack{(\g_1,p^r)=p^{r_2}\\(\g_2,p^r)=p^{r_3}}}\mathcal{W}_z(\g_1,\g_2)\\
        &\ll \dsum_{\substack{0\leq r_2,r_3\leq r\\r_2\leq r_3\\dr_2\leq z<dr_3}}(p^{r-r_2})^n\cdot (p^{r-r_3})^n\cdot p^{z(N-1)+dr_2}+ \dsum_{\substack{0\leq r_2,r_3\leq r\\r_2\leq r_3\\z<dr_2\leq dr_3}}(p^{r-r_2})^n\cdot (p^{r-r_3})^n\cdot p^{zN}.
    \end{aligned}
\end{equation*}
A modicum of calculation leads to 
\begin{equation}\label{6.353535}
    \begin{aligned}
         &\dsum_{\substack{0\leq r_2,r_3\leq r\\r_2\leq r_3\\z<dr_3}}\dsum_{\substack{(\g_1,p^r)=p^{r_2}\\(\g_2,p^r)=p^{r_3}}}\mathcal{W}_z(\g_1,\g_2)\\
         &\ll \dsum_{\substack{0\leq r_3\leq r\\ z<dr_3}}(p^{r})^n\cdot (p^{r-r_3})^n\cdot p^{z(N-1)}+\dsum_{\substack{0\leq r_3\leq r\\ z<dr_3}}\left(p^{r-\lceil z/d\rceil }\right)^n\cdot (p^{r-r_3})^n\cdot p^{zN}\\
         &\ll \dsum_{\substack{0\leq r_3\leq r\\ \lceil z/d\rceil\leq r_3}}(p^{z(N-1)+2rn-nr_3}+p^{zN+2rn-n\lceil z/d\rceil-nr_3})\\
        & \ll p^{z(N-1)+2rn-n\lceil z/d\rceil},
    \end{aligned}
\end{equation}
where we have used the inequality $z-n\lceil z/d\rceil<0$ derived by the hypothesis $n>d$ in the statement of Lemma $\ref{lem6.6}$.

Consider the case that $dr_3\leq z.$ Then, it follows from $(\ref{6.292929})$ again that 
we see that
\begin{equation}\label{6.343434}
    \mathcal{W}_z(\g_1,\g_2)=p^{z(N-2)}\cdot p^{dr_2+dr_3}\cdot  \text{gcd}(\mathcal{G}(\f_1,\f_2),p^{z-dr_3}).
\end{equation}

Let us temporarily define
$$G=\{(\g_1,\g_2)\in [1,p^r]^{2n}|\ (\g_1,p^r)=p^{r_2},\ (\g_2,p^r)=p^{r_3}\}.$$
For given $(\g_1,\g_2)\in G$, recall the definition $(\ref{6.29292929})$ of $\f_1$ and $\f_2.$ Then, by introducing 
$$\mathfrak{F}^{(e)}=\left\{(\g_1,\g_2)\in G\middle|\ \text{gcd}(\mathcal{G}(\f_1,\f_2),p^{z-dr_3})=p^e\right\},$$
we deduce from $(\ref{6.343434})$ that 
$$\dsum_{\substack{(\g_1,p^r)=p^{r_2}\\ (\g_2,p^r)=p^{r_3}}}\mathcal{W}_z(\g_1,\g_2)=p^{z(N-2)}\cdot p^{dr_2+dr_3}\cdot \biggl(|G|+O\biggl(\dsum_{1\leq e\leq z-dr_3}p^e\cdot \#\mathfrak{F}^{(e)}\biggr)\biggr).$$
Note that $|G|=p^{(r-r_2)n+(r-r_3)n}(1+O(p^{-n})).$ Furthermore, for given $\g_2\in [1,p^r]^n$  with $(\g_2,p^r)=p^{r_3}$, the number of $\g_1\in [1,p^r]^n$ with $(\g_1,p^r)=p^{r_2}$ such that $(\g_1,\g_2)\in \mathfrak{F}^{(e)}$ is at most $(p^{r-r_2})^n\cdot p^{-e(n-1)}.$  Thus, this gives $\#\mathfrak{F}^{(e)}\ll p^{2rn-r_1n-r_2n-e(n-1)}.$ Therefore, we deduce that 
$$\dsum_{\substack{(\g_1,p^r)=p^{r_2}\\ (\g_2,p^r)=p^{r_3}}}\mathcal{W}_z(\g_1,\g_2)=p^{z(N-2)}\cdot p^{dr_2+dr_3}\cdot p^{(r-r_2)n+(r-r_3)n}(1+O(p^{-n+2})).$$
This equality yields that
\begin{equation}\label{6.363636}
    \begin{aligned}
        &\dsum_{\substack{0\leq r_2,r_3\leq r\\ r_2\leq r_3\\ dr_3\leq z\\(r_2,r_3)\neq (0,0)}}\dsum_{\substack{(\g_1,p^r)=p^{r_2}\\(\g_2,p^r)=p^{r_3}}}\mathcal{W}_z(\g_1,\g_2)\\
        &=\dsum_{\substack{0\leq r_2,r_3\leq r\\ r_2\leq r_3\\ dr_3\leq z\\(r_2,r_3)\neq (0,0)}}p^{z(N-2)}\cdot p^{dr_2+dr_3}\cdot p^{(r-r_2)n+(r-r_3)n}(1+O(p^{-n+2}))\\
        &\ll p^{z(N-2)+2rn+d-n}.
    \end{aligned}
\end{equation}

By adding the bounds in $(\ref{6.353535})$ and $(\ref{6.363636})$ and substituting that into $(\ref{6.313131})$, we find that
\begin{equation}\label{6.393939}
    \Sigma_2\ll p^{z(N-2)+2rn}\left(p^{z-n\lceil z/d\rceil}+p^{d-n}\right).
\end{equation}
Furthermore, the same argument leading from ($\ref{6.323232}$) and $(\ref{6.393939})$ delivers that 
\begin{equation}\label{6.404040}
\Sigma_3\ll p^{z(N-2)+2rn}\left(p^{z-n\lceil z/d\rceil}+p^{d-n}\right),
\end{equation}
by reversing the roles of $r_2$ and $r_3.$ 

Therefore, on substituting $(\ref{6.333})$, $\ref{6.393939}$ and $\ref{6.404040}$ into $(\ref{6.32323232})$, we conclude that 
$$\mathfrak{T}=p^{z(N-2)+2rn}(1+O(p^{d-n}+p^{z-n\lceil z/d\rceil})).$$
\end{proof}

\bigskip

In advance of the statement of the following lemma, we recall the formulation ($\ref{5.19}$) of $N_3(p^r)$.
\begin{lem}\label{lem5.6} 
Whenever $n>d$, we have
\begin{equation*}
  p^{-2r(n-1)}\dsum_{0\leq r_2,r_3\leq r}\dsum_{\substack{1\leq \g_1\leq p^r\\(\g_1,p^r)=p^{r_2}}}\dsum_{\substack{1\leq \g_2\leq p^r\\(\g_2,p^r)=p^{r_3}}}T(\g_1,\g_2)=p^{rN-r}(1+O(p^{d-n}+p^{r-n\lceil r/d\rceil})).    
\end{equation*}
\end{lem}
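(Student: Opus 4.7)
The plan is to exploit the fact that the definition of $T(\g_1,\g_2)$ in $(\ref{6.2323})$ corresponds to $l_1=p^r$, so that the factor $e(P(\a)l_1/p^r)$ appearing in $\Xi$ (see $(\ref{6.2121})$) is simply $1$, trivializing the $P$-condition. First I would rewrite
\begin{equation*}
\Xi(\g_1,\g_2,p^r,l_2,l_3)=\dsum_{1\leq \a\leq p^r}e\biggl(\frac{\langle\a,\nu_{d,n}(\g_1)\rangle l_2+\langle\a,\nu_{d,n}(\g_2)\rangle l_3}{p^r}\biggr),
\end{equation*}
and then apply orthogonality in each of $l_2$ and $l_3$ separately. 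Each of these sums over $l_i\in[1,p^r]$ produces a factor $p^r$ precisely when $\langle\a,\nu_{d,n}(\g_i)\rangle\equiv 0\ (\text{mod}\ p^r)$ and vanishes otherwise, so the whole expression collapses to
\begin{equation*}
T(\g_1,\g_2)=p^{-3r}\cdot p^{2r}\cdot\mathcal{W}_r(\g_1,\g_2)=p^{-r}\mathcal{W}_r(\g_1,\g_2),
\end{equation*}
where $\mathcal{W}_r$ is the counting function defined just before Lemma $\ref{lem6.6}$.

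Next, I would sum over $\g_1,\g_2$ stratified by $(\g_i,p^r)=p^{r_i}$ and apply Lemma $\ref{lem6.6}$ directly with $z=r$. This gives
\begin{equation*}
\dsum_{0\leq r_2,r_3\leq r}\dsum_{(\g_1,p^r)=p^{r_2}}\dsum_{(\g_2,p^r)=p^{r_3}}\mathcal{W}_r(\g_1,\g_2)=p^{r(N-2)+2rn}\left(1+O(p^{d-n}+p^{r-n\lceil r/d\rceil})\right),
\end{equation*}
which is valid under the hypothesis $n>d$.

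Finally, the combined prefactor in the statement is $p^{-2r(n-1)}\cdot p^{-r}$ times the expression displayed above. Multiplying out the exponents, $-2r(n-1)-r+r(N-2)+2rn=rN-r$, so the main term lands on $p^{rN-r}$ and the error terms are carried unchanged, yielding the claim. The only substantive work is the orthogonality collapse and the arithmetic bookkeeping of the exponents; the delicate estimate on the distribution of $\mathcal{G}(\f_1,\f_2)$ has already been absorbed into Lemma $\ref{lem6.6}$, so I do not expect any further obstacle here.
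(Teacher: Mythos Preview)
Your proposal is correct and matches the paper's own argument essentially line for line: the paper also uses orthogonality to collapse $T(\g_1,\g_2)$ to $p^{-r}\mathcal{W}_r(\g_1,\g_2)$, invokes Lemma~\ref{lem6.6} with $z=r$, and then checks the same exponent arithmetic $-2r(n-1)-r+r(N-2)+2rn=rN-r$.
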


\begin{proof}
For simplicity, we write
\begin{equation}\label{5.42}
   \mathfrak{T} =p^{-2r(n-1)}\dsum_{0\leq r_2,r_3\leq r}\dsum_{\substack{1\leq \g_1\leq p^r\\(\g_1,p^r)=p^{r_2}}}\dsum_{\substack{1\leq \g_2\leq p^r\\(\g_2,p^r)=p^{r_3}}}T(\g_1,\g_2).
\end{equation}
 On recalling the definition of $T(\g_1,\g_2)$ leading to $(\ref{5.19})$, we see that
\begin{equation}\label{5.44}
   \mathfrak{T}= p^{-2rn-r}\dsum_{0\leq r_2,r_3\leq r}\dsum_{\substack{(\g_1,p^r)=p^{r_2}\\(\g_2,p^r)=p^{r_3}}}\dsum_{1\leq \a\leq p^r}\mathcal{T}_1(\a,\g_1,\g_2),
\end{equation}
where $$\mathcal{T}_1(\a,\g_1,\g_2)=\dsum_{1\leq l_2,l_3\leq p^r}e\biggl(\frac{\langle \a,\nu_{d,n}(\g_1)\rangle l_2}{p^r}\biggr)\biggl(\frac{\langle \a,\nu_{d,n}(\g_2)\rangle l_3}{p^r}\biggr).$$
It follows by orthogonality that for given $\g_1$ and $\g_2$ with $(\g_1,p^r)=p^{r_2}$ and $(\g_2,p^r)=p^{r_3}$, one has
\begin{equation}\label{6.28}
    \dsum_{1\leq \a\leq p^r}\mathcal{T}_1(\a,\g_1,\g_2)=p^{2r}\#\left\{1\leq \a\leq p^r\middle| \begin{aligned}
        \langle \a,\nu_{d,n}(\g_i)\rangle\equiv0\ \text{mod}\ p^r\ (i=1,2)
    \end{aligned}\right\}.
\end{equation}
Notice here that $ \sum_{1\leq \a\leq p^r}\mathcal{T}_1(\a,\g_1,\g_2)=p^{2r}\cdot \mathcal{W}_r(\g_1,\g_2)$, where $\mathcal{W}_r(\g_1,\g_2)$ is defined in the preamble to Lemma $\ref{lem6.6}$.

Therefore, by substituting $(\ref{6.28})$ into $(\ref{5.44})$ and by applying Lemma $\ref{lem6.6}$ with $z=r$, we see that
\begin{equation*}
    \begin{aligned}
         \mathfrak{T}&=p^{-2rn+r}\cdot p^{r(N-2)+2rn}(1+O(p^{d-n}+p^{r-n\lceil r/d\rceil}))\\
         &=p^{rN-r}(1+O(p^{d-n}+p^{r-n\lceil r/d\rceil})).
    \end{aligned}
\end{equation*}
This completes the proof of Lemma $\ref{lem5.6}.$

\end{proof}

In advance of the statement of the following lemma, we recall the formulation ($\ref{5.19}$) of $N_3(p^r)$.
\begin{lem}\label{lem5.5}
Whenever $n>d$ and $N>(k-1)2^{k-1}(n-d+3)$, one has
$$p^{-2r(n-1)}\dsum_{0\leq r_2,r_3\leq r}\dsum_{\substack{1\leq \g_1\leq p^r\\(\g_1,p^r)=p^{r_2}}}\dsum_{\substack{1\leq \g_2\leq p^r\\(\g_2,p^r)=p^{r_3}}}S(\g_1,\g_2)\ll p^{rN-r-(N/(2^{k-1}(k-1))-3)+\epsilon}.$$

\end{lem}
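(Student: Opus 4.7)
The plan is to adapt the Weyl-type decomposition used in the proof of Lemma \ref{lem5.2} to handle two linear phases simultaneously. For each $r_1\in\{1,\ldots,r\}$, write $l_1=p^{r-r_1}\widetilde{l}_1$ with $(\widetilde{l}_1,p)=1$ and decompose $\a=p^{r_1}\n+\m$ with $\n\in[0,p^{r-r_1}-1]^N$ and $\m\in[1,p^{r_1}]^N$. Because the linear forms in $\a$ split additively and every term of $P(p^{r_1}\n+\m)$ beyond $P(\m)$ carries at least a factor $p^{r_1}$, the phase factors cleanly and one obtains $\Xi(\g_1,\g_2,l_1,l_2,l_3)=S_1(\widetilde{l}_1,\g_1,\g_2,l_2,l_3)\cdot S_2(\g_1,\g_2,l_2,l_3)$, where
\begin{equation*}
S_1=\dsum_{1\leq \m\leq p^{r_1}}e\biggl(\frac{P(\m)\widetilde{l}_1}{p^{r_1}}\biggr)e\biggl(\frac{\langle\m,\nu_{d,n}(\g_1)\rangle l_2+\langle\m,\nu_{d,n}(\g_2)\rangle l_3}{p^r}\biggr)
\end{equation*}
and $S_2$ is a purely linear complete sum over $\n$ modulo $p^{r-r_1}$ involving the two frequencies $\langle\n,\nu_{d,n}(\g_i)\rangle l_{i+1}/p^{r-r_1}$ for $i=1,2$.

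Since $(\widetilde{l}_1,p)=1$ and $P$ is non-singular, the Weyl-type estimate [\ref{ref8}, Lemma 5.4] applied to $S_1$, after combining phases over the common denominator $p^r$ and noting that $(k-1)$-fold differencing annihilates the linear perturbation and leaves the $k$-th order multilinear form associated with $P$ on the reduced denominator $p^{r_1}$, yields
\begin{equation*}
\sup_{l_2,l_3}\bigl|S_1(\widetilde{l}_1,\g_1,\g_2,l_2,l_3)\bigr|\ll p^{r_1 N-r_1 N/(2^{k-1}(k-1))+\epsilon}.
\end{equation*}
On the other hand, $S_2$ factors coordinate-wise into a product of complete character sums, each being either $p^{r-r_1}$ or $0$, so $S_2\geq 0$; switching the order of summation then gives $\sum_{1\leq l_2,l_3\leq p^r}S_2(\g_1,\g_2,l_2,l_3)=p^{2r}\cdot \mathcal{W}_{r-r_1}(\g_1,\g_2)$, in the notation of Lemma \ref{lem6.6}.

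Inserting these bounds into the definition $(\ref{6.2323})$ of $S(\g_1,\g_2)$ via the triangle inequality, and then applying Lemma \ref{lem6.6} with $z=r-r_1$ to obtain $\sum_{r_2,r_3}\sum_{\g_1,\g_2}\mathcal{W}_{r-r_1}(\g_1,\g_2)\ll p^{(r-r_1)(N-2)+2rn}$ (the error terms being harmless since $n>d$), the desired quantity is dominated by
\begin{equation*}
p^{\epsilon}\cdot p^{rN-r}\dsum_{r_1=1}^{r}p^{r_1(3-N/(2^{k-1}(k-1)))}.
\end{equation*}
Under the hypothesis on $N$ the bracketed exponent is negative, so the geometric sum is controlled by the $r_1=1$ term, yielding the claimed bound. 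The main technical point will be to justify the Weyl estimate on $S_1$ in the presence of two extra linear phases of denominator $p^r>p^{r_1}$: this is handled by the observation that only the degree-$k$ part of the phase survives Weyl differencing, so the effective modulus controlling the Weyl bound is the reduced denominator $p^{r_1}$ of the leading coefficient.
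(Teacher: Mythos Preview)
Your proposal is correct and follows essentially the same route as the paper: decompose $\a=p^{r_1}\n+\m$, separate into the Weyl-bounded sum over $\m$ (taking the supremum over $l_2,l_3$) and the non-negative complete sum over $\n$, identify the latter summed over $l_2,l_3$ as $p^{2r}\mathcal{W}_{r-r_1}(\g_1,\g_2)$, apply Lemma~\ref{lem6.6} with $z=r-r_1$, and conclude with the geometric sum in $r_1$. The only cosmetic difference is that the paper phrases the non-negativity of the inner $\n$-sum via orthogonality rather than via coordinate-wise factorisation, but the content is identical.
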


\begin{proof}
 On recalling that $\gcd(l_1,p^r)=p^{r-r_1}$, let us write $l_1=p^{r-r_1}\widetilde{l}_1$ with $\gcd(\widetilde{l}_1,p)=1.$ Then, on recalling the definition $(\ref{6.2121})$ of $\Xi(\g_1,\g_2,l_1,l_2,l_3)$ and by writing $\a=p^{r_1}\n+\m$ with $0\leq \n\leq p^{r-r_1}-1$ and $1\leq \m\leq p^{r_1}$, we see that 
\begin{equation*}
\begin{aligned}
   &\Xi(\g_1,\g_2,l_1,l_2,l_3)\\
&=\dsum_{0\leq \n\leq p^{r-r_1}-1}\dsum_{1\leq \m\leq p^{r_1}} e\biggl(\frac{P(\m)\widetilde{l}_1}{p^{r_1}}\biggr)e\biggl(\frac{\langle p^{r_1}\n+\m,\nu_{d,n}(\g_1)\rangle l_2+\langle p^{r_1}\n+\m,\nu_{d,n}(\g_2)\rangle l_3}{p^r}\biggr).
\end{aligned}
\end{equation*}

On recalling ($\ref{5.19}$) and $(\ref{6.2323})$ and by substituting this expression for $\Xi(\g_1,\g_2,l_1,l_2,l_3)$ into $S(\g_1,\g_2)$, it follows by the triangle inequality that 
\begin{equation}\label{5.20}
    \begin{aligned}
   & p^{-2r(n-1)}\dsum_{0\leq r_2,r_3\leq r}\dsum_{\substack{1\leq \g_1\leq p^r\\(\g_1,p^r)=p^{r_2}}}\dsum_{\substack{1\leq \g_2\leq p^r\\(\g_2,p^r)=p^{r_3}}}S(\g_1,\g_2)\\
    &\leq p^{-2rn-r}\dsum_{\substack{1\leq r_1\leq r\\0\leq r_2,r_3\leq r}}\dsum_{\substack{1\leq \g_1\leq p^r\\(\g_1,p^r)=p^{r_2}}}\dsum_{\substack{1\leq \g_2\leq p^r\\(\g_2,p^r)=p^{r_3}}}\dsum_{\substack{1\leq \widetilde{l}_1\leq p^{r_1}\\(\widetilde{l}_1,p)=1}}\bigl|S_1(\widetilde{l}_1,\g_1,\g_2)\bigr|\cdot\bigl|S_2(\g_1,\g_2)\bigr|,
    \end{aligned}
\end{equation}
where \begin{equation*}
\begin{aligned}
    &S_1(\widetilde{l}_1,\g_1,\g_2)\\
    &=\sup_{1\leq l_2,l_3\leq p^r}\biggl|\dsum_{1\leq \m\leq p^{r_1}}e\biggl(\frac{P(\m)\widetilde{l}_1}{p^{r_1}}\biggr)e\biggl(\frac{\langle \m,\nu_{d,n}(\g_1)\rangle l_2}{p^r}\biggr)e\biggl(\frac{\langle\m,\nu_{d,n}(\g_2)\rangle l_3}{p^r}\biggr)\biggr|
\end{aligned}
\end{equation*}
and 
\begin{equation*}
    S_2(\g_1,\g_2)=\dsum_{1\leq l_2,l_3\leq p^r}\biggl|\dsum_{0\leq \n\leq p^{r-r_1}-1}e\biggl(\frac{\langle\n,\nu_{d,n}(\g_1)\rangle l_2}{p^{r-r_1}}\biggr)e\biggl(\frac{\langle\n,\nu_{d,n}(\g_2)\rangle l_3}{p^{r-r_1}}\biggr)\biggr|.
\end{equation*}

First, by the same explanation leading to $(\ref{5.55.5})$, one has
\begin{equation}\label{5.21}
    S_1(\widetilde{l}_1,\g_1,\g_2)\ll p^{Nr_1-Nr_1/(2^{k-1}(k-1))+\epsilon}.
\end{equation}
Next, we analyse $S_2(\g_1,\g_2).$ We infer from orthogonality that the inner sum of $S_2(\g_1,\g_2)$ is a non-negative integer. Thus, by changing the order of summations, we have
\begin{equation*}
    S_2(\g_1,\g_2)=\dsum_{0\leq \n\leq p^{r-r_1}-1}\dsum_{1\leq l_2,l_3\leq p^r}e\biggl(\frac{\langle\n,\nu_{d,n}(\g_1)\rangle l_2}{p^{r-r_1}}\biggr)e\biggl(\frac{\langle\n,\nu_{d,n}(\g_2)\rangle l_3}{p^{r-r_1}}\biggr).
\end{equation*}
Hence, by orthogonality, we find that
\begin{equation}\label{6.4646}
    S_2(\g_1,\g_2)=p^{2r}\#\left\{0\leq \n\leq p^{r-r_1}-1\middle| \begin{aligned}
        \langle \n,\nu_{d,n}(\g_i)\rangle\equiv0\ \text{mod}\ p^{r-r_1}\ (i=1,2)
    \end{aligned}\right\}.
\end{equation}
Notice here that $  S_2(\g_1,\g_2)=p^{2r}\cdot \mathcal{W}_{r-r_1}(\g_1,\g_2)$, where $\mathcal{W}_{r-r_1}(\g_1,\g_2)$ is defined in the preamble to Lemma $\ref{lem6.6}.$

Therefore, we deduce by applying Lemma $\ref{lem6.6}$ with $z=r-r_1$ that 
\begin{equation}\label{6.47}
    \begin{aligned}
        &\dsum_{0\leq r_2,r_3\leq r}\dsum_{\substack{1\leq \g_1,\g_2\leq p^r\\ (\g_1,p^r)=p^{r_2}\\(\g_2,p^r)=p^{r_3}}}S_2(\g_1,\g_2)\\
        &=p^{2r}\cdot p^{(r-r_1)(N-2)+2rn}\left(1+O\left(p^{d-n}+p^{r-r_1-n\lceil (r-r_1)/d\rceil}\right)\right)\\
        &\ll p^{2r}\cdot p^{(r-r_1)(N-2)+2rn}=p^{(r-r_1)N+2rn+2r_1},
    \end{aligned}
\end{equation}
where we have used the inequality $r-r_1-n\lceil(r-r_1)/d\rceil<0$ derived from the hypothesis $n>d.$

By substituting ($\ref{5.21}$) and $(\ref{6.47})$ into $(\ref{5.20})$ and by recalling the hypothesis $N>(k-1)2^{k-1}(n-d+3)$, we conclude that 
\begin{equation*}
\begin{aligned}
  & p^{-2r(n-1)} \dsum_{0\leq r_2,r_3\leq r}\dsum_{\substack{1\leq \g_1,\g_2\leq p^r\\ (\g_1,p^r)=p^{r_2}\\(\g_2,p^r)=p^{r_3}}}S(\g_1,\g_2)\\
  &\leq p^{-2rn-r} \dsum_{1\leq r_1\leq r}\dsum_{\substack{1\leq \widetilde{l_1}\leq p^{r_1}\\ (\widetilde{l_1},p)=1}}p^{Nr_1-Nr_1/(2^{k-1}(k-1))+\epsilon}\cdot p^{(r-r_1)N+2rn+2r_1}\\
  &\ll p^{rN-r-(N/(2^{k-1}(k-1))-3)+\epsilon}.
\end{aligned}
\end{equation*}This completes the proof of Lemma $\ref{lem5.5}.$\end{proof}

\bigskip

We now provide the proof of Lemma $\ref{lem5.3}$ by making use of the method described at the beginning of section $\ref{subsec5.1.3}$, following the statement of Lemma $\ref{lem5.3}.$
\begin{proof}[Proof of Lemma $\ref{lem5.3}$]
Recall ($\ref{5.1717}$), that is 
\begin{equation*}
     \dsum_{\substack{1\leq \a\leq p^r\\P(\a)\equiv 0\ \text{mod}\ p^r}}(\sigma(\a;p^r)-1)^2= \dsum_{\substack{1\leq \a\leq p^r\\P(\a)\equiv 0\ \text{mod}\ p^r}}(\sigma(\a;p^r)^2-2\sigma(\a;p^r)+1).
\end{equation*}
 On substituting the bounds from Lemma $\ref{lem5.6}$ and Lemma $\ref{lem5.5}$ into $(\ref{5.19}),$ it follows that
whenever $N>(k-1)2^{k-1}(n-d+3)$, one has
\begin{equation}\label{5.57}
    \dsum_{\substack{1\leq \a\leq p^r\\P(\a)\equiv 0\ \text{mod}\ p^r}}\sigma(\a;p^r)^2=p^{rN-r}\bigl(1+O\bigl(p^{d-n}+p^{r-n\lceil r/d\rceil}\bigr)\bigr).
\end{equation}
Therefore, by adding $(\ref{5.57})$ and the expressions obtained in Lemma $\ref{lem5.1}$ and Lemma $\ref{lem5.2}$, we deduce that
\begin{equation*}
    \dsum_{\substack{1\leq \a\leq p^r\\P(\a)\equiv 0\ \text{mod}\ p^r}}(\sigma(\a;p^r)-1)^2= O(p^{rN-n\lceil r/d\rceil}+p^{rN-r+d-n}).
\end{equation*}
This completes the proof of Lemma $\ref{lem5.3}.$
\end{proof}

\bigskip

\subsection{Singular series treatment II}\label{sec6.26.2}
In this section, we provide an upper bound for $\#A_v^{(e)}(p^r)$, and give a lower bound for $\sigma (\a;p^r)$ with $\a\in A_v^{(e)}(p^r).$ Furthermore, combining all the estimates obtained in sections $\ref{sec6.1}$ and $\ref{sec6.26.2}$ together with the strategy used in  [$\ref{ref3},$ section 5], we shall prove Proposition $\ref{prop6.1}$ at the end of this section. 
\subsubsection{The proof of Lemma $\ref{lem6.8}$}\label{6.1.4}
 We recall the definition of the condition $\mathcal{C}_v^{(e)}(p^r)$ with $0\leq e\leq r-v$ in the preamble to the definition  $(\ref{6.59590})$ of $A_v^{(e)}(p^r)$, that is
\begin{equation*}
\begin{aligned}
A_v^{(e)}(p^r)=\left\{\a\in [1,p^r]^N\middle|\ \a \ \text{satisfies the condition}\ \mathcal{C}_v^{(e)}(p^r)\right\}.    
\end{aligned}
\end{equation*}
Recall that $P\in \Z[\x]$ is a non-singular form in $N$ variables of degree $k.$
\begin{lem}\label{lem6.8}
Let $d\geq3.$ Let $p$ be a prime number and $r\geq 1.$ For $e\in \{0,\ldots,r\}$, whenever $N>(k-1)2^{k-1}(n+3)$ we have
\begin{equation*}
    \# A_v^{(e)}(p^r)\ll p^{rN-r-e}.
\end{equation*}
\end{lem}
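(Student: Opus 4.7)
The plan is to prove this by a double-counting argument on pairs $(\a,\x)$, combining the strategy of $[\ref{ref3}]$ for bounding $\#\mathcal{R}_N^{(e)}(p^r)$ with an additional Fourier-type step that absorbs the constraint $P(\a)\equiv 0\ \text{mod}\ p^r$. I would first count pairs $(\a,\x)$ with $\a\in A_v^{(e)}(p^r)$ and $\x\in \mathcal{R}_n(p^{r-v})$ a witness to condition $\mathcal{C}_v^{(e)}(p^r)$. For each such $\a$, Lemma $\ref{lem3.4}$ applied to $p^{-v}\a$ provides a lower bound on the number of $\g\in \mathcal{R}_n(p^{r-v})$ with $f_{p^{-v}\a}(\g)\equiv 0\ \text{mod}\ p^{r-v}$, which I would refine (via a Hensel-type argument) to a lower bound on the number of witnesses $\x$ satisfying the exact gradient valuation condition $v_{p^{r-v}}(\nabla f_{p^{-v}\a}(\x))=e$. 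This gives the denominator in the double-counting inequality.

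Next, I would fix $\x\in \mathcal{R}_n(p^{r-v})$ and bound the number of admissible $\a$. After the substitution $\a=p^v\tilde\a$, the homogeneity of $P$ turns $P(\a)\equiv 0\ \text{mod}\ p^r$ into $P(\tilde\a)\equiv 0\ \text{mod}\ p^{r-vk}$, and the remaining linear constraints read $f_{\tilde\a}(\x)\equiv 0\ \text{mod}\ p^{r-v}$ together with $\nabla f_{\tilde\a}(\x)\equiv\0\ \text{mod}\ p^e$. Since $p\nmid\x$, the vector $\nu_{d,n}(\x)$ is primitive modulo $p$ and the $n$ gradient vectors are, generically, also independent; Euler's identity $d\cdot f_{\tilde\a}(\x)=\langle\x,\nabla f_{\tilde\a}(\x)\rangle$ shows that the $n+1$ linear constraints have effective rank $n$. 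Invoking Lemma $\ref{lem3.5}$ in the sharpened form that strengthens the $f$-congruence from $p^e$ to $p^{r-v}$ then gives a per-$\x$ linear bound of the desired shape.

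The polynomial constraint $P(\tilde\a)\equiv 0\ \text{mod}\ p^{r-vk}$ is inserted by Fourier expansion,
\[
\mathbb{1}[P(\tilde\a)\equiv 0\ \text{mod}\ p^{r-vk}]=p^{-(r-vk)}\dsum_{t=0}^{p^{r-vk}-1}e\!\left(\tfrac{tP(\tilde\a)}{p^{r-vk}}\right),
\]
so that the $t=0$ term supplies the expected factor $p^{-(r-vk)}$ and combines with the linear count to produce the claimed total saving of $p^{-r-e}$. The terms with $t\neq 0$ are handled by a Weyl-type estimate for the exponential sum of $P$ restricted to the affine sublattice cut out by $f_{\tilde\a}(\x)$ and $\nabla f_{\tilde\a}(\x)$. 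This is exactly where the hypothesis $N>(k-1)2^{k-1}(n+3)$ enters: it guarantees that after removing $n+1$ linear directions, the remaining $N-O(n)$ variables are enough for the standard Weyl differencing estimate for a non-singular form of degree $k$ (as in $[\ref{ref8}$, Lemma 5.4$]$) to furnish the required saving.

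The main obstacle is the last point: one must argue that the restriction of the non-singular form $P$ to the codimension-$O(n)$ sublattice associated with $\x$ retains enough of its non-singularity to drive the Weyl estimate. I expect to handle this either by averaging the Weyl bound over $\x\in \mathcal{R}_n(p^{r-v})$ (absorbing pathological $\x$ into a negligible exceptional contribution) or by exploiting the generous margin in $N$ to tolerate a bounded drop in effective rank of the Hessian of $P|_{T_{\x}}$. Once this step is in place, summing the resulting per-$\x$ count over $\x\in \mathcal{R}_n(p^{r-v})$ and dividing by the lower bound on witnesses from Lemma $\ref{lem3.4}$ yields $\#A_v^{(e)}(p^r)\ll p^{rN-r-e}$.
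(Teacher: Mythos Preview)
Your overall architecture (double-count pairs $(\a,\x)$, Fourier-expand the $P$-constraint, combine a linear count from Lemma~\ref{lem3.5} with a Weyl estimate) matches the paper's, but two simplifications in the paper dissolve precisely the obstacle you flag.

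First, the paper does not insist on the \emph{exact} gradient valuation. It relaxes the witness condition to the set
\[
\mathcal{D}_{\b}(p^{e})=\bigl\{\x\in\mathcal{R}_n(p^{e}):\ (\nabla f_{\b}(\x),f_{\b}(\x))\equiv\boldsymbol{0}\ \text{mod}\ p^{e}\bigr\},
\]
with $\b=p^{-v}\a$. For $\a\in A_v^{(e)}(p^r)$ there is one such $\x$, and homogeneity gives $g\x\in\mathcal{D}_{\b}(p^{e})$ for every unit $g\in(\Z/p^{e}\Z)^{\times}$, so $\#\mathcal{D}_{\b}(p^{e})\geq p^{e}(1-p^{-1})$. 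This replaces your Lemma~\ref{lem3.4}+Hensel lower bound by a one-line scaling argument and makes the upper-bound side land exactly on the quantity in Lemma~\ref{lem3.5}.

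Second, and this is the point that removes your ``restriction of $P$ to a sublattice'' difficulty: the paper Fourier-expands \emph{both} the $P$-constraint and the linear constraints $(\nabla f_{\b}(\x),f_{\b}(\x))\equiv\boldsymbol{0}$, obtaining characters $(l_1,\boldsymbol{l})$, and then writes $\b=p^{r_1}\n+\m$ where $p^{r_1}$ is the exact denominator of $l_1/p^{r-kv}$. Since $P(\b)\equiv P(\m)\pmod{p^{r_1}}$ and the linear form $\b\mapsto(\nabla f_{\b}(\x),f_{\b}(\x))$ splits additively, the sum factorises as $S_1\cdot S_2$: an inner sum $S_1$ over $\m\in[1,p^{r_1}]^N$ of $e(P(\m)\widetilde{l}_1/p^{r_1})$ times a \emph{linear} phase in $\m$, and a sum $S_2$ over $\n$ detecting $(\nabla f_{\n}(\x),f_{\n}(\x))\equiv\boldsymbol{0}\pmod{p^{e-r_1}}$. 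The Weyl estimate $[\ref{ref8}$, Lemma~5.4$]$ applies to $S_1$ on the \emph{full} $N$-dimensional box (the extra linear phase is harmless), so no question of non-singularity of a restricted form arises; $S_2$ is bounded by Lemma~\ref{lem3.5}. The hypothesis $N>(k-1)2^{k-1}(n+3)$ is exactly what makes the geometric series over $r_1$ converge once one combines the Weyl saving $p^{-Nr_1/((k-1)2^{k-1})}$ with the losses $p^{r_1(n+1)}$ coming from $S_2$ and the $l_1$-count. Your proposed fixes (averaging the sublattice Weyl bound over $\x$, or hoping the Birch margin survives an $O(n)$-codimension slice uniformly in $\x$) would require controlling the singular locus of $P$ on an $\x$-dependent section, which is genuinely delicate; the $p$-adic splitting avoids this entirely.
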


\bigskip

We shall prove Lemma $\ref{lem6.8}$ at the end of section $\ref{6.1.4}.$ In order to prove this lemma, we require an auxiliary lemma.  In advance of describing this auxiliary lemma, it is convenient to define
\begin{equation*}
    M_1(r,v,h;k)=\dsum_{\substack{\b\in [1,p^{r-v}]^{N}\\ P(\b)\equiv 0\ \text{mod}\ p^{r-kv}}}p^{-h}\cdot \#\left\{\x\in \mathcal{R}_n(p^h)\middle|\begin{aligned}
   (\nabla f_{\b}(\x),f_{\b}(\x))\equiv \boldsymbol{0}\ \text{mod}\ p^h
\end{aligned}\right\},
\end{equation*}
with $r,v,h \geq0,$ $r-v\geq h$ and $r-kv\geq0.$
\begin{lem}\label{lem6.96.9}
Let $k\in\N,$ and let $r,v,h$ be non-negative integers with  $r-v\geq h$ and $r- kv\geq 0$. Then, whenever $N>(k-1)2^{k-1}(n+3)$, one has
\begin{equation}\label{6.59}
M_1(r,v,h;k)\ll p^{-h}\cdot p^{kv-r}\cdot p^{(r-v)N}.
\end{equation}
\end{lem}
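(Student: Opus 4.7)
The plan is to swap the order of summation and then bound the inner count by combining the linear-algebraic estimate of Lemma~\ref{lem3.5} with the Birch-style count of Lemma~\ref{lem5.1}. Concretely, swapping yields
\[
M_1(r,v,h;k)\cdot p^h = \sum_{\x \in \mathcal{R}_n(p^h)}\#\mathcal{B}(\x),
\]
with $\mathcal{B}(\x) := \{\b \in [1,p^{r-v}]^N : P(\b)\equiv 0 \pmod{p^{r-kv}} \text{ and } (\nabla f_\b(\x), f_\b(\x)) \equiv \0 \pmod{p^h}\}$. Since $|\mathcal{R}_n(p^h)| \leq p^{hn}$, it suffices to show $\#\mathcal{B}(\x) \ll p^{(r-v)N-hn-(r-kv)}$ uniformly in primitive $\x$; summing then produces the claimed bound $M_1 \cdot p^h \ll p^{(r-v)N-(r-kv)}$.

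For fixed $\x$, I will decompose $\b = \b_1 + p^h\b_2$ with $\b_1 \in [1,p^h]^N$ and $\b_2 \in [0,p^{r-v-h}-1]^N$, which is legitimate since $h \leq r-v$. Because $f_\b(\x)$ and $\nabla f_\b(\x)$ are linear in $\b$, the congruences modulo $p^h$ depend only on $\b_1$. Applying Lemma~\ref{lem3.5} with $r = h$ and $e = h$, and summing over $j$ with $p^j \| \b_1$ to remove the primitivity assumption, gives $O(p^{h(N-n)})$ admissible $\b_1$. For each such $\b_1$ I will then estimate the number of $\b_2$ with $P(\b_1 + p^h\b_2) \equiv 0 \pmod{p^{r-kv}}$ by invoking Lemma~\ref{lem5.1} (or equivalently Lemma~\ref{lem3.93.9}), aiming for $O(p^{(r-v-h)N-(r-kv)})$. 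Multiplying the two gives $\#\mathcal{B}(\x) \ll p^{h(N-n)} \cdot p^{(r-v-h)N-(r-kv)} = p^{(r-v)N-hn-(r-kv)}$.

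The main obstacle is the uniform estimate for the $\b_2$-count, which splits naturally into two cases. When $r-kv \leq h$, the condition $P(\b_1+p^h\b_2)\equiv 0 \pmod{p^{r-kv}}$ reduces to $P(\b_1) \equiv 0 \pmod{p^{r-kv}}$ and becomes independent of $\b_2$, so the savings must instead come from restricting $\b_1$; I will bound jointly the number of $\b_1 \in [1,p^h]^N$ satisfying both the linear conditions and $P(\b_1) \equiv 0 \pmod{p^{r-kv}}$ by $O(p^{h(N-n)-(r-kv)})$, via Lemma~\ref{lem5.1} applied to the affine sublattice cut out by the linear conditions (the non-singularity of $P$ and the hypothesis $N > (k-1)2^{k-1}(n+3)$, comfortably above the threshold $(k-1)2^{k+1}$ of Lemma~\ref{lem5.1}, provide room for this). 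When $r-kv > h$, the shifted polynomial $Q_{\b_1}(\b_2) := P(\b_1 + p^h\b_2)$ has leading form $p^{kh}P(\b_2)$, and a Hensel-type argument combined with Lemma~\ref{lem5.1} applied modulo $p^{r-kv-kh}$ recovers the required factor $p^{-(r-kv)}$. The delicate technical point is to ensure this saving holds uniformly in the shift $\b_1$ and in every regime of the parameters $r,v,h,k$; this is precisely what the generous condition $N > (k-1)2^{k-1}(n+3)$ is calibrated to guarantee.
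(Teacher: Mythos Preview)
Your approach has a genuine gap at the heart of both cases, namely the uniform estimate for $\#\mathcal{B}(\x)$.

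In the case $r-kv>h$, you want to count $\b_2\in[0,p^{r-v-h}-1]^N$ with $P(\b_1+p^h\b_2)\equiv 0\pmod{p^{r-kv}}$ and claim a bound $O(p^{(r-v-h)N-(r-kv)})$ via ``Hensel plus Lemma~\ref{lem5.1} modulo $p^{r-kv-kh}$.'' But Lemma~\ref{lem5.1} concerns the non-singular homogeneous form $P$ over a full residue box, not the shifted inhomogeneous polynomial $Q_{\b_1}(\b_2)=P(\b_1+p^h\b_2)$. Its top-degree part $p^{kh}P(\b_2)$ vanishes modulo $p^{r-kv}$ unless $r-kv>kh$, and even then the lower-degree terms carry smaller $p$-powers, so one cannot divide through by $p^{kh}$ and reapply Lemma~\ref{lem5.1}. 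Hensel lifting does not rescue this either, since nothing forces $\nabla P(\b_1)$ to be a unit modulo~$p$. In the case $r-kv\le h$, the difficulty is the same in disguise: you need the joint count of $\b_1\in[1,p^h]^N$ satisfying the linear conditions \emph{and} $P(\b_1)\equiv 0\pmod{p^{r-kv}}$, and Lemma~\ref{lem5.1} says nothing about $P$ restricted to the sublattice cut out by those linear conditions (that restriction need not be non-singular, or even of full degree).

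The paper's proof is built precisely to avoid this coupling problem. It introduces characters for \emph{both} conditions via orthogonality --- $l_1$ for $P$ and $\l$ for the linear constraints --- and then splits according to $(l_1,p^{r-kv})=p^{r-kv-r_1}$. For $r_1=0$ (the term $T(\x)$) only the linear conditions survive and Lemma~\ref{lem3.5} applies directly. For $r_1\ge 1$ (the term $S(\x)$), the inner sum over $\m\in[1,p^{r_1}]^N$ is handled by the Weyl-type bound~\eqref{5.55.5}; the crucial point is that this bound is \emph{uniform in the additional linear phase} contributed by~$\l$, because Weyl differencing kills linear terms. This is exactly the ``separation procedure'' of section~\ref{subsec4.1}, and it is the missing ingredient in your argument: the saving of $p^{-(r-kv)}$ from the condition $P\equiv 0$ has to be extracted via exponential sums rather than by direct counting on a sublattice or a shifted box.
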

\begin{proof}
By orthogonality, one finds that
\begin{equation}\label{6.60}
\begin{aligned}
M_1(r,v,h;k)=p^{-(n+2)h-r}p^{kv}\dsum_{\x\in \mathcal{R}_n(p^h)}\dsum_{1\leq l_1\leq p^{r-kv}}\dsum_{\substack{1\leq \l\leq p^h\\\l\in \Z^{n+1}}}\Xi(\x,l_1,\l),
\end{aligned}
\end{equation}
where 
\begin{equation*}
    \Xi(\x,l_1,\l)=\dsum_{1\leq \b\leq p^{r-v}}e\left(\frac{P(\b)l_1}{p^{r-kv}}\right)e\left(\frac{\langle(\nabla f_{\b}(\x),f_{\b}(\x)),\l\rangle}{p^h}\right).
\end{equation*}
Furthermore, on splitting the sum over $l_1$ in $(\ref{6.60})$ in terms of values of $(l_1,p^{r-kv})$, we see that
\begin{equation}\label{6.61}
    M_1(r,v,h;k)=p^{-(n+2)h-r}p^{kv}\dsum_{\x\in \mathcal{R}_n(p^h)}(S(\x)+T(\x)),
\end{equation}
where 
$$S(\x)=\dsum_{1\leq r_1\leq r-kv}\dsum_{\substack{1\leq l_1\leq p^{r-kv}\\(l_1,p^{r-kv})=p^{r-kv-r_1}}}\dsum_{\substack{1\leq \l\leq p^h\\\l\in \Z^{n+1}}}\Xi(\x,l_1,\l)\ \text{and}\ T(\x)=\dsum_{\substack{1\leq \l\leq p^h\\\l\in \Z^{n+1}}}\Xi(\x,p^{r-kv},\l).$$

We first analyze $S(\x)$. For fixed $r_1$ with $p^{r-kv-r_1}=(l_1,p^{r-kv})$, let us write $l_1=p^{r-kv-r_1}\widetilde{l}_1.$ Then, on writing $\b=p^{r_1}\n+\m$ with $0\leq \n\leq p^{r-v-r_1}-1$ and $1\leq \m\leq p^{r_1}$, we see that
\begin{equation*}
\begin{aligned}
    \Xi(\x,l_1,\l)=\dsum_{0\leq \n\leq p^{r-v-r_1}-1}\dsum_{1\leq \m\leq p^{r_1}}e\left(\frac{P(\m)\widetilde{l}_1}{p^{r_1}}\right)e\left(\frac{\langle(\nabla f_{p^{r_1}\n+\m}(\x),f_{p^{r_1}\n+\m}(\x)),\l\rangle}{p^h}\right)    .
\end{aligned}
\end{equation*}
On substituting this expression into $S(\x)$ and from that into $(\ref{6.61})$, it follows by the triangle inequality that 
\begin{equation}\label{6.6262}
\begin{aligned}
    &p^{-(n+2)h-r}p^{kv}\dsum_{\x\in \mathcal{R}_n(p^h)}S(\x)\\
    &\leq p^{-(n+2)h-r}p^{kv}\dsum_{\substack{1\leq r_1\leq r-kv}}\dsum_{\x\in \mathcal{R}_n(p^h)}\dsum_{\substack{1\leq l_1\leq p^{r-kv}\\(l_1,p^{r-kv})=p^{r-kv-r_1}}}|S_1(\widetilde{l}_1,\x)|\cdot\left|S_2(\x)\right|,
\end{aligned}
\end{equation}
where 
$$S_1(\widetilde{l}_1,\x)=\sup_{1\leq \l\leq p^h}\biggl|\dsum_{1\leq \m\leq p^{r_1}}e\left(\frac{P(\m)\widetilde{l}_1}{p^{r_1}}\right)e\left(\frac{\langle(\nabla f_{\m}(\x),f_{\m}(\x)),\l\rangle}{p^h}\right)\biggr|$$
and
$$S_2(\x)=\dsum_{1\leq \l\leq p^h}\biggl|\dsum_{0\leq \n\leq p^{r-v-r_1}-1}e\left(\frac{\langle(\nabla f_{\n}(\x),f_{\n}(\x)),\l\rangle}{p^{h-r_1}}\right) \biggr|.$$

First, by the same explanation leading to $(\ref{5.55.5})$, we have
\begin{equation}\label{6.6363}
S_1(\widetilde{l}_1,\x)\ll p^{Nr_1-Nr_1/(2^{k-1}(k-1))+\epsilon}.
\end{equation}
Next, we turn to estimate the sum $S_2(\x).$ We first consider the case $h-r_1>0.$ Since $r-v\geq h,$ we infer from orthogonality that the inner sum in $S_2(\x)$ is a non-negative integer. Thus, by changing the order of summations, we see that
$$S_2(\x)=\dsum_{0\leq \n\leq p^{r-v-r_1}-1}\dsum_{1\leq \l\leq p^h}e\left(\frac{\langle(\nabla f_{\n}(\x),f_{\n}(\x)),\l\rangle}{p^{h-r_1}}\right).$$
Then, we find by orthogonality that
\begin{equation*}
\begin{aligned}
    S_2(\x)&=p^{h(n+1)}\cdot \#\left\{0\leq \n\leq p^{r-v-r_1}-1\middle|\ \begin{aligned}
         (\nabla f_{\n}(\x),f_{\n}(\x))\equiv \0\ \text{mod}\ p^{h-r_1}
    \end{aligned}\right\}  \\
    &=p^{h(n+1)+(r-v-h)N}\cdot \#\left\{0\leq \n\leq p^{h-r_1}-1\middle|\ \begin{aligned}
         (\nabla f_{\n}(\x),f_{\n}(\x))\equiv \0\ \text{mod}\ p^{h-r_1}
    \end{aligned}\right\} .
\end{aligned}
\end{equation*}
By splitting this expression for $S_2(\x)$ in terms of values of $(\n,p^{h-r_1})$, we see that
\begin{equation}\label{6.6565}
    S_2(\x)=p^{h(n+1)+(r-v-h)N}\cdot \dsum_{0\leq g\leq h-r_1}S^g(\x),
\end{equation}
where
$$S^g(\x)=\#\left\{\n\in \mathcal{R}_{N}(p^{h-r_1-g})\middle|\ \begin{aligned}
        ( \nabla f_{\n}(\x),f_{\n}(\x))\equiv \0\ \text{mod}\ p^{h-r_1-g}
    \end{aligned}\right\}.$$
    By applying Lemma $\ref{lem3.5}$ with $h-r_1-g$ in place of $r$ and $e$, one obtains 
    \begin{equation}\label{6.656565}
        S^g(\x)\ll p^{(h-r_1-g)(N-n)}.
    \end{equation}
Hence, on substituting ($\ref{6.656565}$) into $(\ref{6.6565})$, it follows that when $h-r_1>0$, one has
\begin{equation}\label{6.6767}
    S_2(\x)\ll p^{h(n+1)+(r-v-h)N}\cdot p^{(h-r_1)(N-n)}\ll p^{(r-v-r_1)N+r_1n+h}.
\end{equation}
Next, we consider the case $h-r_1\leq 0.$ Then, it follows by the trivial bound that
\begin{equation}\label{6.666}
S_2(\x)=p^{(n+1)h}\cdot p^{(r-v-r_1)N}.   
\end{equation}

Meanwhile, it follows from $(\ref{6.6262})$ that 
\begin{equation}\label{6.68}
\begin{aligned}
    &p^{-(n+2)h-r}\cdot p^{kv}\dsum_{\x\in \mathcal{R}_n(p^h)}S(\x) \ll X_1+X_2,
\end{aligned}
\end{equation}
where 
$$X_1= p^{-(n+2)h-r}\cdot p^{kv}\dsum_{\substack{1\leq r_1< h}}\dsum_{\x\in \mathcal{R}_n(p^h)}\dsum_{\substack{1\leq l_1\leq p^{r-kv}\\(l_1,p^{r-kv})=p^{r-kv-r_1}}}|S_1(\widetilde{l}_1,\x)|\cdot\left|S_2(\x)\right|$$
and
$$X_2= p^{-(n+2)h-r}\cdot p^{kv}\dsum_{\substack{h\leq r_1}}\dsum_{\x\in \mathcal{R}_n(p^h)}\dsum_{\substack{1\leq l_1\leq p^{r-kv}\\(l_1,p^{r-kv})=p^{r-kv-r_1}}}|S_1(\widetilde{l}_1,\x)|\cdot\left|S_2(\x)\right|$$

Hence, on substituting the bound $(\ref{6.6767})$ into $X_1$ and the quantity $(\ref{6.666})$ into $X_2$ together with the bound $(\ref{6.6363})$ for $S_1(\widetilde{l}_1,\x)$, it follows that whenever $N>(k-1)2^{k-1}(n+3)$, one has
\begin{equation*}
\begin{aligned}
X_1 &\ll p^{-(n+2)h-r}\cdot p^{kv}\cdot p^{hn} \cdot p^{(r-v)N+h}\dsum_{1\leq r_1<h}p^{r_1-Nr_1/(2^{k-1}(k-1))+r_1n+\epsilon}\\
    &\ll p^{-h-1}\cdot p^{kv-r}\cdot p^{(r-v)N},
\end{aligned}   
\end{equation*} 
and 
\begin{equation*}
\begin{aligned}
    X_2&\ll  p^{-(n+2)h-r}\cdot p^{kv}\cdot p^{hn} \cdot p^{(r-v)N+(n+1)h}\dsum_{h\leq r_1}p^{r_1-Nr_1/(2^{k-1}(k-1))+\epsilon}\\
    &\ll  p^{-h-1}\cdot p^{kv-r}\cdot p^{(r-v)N}.
\end{aligned}
\end{equation*}
Thus, it follows from $(\ref{6.68})$ together with these estimates for $X_1$ and $X_2$ that one has
\begin{equation}
    p^{-(n+2)h-r}\cdot p^{kv}\dsum_{\x\in \mathcal{R}_n(p^h)}S(\x) \ll p^{-h-1}\cdot p^{kv-r}\cdot p^{(r-v)N}.
\end{equation}


As the endgame, we turn to estimate the remaining quantity in ($\ref{6.61}$), that is
\begin{equation}\label{6.8282}
    p^{-(n+2)h-r}p^{kv}\dsum_{\x\in \mathcal{R}_n(p^h)}T(\x).
\end{equation}
It follows from orthogonality that one has
\begin{equation*}
\begin{aligned}
T(\x)&=p^{h(n+1)}\cdot\#\left\{1\leq \b\leq p^{r-v}\middle|\ \begin{aligned}
            (\nabla f_{\b}(\x),f_{\b}(\x))\equiv \boldsymbol{0}\ \text{mod}\ p^{h}
        \end{aligned}\right\}\\
        &=p^{h(n+1)+(r-v-h)N}\cdot\#\left\{1\leq \b\leq p^{h}\middle|\ \begin{aligned}
            (\nabla f_{\b}(\x),f_{\b}(\x))\equiv \boldsymbol{0}\ \text{mod}\ p^{h}
        \end{aligned}\right\}.
\end{aligned}
\end{equation*}
By splitting this expression for $T(\x)$ in terms of values of $(\b,p^{h})$, we see that
\begin{equation}\label{6.82}
    T(\x)=p^{h(n+1)+(r-v-h)N}\cdot \dsum_{0\leq g\leq h}T^g(\x),
\end{equation}
where
$$T^g(\x)=\#\left\{\b\in \mathcal{R}_{N}(p^{h-g})\middle|\ \begin{aligned}
           (\nabla f_{\b}(\x),f_{\b}(\x))\equiv \boldsymbol{0}\ \text{mod}\ p^{h-g}
        \end{aligned}\right\}.$$
By applying Lemma $\ref{lem3.5}$ with $h-g$ in place of $r$ and $e$, one obtains 
\begin{equation}\label{6.83}
    T^g(\x)\ll p^{(h-g)(N-n)}.
\end{equation}
Therefore, on substituting $(\ref{6.83})$ into $(\ref{6.82})$ and that into ($\ref{6.8282}$), one concludes that
\begin{equation}\label{6.86}
\begin{aligned}
     p^{-(n+2)h-r}p^{kv}\dsum_{\x\in \mathcal{R}_n(p^h)}T(\x)&\ll   p^{-(n+2)h-r}\cdot p^{kv}\cdot p^{hn}\cdot p^{h(n+1)+(r-v-h)N}\cdot  p^{h(N-n)} \\
     &\ll p^{-h}\cdot p^{kv-r}\cdot p^{(r-v)N}.
\end{aligned}
\end{equation}
Thus, on substituting $(\ref{6.68})$ and $(\ref{6.86})$ into $(\ref{6.61})$, we conclude that
\begin{equation*}
\begin{aligned}
M_1(r,v,h;k)&\ll p^{-h}\cdot p^{kv-r}\cdot p^{(r-v)N}.
\end{aligned}
\end{equation*}
This completes the proof of Lemma $\ref{lem6.96.9}.$
\end{proof} 

\bigskip

\begin{proof}[Proof of Lemma $\ref{lem6.8}$]
 We recall the definition of the condition $\mathcal{C}_v^{(e)}(p^r)$ with $0\leq e\leq r-v$ in the preamble to the definition  $(\ref{6.59590})$ of $A_v^{(e)}(p^r)$, that is
\begin{equation*}
\begin{aligned}
A_v^{(e)}(p^r)=\left\{\a\in [1,p^r]^N\middle|\ \a \ \text{satisfies the condition}\ \mathcal{C}_v^{(e)}(p^r)\right\}.    
\end{aligned}
\end{equation*}
For $\a\in [1,p^r]^N$ with $p^v|\a$, we define
\begin{equation}\label{6.888}
\mathcal{D}_{p^{-v}\a}(p^h)=\left\{\x\in \mathcal{R}_n(p^{h})\middle|\ \begin{aligned}
(\nabla f_{p^{-v}\a}(\x),f_{p^{-v}\a}(\x))\equiv \boldsymbol{0}\ \text{mod}\ p^{h}
\end{aligned}
\right\}.    
\end{equation}
Observe that given $\x\in \mathcal{D}_{p^{-v}\a}(p^{h})$, we see that for any $g\in \Z/p^{h}\Z$ with $(g,p)=1$ we have $g\x\in \mathcal{D}_{p^{-v}\a}(p^{h}).$

First, we consider the case $r-kv\leq 0$. By our assumptions concerning $e$ and $v$ in the definition of $A^{(e)}_v(p^r)$ we have $r-v\geq e$. Thus, one finds that
\begin{equation}\label{6.7878}
    \begin{aligned}
        \# A_v^{(e)}(p^r)&\leq \#\left\{\a\in [1,p^r]^N\middle|\ \begin{aligned}
    &\mathcal{D}_{p^{-v}\a}(p^{e})\neq \emptyset\\
    &P(\a)\equiv 0\ \text{mod}\ p^r\\
    &p^v\|\a
\end{aligned}\right\}.
    \end{aligned}
\end{equation}
Hence, we deduce from $(\ref{6.7878})$ and the observation following the definition $(\ref{6.888})$ of $\mathcal{D}_{p^{-v}\a}(p^h)$ that
\begin{equation}\label{6.899}
\begin{aligned}
     \# A_v^{(e)}(p^r)&\leq \dsum_{\substack{\a\in [1,p^r]^N\\ P(\a)\equiv 0\ \text{mod}\ p^r\\ p^v\|\a}}\frac{1}{p^{e}(1-p^{-1})}\cdot \#(\mathcal{D}_{p^{-v}\a}(p^{e})).
\end{aligned}
\end{equation}
Recall that $P$ is a homogeneous polynomial of degree $k$ and that $r-kv\leq 0$. Then, from the condition $p^v\|\a$, we may drop the condition $P(\a)\equiv 0\ \text{mod}\ p^r$ in ($\ref{6.899}$). Hence, we see from $(\ref{6.899})$ together with the substitution $\b=p^{-v}\a$ that
\begin{equation*}
     \# A_v^{(e)}(p^r)  \ll \dsum_{\substack{\b\in [1,p^{r-v}]^N}}p^{-e}\cdot \#\left\{\x\in \mathcal{R}_n(p^{e})\middle| \begin{aligned}
(\nabla f_{\b}(\x),f_{\b}(\x))\equiv \boldsymbol{0}\ \text{mod}\ p^{e}
\end{aligned}
\right\}.
\end{equation*}
By changing the order of summations, we have
\begin{equation}
     \# A_v^{(e)}(p^r)\ll \dsum_{\substack{\x\in \mathcal{R}_n(p^{e})}}p^{-e}\cdot \#\left\{\b\in [1,p^{r-v}]^N\middle| \begin{aligned}
(\nabla f_{\b}(\x),f_{\b}(\x))\equiv \boldsymbol{0}\ \text{mod}\ p^{e}
\end{aligned}
\right\}.
\end{equation}
Then, by the same argument leading from $(\ref{6.82})$ to $(\ref{6.86})$, it readily follows by Lemma $\ref{lem3.5}$ with $r-v$ in place of $r$ that
\begin{equation}\label{6.777}
     \# A_v^{(e)}(p^r)  \ll \dsum_{\substack{\x\in \mathcal{R}_n(p^{e})}}p^{-e}\cdot p^{(r-v)(N-n)}\cdot p^{(r-v-e)n}\ll p^{(r-v)N-e}\ll p^{rN-r-e},
\end{equation}
where we have used the inequalities $\#\mathcal{R}_n(p^e)\leq p^{en}$ and $vN\geq r$ stemming from $v\geq r/k.$


Next, we consider the case $r-kv>0$. We obtain again $(\ref{6.7878})$ and $(\ref{6.899})$. Hence, we find from $(\ref{6.899})$ together with a substitution $\b=p^{-v}\a$ that
\begin{equation*}
   \# A_v^{(e)}(p^r)  \ll \dsum_{\substack{\b\in [1,p^{r-v}]^N\\P(\b)\equiv 0\ \text{mod}\ p^{r-kv}}}p^{-e}\cdot \#\left\{\x\in \mathcal{R}_n(p^{e})\middle| \begin{aligned}
(\nabla f_{\b}(\x),f_{\b}(\x))\equiv \boldsymbol{0}\ \text{mod}\ p^{e}
\end{aligned}
\right\}.
\end{equation*}
Therefore, by applying Lemma $\ref{lem6.96.9}$ with $e$ in place of $h$, we have
\begin{equation}\label{6.8383}
     \# A_v^{(e)}(p^r) \ll p^{-e}\cdot p^{kv-r}\cdot p^{(r-v)N}\ll p^{rN-r-e}.
\end{equation}

Combining both the bounds $(\ref{6.777})$ and $(\ref{6.8383})$, we conclude that
\begin{equation*}
    \# A_v^{(e)}(p^r) \ll p^{rN-r-e}.
\end{equation*}
This completes the proof of Lemma $\ref{lem6.8}.$
\end{proof}

\bigskip

\subsubsection{The proof of Lemma $\ref{lem6.9}$}\label{6.1.5}

\begin{lem}\label{lem6.9}
Let $d\geq 3$. Let $p$ be a prime number and $r\geq 1.$ For $\a\in A_{v}^{(e)}(p^r)$ with $e\in \{0,\ldots, r-v\}$, we have
$$\sigma(\a;p^r)\geq p^{v-(e+1)(n-1)}.$$
\end{lem}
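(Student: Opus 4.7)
The plan is straightforward: reduce to Lemma~$\ref{lem3.4}$ by stripping off the factor of $p^{v}$ from $\a$, and then compare the two sides carefully.

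First, given $\a \in A_v^{(e)}(p^r)$, the condition $p^v \| \a$ means we may write $\a = p^v \b$ with $\b \in \Z^N$ and $p \nmid \b$. Since $f_{\a}(\x) = \langle \a, \nu_{d,n}(\x)\rangle$ is linear in $\a$, we have the identity $f_{\a}(\g) = p^v f_{\b}(\g)$ for every $\g \in \Z^n$. In particular,
\begin{equation*}
    f_{\a}(\g) \equiv 0 \pmod{p^r} \iff f_{\b}(\g) \equiv 0 \pmod{p^{r-v}},
\end{equation*}
and therefore
\begin{equation*}
    \#\{\g \in [1,p^r]^n \mid f_{\a}(\g) \equiv 0 \bmod p^r\} = \#\{\g \in [1,p^r]^n \mid f_{\b}(\g) \equiv 0 \bmod p^{r-v}\}.
\end{equation*}
Since the condition on $\g$ only involves congruences modulo $p^{r-v}$, and the cube $[1,p^r]^n$ covers each residue class in $(\Z/p^{r-v}\Z)^n$ exactly $p^{vn}$ times, this count equals $p^{vn}$ times the analogous count over $[1,p^{r-v}]^n$.

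Next I would discard from this count all contributions from $\g$ with $p \mid \g$, giving the lower bound
\begin{equation*}
    \#\{\g \in [1,p^r]^n \mid f_{\a}(\g) \equiv 0 \bmod p^r\} \geq p^{vn} \cdot \#\{\g \in \mathcal{R}_n(p^{r-v}) \mid f_{\b}(\g) \equiv 0 \bmod p^{r-v}\}.
\end{equation*}
Now condition $(ii)$ in the definition of $\mathcal{C}_v^{(e)}(p^r)$ says exactly that $\b = p^{-v}\a$ lies in $\mathcal{R}_N^{(e)}(p^{r-v})$ as defined in the preamble to Lemma~$\ref{lem3.4}$. Applying Lemma~$\ref{lem3.4}$ with $r-v$ in place of $r$ and $\b$ in place of $\a$ yields
\begin{equation*}
    \#\{\g \in \mathcal{R}_n(p^{r-v}) \mid f_{\b}(\g) \equiv 0 \bmod p^{r-v}\} \geq p^{(r-v)(n-1) - (e+1)(n-1)}.
\end{equation*}

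Finally, by the definition $(\ref{def6.1})$ of $\sigma(\a; p^r)$ with $Q = p^r$, combining the last two displays gives
\begin{equation*}
    \sigma(\a; p^r) \geq p^{-r(n-1)} \cdot p^{vn} \cdot p^{(r-v)(n-1) - (e+1)(n-1)} = p^{v - (e+1)(n-1)},
\end{equation*}
which is the claimed lower bound. There is no substantive obstacle here; the only point requiring care is the exponent bookkeeping in the final line, and the verification that the hypothesis $\a \in A_v^{(e)}(p^r)$ does deliver the input $\b \in \mathcal{R}_N^{(e)}(p^{r-v})$ demanded by Lemma~$\ref{lem3.4}$.
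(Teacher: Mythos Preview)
Your proof is correct and follows essentially the same approach as the paper: strip off the factor $p^v$ from $\a$, use that $[1,p^r]^n$ covers each residue class mod $p^{r-v}$ exactly $p^{vn}$ times, discard $\g$ with $p\mid\g$, and then apply Lemma~\ref{lem3.4} with $r-v$ in place of $r$. The exponent bookkeeping is handled correctly, and your observation that condition $(ii)$ of $\mathcal{C}_v^{(e)}(p^r)$ together with $p^v\|\a$ places $\b=p^{-v}\a$ in $\mathcal{R}_N^{(e)}(p^{r-v})$ matches the paper's verification.
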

\begin{proof}

When $\a\in A_v^{(e)}(p^r),$ one has
\begin{equation}\label{6.88}
    \begin{aligned}
        \sigma(\a;p^r)&=p^{-r(n-1)}\cdot\#\{\g\in [1,p^r]^n|\ \langle\a,\nu_{d,n}(\g)\rangle\equiv 0\ \text{mod}\ p^r\}\\
        &=p^{-r(n-1)}\cdot p^{vn}\cdot\#\{\g\in [1,p^{r-v}]^n|\ \langle p^{-v}\a,\nu_{d,n}(\g)\rangle\equiv 0\ \text{mod}\ p^{r-v}\}\\
        &\geq \frac{p^v}{p^{(r-v)n-(r-v)}}\cdot \#\{\g\in \mathcal{R}_n(p^{r-v})|\ \langle p^{-v}\a,\nu_{d,n}(\g)\rangle\equiv 0\ \text{mod}\ p^{r-v}\}.
    \end{aligned}
\end{equation}
Since $\a\in A_{v}^{(e)}(p^r),$ there exists $\x\in \mathcal{R}_n(p^{r-v})$ such that
\begin{equation}\label{6.87}
v_{p^{r-v}}(\nabla f_{p^{-v}\a}(\x))=e\ \text{and}\ f_{p^{-v}\a}(\x)\equiv 0\ \text{mod}\ p^{r-v},   
\end{equation} with $p^{-v}\a\in \mathcal{R}_N(p^{r-v}).$ Thus, it follows by Lemma $\ref{lem3.4}$ with $p^{-v}\a$ and $r-v$ in place of $\a$ and $r$ that we obtain the lower bound
\begin{equation*}
    \frac{p^v}{p^{(r-v)n-(r-v)}}\cdot \#\{\g\in \mathcal{R}_n(p^{r-v})|\ \langle p^{-v}\a,\nu_{d,n}(\g)\rangle\equiv 0\ \text{mod}\ p^{r-v}\}\geq \frac{p^v}{p^{(e+1)(n-1)}}.
\end{equation*}
Hence, we conclude that
$$\sigma(\a;p^r)\geq p^{v-(e+1)(n-1)}.$$
\end{proof}

\bigskip

\begin{proof}[Proof of Proposition $\ref{prop6.1}$]
Note that a modicum of computation reveals that the condition on $N=N_{d,n}\geq 1000n^28^k$ satisfies conditions on $N$ in Lemma $\ref{lem5.1}$, Lemma $\ref{lem5.2}$, Lemma $\ref{lem5.3}$, Lemma $\ref{lem6.8}$, and Theorem $\ref{thm3.1}$, and thus these lemmas are available in this proof.

For given $\b\in \Z^N,$ we temporarily define $\mathbb{B}(\b)$ to be the set consisting of all prime powers $p^r$ having the property that there exists $\x\in \mathcal{R}_n(p^r/(p^r,\b))$ such that $f_{\b/(p^r,\b)}(\x)\equiv 0$ mod $p^r/(p^r,\b).$ Then, we define  a set $\mathbb{F}^{loc}_{d,n}(Q)$ to be the set consisting of $\b\in [1,Q]^N$ such that for all $p^r\|Q$, one has $p^r\in \mathbb{B}(\b).$

For simplicity, we write 
\begin{equation*}
    \mathcal{S}(A)= A^{-N+k}\cdot\#\left\{\a\in \mathcal{A}^{\text{loc}}_{d,n}(A;P)\middle|\   \sigma(\a;W)\leq(\log A)^{-\eta}\right\}.
\end{equation*}
Note that $\sigma(\a;W)=\sigma(\b;W)$ for $\a\equiv \b\ \text{mod}\ W.$ Then, one infers that
\begin{equation}\label{6.79}
    \mathcal{S}(A)=A^{-N+k}\cdot \dsum_{\substack{\b\in \mathbb{F}_{d,n}^{\text{loc}}(W)\\\sigma(\b;W)\leq(\log A)^{-\eta}}}\#\left\{\a\in \mathcal{A}_{d,n}^{\text{loc}}(A;P)\middle|\ \a\equiv\b\ \text{mod}\ W\right\}.
\end{equation}
Since the relations $\a\equiv \b\ \text{mod}\ W$ and $P(\a)=0$ together imply that $P(\b)\equiv 0\ \text{mod}\ W,$ we observe that the summands $\b$ in $(\ref{6.79})$ are additionally restricted to satisfy $P(\b)\equiv 0\ \text{mod}\ W$. With this observation in mind, we deduce by applying the Cauchy-Schwarz inequality that
\begin{equation}\label{6.816.81}
    \mathcal{S}(A)\leq A^{-N+k}\cdot \mathcal{S}_1^{1/2}\cdot \mathcal{S}_2^{1/2}, 
\end{equation}
where
\begin{equation*}
    \mathcal{S}_1=
    \#\left\{\b\in\mathbb{F}^{\text{loc}}_{d,n}(W)\middle|\ \begin{aligned}
        \sigma(\b;W)\leq(\log A)^{-\eta},\ P(\b)\equiv 0\ \text{mod}\ W
    \end{aligned}\right\}
\end{equation*}
and 
\begin{equation*}
    \mathcal{S}_2=\dsum_{\b\in [1,W]^N}\#\left\{\a\in \mathcal{A}_{d,n}^{\text{loc}}(A;P)\middle|\ 
        \a\equiv\b\ \text{mod}\ W
      \right\}^2.
\end{equation*}

Note that
\begin{equation*}
    \mathcal{S}_2\leq\{\a_1,\a_2\in [-A,A]^N|\ P(\a_1)=P(\a_2)=0,\ \a_1\equiv\a_2\ \text{mod}\ W\}.
\end{equation*}
Recall that $W\leq X^{2}$ and that we have the hypothesis $X^3\leq A$ in the statement of Proposition $\ref{prop6.1}$, and thus $W\leq A^{2/3}.$ Then, since $N\geq 18k(k-1)4^{k+3},$ on noting that the upper bound for $\mathcal{N}(A)$ in Theorem $\ref{thm3.1}$ is valid without the condition that $P(\x)=0$ has a nontrivial integer solution, we find by applying Theorem $\ref{thm3.1}$ that
\begin{equation}\label{6.826.826.82}
    \mathcal{S}_2\ll A^{2N-2k}W^{1-N}.
\end{equation}
We now analyse $\mathcal{S}_1$. Since $\sigma(\b;W)>0$, we deduce that for any $\kappa>0$ one has
\begin{equation*}
    \mathcal{S}_1\ll \dsum_{\substack{\b\in \mathbb{F}^{\text{loc}}_{d,n}(W)\\P(\b)\equiv 0\ \text{mod}\ W}}\left(\frac{1}{(\log A)^{\eta}\sigma(\b;W)}\right)^{\kappa}.
\end{equation*}
Thus, we find from ($\ref{6.26.26.2}$) that 
$$\mathcal{S}_1\ll \frac{1}{(\log A)^{\eta\cdot\kappa}} \dsum_{\substack{\b\in \mathbb{F}^{\text{loc}}_{d,n}(W)\\P(\b)\equiv 0\ \text{mod}\ W}}
\dprod_{p^r\|W}\frac{1}{\sigma(\b;p^r)^{\kappa}}.$$
Furthermore, an application of the Chinese remainder theorem delivers that
$$\mathcal{S}_1\ll \frac{1}{(\log A)^{\eta\cdot\kappa}} 
\dprod_{p^r\|W}\dsum_{\substack{\b\in \mathbb{F}^{\text{loc}}_{d,n}(p^r)\\P(\b)\equiv 0\ \text{mod}\ p^r}}\frac{1}{\sigma(\b;p^r)^{\kappa}}.$$
We investigate the sum over $\b\in \mathbb{F}^{\text{loc}}_{d,n}(p^r)$ with $P(\b)\equiv 0\ \text{mod}\ p^r$ by dividing this sum in terms of size of $\sigma(\b;p^r).$ Write
\begin{equation*}
    \mathcal{S}_{>}^{(\kappa)}(p^r)=\dsum_{\substack{\b\in \mathbb{F}^{\text{loc}}_{d,n}(p^r)\\P(\b)\equiv 0\ \text{mod}\ p^r\\\sigma(\b;p^r)>1/2}}\frac{1}{\sigma(\b;p^r)^{\kappa}}\ \text{and}\  \mathcal{S}_{\leq}^{(\kappa)}(p^r)=\dsum_{\substack{\b\in \mathbb{F}^{\text{loc}}_{d,n}(p^r)\\P(\b)\equiv 0\ \text{mod}\ p^r\\\sigma(\b;p^r)\leq1/2}}\frac{1}{\sigma(\b;p^r)^{\kappa}}, 
\end{equation*}
so that 
\begin{equation}\label{6.826.82}
    \mathcal{S}_1\ll \frac{1}{(\log A)^{\eta\cdot\kappa}} 
\dprod_{p^r\|W}\left(\mathcal{S}_{>}^{(\kappa)}(p^r)+ \mathcal{S}_{\leq}^{(\kappa)}(p^r)\right).
\end{equation}

We first analyse the sum $\mathcal{S}_{>}^{(\kappa)}(p^r).$ In order to do this, we use the estimate
$$\frac{1}{\sigma(\b;p^r)^{\kappa}}=1-\kappa(\sigma(\b;p^r)-1)+O((\sigma(\b;p^r)-1)^2),$$
where the implied constant depends at most on $\kappa.$ Then, we find that
\begin{equation}\label{6.88888}
    \begin{aligned}
   &\mathcal{S}_{>}^{(\kappa)}(p^r)\\
   &=\dsum_{\substack{\b\in \mathbb{F}^{\text{loc}}_{d,n}(p^r)\\P(\b)\equiv 0\ \text{mod}\ p^r\\\sigma(\b;p^r)>1/2}}1-\kappa \dsum_{\substack{\b\in \mathbb{F}^{\text{loc}}_{d,n}(p^r)\\P(\b)\equiv 0\ \text{mod}\ p^r\\\sigma(\b;p^r)>1/2}}(\sigma(\b;p^r)-1)+O\biggl(\dsum_{\substack{1\leq \b\leq p^r\\P(\b)\equiv 0\ \text{mod}\ p^r}}(\sigma(\b;p^r)-1)^2\biggr)\\
    &\leq \dsum_{\substack{1\leq \b\leq p^r\\P(\b)\equiv 0\ \text{mod}\ p^r}}1-\kappa \dsum_{\substack{\b\in \mathbb{F}^{\text{loc}}_{d,n}(p^r)\\P(\b)\equiv 0\ \text{mod}\ p^r}}(\sigma(\b;p^r)-1)+O\biggl(\dsum_{\substack{1\leq \b\leq p^r\\P(\b)\equiv 0\ \text{mod}\ p^r}}(\sigma(\b;p^r)-1)^2\biggr)\\
    &\leq \dsum_{\substack{1\leq \b\leq p^r\\P(\b)\equiv 0\ \text{mod}\ p^r}}1-\kappa \dsum_{\substack{1\leq \b\leq p^r\\P(\b)\equiv 0\ \text{mod}\ p^r}}(\sigma(\b;p^r)-1)+O\biggl(\dsum_{\substack{1\leq \b\leq p^r\\P(\b)\equiv 0\ \text{mod}\ p^r}}(\sigma(\b;p^r)-1)^2\biggr)+\kappa\cdot\mathcal{E},
    \end{aligned}
\end{equation}
where $$\mathcal{E}= \dsum_{\substack{1\leq \b\leq p^r\\P(\b)\equiv 0\ \text{mod}\ p^r\\\b\notin \mathbb{F}^{\text{loc}}_{d,n}(p^r)}}\sigma(\b;p^r).$$

By splitting the sum $\mathcal{E}$ in terms of values of $(\b,p^r),$ we see that
\begin{equation}\label{6.838383}
    \mathcal{E}\leq \dsum_{\substack{1\leq \b\leq p^r\\P(\b)\equiv 0\ \text{mod}\ p^r\\\b\notin \mathbb{F}^{\text{loc}}_{d,n}(p^r)\\(\b,p^r)=1}}\sigma(\b;p^r)+\dsum_{1\leq d_0\leq r}\dsum_{\substack{1\leq \b\leq p^r\\P(\b)\equiv 0\ \text{mod}\ p^r\\(\b,p^r)=p^{d_0}}}\sigma(\b;p^r).
\end{equation}

We analyze the second term in $(\ref{6.838383})$. Recall the definition ($\ref{def6.1}$) of $\sigma(\b;p^r)$. Also, when $(\b,p^r)=p^{d_0}$, we write $\widetilde{\b}=p^{-d_0}\b.$ Then, in the case $d_0\leq r/k,$ one has
\begin{equation*}
\begin{aligned}
    \sigma(\b;p^r)&=p^{-r(n-1)}\#\{1\leq \g\leq p^r|\ \langle \widetilde{\b},\nu_{d,n}(\g)\rangle\equiv 0\ \text{mod}\ p^{r-d_0}\}\\
    &\leq p^{-r(n-1)}\cdot p^{kd_0n} \#\{1\leq \g\leq p^{r-kd_0}|\ \langle \widetilde{\b},\nu_{d,n}(\g)\rangle\equiv 0\ \text{mod}\ p^{r-kd_0}\}\\
    &=p^{kd_0} \sigma(\widetilde{\b},p^{r-kd_0}).
\end{aligned}
\end{equation*}
Hence, we deduce that
\begin{equation*}
\begin{aligned}
    \dsum_{1\leq d_0\leq r/k}\dsum_{\substack{1\leq \b\leq p^r\\P(\b)\equiv 0\ \text{mod}\ p^r\\(\b,p^r)=p^{d_0}}}\sigma(\b;p^r)&\leq \dsum_{1\leq d_0\leq r/k}\dsum_{\substack{1\leq \widetilde{\b}\leq p^{r-d_0}\\P(\widetilde{\b})\equiv0\ \text{mod}\ p^{r-kd_0}}}p^{kd_0} \sigma(\widetilde{\b},p^{r-kd_0})\\
    &\leq \dsum_{1\leq d_0\leq r/k}p^{kd_0}\cdot p^{d_0(k-1)N}\dsum_{\substack{1\leq \widetilde{\b}\leq p^{r-kd_0}\\P(\widetilde{\b})\equiv0\ \text{mod}\ p^{r-kd_0}}} \sigma(\widetilde{\b},p^{r-kd_0}).
\end{aligned}
\end{equation*}
Then, by applying Lemma $\ref{lem5.2}$ with $r-kd_0$ in place of $r,$ we have
\begin{equation*}
\begin{aligned}
   \dsum_{1\leq d_0\leq r/k}\dsum_{\substack{1\leq \b\leq p^r\\P(\b)\equiv 0\ \text{mod}\ p^r\\(\b,p^r)=p^{d_0}}}\sigma(\b;p^r)\ll \dsum_{1\leq d_0\leq r/k}p^{kd_0+{d_0(k-1)N}}\cdot (p^{(r-kd_0)(N-1)}+O(E)),
\end{aligned}
\end{equation*}
where $$E=p^{(r-kd_0)N-\lceil (r-kd_0)/d\rceil n}+p^{(r-kd_0)N-(r-kd_0)-n+d}.$$
A modicum of computation delivers that
\begin{equation}\label{6.84}
\begin{aligned}
 &\dsum_{1\leq d_0\leq r/k}\dsum_{\substack{1\leq \b\leq p^r\\P(\b)\equiv 0\ \text{mod}\ p^r\\(\b,p^r)=p^{d_0}}}\sigma(\b;p^r)\\
   &\ll p^{r(N-1)+2k-N}+O(p^{rN-\lceil (r-k)/d\rceil n+k-N}+p^{rN-r-n+d-N+2k}).
\end{aligned}
\end{equation}
In the case $d_0> r/k,$ it follows by the trivial bound $\sigma(\b;p^r)\leq p^r$ that
\begin{equation}\label{6.85}
\begin{aligned}
    \dsum_{d_0> r/k}\dsum_{\substack{1\leq \b\leq p^r\\P(\b)\equiv 0\ \text{mod}\ p^r\\(\b,p^r)=p^{d_0}}}\sigma(\b;p^r)\leq  \dsum_{d_0> r/k}\dsum_{\substack{1\leq \b\leq p^r\\(\b,p^r)=p^{d_0}}}p^r\leq  \dsum_{d_0> r/k}p^{(r-d_0)N+r}\leq p^{(r-r/k)N+r}.
\end{aligned}
\end{equation}
Hence, it follows by $(\ref{6.84})$ and $(\ref{6.85})$ that 
\begin{equation}\label{6.}
\begin{aligned}
     \dsum_{1\leq d_0\leq r}\dsum_{\substack{1\leq \b\leq p^r\\P(\b)\equiv 0\ \text{mod}\ p^r\\(\b,p^r)=p^{d_0}}}\sigma(\b;p^r)\ll  p^{r(N-1)+2k-N}+p^{rN-\lceil (r-k)/d\rceil n+k-N}+p^{(r-r/k)N+r}.        
\end{aligned}
\end{equation}

 We turn to estimate the first term in $(\ref{6.838383}).$ Recall the definition of $\mathbb{F}^{\text{loc}}_{d,n}(p^r)$ at the beginning of this proof. Then, for $\b\notin \mathbb{F}^{\text{loc}}_{d,n}(p^r)$ and $(\b,p^r)=1$, one has
 \begin{equation}\label{6.8888}
 \begin{aligned}
       \sigma(\b;p^r)&=p^{-r(n-1)}\#\{1\leq \g\leq p^r|\ \langle \b,\nu_{d,n}(\g)\rangle\equiv 0\ \text{mod}\ p^{r}\}\\   
       &=p^{-r(n-1)}\#\{1\leq \g\leq p^r|\ p|\g,\ \langle \b,\nu_{d,n}(\g)\rangle\equiv 0\ \text{mod}\ p^{r}\}.
 \end{aligned}
 \end{equation}
 Hence, for $\b\notin \mathbb{F}^{\text{loc}}_{d,n}(p^r)$ and $(\b,p^r)=1$, it follows by the argument leading from $(\ref{eqeq5.1})$ to $(\ref{eqeqeq5.3})$ that 
\begin{equation}\label{6.876.87}
    \dsum_{\substack{1\leq \b\leq p^r\\P(\b)\equiv 0\ \text{mod}\ p^r\\\b\notin \mathbb{F}^{\text{loc}}_{d,n}(p^r)\\(\b,p^r)=1}}\sigma(\b;p^r)\leq p^{-r(n-1)}\dsum_{1\leq r_2\leq r}\dsum_{\substack{1\leq \g\leq p^r\\ (\g,p^r)=p^{r_2}}}(S(\g)+T(\g)),
\end{equation}
where $S(\g)$ and $T(\g)$ are defined in the sequel to $(\ref{eqeqeq5.3})$. By applying the same argument leading from $(\ref{5.45.4})$ to $(\ref{6.96.96.9})$ together with the hypotheses on $N,n,$ and $d$, we readily deduce that
\begin{equation}\label{6.886.88}
    p^{-r(n-1)}\dsum_{1\leq r_2\leq r}\dsum_{\substack{1\leq \g\leq p^r\\ (\g,p^r)=p^{r_2}}}S(\g)\ll p^{rN-r-2}.
\end{equation}
 Moreover, by applying the argument leading from $(\ref{5.11})$ to $(\ref{5.15})$ with the range $1\leq r_2\leq r$ in place of $0\leq r_2\leq r$, one infers that
 \begin{equation}\label{6.896.89}
 \begin{aligned}
     p^{-r(n-1)}\dsum_{1\leq r_2\leq r}\dsum_{\substack{1\leq \g\leq p^r\\ (\g,p^r)=p^{r_2}}}T(\g)=p^{-r(n+1)}(U_1+U_2) \ll p^{rN-\lceil r/d\rceil n}+p^{r(N-1)+d-n},
 \end{aligned}
 \end{equation}
 where $U_1$ and $U_2$ are defined in the sequel to $(\ref{5.11})$  with the range $1\leq r_2\leq r$ in place of $0\leq r_2\leq r$ and we have used inequalities 
 $$p^{-r(n+1)}U_1\ll p^{rN-\lceil r/d\rceil n}\ \text{and}\ p^{-r(n+1)}U_2\ll p^{r(N-1)+d-n}.$$
By substituting $(\ref{6.886.88})$ and $(\ref{6.896.89})$ into $(\ref{6.876.87})$ and by the hypothesis $n>d+1$ in the statement of Proposition $\ref{prop6.1}$, we find that 
\begin{equation}\label{6.92}
     \dsum_{\substack{1\leq \b\leq p^r\\P(\b)\equiv 0\ \text{mod}\ p^r\\\b\notin \mathbb{F}^{\text{loc}}_{d,n}(p^r)\\(\b,p^r)=1}}\sigma(\b;p^r)\ll p^{rN-r-2}.
\end{equation}
Hence, on substituting ($\ref{6.}$) and $(\ref{6.92})$ into ($\ref{6.838383}$), we obtain
\begin{equation}
    \mathcal{E}\ll p^{rN-r-2}.
\end{equation}
Then, by Lemma $\ref{lem5.1}$, Lemma $\ref{lem5.2}$ and Lemma $\ref{lem5.3}$, it follows from ($\ref{6.88888}$) that
\begin{equation}\label{6.946.94}
    \mathcal{S}_{>}^{(\kappa)}(p^r)\leq p^{rN-r}(1+O(p^{-2})).
\end{equation}

Next, we analyse the sum $\mathcal{S}_{\leq}^{(\kappa)}(p^r).$ Recall the definition $(\ref{6.59590})$ of $A_v^{(e)}(p^r)$ for given $e\in \{0,\ldots,r\}.$ On observing that
\begin{equation*}
    \{\b\in \mathbb{F}^{\text{loc}}_{d,n}(p^r)|\ P(\b)\equiv0\ \text{mod}\ p^r\}=\bigcup_{e=0}^{r}\bigcup_{v=0}^{r-e}A_v^{(e)}(p^r),
\end{equation*}
it follows that
\begin{equation}\label{6.95}
    \mathcal{S}_{\leq}^{(\kappa)}(p^r)\leq \dsum_{e=0}^r\dsum_{v=0}^{r-e}\mathcal{T}^{(\kappa)}(e,v;p^r),
\end{equation}
where, for $e\in \{0,\ldots,r\}$ and $v\in \{0,\ldots, r-e\}$, we have set
$$\mathcal{T}^{(\kappa)}(e,v;p^r)=\dsum_{\substack{\b\in A_v^{(e)}(p^r)\\ \sigma(\b;p^r)\leq 1/2}}\frac{1}{\sigma(\b;p^r)^{\kappa}}.$$

We first consider the case where $e\in \{0,1\}$. By applying Lemma $\ref{lem6.9}$, we find that
$$\mathcal{T}^{(\kappa)}(e,v;p^r)\leq p^{-\kappa v}\cdot p^{\kappa(e+1)(n-1)}\dsum_{\substack{1\leq \b\leq p^r\\P(\b)\equiv 0\ \text{mod}\ p^r\\ \sigma(\b;p^r)\leq 1/2 }}1.$$
From the elementary inequality $1\leq 4(\sigma(\b;p^r)-1)^2$ whenever $\sigma(\b;p^r)\leq 1/2,$ we obtain
\begin{equation*}
    \mathcal{T}^{(\kappa)}(e,v;p^r)\leq 4 p^{-\kappa v}\cdot p^{\kappa(e+1)(n-1)}\dsum_{\substack{1\leq \b\leq p^r\\P(\b)\equiv 0\ \text{mod}\ p^r}}(\sigma(\b;p^r)-1)^2.
\end{equation*}
Thus, it follows from Lemma $\ref{lem5.3}$ that
\begin{equation*}
    \mathcal{T}^{(\kappa)}(e,v;p^r)\ll  p^{rN-r-\kappa v+\kappa(e+1)(n-1)}(p^{r-n\lceil r/d\rceil}+p^{d-n}).
\end{equation*}
Then, we deduce that
\begin{equation}\label{6.96}
\begin{aligned}
    \dsum_{v=0}^{r} \mathcal{T}^{(\kappa)}(0,v;p^r)+\dsum_{v=0}^{r-1}\mathcal{T}^{(\kappa)}(1,v;p^r)\ll  p^{rN-r+2\kappa(n-1)}(p^{r-n\lceil r/d\rceil}+p^{d-n}).    
\end{aligned}
\end{equation}

We now consider the case where $e\in \{2,\ldots,r\}.$ On dropping the condition $\sigma(\b;p^r)\leq 1/2$ and applying Lemma $\ref{lem6.9}$, we find that 
\begin{equation*}
    \mathcal{T}^{(\kappa)}(e,v;p^r)\leq   p^{-\kappa v}\cdot p^{\kappa(e+1)(n-1)}\# A_v^{(e)}(p^r).
\end{equation*}
By taking $\kappa=1/(10n)$, we deduce from Lemma $\ref{lem6.8}$ that
\begin{equation}\label{6.97}
    \dsum_{e=2}^{r}\dsum_{v=0}^{r-e}\mathcal{T}^{(\kappa)}(e,v;p^r)\ll   p^{rN-r-2+3\kappa(n-1)}.
\end{equation}
It follows from $(\ref{6.95})$ together with bounds ($\ref{6.96}$) and ($\ref{6.97}$) that 
\begin{equation*}
     \mathcal{S}_{\leq}^{(\kappa)}(p^r)\ll p^{rN-r}\left(\frac{1}{p^{n\lceil r/d\rceil-r-2\kappa (n-1)}}+\frac{1}{p^{n-d-2\kappa (n-1)}}+\frac{1}{p^{2-3\kappa(n-1)}}\right).
\end{equation*}
By the assumption $n>d+1$ and the choice $\kappa=1/(10n)$, we have
\begin{equation}\label{6.98}
     \mathcal{S}_{\leq}^{(\kappa)}(p^r)\ll p^{rN-r-3/2}.
\end{equation}

On substituting ($\ref{6.946.94}$) and $(\ref{6.98})$ into ($\ref{6.826.82}$), one finds that
\begin{equation}\label{6.100}
    \mathcal{S}_1\ll \frac{W^{N-1}}{(\log A)^{\eta\cdot\kappa}}\dprod_{p^r\|W}\left(1+O(p^{-3/2})\right).
\end{equation}
By substituting $(\ref{6.826.826.82})$ and $(\ref{6.100})$ into ($\ref{6.816.81}$), we conclude that
\begin{equation*}
    \mathcal{S}(A)\ll  \frac{1}{(\log A)^{(\eta\cdot\kappa)/2}}.
\end{equation*}
Thus, since $\kappa=1/(10n)$, this completes the proof of Proposition $\ref{prop6.1}$.
\end{proof}

\bigskip

\subsection{Singular integrals treatment}\label{sec6.2}
Our goal in this subsection is to prove Proposition $\ref{prop6.11}$ below. In advance of the statement of this proposition, we recall the definition $\mathcal{A}^{\text{loc}}_{d,n}(A;P)$ in section 1, and recall the definition ($\ref{defnJ*}$) of  $\mathfrak{J}_{\a}^*.$


\begin{prop}\label{prop6.11}
 Let $A$ and $X$ be positive number with $X^3\leq A.$ Suppose that $n$ and $d$ are natural numbers with $d\geq 2$ and $n\geq 4$. Suppose that $P\in \Z[\x]$ is a non-singular form in $N_{d,n}$ variables of degree $k\geq 2.$ Then, whenever $N\geq 1000n^28^k$, one has
\begin{equation*}
     {A^{-N+k}}\cdot\#\left\{\a\in \mathcal{A}^{\text{loc}}_{d,n}(A;P)\middle|\ 
        \mathfrak{J}_{\a}^*\leq X^{n-d}A^{-1}(\log A)^{-\eta}\right\}\ll (\log A)^{-\eta/(2n)},
\end{equation*}
for some $\eta>0$, where the implicit constant may depend on $n$ and $d.$
\end{prop}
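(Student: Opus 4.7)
The plan is to bound $\mathcal{B}(A;\eta):=\{\a\in\mathcal{A}^{\text{loc}}_{d,n}(A;P):\mathfrak{J}_{\a}^*\leq X^{n-d}A^{-1}(\log A)^{-\eta}\}$ by an argument inspired by the proof of Proposition~\ref{prop6.1}, but built around the geometric measure bound of Lemma~\ref{lem3.7} in place of the singular-series moment estimates (Lemmas~\ref{lem5.1}, \ref{lem5.2}, and \ref{lem5.3}). As in Proposition~\ref{prop6.1}, the descent to the subvariety $P(\a)=0$ will ultimately be handled by a Cauchy--Schwarz inequality in the spirit of (\ref{6.816.81}) together with Theorem~\ref{thm3.1}.

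The first step is to translate the analytic condition $\mathfrak{J}_{\a}^*\leq X^{n-d}A^{-1}(\log A)^{-\eta}$ into a pointwise geometric one on $\a$. Unfolding the Fourier transform of $\mathfrak{w}_\zeta$, the quantity $A\cdot X^{-(n-d)}\mathfrak{J}_{\a}^*$ is comparable, up to smoothing at scale $\zeta$, to $\zeta^{-1}\cdot\mathrm{vol}\{\boldsymbol\gamma\in [0,1]^n:|f_{\a}(\boldsymbol\gamma)|\leq \zeta A\}$, and hence by a co-area argument to the surface integral $\int_{f_{\a}=0\cap[0,1]^n}\|\nabla f_{\a}\|^{-1}d\sigma$. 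After a dyadic decomposition at scales $\lambda=2^{-j}$ with $\lambda\geq (\log A)^{-\eta/(2n)}$, smallness of $\mathfrak{J}_{\a}^*$ forces the existence of $\y$ in a cover of $[0,1]^n$ of cardinality $O(\lambda^{-n})$ such that $|f_{\a}(\y)|\ll A\lambda^2$ and $\|\nabla f_{\a}(\y)\|\ll A\lambda$.

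The second step estimates the measure of such $\a$. For each fixed $\y$, an appropriate rescaling of Lemma~\ref{lem3.7} (exploiting homogeneity to reduce $\y$ to the unit sphere) yields
\begin{equation*}
\mathrm{mes}\{\a\in [-A,A]^N:|f_{\a}(\y)|\ll A\lambda^2,\ \|\nabla f_{\a}(\y)\|\ll A\lambda\}\ll A^N\lambda^{n+1},
\end{equation*}
and summing over the cover gives the aggregate bound $\mathrm{mes}\{\a\in [-A,A]^N:\mathfrak{J}_{\a}^*\ll\lambda X^{n-d}A^{-1}\}\ll A^N\lambda$. To incorporate the arithmetic constraint $P(\a)=0$, apply Cauchy--Schwarz modelled on (\ref{6.816.81}): fix a modulus $W\in [1,A^{2/3}]$, split the sum defining the dyadic piece $\#\mathcal{B}_\lambda(A;\eta)$ by residues $\b\in [1,W]^N$ with $P(\b)\equiv 0\,(\mathrm{mod}\ W)$, and bound
\begin{equation*}
\#\mathcal{B}_\lambda(A;\eta)\leq \mathcal{S}_1^{1/2}\cdot \mathcal{S}_2^{1/2},
\end{equation*}
where $\mathcal{S}_1$ counts residues $\b$ admitting some lift $\a\equiv \b\,(\mathrm{mod}\ W)$ in the geometric set just bounded, and $\mathcal{S}_2$ is the pair count controlled by Theorem~\ref{thm3.1}, giving $\mathcal{S}_2\ll A^{2N-2k}W^{1-N}$. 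Optimizing $W$ in terms of $\lambda$ and summing dyadically over $\lambda$ then yields the stated bound.

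The main obstacle lies in this final step. Unlike $\mathfrak{S}_{\a}^*$ in Proposition~\ref{prop6.1}, the singular integral $\mathfrak{J}_{\a}^*$ is not a function of $\a\,(\mathrm{mod}\ W)$, so the passage from the geometric measure bound to the corresponding residue-class count $\mathcal{S}_1$ must be achieved by a Birch-type equidistribution argument inside each residue class, and this is precisely where the hypothesis $N\geq 1000n^28^k$ is expected to play its role. Calibrating $W$, the dyadic parameter $\lambda$, and the mesh of the cover of $[0,1]^n$ so that the final exponent reduces to $-\eta/(2n)$, while simultaneously respecting the requirement $N\geq 18k(k-1)4^{k+3}$ of Theorem~\ref{thm3.1}, is the principal technical task.
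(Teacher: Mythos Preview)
Your proposal has a genuine gap at the step you yourself flag as the main obstacle. The Cauchy--Schwarz decomposition (\ref{6.816.81}) works in Proposition~\ref{prop6.1} precisely because $\sigma(\a;W)$ depends only on $\a\pmod{W}$; this is what makes $\mathcal{S}_1$ a count of residue classes, which can then be estimated via the Chinese remainder theorem and the local lemmas. For $\mathfrak{J}_{\a}^*$ there is no such periodicity, so the quantity $\mathcal{S}_1$ in your scheme is a count of residues $\b$ for which \emph{some} lift lies in the geometric set. You would need to show this count is $\ll W^{N-1}\lambda$; but a generic geometric set of volume $A^N\lambda$ can meet essentially every residue class $\b$ as soon as $\lambda\gg (W/A)^N$, which gives no saving at all in the range $\lambda\gg(\log A)^{-\eta}$. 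The vague ``Birch-type equidistribution inside each residue class'' does not fix this: equidistribution would only confirm that each residue class meets the geometric set with the expected frequency, not that few residue classes meet it. Theorem~\ref{thm3.1} is simply not the right tool here.

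The paper's proof abandons the residue-class Cauchy--Schwarz altogether and instead handles the constraint $P(\a)=0$ by a direct circle-method argument built into the geometric lemma. Concretely, it introduces the set $C_P(A)\subset B_N(N)$ consisting of points within $A^{-1}$ of some rescaled integer zero of $P$, and proves (Lemma~\ref{lem6.14}) that the subset $B_N^{(\lambda)}\subset C_P(A)$ of points with a zero on $\mathbb{S}^{n-1}$ at gradient scale $\lambda$ has measure $\ll\lambda A^{-k}$. The proof writes the indicator of $P=0$ via orthogonality, splits $\int_0^1\,d\alpha$ into major and minor arcs, bounds the minor arcs by the Weyl estimate for $P$ (this is where $N\geq 1000n^28^k$ enters), and on the major arcs recovers exactly the measure bound of Lemma~\ref{lem3.7} multiplied by $A^{-k}$. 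A separate argument (Lemma~\ref{lem6.13}) excises the boundary of $\mathbb{I}_{d,n}^{\text{loc}}$, and a lower bound for $\tau(\a;\gamma)$ on $B_N^{(\lambda)}$ (Lemma~\ref{lem6.1414}) then converts the measure bound into the desired count, via a moment argument with $\kappa=1/(2n)$ analogous to the one you describe.
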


\bigskip

 We begin this section by observing the following. Recall the definition of $w$ and $N$. For $\a\in \Z^N$ with $\|\a\|_{\infty}\leq A,$ we define
\begin{equation*}
    \tau(\a;b)=b\cdot\text{mes}\left(\left\{\boldsymbol{\gamma}\in [0,1]^n\middle|\ |f_{\a}(\boldsymbol{\gamma})|\leq \frac{\|\nu_{d,n}(\boldsymbol{\gamma})\|\cdot \|\a\|}{b}\right\}\right).
\end{equation*}
We claim 
 that there exists $C>0$ such that
    $$C\cdot\tau(\a;2w^{5}N)\leq \mathfrak{J}^*_{\a}\cdot AX^{-n+d}.$$
   Define a function   
\begin{equation*}
    \chi_{\zeta}(\xi):=\left\{\begin{aligned}
        &1,\ \text{when}\ \xi\in [-(1/2)\zeta,(1/2)\zeta]\\
        &0,\ \text{otherwise}.
    \end{aligned}\right.
\end{equation*}
  On recalling the definition of $\widehat{\mathfrak{w}}_{\zeta}(\xi),$   one sees that for $\xi\in \R$
$$\widehat{\mathfrak{w}}_{\zeta}(\xi)\geq \frac{1}{2}\cdot\chi_{\zeta}(\xi).$$ Then,
 we find from $(\ref{defnJ*})$ together with the above inequality that
\begin{equation*}
    \mathfrak{J}^*_{\a}\cdot AX^{-n+d}\geq \frac{1}{2}\cdot\dint_{[0,1]^n}\zeta^{-1}\chi_{\zeta}(A^{-1}f_{\a}(\boldsymbol{\gamma}))d\boldsymbol{\gamma}.
\end{equation*}
Furthermore, for given $\a\in\Z^N$ with $\|\a\|_{\infty}\leq A,$ define a function
\begin{equation*} 
    \boldsymbol{\chi}(\boldsymbol{\gamma}):=\left\{\begin{aligned}
        &1,\ \text{when}\ |f_{\a}(\boldsymbol{\gamma})|\leq \frac{\|\nu_{d,n}(\boldsymbol{\gamma})\|\cdot \|\a\|}{2w^5N}\ \text{and}\ \boldsymbol{\gamma}\in [0,1]^n\\
        &0,\ \text{otherwise}.
    \end{aligned}\right.
\end{equation*}
Observe that $\|\nu_{d,n}(\boldsymbol{\gamma})\|\leq N^{1/2}$ and $\|\a\|\leq AN^{1/2}$ for $\boldsymbol{\gamma}\in [0,1]^n$ and $\|\a\|_{\infty}\leq A$, one has
\begin{equation*}
   A^{-1} \cdot\frac{\|\nu_{d,n}(\boldsymbol{\gamma})\|\cdot \|\a\|}{2w^5N}\leq \frac{1}{2w^5}=(1/2)\zeta.
\end{equation*}
Hence, for a given $\boldsymbol{\gamma}\in [0,1]^n$ and $\a\in\Z^N$ with $\|\a\|_{\infty}\leq A,$ one sees that the inequality $$|f_{\a}(\boldsymbol{\gamma})|\leq \frac{\|\nu_{d,n}(\boldsymbol{\gamma})\|\cdot \|\a\|}{2w^5N}$$ implies that  $A^{-1}|f_{\a}(\boldsymbol{\gamma})|\leq (1/2)\zeta.$
This means that whenver $\boldsymbol{\chi}(\boldsymbol{\gamma})=1$, we have $\chi_{\zeta}(A^{-1}f_{\a}(\boldsymbol{\gamma}))=1.$ Thus,  one has
\begin{equation*}
    \chi_{\zeta}(A^{-1}f_{\a}(\boldsymbol{\gamma}))\geq \boldsymbol{\chi}(\boldsymbol{\gamma}),
\end{equation*}
for a given $\boldsymbol{\gamma}\in [0,1]^n$ and $\a\in\Z^N$ with $\|\a\|_{\infty}.$
Therefore, we discern that
\begin{equation*}
\begin{aligned}
     \mathfrak{J}^*_{\a}\cdot AX^{-n+d}&\geq \frac{1}{2}\cdot\dint_{[0,1]^n}\zeta^{-1}\chi_{\zeta}(A^{-1}f_{\a}(\boldsymbol{\gamma}))d\boldsymbol{\gamma}\\ 
     &\geq \frac{1}{2}\cdot\dint_{[0,1]^n}\zeta^{-1}\boldsymbol{\chi}(\boldsymbol{\gamma})d\boldsymbol{\gamma}\\
     &=\frac{1}{2}\cdot\frac{1}{2N}\dint_{[0,1]^n}(2w^5N)\cdot \boldsymbol{\chi}(\boldsymbol{\gamma})d\boldsymbol{\gamma}\\
    &=\frac{1}{4N}\cdot \tau(\a;2w^{5}N).
     \end{aligned}
\end{equation*}
Therefore, we have confirmed the claim with $C=1/(4N).$

In order to prove Proposition $\ref{prop6.11}$, therefore, we infer that it suffices to show that
\begin{equation}\label{6.989898}
     {A^{-N+k}}\cdot\#\left\{\a\in \mathcal{A}^{\text{loc}}_{d,n}(A;P)\middle|\ 
       C\cdot\tau(\a;2w^{5}N)\leq(\log A)^{-\eta}\right\}\ll (\log A)^{-\eta/(2n)},
\end{equation}
where the implicit constant may depend on $n$ and $d$. We will prove the inequality $(\ref{6.989898})$ at the end of this section, via the strategy used in [$\ref{ref3}$, section 5].

\bigskip

\subsubsection{The proof of Lemma $\ref{lem6.13}$}
In this section, we prove Lemma $\ref{lem6.13}$. In order to describe this lemma, we introduce some definitions. 

\bigskip

We define the set 
\begin{equation}\label{6.99}
    \mathbb{I}_{d,n}^{\text{loc}}=\{\a\in \R^N\setminus\{\0\}|\ \text{there exists}\ \x\in \mathbb{S}^{n-1}\ \text{such that}\ f_{\a}(\x)=0\},
\end{equation}
where we wrote 
$\mathbb{S}^{n-1}=\{\x\in \R^{n}|\ \|\x\|=1\}.$
On recalling the definition of $B_m(u)$ in the preamble to Lemma $\ref{lem3.6}$, we define 
$$\mathcal{N}(\a)=\{\y\in \R^N: \y-\a\in B_N(1)\}.$$
Furthermore, on recalling that $P$ is a non-singular form in $N_{d,n}$ variables of degree $k\geq 2,$ we set
$$\mathcal{U}_{d,n}(A)=\{\a\in \Z^N\cap [-A,A]^N \cap \mathbb{I}_{d,n}^{\text{loc}}|\ \mathcal{N}(\a)\not\subset \mathbb{I}_{d,n}^{\text{loc}},\ P(\a)=0\}.$$

 The following lemma shows that the cardinality of $\mathcal{U}_{d,n}(A)$ is small enough so that we can begin the proof of Proposition $\ref{prop6.11}$ by excluding the set $\mathcal{U}_{d,n}(A)$ from the set in the left-hand side of the inequality $(\ref{6.989898})$.
\begin{lem}\label{lem6.13}
    Let $d\geq 2$ and $n\geq 4.$ Whenever $N_{d,n}>2^{k+1}(k-1)(k+n)$, we have
    $$\#\mathcal{U}_{d,n}(A)\ll A^{N-k-1/2}.$$
\end{lem}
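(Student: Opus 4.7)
The overall strategy is to convert the geometric boundary condition on $\a$ into an analytic one, then cover the sphere by a suitable net and bound the number of relevant $\a$ satisfying both the linear slab conditions at each net point and the polynomial constraint $P(\a)=0$.

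First I would show that if $\a \in \mathcal{U}_{d,n}(A)$, there exists $\x \in \mathbb{S}^{n-1}$ with $|f_\a(\x)| \leq C_0$ and $\|\nabla f_\a(\x)\| \leq C_0$, where $C_0$ depends only on $n$ and $d$. Pick $\y \in \mathcal{N}(\a) \setminus \mathbb{I}^{\text{loc}}_{d,n}$ and set $\a_t = (1-t)\a + t\y$, and let $t_* \in (0,1]$ be the supremum of $t$ such that $\a_t \in \mathbb{I}^{\text{loc}}_{d,n}$. At $t_*$, the real zero of $f_{\a_{t_*}}$ on $\mathbb{S}^{n-1}$ is about to disappear, so by the implicit function theorem this zero must be degenerate in the restriction to the sphere. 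Combined with Euler's identity $\langle \x, \nabla f_{\a_{t_*}}(\x)\rangle = d \, f_{\a_{t_*}}(\x) = 0$, this forces the full gradient $\nabla f_{\a_{t_*}}(\x) = \boldsymbol{0}$. Since $\|\a - \a_{t_*}\| \leq \|\a - \y\| \leq 1$ and $f_\a$ is linear in $\a$, the inequalities on $f_\a(\x)$ and $\nabla f_\a(\x)$ at this $\x$ follow.

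Next I would cover $\mathbb{S}^{n-1}$ by a net $\mathcal{N}_A$ of cardinality $\ll A^{(n-1)/2}$ with spacing $\rho = A^{-1/2}$. For any $\x \in B(\x_0, \rho)$ with $\x_0 \in \mathcal{N}_A$, using $|\nabla f_\a|, \|\nabla^2 f_\a\| \ll A$ on the unit ball gives $|f_\a(\x_0)| \ll 1 + A\rho \ll A^{1/2}$ and $\|\nabla f_\a(\x_0)\| \ll A^{1/2}$. Consequently,
\begin{equation*}
    \# \mathcal{U}_{d,n}(A) \ll \sum_{\x_0 \in \mathcal{N}_A} \#\left\{ \a \in \Z^N \cap [-A,A]^N \middle|\ \begin{aligned} &P(\a) = 0,\ |f_\a(\x_0)| \leq C A^{1/2}, \\ &\|\nabla f_\a(\x_0)\| \leq C A^{1/2} \end{aligned} \right\}.
\end{equation*}

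The third step is the per-point count. The linear conditions in $\a$ define, after using Euler's identity to eliminate one dependency, a slab of effective codimension $n$ with widths $A^{1/2}$, of Lebesgue measure $\ll A^{N-n}(A^{1/2})^n = A^{N-n/2}$ inside $[-A,A]^N$ (an unbalanced-widths generalization of Lemma $\ref{lem3.7}$). Combining with the equidistribution of integer zeros of $P$ at density $A^{-k}$ — which follows from Birch's theorem $[\ref{ref8}]$, applicable since $N_{d,n} > 2^{k+1}(k-1)(k+n)$ comfortably exceeds the Birch threshold — yields at most $\ll A^{N-k-n/2}$ integer solutions per $\x_0$. Summing over the net gives $\# \mathcal{U}_{d,n}(A) \ll A^{(n-1)/2} \cdot A^{N-k-n/2} = A^{N-k-1/2}$.

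The main obstacle lies in the third step: rigorously merging the continuous slab measure bound with the arithmetic constraint $P(\a) = 0$. I expect that the cleanest implementation is to use the $n$ linear conditions at $\x_0$ to parametrize $n$ of the coordinates of $\a$ in terms of the remaining $N-n$ coordinates $\a'$, confining $\a$ to a box of side $O(A^{1/2})$ in $n$ directions; then Birch's theorem, applied to the degree-$k$ form $P$ expressed in the $N-n$ free variables, yields the required per-point bound, since the hypothesis $N_{d,n} > 2^{k+1}(k-1)(k+n)$ ensures $N-n$ remains well above the Birch threshold for $P$ restricted to this affine subspace. The factor $A^{1/2}$ in the final bound arises from the choice $\rho = A^{-1/2}$: a finer net would exploit stronger conditions per point but the gain is washed out by the larger number of net points, and the balance is optimized precisely at this scale.
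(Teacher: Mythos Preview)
Your first step agrees with the paper's: both argue that $\a\in\mathcal{U}_{d,n}(A)$ forces some nearby $\c\in\mathcal{N}(\a)$ and $\x\in\mathbb{S}^{n-1}$ with $f_{\c}(\x)=0$ and $\nabla f_{\c}(\x)=\boldsymbol{0}$, whence $\|\nabla f_{\a}(\x)\|\ll 1$. Your second step replaces the paper's integral over the shell $\mathcal{H}_n(A)$ (thickness $A^{-1}$) by a discrete net at scale $A^{-1/2}$; the discretization itself is harmless.

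The gap is in your third step. The linear forms $\partial_{x_i}f_{\a}(\x_0)$, viewed as functions of $\a$, have coefficients that are degree-$(d-1)$ monomials in the real coordinates of $\x_0\in\mathbb{S}^{n-1}$; these are irrational in general, so there is no way to ``fix $n$ coordinates of $\a$'' compatibly with the integer lattice. Even if one could pass to a rational affine slice, the restriction of $P$ to that slice need not remain non-singular (or have controlled singular locus), so Birch's theorem $[\ref{ref8}]$ does not apply to it; and you would in any case need the implied constant uniform over all $\asymp A^{(n-1)/2}$ net points, which does not follow. The heuristic ``density $A^{-k}$ inside a slab of volume $A^{N-n/2}$'' is exactly what has to be \emph{proved} here, and it does not reduce to Birch on restrictions.

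The paper avoids restriction entirely. It smooths the gradient condition by a product of bump functions and Fourier-expands, turning the slab constraint into an additive character $e(\langle\boldsymbol{\beta},\nabla f_{\a}(\y)\rangle/D)$ that is \emph{linear} in $\a$; then it writes $\sum_{P(\a)=0}=\int_0^1\sum_{\a}e(\alpha P(\a))\,d\alpha$ and splits $\alpha$ into major and minor arcs. On minor arcs the linear twist is invisible to Weyl differencing, so one obtains $\ll A^{N-N/(2^{k-1}(k-1))+\epsilon}$ uniformly in $\boldsymbol{\beta}$ and $\y$ --- this is precisely where the hypothesis $N>2^{k+1}(k-1)(k+n)$ enters. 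On major arcs one drops the character, bounds $\text{mes}(\mathfrak{M}_{1/4})\leq A^{1/2-k}$, and uses the unconditional count $\#\{\a:\|\nabla f_{\a}(\y)\|\ll 1\}\ll A^{N-n}$ from Lemma~$\ref{lem3.6}$. The factor $A^{1/2}$ in the final bound thus comes from the major-arc measure, not from any net-spacing balance.
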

\begin{proof}
If $\mathcal{U}_{d,n}(A)$ is the empty set, there is nothing to prove.
Given $\a\in \mathcal{U}_{d,n}(A)$, let $\b\in \mathcal{N}(\a)\setminus \mathbb{I}_{d,n}^{\text{loc}}$ and define
$$M_{\a}=\text{max}\left\{t\in (0,1]|\ \a+t(\b-\a)\in \mathbb{I}_{d,n}^{\text{loc}}\right\}.$$
 This maximum value exists, since $ \mathbb{I}_{d,n}^{\text{loc}}$ is a closed set.
We also set $\c=\a+M_{\a}(\b-\a)$ and we will show that for any $\x\in \mathbb{S}^{n-1}$ satisfying $f_{\c}(\x)=0$ one has $\nabla f_{\c}(\x)=0$. Indeed, for $\rho\in (0,1/A^2)$ and $\y\in B_{n}(\rho)$ we have
\begin{equation*}
    \begin{aligned}
        f_{\c+\rho^2(\b-\a)}(\x+\y)&=f_{\c+\rho^2(\b-\a)}(\x)+\langle \nabla f_{\c+\rho^2(\b-\a)}(\x),\y\rangle+O(A\rho^2)\\
        &=f_{\c}(\x)+\langle \nabla f_{\c}(\x),\y\rangle+O(\rho^{3/2})\\
        &=\langle \nabla f_{\c}(\x),\y\rangle+O(\rho^{3/2}).
    \end{aligned}
\end{equation*}
Assume that $\nabla f_{\c}(\x)\neq \boldsymbol{0}$ and let $\y_0\in \mathbb{S}^{n-1}$ satisfying $\langle \nabla f_{\c}(\x),\y_0\rangle\neq 0.$ For $|u|\leq \rho$ we thus have $$f_{\c+\rho^2(\b-\a)}(\x+u\y_0)=u\cdot \langle \nabla f_{\c}(\x),\y_0\rangle +O(\rho^{3/2}).$$
We see that if $\rho$ is chosen sufficiently small then the intermediate value theorem shows that there exists $u_0\in \R$ such that $f_{\c+\rho^2(\b-\a)}(\x+u_0\y_0)=0$, which contradicts the maximality of $M_{\a}.$ Hence, we find that
    \begin{equation*}
    \begin{aligned}
\mathcal{U}_{d,n}(A)\subseteq \{\a\in \Z^N\cap [-A,A]^N\cap \mathbb{I}_{d,n}^{\text{loc}}|\ P(\a)=0,\ \exists \c\in \mathcal{N}(\a)\ \exists \x\in \mathbb{S}^{n-1}, \nabla f_{\c}(\x)=0\}.        
    \end{aligned}        
    \end{equation*}

    Next, we will show that given $\a\in \Z^N\cap [-A,A]^N,$ if $\c\in \mathcal{N}(\a)$ and $\x\in \mathbb{S}^{n-1}$ satisfy $\nabla f_{\c}(\x)=\boldsymbol{0}$ then for any $\y\in \R^{n}$ such that $\|\y-\x\|\leq A^{-1}$ we have $\|\nabla f_{\a}(\y)\|\ll 1.$ Indeed, the triangle inequality gives 
    \begin{equation*}
        \begin{aligned}
            \|\nabla f_{\a}(\y)\|&\leq \|\nabla f_{\a-\c}(\y)\|+\|\nabla f_{\c}(\y)-\nabla f_{\c}(\x)\|+\|\nabla f_{\c}(\x)\|\\
            &\ll \|\a-\c\|\cdot \|\y\|^{d-1}+\|\c\|\cdot \|\y-\x\|\cdot \text{max}\{\|\x\|,\|\y\|\}^{d-2}.
        \end{aligned}
    \end{equation*}
    We thus get $\|\nabla f_{\a}(\y)\|\ll 1$ as we claimed. Since we have in addition
    $$\text{mes}\bigl(\bigl\{\y\in \R^n\big|\ \|\y-\x\|\leq A^{-1}\bigr\}\bigr)\gg A^{-n},$$ on recalling the definition of $\mathcal{H}_n(A)$ in section $\ref{sec3.1}$, it follows that 
\begin{equation}\label{6.9999}
\begin{aligned}
 \#\mathcal{U}_{d,n}(A)&\ll A^n\dsum_{\substack{\a\in \Z^N\cap [-A,A]^N\\P(\a)=0}}\text{mes}\left(\left\{\y\in \R^n\middle|\ \begin{aligned}
     &1-1/A\leq \|\y\|\leq 1+1/A\\
     &\|\nabla f_{\a}(\y)\|\ll 1
 \end{aligned}\right\}\right)\\
 &\ll A^n\dint_{\mathcal{H}_n(A)}\#\{\a\in \Z^N\cap [-A,A]^N|\ \|\nabla f_{\a}(\y)\|\ll 1,\ P(\a)=0\}d\y
\end{aligned}
\end{equation}

We consider a function $\varphi\in C^{\infty}(\R)$ such that supp($\varphi$)$\subseteq [-1,1],\ \varphi>0$ and $\frac{d^m\varphi(y)}{dy^m}\ll 1,$ where the implicit constant may depend on $m.$ 
Furthermore, for fixed $\y\in \R^n$, if we define a function $\mathbf{1}_{\y}(\a)$ to be
$$\mathbf{1}_{\y}(\a):=\begin{cases} 1, &\text{when $\|\nabla f_{\a}(\y)\|\ll 1$}\\
0, &\text{otherwise}
\end{cases},$$
we observe that there exists $D>0$ such that
$\mathbf{1}_{\y}(\a)\ll \prod_{i=1}^n \varphi\bigl(\frac{(\nabla f_{\a}(\y))_i}{D}\bigr).$
Then, it follows from $(\ref{6.9999})$ that
\begin{equation*}
\begin{aligned}
 \#\mathcal{U}_{d,n}(A)\ll A^n\int_{\mathcal{H}_n(A)}\dsum_{\substack{\|\a\|_{\infty}\leq A\\P(\a)=0}}\mathbf{1}_{\y}(\a)d\y\ll A^n \int_{\mathcal{H}_{n}(A)}\dsum_{\substack{\|\a\|_{\infty}\leq A\\ P(\a)=0}}\prod_{i=1}^n \varphi\biggl(\frac{(\nabla f_{\a}(\y))_i}{D}\biggr)d\y.
\end{aligned}
\end{equation*}

On writing $\varphi(\xi)=\int_{\R}\widehat{\varphi}(\beta)e(\beta \xi)d\beta,$
it follows by orthogonality that
\begin{equation}\label{6.100100}
\begin{aligned}
    &\#\mathcal{U}_{d,n}(A)\\
    &\ll A^n\dint_{\mathcal{H}_n(A)}\dint_{\R^n}\int_0^1\dsum_{ \|\a\|_{\infty}\leq A}e(\alpha P(\a))\prod_{i=1}^n\widehat{\varphi}(\beta_i)e\biggl(\left\langle\boldsymbol{\beta},\frac{\nabla f_{\a}(\y)}{D}\right\rangle\biggr)d\alpha d\boldsymbol{\beta}d\y,   
\end{aligned}
\end{equation}
where $\boldsymbol{\beta}=(\beta_1,\beta_2,\ldots,\beta_n)$ and $d\boldsymbol{\beta}=d\beta_1 d\beta_2\cdots d\beta_n.$
We define
\begin{equation}\label{6.102}
\mathfrak{M}_{\delta}=\bigcup_{\substack{0\leq a\leq q\leq A^{\delta}\\(q,a)=1}}\mathfrak{M}_{\delta}(q,a),    
\end{equation}
where $\mathfrak{M}_{\delta}(q,a)=\{\a\in [0,1)|\ |\alpha-a/q|\leq q^{-1}A^{\delta-k}\}$, and define $\mathfrak{m}_{\delta}=[0,1)\setminus \mathfrak{M}_{\delta}$. Then, we deduce from $(\ref{6.100100})$ together with the triangle inequality that
\begin{equation}\label{6.101}
    \#\mathcal{U}_{d,n}(A)\ll A^n(\Sigma_1+\Sigma_2),
\end{equation}
where 
\begin{equation}
\begin{aligned}
 \Sigma_1&=\dint_{\mathcal{H}_n(A)}\dint_{\mathfrak{M}_{1/4}}\dsum_{ \|\a\|_{\infty}\leq A}\biggl|\int_{\R^n}\prod_{i=1}^n\widehat{\varphi}(\beta_i)e\biggl(\biggl\langle\boldsymbol{\beta},\frac{\nabla f_{\a}(\y)}{D}\biggr\rangle\biggr)d\boldsymbol{\beta}\biggr|d\alpha d\y  \\
 &=\dint_{\mathcal{H}_n(A)}\dint_{\mathfrak{M}_{1/4}}\dsum_{ \|\a\|_{\infty}\leq A}\prod_{i=1}^n\varphi\biggl(\frac{(\nabla f_{\a}(\y))_i}{D}\biggr)d\alpha d\y
\end{aligned}
\end{equation}
and
$$\Sigma_2=\int_{\mathcal{H}_n(A)}\int_{\R^n}\prod_{i=1}^n\widehat{\varphi}(\beta_i)\biggl|\int_{\mathfrak{m}_{1/4}}\dsum_{-A\leq \a\leq A}e(\alpha P(\a))e\biggl(\biggl\langle\boldsymbol{\beta},\frac{\nabla f_{\a}(\y)}{D}\biggr\rangle\biggr)d\alpha\biggr|d\boldsymbol{\beta}d\y.$$

First, we analyze $\Sigma_2.$ Since $P(\a)$ is a non-singular form, it follows by the Weyl type estimate for exponential sums over minor arcs $[\ref{ref20}, \text{Lemma}\ 3.6]$ that 
$$\sup_{\alpha\in \mathfrak{m}_{1/4}}\dsum_{-A\leq \a\leq A}e(\alpha P(\a))e\biggl(\biggl\langle\boldsymbol{\beta},\frac{\nabla f_{\a}(\y)}{D}\biggr\rangle\biggr)\ll A^{N-N/(2^{k+1}(k-1))+\epsilon}.$$
Since $N>2^{k+1}(k-1)(k+n)$, $\text{mes}(\mathcal{H}_n(A))\ll A^{-1}$ and 
$\int_{\R^n}\prod_{i=1}^n \widehat{\varphi}(\beta_i)d\boldsymbol{\beta}\ll 1,$ we find that
\begin{equation}\label{6.103}
\Sigma_2\ll A^{N-k-n-1}.
\end{equation}

Next, we turn to estimate $\Sigma_1.$ Note that there exists a positive number $D_1=O(1)$ such that
$$ \prod_{i=1}^n \varphi\biggl(\frac{(\nabla f_{\a}(\y))_i}{D}\biggr)\ll \mathbf{1}_{\y}\biggl(\frac{\a}{D_1}\biggr).$$
Hence, we have
\begin{equation}
    \begin{aligned}
        \Sigma_1&\ll \dint_{\mathcal{H}_n(A)}\dint_{\mathfrak{M}_{1/4}}\dsum_{ \|\a\|_{\infty}\leq A} \mathbf{1}_{\y}\biggl(\frac{\a}{D_1}\biggr) d\alpha d\y\\
        &\ll \dint_{\mathfrak{M}_{1/4}}\dint_{\mathcal{H}_n(A)}\#\{\a\in \Z^N\cap [-A,A]^N|\ \|\nabla f_{\a}(\y)\|\ll 1\}d\y d\alpha,
    \end{aligned}
\end{equation}
where we have used the fact that $D_1=O(1).$
Therefore, since $\text{mes}(\mathfrak{M}_{1/4})\leq A^{-k+1/2}$, we conclude by applying Lemma $\ref{lem3.6}$ that 
\begin{equation}\label{6.104}
    \Sigma_1\ll A^{N-k-n+1/2}.
\end{equation}

By substituting $(\ref{6.103})$ and $(\ref{6.104})$ into $(\ref{6.101})$, we complete the proof of Lemma $\ref{lem6.13}$.
\end{proof}

\bigskip

Recall that $P\in \Z[\x]$ is a non-singular form in $N_{d,n}$ variables of degree $k\geq 2.$ We define the set
\begin{equation*}
C_P(A)=\left\{\a\in B_N(N)\middle|\ \|\a-\b A^{-1}\|\leq A^{-1}\ \text{with}\ \b\in \Z^N\cap [-A,A]^N\ \text{and}\ P(\b)=0\right\}.
\end{equation*}
For $\lambda>0,$ we introduce the set 
\begin{equation}\label{6.108}
    B^{(\lambda)}_N=\left\{\a\in C_P(A)\middle|\ \begin{aligned}
\exists\x\in \mathbb{S}^{n-1}\ \text{such that}\  \left\{\begin{aligned}
        &(1)\ f_{\a}(\x)=0\\
        &(2)\ \lambda\|\a\|<\|\nabla f_{\a}(\x)\|\leq 2\lambda\|\a\|
    \end{aligned}\right.\end{aligned}\right\}.
\end{equation}
Furthermore, if we set
\begin{equation}\label{6.109}
    M_{d,n}=\max\left\{\|\nabla f_{\a}(\x)\||\ (\a,\x)\in \mathbb{S}^{N-1}\times \mathbb{S}^{n-1}\right\},
\end{equation}
then we observe that
\begin{equation}\label{6.110}
\mathbb{I}_{d,n}^{\text{loc}}\cap C_P(A)=\bigcup_{l=1}^{\infty}B_N^{(M_{d,n}/2^l)}.
\end{equation}

In the proof of Proposition $\ref{prop6.11},$ the following lemma plays a similar role with that of Lemma $\ref{lem6.8}$ in the proof of Proposition $\ref{prop6.1}.$
\begin{lem}\label{lem6.14}
    Let $d\geq 2$ and $n\geq4.$ For $\lambda\in (0,M_{d,n})$, whenever $$N_{d,n}> 6n(n+1)k(k-1)2^{k-1},$$ we have
$$\text{mes}\left(B_{N}^{(\lambda)}\right)\ll \lambda^{}A^{-k}.$$
\end{lem}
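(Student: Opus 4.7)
The plan is to adapt the strategy of [ref 3, proof of Lemma 5.9] (which bounds $\text{mes}(B_N^{(\lambda)})$ in the absence of the constraint defining $C_P(A)$) by merging it with the Fourier-analytic circle-method treatment of the condition $P(\b) = 0$ that is employed in the proof of Lemma 6.13 above.

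First, I would remove the existential quantifier over $\x$ via a net argument. Fix a maximal $\lambda$-separated subset $\mathcal{N}_\lambda \subseteq \mathbb{S}^{n-1}$, which has $|\mathcal{N}_\lambda| \ll \lambda^{-(n-1)}$. For $\a \in B_N^{(\lambda)}$ with a witnessing $\x \in \mathbb{S}^{n-1}$, pick $\x_0 \in \mathcal{N}_\lambda$ within distance $\lambda$ of $\x$. Taylor-expanding $f_\a$ and $\nabla f_\a$ about $\x_0$, and exploiting that $\|\a\| \leq N = O(1)$ on $B_N(N)$, one deduces $|f_\a(\x_0)| \ll \lambda^2$ and $\|\nabla f_\a(\x_0)\| \ll \lambda$, so that
$$B_N^{(\lambda)} \subseteq \bigcup_{\x_0 \in \mathcal{N}_\lambda} E_{\x_0}(\lambda), \qquad E_{\x_0}(\lambda) := \{\a \in C_P(A) : |f_\a(\x_0)| \ll \lambda^2,\ \|\nabla f_\a(\x_0)\| \ll \lambda\}.$$
By the union bound, it suffices to establish $\text{mes}(E_{\x_0}(\lambda)) \ll \lambda^{n+1} A^{-k}$ uniformly in $\x_0$; summing over the net will then even give the stronger bound $\lambda^2 A^{-k}$.

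Second, I would unfold $C_P(A)$ as a union of $A^{-1}$-balls around $\b/A$ with $\b \in \Z^N \cap [-A,A]^N$ and $P(\b)=0$. Each such ball has measure $O(A^{-N})$, yielding $\text{mes}(E_{\x_0}(\lambda)) \ll A^{-N} \cdot \mathcal{N}_{\x_0}(\lambda)$, where $\mathcal{N}_{\x_0}(\lambda)$ counts those integer $\b$ with $P(\b)=0$, $|f_\b(\x_0)| \ll A\lambda^2 + 1$ and $\|\nabla f_\b(\x_0)\| \ll A\lambda + 1$, the $+1$ absorbing the contribution from the $A^{-1}$-thickening via the linearity of $f$ and $\nabla f$ in $\a$. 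The regime $A\lambda \leq 1$ can be handled directly by Lemma 3.6 combined with a Birch-type count for $P(\b) = 0$, which is stronger than required; I focus on $A\lambda \geq 1$.

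In the main regime, I would estimate $\mathcal{N}_{\x_0}(\lambda)$ by imitating the Fourier-analytic treatment in the proof of Lemma 6.13. Majorize the $n+1$ linear conditions by $\varphi(\cdot/D_i)$ for a fixed $\varphi \in C^\infty_c(\R)$ supported in $[-1,1]$, with scales $D_0 \asymp A\lambda^2$ and $D_1 = \cdots = D_n \asymp A\lambda$, expand each $\varphi$ via its Fourier transform, and enforce $P(\b) = 0$ by orthogonality $\int_0^1 e(\alpha P(\b))\,d\alpha$. This rewrites $\mathcal{N}_{\x_0}(\lambda)$ as a weighted integral of exponential sums
$$S(\alpha, \xi, \boldsymbol{\eta}) = \sum_{\b \in \Z^N \cap [-A,A]^N} e\bigl(\alpha P(\b) + \xi f_\b(\x_0)/D_0 + \langle \boldsymbol{\eta}, \nabla f_\b(\x_0)\rangle/D_1\bigr),$$
with rapidly decaying weights in $\xi$ and $\boldsymbol{\eta}$. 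Splitting the $\alpha$-integral into major arcs $\mathfrak{M}_\delta$ and minor arcs $\mathfrak{m}_\delta$ exactly as in the proof of Lemma 6.13, on $\mathfrak{M}_\delta$ a routine major-arc analysis produces the anticipated main term $\asymp A^{N-k}\lambda^{n+1}$, while on $\mathfrak{m}_\delta$ the Weyl-type bound for non-singular forms [ref 20, Lemma 3.6], used uniformly in $\xi, \boldsymbol{\eta}$, yields the requisite saving; alternatively one may invoke Lemma 3.2 with $l = 1$ and $\sigma = 1/(n+1)$ after Weyl differencing in $\b$, which accounts for the $(n+1)^2$-type factor in the hypothesis on $N$. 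Combining the two contributions gives $\mathcal{N}_{\x_0}(\lambda) \ll A^{N-k} \lambda^{n+1}$, hence $\text{mes}(E_{\x_0}(\lambda)) \ll \lambda^{n+1} A^{-k}$, and summing over $\mathcal{N}_\lambda$ completes the proof.

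The main technical obstacle will be the uniform minor-arc estimate for $S(\alpha, \xi, \boldsymbol{\eta})$: the $n+1$ auxiliary Fourier phases from the smoothed linear constraints couple with the circle-method phase $\alpha P(\b)$ under Weyl differencing, and securing a saving that dominates all of $A^k$ and the scale factors $D_0^{-1}, D_1^{-n}$ uniformly in $(\xi, \boldsymbol{\eta})$ is precisely what forces the quantitative hypothesis $N > 6n(n+1)k(k-1)2^{k-1}$ in the statement.
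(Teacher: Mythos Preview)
Your approach is essentially the paper's: both eliminate the existential $\x$ (you via a $\lambda$-net on $\mathbb{S}^{n-1}$, the paper via the measure-inflation inequality $\text{mes}(B_N^{(\lambda)})\ll\lambda^{1-n}\int_{\mathbb{S}^{n-1}}\mathfrak{K}(\x,\lambda)\,d\x$), then smooth the $n{+}1$ linear constraints by bump functions, enforce $P(\b)=0$ by $\int_0^1 e(\alpha P(\b))\,d\alpha$, and split $\alpha$ into major and minor arcs, using the Weyl-type bound for non-singular forms on $\mathfrak{m}_\delta$. The $1/(n{+}1)$-exponent you locate in Lemma~\ref{lem3.2} appears in the paper not via that lemma but through the major-arc estimate $\int|I(\beta)|^{1/(n+1)}\,d\beta=O(1)$, which is what drives the hypothesis $N>6n(n+1)k(k-1)2^{k-1}$.

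The gap is in your case split. The paper uses three ranges: (a) $\lambda\le A^{-k}$, where Lemma~\ref{lem3.7} alone (dropping $C_P(A)$ entirely) already gives $\text{mes}(B_N^{(\lambda)})\ll\lambda^2\le\lambda A^{-k}$; (b) $A^{-k}<\lambda\le A^{-1/(2n)}$, where one keeps the linear constraints on the \emph{continuous} variable $\a$ (since $A\lambda^2$ may be $<1$, so transferring $|f_\a(\x_0)|\le\lambda^2$ to an integer condition on $\b$ loses it completely); and (c) $\lambda>A^{-1/(2n)}$, where your integer-count argument applies directly. Your two-way split at $A\lambda=1$ places the entire range $A^{-k}<\lambda<A^{-1}$ in the ``small'' case, and there ``Lemma~\ref{lem3.6} combined with a Birch-type count'' does \emph{not} yield the joint saving $A^{N-n-k}$ you need: those two bounds cannot be multiplied, and obtaining the combined saving genuinely requires running the circle method (as in Lemma~\ref{lem6.13} or the paper's case (b)). This is a repairable oversight rather than a conceptual error, but as written the small-$\lambda$ dismissal does not go through.
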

\begin{rmk}
   By modifying the argument in the proof of Lemma 5.14, one could prove that $\text{mes}\left(B_{N}^{(\lambda)}\right)\ll \lambda^{c}A^{-k}$ for any $c<2,$ provided that $N_{d,n}$ is sufficiently large in terms of $n$ and $k.$ However, we did not put our effort into optimizing the result, as the conclusion of Lemma 5.14 is sufficient to prove Proposition 5.12. 
\end{rmk}
\begin{proof}[Proof of Lemma 5.14]
    For $\a\in B_N(N)$ we define
    $$\mathcal{D}_{\a}(\lambda)=\left\{\x\in \mathbb{S}^{n-1}\middle|\ \begin{aligned}
        |f_{\a}(\x)|\leq \lambda^2,\ \|\nabla f_{\a}(\x)\|\leq 2\lambda N
    \end{aligned}\right\},$$
    and we observe that
    $$\text{mes}\left(B_N^{(\lambda)}\right)\leq \text{mes}(\{\a\in C_P(A)|\ \mathcal{D}_{\a}(\lambda)\neq \emptyset\}).$$
    Given $\x\in \mathcal{D}_{\a}(\lambda/2)$, it follows from the estimates
    $$f_{\a}(\y)=f_{\a}(\x)+\langle\nabla f_{\a}(\x),\y-\x\rangle+O(\|\y-\x\|^2),$$
    and $$\|\nabla f_{\a}(\y)\|=\|\nabla f_{\a}(\x)\|+O(\|\y-\x\|),$$
    that there exists an absolute constant $K>0$ such that if $\y\in \mathbb{S}^{n-1}$ and $\|\y-\x\|\leq K\lambda$ then $\y\in \mathcal{D}_{\a}(\lambda)$. Since we have 
    $\text{mes}(\{\y\in \mathbb{S}^{n-1}|\ \|\x-\y\|\leq K\lambda\})\gg \lambda^{n-1},$
    we see that 
    $$\text{mes}\left(B_N^{(\lambda)}\right)\ll \int_{C_P(A)}\frac{\text{mes}(\mathcal{D}_{\a}(\lambda))}{\lambda^{n-1}}d\a.$$
    Therefore, we find that
    \begin{equation}\label{6.111}
        \text{mes}\left(B_N^{(\lambda)}\right)\ll \lambda^{1-n}\int_{\mathbb{S}^{n-1}}\text{mes}\left(\left\{\a\in C_P(A)\middle|\ \begin{aligned}
            |f_{\a}(\x)|\leq \lambda^2,\ \|\nabla f_{\a}(\x)\|\leq 2\lambda N
        \end{aligned}\right\}\right)d\x.
    \end{equation}
    For simplicity, we write $$\mathfrak{K}(\x,\lambda)=\text{mes}\left(\left\{\a\in C_P(A)\middle|\ \begin{aligned}
            |f_{\a}(\x)|\leq \lambda^2,\ \|\nabla f_{\a}(\x)\|\leq 2\lambda N
        \end{aligned}\right\}\right).$$ We investigate $\mathfrak{K}(\x,\lambda)$ by splitting into three cases $$(a)\ \lambda\leq A^{-k}\ (b)\ A^{-k}< \lambda\leq A^{-1/(2n)}\ (c)\ A^{-1/(2n)}< \lambda.$$

    Case (a): Assume that $\lambda\leq A^{-k}.$ Since $C_P(A)\subseteq B_N(N)$ and $N=O(1),$ one infers from Lemma $\ref{lem3.7}$ with $2\lambda N$ in place of $2\lambda$ that 
    \begin{equation}\label{6.112}
        \begin{aligned}
          \text{mes}\left(B_N^{(\lambda)}\right)
        &\ll \lambda^{1-n}\int_{\mathbb{S}^{n-1}}\mathfrak{K}(\x,\lambda)d\x\ll \lambda^2\leq\lambda A^{-k}.
        \end{aligned}
    \end{equation}

    Case (b): Assume that $A^{-k}<\lambda\leq A^{-1/(2n)}.$ Let $\varphi\in C^{\infty}$ be such that $\text{supp}(\varphi)\subseteq [-5,5]$, $\varphi>0$ and $\frac{d^m\varphi(y)}{dy^m}\ll 1,$ where the implicit constant may depend on $m.$ Then, we find by the definition of $C_P(A)$ that
    \begin{equation*}
        \mathfrak{K}(\x,\lambda)\ll \int_{B_N(N)}\dsum_{\substack{\|\b\|_{\infty}\leq A\\ P(\b)=0}}\biggl(\dprod_{i=1}^N\varphi(Aa_i-b_i)\biggr)\varphi\biggl(\frac{f_{\a}(\x)}{\lambda^2}\biggr)\biggl(\dprod_{i=1}^n\varphi\left(\frac{\partial_{x_i}f_{\a}(\x)}{2\lambda N}\right)\biggr)d\a.
    \end{equation*}
    For simplicity, here and throughout this proof, we write
    $$w(\c,\x)=\varphi\biggl(\frac{f_{\c}(\x)}{\lambda^2}\biggr)\biggl(\dprod_{i=1}^n\varphi\left(\frac{\partial_{x_i}f_{\c}(\x)}{2\lambda N}\right)\biggr),$$
    with $\c\in \R^N.$
    By orthogonality, one deduces that
    \begin{equation*}
        \mathfrak{K}(\x,\lambda)\ll \int_{B_N(N)}w(\a,\x)\cdot\Sigma(\a) d\a,
    \end{equation*}
    where 
$$\Sigma(\a)=\int_0^1\dsum_{\substack{\|\b\|_{\infty}\leq A}}e(\alpha P(\b))\prod_{i=1}^N\varphi(Aa_i-b_i)d\alpha.$$
    Recalling the definition $(\ref{6.102})$ of $\mathfrak{M}_{\delta}$ and $\mathfrak{m}_{\delta}$, one finds that
    \begin{equation}\label{6.113}
       \mathfrak{K}(\x,\lambda)\ll \int_{B_N(N)} w(\a,\x)\cdot\left(\Sigma_1(\a)+\Sigma_2(\a)\right)d\a,
    \end{equation}
    where
$$\Sigma_1(\a)=\int_{\mathfrak{M}_{1/(6n)}}\dsum_{\substack{\|\b\|_{\infty}\leq A}}e(\alpha P(\b))\prod_{i=1}^N\varphi(Aa_i-b_i)d\alpha$$
and
$$\Sigma_2(\a)=\int_{\mathfrak{m}_{1/(6n)}}\dsum_{\substack{\|\b\|_{\infty}\leq A}}e(\alpha P(\b))\prod_{i=1}^N\varphi(Aa_i-b_i)d\alpha.$$

We first analyze the quantity
$$\int_{B_N(N)}w(\a,\x)\cdot\Sigma_1(\a)d\a$$
Since $\text{supp}(\varphi)\subseteq [-5,5]$, we observe that for fixed $\a$
$$\dsum_{\substack{\|\b\|_{\infty}\leq A}}e(\alpha P(\b))\prod_{i=1}^N\varphi(Aa_i-b_i)\ll 1.$$
Hence, on noting that $\text{mes}(\mathfrak{M}_{1/(6n)})\leq A^{1/(3n)-k},$ we obtain that
$$\Sigma_1\ll \text{mes}(\mathfrak{M}_{1/(6n)})\leq A^{1/(3n)-k}.$$
Then, on noting that 
\begin{equation*}
\begin{aligned}
    \int_{B_N(N)} w(\a,\x)d\a\ll \text{mes}\left(\left\{\a\in B_N(N)\middle|\ \begin{aligned}
        |f_{\a}(\x)|\leq 5\lambda^2,\ \|\nabla f_{\a}(\x)\|\leq 10n^{1/2}\lambda N
    \end{aligned}\right\}\right),
\end{aligned}
    \end{equation*}
it follows by applying Lemma $\ref{lem3.7}$ with $5\lambda^2$ and $10n^{1/2}\lambda N$ in place of $\lambda^2$ and $2\lambda$ that
\begin{equation}\label{6.114}
    \begin{aligned}
       & \int_{B_N(N)}w(\a,\x)\cdot\Sigma_1(\a)d\a\\
       & \ll A^{1/(3n)-k}\cdot \text{mes}\left(\left\{\a\in B_N(N)\middle|\ \begin{aligned}
        |f_{\a}(\x)|\leq 5\lambda^2,\ \|\nabla f_{\a}(\x)\|\leq 10n^{1/2}\lambda N
    \end{aligned}\right\}\right)\\
       &\ll A^{1/(3n)-k}\lambda^{n+1}\ll \lambda^n A^{-k},
    \end{aligned}
\end{equation}
where we have used the upper bound $\lambda\leq A^{-1/(2n)}.$ Here, one easily infers that Lemma $\ref{lem3.7}$ is applicable in $(\ref{6.114})$, since $10n^{1/2}N$ and $5$ are $O(1).$

We turn to estimate
\begin{equation}\label{6.115}
    \int_{B_N(N)} w(\a,\x)\cdot\Sigma_2(\a)d\a.
\end{equation}
By using the identity
$\varphi(y)=\int_{-\infty}^{\infty}\widehat{\varphi}(\xi)e(\xi y)d\xi,$
we find that $(\ref{6.115})$ is seen to be
\begin{equation}\label{6.116}
 \dint_{\R^{n+1}}\widehat{\varphi}(\xi_0)\prod_{i=1}^n\widehat{\varphi}(\xi_i)\int_{\mathfrak{m}_{1/(6n)}}\Xi_0(\alpha,\xi_0,\boldsymbol{\xi})d\alpha d\xi_0 d\boldsymbol{\xi},   
\end{equation}
where
$$\Xi_0(\alpha,\xi_0,\boldsymbol{\xi})=\dsum_{\|\b\|_{\infty}\leq A}e(\alpha P(\b))\int_{B_N(N)}\prod_{i=1}^N\varphi(Aa_i-b_i)e\biggl(\frac{f_{\a}(\x)\xi_0}{\lambda^2}\biggr)e\biggl(\frac{\nabla f_{\a}(\x)\cdot \boldsymbol{\xi}}{2\lambda N}\biggr)d\a.$$
By the change of variable $\a=\a'+\b A^{-1}$, we see that $\Xi_0(\alpha,\xi_0,\boldsymbol{\xi})$ is
\begin{equation}\label{6.117}
    \begin{aligned}
    & \dsum_{\|\b\|_{\infty}\leq A}e(\alpha P(\b)) \int_{B_N(N)-\b A^{-1}}\prod_{i=1}^N\varphi(Aa_i')e\left(\frac{f_{\a'+\b A^{-1}}(\x)\xi_0}{\lambda^2}\right)e\left(\frac{\nabla f_{\a'+\b A^{-1}}(\x)\cdot \boldsymbol{\xi}}{2\lambda N}\right)d\a'\\
    & =\dsum_{\|\b\|_{\infty}\leq A}e(\alpha P(\b)) \int_{B_N(N/2)}\prod_{i=1}^N\varphi(Aa_i')e\left(\frac{f_{\a'+\b A^{-1}}(\x)\xi_0}{\lambda^2}\right)e\left(\frac{\nabla f_{\a'+\b A^{-1}}(\x)\cdot \boldsymbol{\xi}}{2\lambda N}\right)d\a'\\
    &=S_1(\alpha,\xi_0,\boldsymbol{\xi})\cdot S_2(\xi_0,\boldsymbol{\xi}),
    \end{aligned}
\end{equation}
where $$S_1(\alpha,\xi_0,\boldsymbol{\xi})=\dsum_{\|\b\|_{\infty}\leq A}e(\alpha P(\b))e\left(\frac{f_{\b A^{-1}}(\x)\xi_0}{\lambda^2}\right)e\left(\frac{\nabla f_{\b A^{-1}}(\x)\cdot \boldsymbol{\xi}}{2\lambda N}\right)$$
and
$$S_2(\xi_0,\boldsymbol{\xi})=\int_{B_N(N/2)}\prod_{i=1}^N\varphi(Aa_i')e\left(\frac{f_{\a'}(\x)\xi_0}{\lambda^2}\right)e\left(\frac{\nabla f_{\a'}(\x)\cdot \boldsymbol{\xi}}{2\lambda N}\right)d\a', $$
 and for the first equality in $(\ref{6.117})$ we used the fact that
$$\text{supp}\biggl(\prod_{i=1}^N\varphi(Aa_i')\biggr)\subseteq [-5/A,5/A]^N$$ and  $$\text{supp}\biggl(\prod_{i=1}^N\varphi(Aa_i')\biggr)\subseteq B_N(N/2)\subseteq B_N(N)-\b A^{-1}$$ for all $\b$ with $\|\b \|_{\infty}\leq A.$
Meanwhile, since $P(\b)$ is a non-singular polynomial of degree $k$, by the Weyl type estimate over minor arcs  [$\ref{ref20}$, Lemma 3.6], we have
$$\int_{\mathfrak{m}_{1/(6n)}}S_1(\alpha,\xi_0,\boldsymbol{\xi})d\alpha\ll A^{N-N/(6n(k-1)2^{k-1})+\epsilon}.$$
Furthermore, since $S_2(\xi_0,\boldsymbol{\xi})\leq A^{-N}$ and $\int_{\R^{n+1}}\widehat{\varphi}(\xi_0)\prod_{i=1}^n\widehat{\varphi}(\xi_i)d\xi_0 d\boldsymbol{\xi}\ll 1$, one obtains
$$\dint_{\R^{n+1}}\widehat{\varphi}(\xi_0)\prod_{i=1}^n\widehat{\varphi}(\xi_i)S_2(\xi_0,\boldsymbol{\xi})d\xi_0 d\boldsymbol{\xi}\ll A^{-N}.$$
Hence, we find from expressions $(\ref{6.116})$ and $(\ref{6.117})$ that
\begin{equation*}
\begin{aligned}
&\int_{B_N(N)} w(\a,\x)\cdot\Sigma_2(\a)d\a  \\
& =\dint_{\R^{n+1}}\widehat{\varphi}(\xi_0)\prod_{i=1}^n\widehat{\varphi}(\xi_i)\int_{\mathfrak{m}_{1/(6n)}}S_1(\alpha,\xi_0,\boldsymbol{\xi})\cdot S_2(\xi_0,\boldsymbol{\xi})d\alpha d\xi_0 d\boldsymbol{\xi}\ll A^{-N/(6n(k-1)2^{k-1})+\epsilon}.
\end{aligned}  
\end{equation*}
Noting a choice of $N$ in the statement of Lemma $\ref{lem6.14}$ with $N>6n(n+1)k(k-1)2^{k-1}$ and the condition $A^{-k}<\lambda\leq A^{-1/(2n)},$ we find that
\begin{equation}\label{6.118}
    \int_{B_N(N)} w(\a,\x)\cdot\Sigma_2(\a)d\a \ll \lambda^nA^{-k}.
\end{equation}
On substituting $(\ref{6.114})$ and $(\ref{6.118})$ into ($\ref{6.113}$) and from that into $(\ref{6.111})$, we see that whenever $A^{-k}<\lambda\leq A^{-1/(2n)}$ one has \begin{equation}\label{6.119}
   \text{mes}\left(B_N^{(\lambda)}\right)\ll \lambda A^{-k}. 
\end{equation}

Case (c): Assume that $A^{-1/(2n)}<\lambda.$ Recall the definition of $\mathfrak{K}(\x,\lambda),$ that is
$$\mathfrak{K}(\x,\lambda)=\text{mes}\left(\left\{\a\in C_P(A)\middle|\ \begin{aligned}
            |f_{\a}(\x)|\leq \lambda^2,\ \|\nabla f_{\a}(\x)\|\leq 2\lambda N
        \end{aligned}\right\}\right).$$
Note that for given $\x\in \S^{n-1}$ and $\a,\b\in \Z^N$ with $\|\a-\b A^{-1}\|\leq A^{-1}$ for sufficiently large $A>0$, it follows by applying the Cauchy-Schwarz inequality that we have 
$$|f_{\a-\b A^{-1}}(\x)|\leq \|\a-\b A^{-1}\|\|\nu_{d,n}(\x)\|\leq \lambda^2,$$
where we used $A^{-1/(2n)}< \lambda$ for the second inequality.
Thus, whenever $|f_{\a}(\x)|\leq \lambda^2$, we deduce by the triangle inequality that
$$|f_{\b A^{-1}}(\x)|\leq |f_{\a}(\x)|+|f_{\a-\b A^{-1}}(\x)|\leq 2\lambda^2.$$
Similarly, for given $\x\in \S^{n-1}$ and $\a,\b\in \Z^N$ with $\|\a-\b A^{-1}\|\leq A^{-1}$ for sufficiently large $A>0$, whenever $\|\nabla f_{\a}(\x)\|\leq 2\lambda N$, we have 
$$\|\nabla f_{\b A^{-1}}(\x)\|\leq \|\nabla f_{\a}(\x)\|+\|\nabla f_{\a-\b A^{-1}}(\x)\|\leq 4\lambda N.$$
Hence, on recalling the definitions of $C_P(A)$ and $w(\c,\x)$, we see that 
\begin{equation*}
\begin{aligned}
    &\mathfrak{K}(\x,\lambda)\\
    &\ll \dsum_{\substack{\|\b\|_{\infty}\leq A\\ P(\b)=0}}\text{mes}\left(\left\{\a\in B_N(N)\middle|\ \begin{aligned}
      &(i)\  |f_{\a}(\x)|\leq \lambda^2,\|\nabla f_{\a}(\x)\|\leq 2\lambda N\\
      &(ii)\ \|\a-\b A^{-1}\|\leq A^{-1} 
    \end{aligned}\right\}\right)\\
    &\ll\dsum_{\substack{\|\b\|_{\infty}\leq A\\ P(\b)=0}}\text{mes}\left(\left\{\a\in B_N(N)\middle|\ \begin{aligned}
      &(i)\  |f_{\b A^{-1}}(\x)|\leq 2\lambda^2,\|\nabla f_{\b A^{-1}}(\x)\|\leq 4\lambda N\\
      &(ii)\ \|\a-\b A^{-1}\|\leq A^{-1} 
    \end{aligned}\right\}\right)\\
    &\ll  A^{-N}\dsum_{\substack{\|\b\|_{\infty}\leq A\\ P(\b)=0}}w(\b A^{-1},\x).
\end{aligned}
\end{equation*}

By orthogonality and major and minor arcs dissection again, one sees that
\begin{equation}\label{6.120}
\begin{aligned}
    \mathfrak{K}(\x,\lambda)\ll A^{-N}\int_0^1 \dsum_{\substack{\|\b\|_{\infty}\leq A}}e(\alpha P(\b))w(\b A^{-1},\x)d\alpha =A^{-N}(\Xi_1+\Xi_2),
\end{aligned}
\end{equation}
where 
$$\Xi_1=\int_{\mathfrak{M}_{1/(6n)}}\dsum_{\|\b\|_{\infty}\leq A}e(\alpha P(\b))w(\b A^{-1},\x)d\alpha$$
and 
$$\Xi_2=\int_{\mathfrak{m}_{1/(6n)}}\dsum_{\|\b\|_{\infty}\leq A}e(\alpha P(\b))w(\b A^{-1},\x)d\alpha.$$
By using the Fourier transform of $\varphi$, we find that
\begin{equation}\label{6.121}
    \Xi_2=\int_{\R^{n+1}}\widehat{\varphi}(\xi_0)\left(\prod_{i=1}^n\widehat{\varphi}(\xi_i)\right)T(\xi_0,\boldsymbol{\xi})d\xi_0 d\boldsymbol{\xi},
\end{equation}
where
$$T(\xi_0,\boldsymbol{\xi})=\int_{\mathfrak{m}_{1/(6n)}}\dsum_{\|\b\|\leq A}e(\alpha P(\b))e\left(\frac{f_{\b}(\x)\xi_0}{A\lambda^2}\right)e\left(\frac{\nabla f_{\b}(\x)\cdot \boldsymbol{\xi}}{2A\lambda N}\right)d\alpha.$$
Hence, since $\int_{\R^{n+1}}\widehat{\varphi}(\xi_0)\prod_{i=1}^n\widehat{\varphi}(\xi_i)d\xi_0 d\boldsymbol{\xi}\ll 1$ and $A^{-1/(2n)}<\lambda$, it follows by the Weyl type estimate over minor arcs for the exponential sum of the integrand in
$T(\xi_0,\boldsymbol{\xi})$ [$\ref{ref20}$, Lemma 3.6] and our choice of $N$ that
\begin{equation}\label{6.122}
    \Xi_2\ll A^{N-k}\lambda^n.
\end{equation}

We turn to estimate $\Xi_1.$
On recalling the definition of $\mathfrak{M}_{1/(6n)},$ we find that
\begin{equation}\label{6.123}
    \Xi_1=\dsum_{1\leq q\leq A^{1/(6n)}}\dsum_{\substack{1\leq a\leq q\\(q,a)=1}}\dsum_{\substack{1\leq \z\leq q\\\z\in \Z^N}}e\left(\frac{a}{q}P(\z)\right)\int_{|\beta|\leq \frac{A^{1/(6n)}}{qA^k}}\dsum_{\substack{\y\in B_{\z}\\ \y\in \Z^N}}h_{(\beta)}(\y)d\beta,
\end{equation}
where $h_{(\beta)}(\y)=w\left(\frac{q\y+\z}{A},\x\right)e(\beta P(q\y+\z))$, and $B_{\z}$ is an $N$-dimensional box such that $\y\in B_{\z}$ implies $q\y+\z\in [-A,A]^N.$ If $\x\in [0,1]^N$, then $$h_{(\beta)}(\boldsymbol{\gamma}+\x)=h_{(\beta)}(\boldsymbol{\gamma})+O\biggl(\max_{\boldsymbol{u}\in [0,1]^N}|\nabla h_{(\beta)}(\boldsymbol{\gamma}+\boldsymbol{u})|\biggr).$$
Hence, on noting that $\text{mes}(\text{supp}(B_{\z}(\cdot)))\ll (A/q)^N,$ we see that 
\begin{equation}\label{6.124}
 \biggl|\int_{B_{\z}}h_{(\beta)}(\boldsymbol{\gamma})d\boldsymbol{\gamma}-\dsum_{\substack{\y\in B_{\z}\\ \y\in \Z^N}}h_{(\beta)}(\y)\biggr|\ll (A/q)^N\max_{\y\in B_{\z}}|\nabla h_{(\beta)}(\y)|+(A/q)^{N-1},   
\end{equation}
 where the second term on the right-hand side accounts for the initial and final interval with length at most $O(1)$ for each coordinate of $\boldsymbol{\gamma}$. Meanwhile, since $\frac{d\varphi(y)}{dy}\ll 1$, we note from the chain rule that
 $$\sup_{\substack{\x\in \mathbb{S}^{n-1}\\\b\in \R^N}}\|\nabla w(\b,\x)\|\ll \lambda^{-2}+\lambda^{-1}\ll \lambda^{-2},$$
 where we wrote $\nabla=(\partial_{b_1},\ldots,\partial_{b_N}).$
 Thus, we deduce from the product rule and the chain rule that the right-hand side in $(\ref{6.124})$ is bounded above by 
 \begin{equation}
 \begin{aligned}
 &(A/q)^N\cdot (q/A\lambda^{-2}+q|\beta|A^{k-1})+(A/q)^{N-1}   \\
 &\ll (A/q)^{N-1}\cdot \lambda^{-2}+(A/q)^N\cdot q|\beta|A^{k-1},
 \end{aligned} 
 \end{equation}
 where we used $\lambda\leq M_{d,n}=O(1).$
 Hence, we obtain from $(\ref{6.124})$ and the definition of $h_{(\beta)}(\cdot)$ that
 \begin{equation}\label{6.126}
     \dsum_{\substack{\y\in B_{\z}\\ \y\in \Z^N}}h_{(\beta)}(\y)=(A/q)^N\int_{[-1,1]^N}w(\boldsymbol{\gamma},\x)e(\beta A^k P(\boldsymbol{\gamma}))d\boldsymbol{\gamma}+O\bigl(\ (A/q)^{N-1}(\lambda^{-2}+|\beta|A^{k})\bigr).
 \end{equation}
 On substituting $(\ref{6.126})$ into $(\ref{6.123})$, it follows that $\Xi_1$ is seen to be
 \begin{equation}\label{6.127}
  \dsum_{1\leq q\leq A^{1/(6n)}}\dsum_{\substack{1\leq a\leq q\\(q,a)=1)}}\dsum_{\substack{1\leq \z\leq q\\\z\in \Z^N}}(A/q)^Ne\biggl(\frac{a}{q}P(\z)\biggr)\int_{|\beta|\leq \frac{A^{1/(6n)}}{qA^k}}I(A^k\beta)d\beta+O(E),   
 \end{equation}
 where
 $$I(\beta)=\int_{[-1,1]^N}w(\boldsymbol{\gamma},\x)e(\beta  P(\boldsymbol{\gamma}))d\boldsymbol{\gamma}$$
 and
 \begin{equation*}
 \begin{aligned}
 E&=\dsum_{1\leq q\leq A^{1/(6n)}}\dsum_{\substack{1\leq a\leq q\\(q,a)=1}}\dsum_{\substack{1\leq \z\leq q\\\z\in \Z^N}}e\biggl(\frac{a}{q}P(\z)\biggr)\dint_{|\beta|\leq \frac{A^{1/(6n)}}{qA^k}} (A/q)^{N-1}(\lambda^{-2}+|\beta|A^{k})d\beta \\
 &\leq \dsum_{1\leq q\leq A^{1/(6n)}}q^{N+1}\cdot\frac{A^{1/(6n)}}{qA^k}\cdot(A/q)^{N-1}\left(\lambda^{-2}+\frac{A^{1/(6n)}}{q}\right)\ll \lambda^{-2}A^{N-k-1+1/(2n)}.
 \end{aligned}   
 \end{equation*}
 Meanwhile, on recalling that 
$$w(\boldsymbol{\gamma},\x)=\varphi\left(\frac{f_{\boldsymbol{\gamma} }(\x)}{\lambda^2}\right)\cdot \prod_{i=1}^n\varphi\left(\frac{\partial_{x_i}f_{\boldsymbol{\gamma}}(\x)}{2\lambda N}\right),$$
 one finds by the triangle inequality and applying Lemma $\ref{lem3.7}$ that
$$I(\beta)\leq\int_{[-1,1]^N}|w(\boldsymbol{\gamma},\x)|d\boldsymbol{\gamma}\leq \dint_{[-1,1]^N}\varphi\left(\frac{f_{\boldsymbol{\gamma} }(\x)}{\lambda^2}\right)\cdot \prod_{i=1}^n\varphi\left(\frac{\partial_{x_i}f_{\boldsymbol{\gamma}}(\x)}{2\lambda N}\right)d\boldsymbol{\gamma}\ll\lambda^{n+1},$$
where, for the last inequality, we used the same treatment leading to the second expression in $(\ref{6.114}).$
 We next apply the same treatment as applied in the proofs of [$\ref{ref8}$, Lemmas 5.2 and 5.4] together with $(\ref{6.124})$ and $(\ref{6.126})$ using $q=1,a=0$ and with the Weyl type estimate over minor arcs for the exponential sum of the integrand in $T(\xi_0,\boldsymbol{\xi})$ leading from $(\ref{6.121})$ to $(\ref{6.122})$. Thus, we infer from our choice of $N$ that
 $$\int_{|\beta|\leq A^{1/(6n)}}|I(\beta)|^{1/(n+1)}d\beta\ll \int_{|\beta|\leq A^{1/(6n)}} \text{min}\left(1,|\beta|^{-\frac{N}{(n+1)(k-1)2^{k-1}}}\right)d\beta\ll 1$$
 and
$$\dsum_{1\leq q\leq A^{1/(6n)}}\dsum_{\substack{1\leq a\leq q\\(q,a)=1}}\biggl|\dsum_{\substack{1\leq \z\leq q\\\z\in \Z^N}}q^{-N}e\left(\frac{a}{q}P(\z)\right)\biggr|\ll 1.$$
Therefore, by making use of these bounds, we deduce from $(\ref{6.127})$ together with the triangle inequality that
\begin{equation}\label{6.128}
    \begin{aligned}
        \Xi_1&\ll A^{N-k}\dsum_{1\leq q\leq A^{1/(6n)}}\dsum_{\substack{1\leq a\leq q\\(q,a)=1}}\biggl|\dsum_{\substack{1\leq \z\leq q\\\z\in \Z^N}}q^{-N}e\left(\frac{a}{q}P(\z)\right)\biggr|\dint_{|\beta|\leq \frac{A^{1/(6n)}}{q}}|I(\beta)|d\beta+E\\
        &\ll A^{N-k}\dint_{|\beta|\leq A^{1/(6n)}}|I(\beta)|^{n/(n+1)}|I(\beta)|^{1/(n+1)}d\beta+E\\
        &\ll \lambda^{n}A^{N-k}\int_{|\beta|\leq A^{1/(6n)}}|I(\beta)|^{1/(n+1)}d\beta+E\ll \lambda^n A^{N-k},
    \end{aligned}
\end{equation}
where we have used 
$$E\ll \lambda^{-2}A^{N-k-1+1/(2n)}\ll \lambda^n A^{N-k},$$
obtained from $A^{-1/(2n)}< \lambda$ and $n\geq 4.$

Therefore, on substituting $(\ref{6.122})$ and $(\ref{6.128})$ into $(\ref{6.120})$ and from that into $(\ref{6.111})$, whenever $A^{-1/(2n)}<\lambda$, we conclude that
\begin{equation}\label{6.129}
  \text{mes}\left(B_N^{(\lambda)}\right)\ll \lambda A^{-k}.  
\end{equation}
Hence, combining ($\ref{6.112}$), $(\ref{6.119})$ and $(\ref{6.129}),$ we complete the proof of Lemma $\ref{lem6.14}.$
\end{proof}

\bigskip

The following lemma provides a lower bound for the quantity $\tau(\a;\gamma)$ for $\a\in B_N^{(\lambda)}.$ Furthermore, in the proof of Proposition $\ref{prop6.11}$, this lemma plays similar a role with that of Lemma $\ref{lem6.9}$ in the proof of Proposition $\ref{prop6.1}.$
\begin{lem}\label{lem6.1414}
    Let $d\geq 2$ and $n\geq 4$. Let also $\gamma>0.$ For $\lambda\in (0,M_{d,n})$ and $\a\in B_N^{(\lambda)}$, we have
    $$\tau(\a;\gamma)\gg \lambda ^{n}\cdot \min\left\{\gamma, \frac{1}{\lambda^2}\right\}.$$
\end{lem}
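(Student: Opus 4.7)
The plan is to localise the analysis near a real zero of $f_{\boldsymbol{a}}$ on the sphere and to run a Taylor expansion combined with the implicit function theorem, adapting the strategy of $[\ref{ref3}, \text{section 5}]$. By the definition of $B_N^{(\lambda)}$, there exists $\x\in\mathbb{S}^{n-1}$ with $f_\a(\x)=0$ and $\lambda\|\a\|<\|\nabla f_\a(\x)\|\leq 2\lambda\|\a\|$. Exploiting the homogeneity identities $\nabla f_\a(t\x)=t^{d-1}\nabla f_\a(\x)$ and $Hf_\a(t\x)=t^{d-2}Hf_\a(\x)$, I would first carry out a preliminary geometric reduction producing a reference point $\x_0$ with $f_\a(\x_0)=0$, $\|\nabla f_\a(\x_0)\|\asymp \lambda\|\a\|$, $\|Hf_\a(\x_0)\|\ll\|\a\|$, and a ball $B(\x_0,c\lambda)\subseteq [0,1]^n$ (with absolute constant $c$ depending only on $n,d$).

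The core estimate then proceeds in orthonormal coordinates $\e_1,\ldots,\e_n$ with $\e_1=\nabla f_\a(\x_0)/\|\nabla f_\a(\x_0)\|$. Writing $\boldsymbol{\gamma}=\x_0+y_1\e_1+\y'$ with $\y'\perp\e_1$, the Taylor expansion
\begin{equation*}
f_\a(\boldsymbol{\gamma})= y_1\|\nabla f_\a(\x_0)\|+O\bigl(\|\a\|\,\|\y\|^2\bigr),
\end{equation*}
together with the Hessian bound, shows that on $\{\|\y\|\leq c\lambda\}$ the partial derivative $\partial_{y_1}f_\a$ remains comparable to $\lambda\|\a\|$. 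The implicit function theorem then supplies a graph $y_1=\phi(\y')$ parameterising $\{f_\a=0\}$ for $\|\y'\|\leq c\lambda$, with $|\phi(\y')|\ll \|\y'\|^2/\lambda\leq c\lambda$. A further one-variable Taylor expansion in $y_1$ around $\phi(\y')$ shows that, for each such $\y'$, the set of $y_1$ satisfying $|f_\a(\boldsymbol{\gamma})|\leq \|\boldsymbol{\gamma}\|\cdot\|\a\|/\gamma$ contains an interval of length $\gg \min(\lambda,1/(\lambda\gamma))$. Integrating over $\y'$ in the $c\lambda$-ball yields
\begin{equation*}
\text{mes}\bigl(\{\boldsymbol{\gamma}\in[0,1]^n:|f_\a(\boldsymbol{\gamma})|\leq \|\boldsymbol{\gamma}\|\,\|\a\|/\gamma\}\bigr)\gg \lambda^{n-1}\cdot\min\!\Bigl(\lambda,\tfrac{1}{\lambda\gamma}\Bigr)=\min\bigl(\lambda^n,\lambda^{n-2}/\gamma\bigr),
\end{equation*}
and multiplying through by $\gamma$ delivers the claimed bound $\tau(\a;\gamma)\gg \lambda^n\min(\gamma,1/\lambda^2)$.

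The hard part will be the preliminary reduction: the zero $\x\in\mathbb{S}^{n-1}$ provided by $\a\in B_N^{(\lambda)}$ can lie in any octant, whereas the integration is over $[0,1]^n$, so no positive rescaling of $\x$ need land inside the cube together with a neighbourhood of radius $c\lambda$. I expect this to be handled by a case analysis based on the fact that $\|\x\|=1$ forces some coordinate with $|x_i|\geq 1/\sqrt{n}$, together with a supplementary regime close to the origin exploiting the pointwise bound $|f_\a(\boldsymbol{\gamma})|\ll \|\a\|\,\|\boldsymbol{\gamma}\|^d$ for small $\|\boldsymbol{\gamma}\|$. Making the resulting localisation quantitative so that both regimes deliver the sharp exponents $\lambda^n$ (for $\gamma\leq 1/\lambda^2$) and $\lambda^{n-2}/\gamma$ (for $\gamma>1/\lambda^2$) is the most delicate step, and the $[0,1]^n$-domain issue is what makes the argument here genuinely more subtle than its counterpart in $[\ref{ref3}]$, where the analogous estimate is carried out over a symmetric Euclidean ball.
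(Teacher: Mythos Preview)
Your outline essentially reconstructs the proof of $[\ref{ref3},\text{Lemma 5.10}]$ from scratch, whereas the paper dispatches the lemma in one line by observing that $B_N^{(\lambda)}\subseteq \mathcal{L}(\lambda):=\{\a\in B_N(N):\exists\,\x\in\mathbb{S}^{n-1},\ f_{\a}(\x)=0,\ \lambda\|\a\|<\|\nabla f_{\a}(\x)\|\leq 2\lambda\|\a\|\}$ and then invoking $[\ref{ref3},\text{Lemma 5.10}]$ directly. The only additional constraint in $B_N^{(\lambda)}$ beyond $\mathcal{L}(\lambda)$ is membership in $C_P(A)\subseteq B_N(N)$, which plays no role in the lower bound for $\tau$; so nothing new is needed.

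Your concern that the $[0,1]^n$ domain makes the argument ``genuinely more subtle than its counterpart in $[\ref{ref3}]$'' is misplaced: the quantity $\tau(\a;b)$ here is defined exactly as in $[\ref{ref3}]$, over the same cube $[0,1]^n$, and $[\ref{ref3},\text{Lemma 5.10}]$ already handles precisely this domain. The implicit-function-theorem localisation you sketch is indeed the content of that lemma, and the octant/origin issues you flag are resolved there. So there is no extra work to do in the present setting, and the ``hard part'' you anticipate does not arise.
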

\begin{proof}
From $[\ref{ref3}, \text{Lemma 5.10}]$, one readily infers that whenever $$\a\in \mathcal{L}(\lambda)=\left\{\a\in B_N(N)\middle|\  \exists\x\in \mathbb{S}^{n-1}\ \begin{aligned}
   &f_{\a}(\x)=0\\
   & \lambda\|\a\|<\|\nabla f_{\a}(\x)\|\leq 2\lambda \|\a\|
\end{aligned}\right\},$$
with $\gamma>0$ and $\lambda\in (0,M_{d,n})$, we have $\tau(\a;\gamma)\gg \lambda^{n}\cdot \text{min}\left\{\gamma, \frac{1}{\lambda^2}\right\}.$ Since we obviously have $B_N^{(\lambda)}\subseteq \mathcal{L}(\lambda)$, we complete the proof of Lemma $\ref{lem6.1414}.$
\end{proof}

\begin{proof}[Proof of Proposition $\ref{prop6.11}$]
    As we discussed after the statement of Proposition $\ref{prop6.11},$ it suffices to confirm the inequality $(\ref{6.989898})$. For simplicity, we write
    $$\mathfrak{Y}(A)= {A^{-N+k}}\cdot\#\left\{\a\in \mathcal{A}^{\text{loc}}_{d,n}(A;P)\middle|\ \begin{aligned}
      C\cdot \tau(\a;2w^{5}N)\leq(\log A)^{-\eta}
    \end{aligned}\right\}.$$
    Recall the definition $(\ref{6.99})$ of the set $\mathbb{I}_{d,n}^{\text{loc}}$. Then, we see that
$$\mathfrak{Y}(A)\ll {A^{-N+k}}\cdot\#\left\{\a\in \Z^N\cap [-A,A]^N\cap \mathbb{I}_{d,n}^{\text{loc}}\middle|\ \begin{aligned}
      C\cdot \tau(\a;2w^{5}N)&\leq(\log A)^{-\eta}\\
      P(\a&)=0
    \end{aligned}\right\}.$$
    Then, it follows from Lemma $\ref{lem6.13}$ that
    \begin{equation}\label{6.130}
       \mathfrak{Y}(A)\ll {A^{-N+k}}\cdot\#\left\{\a\in \Z^N\cap [-A,A]^N\middle|\ \begin{aligned}
      \mathcal{N}(\a)\subset \mathbb{I}_{d,n}^{\text{loc}}&,\ P(\a)=0\\
      C\cdot \tau(\a;2w^{5}N)&\leq(\log A)^{-\eta}
    \end{aligned}\right\}+A^{-1/2}. 
    \end{equation}

We now show that if $\a\in \R^N$ satisfies $\|\a\|\geq 8w^{5}N$ then for any $\y\in \mathcal{N}(\a)$, we have
\begin{equation}\label{6.131}
    \tau(\y;4w^{5}N)\leq 2\cdot \tau(\a;2w^{5}N).
\end{equation}
Let $\u\in B_n(1)$ be such that 
$$|\langle\nu_{d,n}(\u),\y\rangle|\leq \frac{\|\nu_{d,n}(\u)\|\cdot\|\y\|}{4w^{5}N}.$$
Since $\y\in \mathcal{N}(\a)$, it follows from the Cauchy-Schwarz inequality that $|\langle\nu_{d,n}(\u),\y-\a\rangle|\leq \|\nu_{d,n}(\u)\|.$ Hence, we find that
$$|\langle \nu_{d,n}(\u),\a\rangle|\leq \frac{\|\nu_{d,n}(\u)\|\cdot\|\y\|}{4w^{5}N}+\|\nu_{d,n}(\u)\|.$$
Since we have assumed that $\|\a\|\geq 8w^5N,$ we have in particular $\|\y\|\leq \frac{3}{2}\|\a\|.$ We deduce that 
$$|\langle \nu_{d,n}(\u),\a\rangle|\leq \frac{3\|\nu_{d,n}(\u)\|\cdot\|\a\|}{8w^{5}N}+\|\nu_{d,n}(\u)\|.$$ Then, our assumption $\|\a\|\geq 8w^{5}N$ gives
$$|\langle \nu_{d,n}(\u),\a\rangle|\leq \frac{\|\nu_{d,n}(\u)\|\cdot\|\a\|}{2w^{5}N},$$
which establishes ($\ref{6.131}$), from the definition of $\tau(\a;b).$

On recalling the bound $(\ref{6.130})$, we note that the inequality $(\ref{6.131})$ gives 
\begin{equation}\label{6.132}
    \begin{aligned}
           &\mathfrak{Y}(A)\\
           &\ll {A^{-N+k}}\cdot\#\left\{\a\in \Z^N\cap ([-A,A]^N\setminus B_N(8w^{5}N))\middle|\ \begin{aligned}
      \mathcal{N}(\a)\subset \mathbb{I}_{d,n}^{\text{loc}}&,\ P(\a)=0\\
      C\cdot \tau(\a;2w^{5}N)&\leq(\log A)^{-\eta}
    \end{aligned}\right\}+A^{-1/2}\\
    &\ll {A^{-N+k}}\dsum_{\substack{\a\in \Z^N\cap [-A,A]^N\\ P(\a)=0}}\text{mes}\left(\left\{\y\in \mathcal{N}(\a)\cap \mathbb{I}_{d,n}^{\text{loc}}\middle|\ C\cdot\tau(\y;4w^{5}N)\leq 2(\log A)^{-\eta}\right\}\right)+A^{-1/2}.
    \end{aligned}
\end{equation}
Inverting the summation over $\a$ and the integration over $\y$ and recalling the definition of $C_P(A),$ the first term in the last display is bounded above by
\begin{equation}\label{6.133}
    \begin{aligned}
      &{A^{-N+k}}\cdot\text{mes}\left(\left\{\y\in B_N(NA)\cap \mathbb{I}_{d,n}^{\text{loc}}\middle|\ \begin{aligned}
       &C\cdot\tau(\y;4w^{5}N)\leq 2(\log A)^{-\eta} \\
       &\exists \b\in [-A,A]^N\cap \Z^N\ \text{s.t}\ \left\{\begin{aligned}
        &\|\y-\b\|\leq 1  \\
&P(\b)=0
\end{aligned}\right.\end{aligned}\right\}\right)\\
&\ll A^k\cdot\text{mes}\left(\left\{\y\in C_P(A)\cap  \mathbb{I}_{d,n}^{\text{loc}}\middle|\ C\cdot\tau(\y;4w^{5}N)\leq 2(\log A)^{-\eta} \right\}\right).
    \end{aligned}
\end{equation}

We now use the same trick as in the proof of Proposition $\ref{prop6.1}$. On substituting $(\ref{6.133})$ into ($\ref{6.132}$), we deduce that for $\kappa>0$
\begin{equation}\label{6.134}
    \mathfrak{Y}(A)\ll \frac{A^k}{(\log A)^{\eta\cdot \kappa}}\int_{C_P(A)\cap \mathbb{I}_{d,n}^{\text{loc}}}\frac{1}{\tau(\y;4w^{5}N)^{\kappa}}d\y+A^{-1/2}.
\end{equation}

Recall the definition ($\ref{6.108}$) and $(\ref{6.109})$ of $B_N^{(\lambda)}$ for $\lambda>0$ and the quantity $M_{d,n}$. We deduce from the equality $(\ref{6.110})$ that
$$\int_{C_P(A)\cap \mathbb{I}_{d,n}^{\text{loc}}}\frac{1}{\tau(\y;4w^{5}N)^{\kappa}}d\y\leq \sum_{l=1}^{\infty}\int_{B_N^{(M_{d,n}/2^l)}}\frac{1}{\tau(\y;4w^{5}N)^{\kappa}}d\y. $$
Let us set $\kappa=1/(2n)$. It follows by applying Lemma $\ref{lem6.14}$ and Lemma $\ref{lem6.1414}$ that 
\begin{equation*}
\begin{aligned}
    \int_{C_P(A)\cap \mathbb{I}_{d,n}^{\text{loc}}}\frac{1}{\tau(\y;4w^{5}N)^{\kappa}}d\y&\ll \dsum_{l=1}^{\infty}\left(\frac{2^{l\cdot n}}{w^{5}}+2^{l\cdot(n-2)}\right)^{\kappa}\cdot \text{mes}\left(B_N^{(M_{d,n}/2^l)}\right)\\
    &\ll A^{-k}\cdot\left(\frac{1}{w^{5/(2n)}}\dsum_{l=1}^{\infty}2^{-l/2}+\dsum_{l=1}^{\infty}2^{-l/2-l/n}\right).   
\end{aligned}
\end{equation*}
On substituting this into $(\ref{6.134})$, we conclude that
$$ \mathfrak{Y}(A)\ll \frac{1}{(\log A)^{\eta/(2n)}}+A^{-1/2}.$$
This completes the proof of Proposition $\ref{prop6.11}.$
\end{proof}

\section{Proof of Theorem 1.2}\label{sec7}
\begin{proof}
We observe that
\begin{equation*}
\begin{aligned}
    & {A^{-N+k}}\cdot\#\left\{\a\in \mathcal{A}^{\text{loc}}_{d,n}(A;P)\middle|\ \begin{aligned}
        \mathfrak{S}^*_{\a}\mathfrak{J}_{\a}^*\leq X^{n-d}A^{-1}(\log A)^{-\eta}
    \end{aligned}\right\}\\
    &\leq    {A^{-N+k}}\cdot\#\left\{\a\in \mathcal{A}^{\text{loc}}_{d,n}(A;P)\middle|\ \begin{aligned}
        \text{min}(\mathfrak{S}^*_{\a}, AX^{d-n}\mathfrak{J}_{\a}^*)\leq (\log A)^{-\eta/2}
    \end{aligned}\right\}.
\end{aligned}
\end{equation*}
Then, on noting that the cardinality of the set in the last expression is bounded above by
\begin{equation*}
\begin{aligned}
 \#\left\{\a\in \mathcal{A}^{\text{loc}}_{d,n}(A;P)\middle|\ \begin{aligned}
        \mathfrak{S}^*_{\a}\leq (\log A)^{-\eta/2}
    \end{aligned}\right\}    + \#\left\{\a\in \mathcal{A}^{\text{loc}}_{d,n}(A;P)\middle|\ \begin{aligned}
         AX^{d-n}\mathfrak{J}_{\a}^*\leq (\log A)^{-\eta/2}
    \end{aligned}\right\}       ,
\end{aligned}
\end{equation*}
it follows by applying Proposition $\ref{prop6.1}$ and Proposition $\ref{prop6.11}$ that
\begin{equation*}
    {A^{-N+k}}\cdot\#\left\{\a\in \mathcal{A}^{\text{loc}}_{d,n}(A;P)\middle|\ \begin{aligned}
        \mathfrak{S}^*_{\a}\mathfrak{J}_{\a}^*\leq X^{n-d}A^{-1}(\log A)^{-\eta}
    \end{aligned}\right\}\ll (\log A)^{-\eta/(40n)}.
\end{equation*}
\end{proof}

\end{document}